\documentclass[a4paper,reqno]{amsart}
\usepackage[british]{babel}
\usepackage[T1]{fontenc}
\usepackage{amssymb,amsthm,bm}
\usepackage{mathtools}
\usepackage{mathrsfs}
\usepackage{nicefrac}
\mathtoolsset{mathic}
\allowdisplaybreaks
\usepackage{courier,mathptmx,stmaryrd}
\usepackage{longtable}
\usepackage[numbers,sort&compress]{natbib}
\usepackage{hyperref,url}
\hypersetup{
bookmarksnumbered
}
\hypersetup{
breaklinks=false,
pdfborderstyle={/S/U/W 0},
citebordercolor=.235 .702 .443,
urlbordercolor=.255 .412 .882,
linkbordercolor=.804 .149 .19,
}
\hypersetup{
pdfauthor={Gert de Cooman, Arthur Van Camp and Jasper De Bock},
pdftitle={The logic behind desirable sets of things, and its filter representation},
pdfkeywords={desirability, desirable sets of things, conservative inference, propositional logic, filter, prime filter, principal filter}
}
\usepackage[all]{hypcap}
\usepackage{xcolor}
\usepackage{enumitem}
\usepackage{tikz}
\usetikzlibrary{decorations.text}

\newif\ifbibincluded
\bibincludedtrue


\DeclarePairedDelimiter{\group}{(}{)}

\DeclarePairedDelimiter{\set}{\{}{\}}

\DeclarePairedDelimiter{\structure}{\langle}{\rangle}

\DeclareMathOperator{\cl}{Cl}
\DeclareMathOperator{\fin}{fin}
\DeclareMathOperator{\finty}{fty}
\DeclareMathOperator{\compl}{co}
\DeclareMathOperator{\up}{up}
\DeclareMathOperator{\Up}{Up}

\DeclareMathOperator{\posi}{posi}
\DeclareMathOperator{\sds}{\cohsdssymbol}

\newcommand{\sdfs}{\smash{\hat{\sds}}}


\newcommand{\lattice}{L}
\newcommand{\meet}{\frown}
\newcommand{\join}{\smile}
\newcommand{\smallest}[1][\lattice]{\smash{0_{#1}}}
\newcommand{\largest}[1][\lattice]{\smash{1_{#1}}}
\newcommand{\latticetop}{\largest}
\newcommand{\latticebottom}{\smallest}
\newcommand{\cohsdttop}{\largest[\toppedcohsdts]}
\newcommand{\cohsdtbottom}{\smallest[\toppedcohsdts]}
\newcommand{\cohsdstop}{\largest[\toppedcohsdss]}
\newcommand{\cohsdsbottom}{\smallest[\toppedcohsdss]}
\newcommand{\fincohsdstop}{\largest[\toppedfincohsdss]}
\newcommand{\fincohsdsbottom}{\smallest[\toppedfincohsdss]}
\newcommand{\cohsdfstop}{\largest[\toppedcohsdfss]}
\newcommand{\cohsdfsbottom}{\smallest[\toppedcohsdfss]}
\newcommand{\eventprincipalfiltersbottom}{\smallest[\eventprincipalfilters]}
\newcommand{\eventprincipalfilterstop}{\largest[\eventprincipalfilters]}
\newcommand{\fineventfiltersbottom}{\smallest[\fineventfilters]}
\newcommand{\fineventfilterstop}{\largest[\fineventfilters]}
\newcommand{\finfineventfiltersbottom}{\smallest[\finfineventfilters]}
\newcommand{\finfineventfilterstop}{\largest[\finfineventfilters]}

\newcommand{\upset}[3][]{\smash{\up_{{#1}}}\group[#3]{#2}}
\newcommand{\Upset}[3][]{\smash{\Up_{{#1}}}\group[#3]{#2}}
\newcommand{\posetupset}[2][]{\upset[\lattice]{#2}{#1}}
\newcommand{\posetUpset}[2][]{\Upset[\lattice]{#2}{#1}}
\newcommand{\sotupset}[2][]{\upset[\setsofthings]{#2}{#1}}
\newcommand{\sotUpset}[2][]{\Upset[\setsofthings]{#2}{#1}}
\newcommand{\cohsdtupset}[2][]{\upset[\cohsdts]{#2}{#1}}
\newcommand{\cohsdtUpset}[2][]{\Upset[\cohsdts]{#2}{#1}}
\newcommand{\eventupset}[2][]{\upset[\events]{#2}{#1}}
\newcommand{\fineventUpset}[2][]{\Upset[\finevents]{#2}{#1}}
\newcommand{\finfineventUpset}[2][]{\Upset[\finfinevents]{#2}{#1}}
\newcommand{\closure}[1][]{\smash{\cl_{#1}}}
\newcommand{\initialclosure}{\closure[\things]}

\newcommand{\filterclosure}{\closure[\latticefilters]}

\newcommand{\fineventfilterclosure}{\closure[\fineventfilters]}
\newcommand{\finfineventfilterclosure}{\closure[\finfineventfilters]}
\newcommand{\cohsdsclosure}{\closure[\toppedcohsdss]}
\newcommand{\fincohsdsclosure}{\closure[\toppedfincohsdss]}
\newcommand{\cohsdfsclosure}{\closure[\toppedcohsdfss]}
\newcommand{\filter}[1][]{\mathscr{F}_{#1}}
\newcommand{\filterbase}[1][]{\mathscr{B}_{#1}}
\newcommand{\primefilter}[1][]{\mathscr{G}_{#1}}
\newcommand{\filters}{\mathbb{F}}
\newcommand{\primefilters}{\smash{\overline{\mathbb{F}}_{\mathrm{p}}}}
\newcommand{\principalfilters}{\mathbb{P}}
\newcommand{\properfilters}{\smash{\overline{\mathbb{F}}}}
\newcommand{\properprincipalfilters}{\smash{\overline{\mathbb{P}}}}
\newcommand{\latticefilters}{\filters(\lattice)}
\newcommand{\latticeprimefilters}{\primefilters(\lattice)}
\newcommand{\properlatticefilters}{\properfilters(\lattice)}

\newcommand{\eventprincipalfilters}{\principalfilters(\events)}
\newcommand{\propereventprincipalfilters}{\properprincipalfilters(\events)}
\newcommand{\fineventfilters}{\filters(\finevents)}
\newcommand{\fineventprimefilters}{\primefilters(\finevents)}
\newcommand{\properfineventfilters}{\properfilters(\finevents)}
\newcommand{\finfineventfilters}{\filters(\finfinevents)}
\newcommand{\finfineventprimefilters}{\primefilters(\finfinevents)}
\newcommand{\properfinfineventfilters}{\properfilters(\finfinevents)}
\newcommand{\setofpropositions}[1][]{H_{#1}}

\newcommand{\lindenbaumproperfilters}{\properfilters(\lindenbaum)}

\newcommand{\true}{\mathrm{T}}
\newcommand{\thing}[1][]{t_{#1}}
\newcommand{\things}[1][]{\smash{T_{#1}}}
\newcommand{\beautifulthings}{\things[{+}]}
\newcommand{\uglythings}{\things[{-}]}
\newcommand{\setsofthings}{\powerset(\things)}
\newcommand{\finites}{\mathscr{Q}}
\newcommand{\finitesetsofthings}{\finites(\things)}
\newcommand{\setsofsetsofthings}{\powerset(\setsofthings)}
\newcommand{\setsoffinitesetsofthings}{\powerset(\finitesetsofthings)}
\newcommand{\cohsdtsymbol}{D}
\newcommand{\cohsdssymbol}{K}
\newcommand{\sot}[1][]{S_{#1}}
\newcommand{\finsot}[1][]{\smash{\hat{S}_{#1}}}
\newcommand{\cohsdt}[1][]{\cohsdtsymbol_{#1}}
\newcommand{\toppedcohsdts}{\mathbf{\cohsdtsymbol}}
\newcommand{\cohsdts}[1][]{\smash{\overline{\toppedcohsdts}_{#1}}}
\newcommand{\idealcohsdt}{\cohsdt[\true]}
\newcommand{\sosot}[1][]{W_{#1}}
\newcommand{\sofinsot}[1][]{\hat{W}_{#1}}
\newcommand{\sop}[1][]{V_{#1}}
\newcommand{\cohsds}[1][]{K_{#1}}
\newcommand{\cohsdfs}[1][]{\smash{\hat{K}}_{#1}}
\newcommand{\cohsdss}{\smash{\overline{\toppedcohsdss}}}
\newcommand{\cohsdfss}{\smash{\overline{\mathbf{F}}}_{\mathrm{fin}}}
\newcommand{\completefincohsdss}{\smash{\overline{\toppedcohsdss}}_{\mathrm{fin,c}}}
\newcommand{\completesdss}{\smash{\overline{\toppedcohsdss}}_{\mathrm{c}}}
\newcommand{\completecohsdfss}{\smash{\overline{\mathbf{F}}}_{\mathrm{fin,c}}}
\newcommand{\fincohsdss}{\smash{\overline{\toppedcohsdss}}_{\mathrm{fin}}}
\newcommand{\toppedcohsdss}{\mathbf{\cohsds}}
\newcommand{\toppedcohsdfss}{\smash{\mathbf{F}_{\mathrm{fin}}}}
\newcommand{\toppedfincohsdss}{\mathbf{\cohsds}_{\mathrm{fin}}}
\newcommand{\indexedselections}[2][\sosot]{\smash{\Phi_{#1}^{#2}}}
\newcommand{\selections}[1][\sosot]{\smash{\Phi_{#1}}}
\newcommand{\finselections}[1][\sofinsot]{\smash{\Phi_{#1}}}
\newcommand{\selection}[1][]{\smash{\sigma_{#1}}}
\newcommand{\finpart}[2][]{\fin\group[#1]{#2}}
\newcommand{\fintypart}[2][]{\finty\group[#1]{#2}}
\newcommand{\sdsify}[1]{\cohsdssymbol_{#1}}
\newcommand{\sdfsify}[1]{\smash{\hat{\cohsdssymbol}}_{#1}}
\newcommand{\sdtify}[1]{\cohsdtsymbol_{#1}}


\newcommand{\filterise}[1][\toppedcohsdts]{\smash{\varphi_{#1}}}
\newcommand{\desirify}[1][\toppedcohsdts]{\smash{\kappa_{#1}}}
\newcommand{\finfilterise}[1][\toppedcohsdts]{\smash{\varphi^{\mathrm{fin}}_{#1}}}
\newcommand{\findesirify}[1][\toppedcohsdts]{\smash{\kappa^{\mathrm{fin}}_{#1}}}
\newcommand{\finfinfilterise}[1][\toppedcohsdts]{\smash{\hat{\varphi}}^{\mathrm{fin}}_{#1}}
\newcommand{\finfindesirify}[1][\toppedcohsdts]{\smash{\hat{\kappa}}^{\mathrm{fin}}_{#1}}

\newcommand{\entails}{\vdash}
\newcommand{\toclass}[2][\equiv]{#2/_{#1}}
\newcommand{\thingclass}[1][]{\toclass{\thing[#1]}}
\newcommand{\lindenbaum}{\mathbb{L}}

\newcommand{\opt}[1][]{h_{#1}}
\newcommand{\altopt}[1][]{g_{#1}}
\newcommand{\opts}{\gbls}
\newcommand{\posopts}{\opts_{\optgt0}}
\newcommand{\negopts}{\opts_{\optlt0}}
\newcommand{\nonposopts}{\opts_{\optlte0}}

\newcommand{\finiteoptsets}{\finites(\opts)}
\newcommand{\optlt}[1][]{\mathrel{\prec_{#1}}}
\newcommand{\optlte}[1][]{\mathrel{\preceq_{#1}}}
\newcommand{\optgt}[1][]{\mathrel{\succ_{#1}}}

\newcommand{\optset}[1][]{A_{#1}}

\newcommand{\states}{\mathscr{X}}
\newcommand{\gbls}{\mathscr{G}}
\newcommand{\posgbls}{\gbls_{\gblgt0}}
\newcommand{\neggbls}{\gbls_{\gbllt0}}

\newcommand{\gbllt}{\optlt}
\newcommand{\gblgt}{\optgt}
\newcommand{\prev}{P}

\newcommand{\simplex}[1][\states]{\Sigma_{#1}}
\newcommand{\solp}{\mathscr{M}}
\newcommand{\rf}{R}
\newcommand{\cf}{C}

\newcommand{\naturals}{\mathbb{N}}
\newcommand{\naturalswithzero}{\mathbb{N}_0}
\newcommand{\reals}{\mathbb{R}}
\newcommand{\posreals}{\mathbb{R}_{>0}}
\newcommand{\nonnegreals}{\mathbb{R}_{\geq0}}

\newcommand{\cset}[3][]{\set[#1]{#2\colon#3}}
\newcommand{\then}{\Rightarrow}
\newcommand{\ifandonlyif}{\Leftrightarrow}
\newcommand{\bolleke}{\vcenter{\hbox{\scalebox{0.7}{\(\bullet\)}}}}
\newcommand{\powerset}{\mathscr{P}}
\newcommand{\finite}[1]{\smash{\hat{#1}}}


\newcommand{\events}{\mathbf{E}}
\newcommand{\finevents}{\mathbf{E}_{\mathrm{fin}}}
\newcommand{\finfinevents}{\smash{\hat{\mathbf{E}}}_{\mathrm{fin}}}
\newcommand{\event}[2][]{\mathscr{E}\group[#1]{#2}}
\newcommand{\basicevent}[2][]{\mathscr{D}\group[#1]{#2}}
\newcommand{\eventwithindex}[2][]{\mathscr{E}\group[#1]{\sosot[#2]}}
\newcommand{\fineventwithindex}[2][]{\mathscr{E}\group[#1]{\sofinsot[#2]}}

\newcommand{\SDT}{set of desirable things}

\newcommand{\SDS}{set of desirable sets of things}

\newcommand{\SDFS}{set of desirable finite sets of things}


\newtheorem{theorem}{Theorem}
\newtheorem{proposition}[theorem]{Proposition}
\newtheorem{lemma}[theorem]{Lemma}
\newtheorem{corollary}[theorem]{Corollary}
\theoremstyle{definition}
\newtheorem{definition}{Definition}
\theoremstyle{remark}
\newtheorem*{counterexample}{Counterexample}

\author{Gert de Cooman \and Arthur Van Camp \and Jasper De Bock}
\address{Ghent University, Foundations Lab, Technologiepark--Zwijnaarde 125, Ghent, Belgium}
\title{The logic behind desirable sets of things, and its filter representation}
\begin{document}
\begin{abstract}
We identify the (filter representation of the) logic behind the recent theory of coherent sets of desirable (sets of) things, which generalise coherent sets of desirable (sets of) gambles as well as coherent choice functions, and show that this identification allows us to establish various representation results for such coherent models in terms of simpler ones.
\end{abstract}
\keywords{desirability, desirable sets of things, conservative inference, propositional logic, filter, prime filter, principal filter}
\maketitle

\section{Introduction}\label{sec:introduction}
A very important aspect of the theory of \emph{imprecise probabilities} \cite{levi1980a,walley1991,augustin2013:itip,troffaes2013:lp} is that it allows for partial specification of probability models (such as probability measures) and, equally importantly, that it enables us to do \emph{conservative inference}.
To give an example, if we specify bounds on the probabilities of a number of events, then the theory is concerned with, amongst other things, inferring the implied bounds on the probabilities of other events.

Such conservative probabilistic inference can be represented quite intuitively and effectively by considering simple desirability statements \cite{walley2000,seidenfeld1999}: if some uncertain rewards---also called \emph{gambles}---are considered desirable to a subject, what does that imply about the desirability (or otherwise) of other gambles?
That a subject considers a given gamble to be desirable is then considered as a simple statement, very much like her asserting a proposition in a propositional logic context.
Inferring from a collection of such desirability statements which other gambles are desirable, is then effectively a matter of \emph{deductive inference} based on a number of so-called \emph{coherence} rules, very much like logical inference is based on the conjunction and modus ponens rules.
This observation has led to a theory of \emph{coherent sets of desirable gambles} \cite{quaeghebeur2012:itip,couso2011:desirable,debock2015:phdthesis,cooman2010,decooman2015:coherent:predictive:inference,miranda2022:nonlinear:desirability,walley2000}: sets of gambles that are deductively closed under the inference based on the coherence rules.

There's, however, a further complication to be dealt with.
Indeed, a desirability statement for a gamble is tantamount to a pairwise comparison, and more specifically a \emph{strict preference}, between this gamble and the zero gamble---the status quo.
It was recognised by Isaac Levi \cite{levi1980a} quite early on in the development of the theory that certain aspects of conservative probabilistic inference demand looking \emph{further than merely pairwise} preferences between gambles.
This has led to the introduction of choice functions, a tool from social choice theory \cite{sen1986:social:choice:theory}, into the field of imprecise probabilities \cite{seidenfeld2010,Rubin1987,Kadane2004,2017vancamp:phdthesis}.
In a number of recent papers \cite{cooman2021:archimedean:choice,debock2018,debock2020:axiomatic:archimedean,debock2019b}, we showed that working with the resulting so-called \emph{coherent choice functions} is mathematically equivalent to doing inferences with \emph{desirable sets of gambles}, rather than with desirable gambles, where a set of gambles is judged to be desirable as soon as at least one of its elements is: coherent choice functions can be seen as a special case of coherent---deductively closed---sets of desirable sets of gambles.
Choosing between gambles can also be usefully generalised to choosing between options that live in a linear space, as was done in Refs.~\cite{2017vancamp:phdthesis,cooman2021:archimedean:choice}.

In very recent work \cite{debock2023:things:arxiv,debock2023:things:isipta}, Jasper De Bock has taken the idea of moving from desirable \emph{gambles} to desirable \emph{sets of gambles} a significant step further, by recognising that it can be applied to any context involving conservative inference based on a closure operator.
In his abstract generalisation step, gambles are replaced by abstract objects, called \emph{things}, and it's assumed that some abstract property of things, called their \emph{desirability}, can be inferred from the desirability of other things through inference rules that are summarised by the action of some closure operator.
This leads to a theory of coherent---deductively closed---sets of desirable sets of things.

At about the same time, Catrin Campbell\textendash Moore \cite{campbellmoore2021:probability:filters:isipta2021} showed that statements about, and the inference behind, the desirability of gambles, and to some extent also the desirability of sets of gambles,\footnote{More specifically, the desirability of sets of gambles then needs to satisfy a so-called \emph{mixingness} requirement; see also the discussion in Section~\ref{sec:choice:functions}.} can be represented by filters of (sets of) probability measures.
This established that the conservative inference mechanism behind these desirability models can also be interpreted as that of propositional logic involving statements about some `ideal unknown' probability measure.\footnote{She has since \cite{campbellmoore2024:desirable:gamble:sets} extended this idea to representations by filters of sets of coherent sets of desirable gambles, along the lines of, but independently from, what we'll achieve for the more general coherent sets of things in Section~\ref{sec:representation:finitary}.}

In the present paper, we combine ideas from both these recent developments, by showing how the conservative inference mechanism behind coherent---deductively closed---sets of \emph{desirable sets of things} is related to that of a propositional logic involving statements about some `ideal unknown' coherent set of \emph{desirable things}.
We do this by showing that the collection of all coherent sets of desirable sets of things is an intersection structure that's order isomorphic to the set of all proper filters on a specific distributive lattice of events, where the events are appropriately chosen collections of coherent sets of desirable things.

Besides identifying the nature of the inference mechanism behind coherent sets of desirable sets of things (and in particular, coherent sets of desirable sets of gambles and therefore also coherent choice functions), our results also allow us to prove (or provide simple alternative proofs for) powerful, interesting as well as useful representation theorems for such models in terms of simpler, so-called conjunctive, models.

We'll freely use basic concepts and results from order theory, and we'll assume the reader to be familiar with most of them, so especially in the context of proofs, we'll mostly limit ourselves to pointers to the literature.
For a nice introduction, we refer to Davey and Priestley's book \cite{davey2002}.

Our argumentation is structured as follows.

In Section~\ref{sec:desirability}, we summarise the basic ideas behind coherent sets of desirable sets of things, and identify the order-theoretic underpinnings of the inference mechanism behind them.
We distinguish between the standard coherence notion, which connects the desirability of arbitrary sets of things, and the less restrictive \emph{finite} coherence notion, which essentially only focuses on the consequences of the desirability of finite sets of things.
We show that the coherent sets of desirable things can be embedded into the (finitely) coherent sets of desirable sets of things, in the form of \emph{conjunctive} models.

In order to provide a more direct link with our earlier work on desirable gamble and option sets \cite{cooman2021:archimedean:choice,debock2018,debock2020:axiomatic:archimedean,debock2019b,debock2023:things:arxiv}, we introduce and discuss yet another model, finitely coherent sets of desirable \emph{finite} sets of things, in Section~\ref{sec:desirability:sdfs}.

In Section~\ref{sec:towards:representation}, we explain how each desirability statement for a given set of things can be identified with a so-called \emph{event}: the specific subset of the collection of all coherent sets of desirable things it's compatible with.

In Section~\ref{sec:representation:lattices}, we identify the order-theoretic nature of the collection of all finitary events as a bounded distributive lattice, and for the collection of all events as a completely distributive complete lattice.

Section~\ref{sec:filters} is a very short primer on order-theoretic and set-theoretic filters, and the prime filter representation theorem.

After setting up all this background material, we show in Section~\ref{sec:representation:finitary} that the collection of all finitely coherent sets of desirable sets of things is order-isomorphic to the collection of all proper filters of finitary events, and then explain how this order isomorphism leads in a straightforward manner to a representation of finitely coherent sets of desirable sets of things as limits inferior of conjunctive ones, providing a simple alternative proof to, and extending, similar representation results by Catrin Campbell\textendash Moore \cite{campbellmoore2021:probability:filters:isipta2021}.

Similarly,  we show in Section~\ref{sec:representation} that the collection of all coherent sets of desirable sets of things is order-isomorphic to the collection of all proper principal filters of events, and then explain how this order isomorphism can be used to a prove a representation of coherent sets of desirable sets of things as intersections of conjunctive ones, leading to a simple alternative proof for related results by Jasper De Bock \cite{debock2023:things:arxiv}.

We pay extra attention to the finitely coherent models in Section~\ref{sec:finitary:models}, where we use the prime filter representation theorem to show that under some extra conditions, a so-called \emph{finitary} subclass of them can also be represented as intersections of conjunctive ones.

All this work allows us to show in Section~\ref{sec:representation:finitary:finite} that the finitely coherent sets of desirable \emph{finite} sets of things introduced in Section~\ref{sec:desirability:sdfs} are in a one-to-one-relationship with the \emph{finitary} and finitely coherent sets of desirable sets of things that Section~\ref{sec:representation:finitary:finite} is devoted to.
This connection allows us to prove representation results that generalise our earlier results on desirable gamble sets \cite{cooman2021:archimedean:choice,debock2019b,debock2018}, and also leads to a simple alternative proof for a similar result in Ref.~\cite{debock2023:things:arxiv}.

Sections~\ref{sec:propositional:logic} and~\ref{sec:choice:functions} contain discussions of two important special cases, where desirable things take the more concrete form of \emph{asserted propositions} and \emph{desirable gambles}, respectively.

We have moved most of the technical details and arguments that, taken by themselves, wouldn't add to the flow of the narrative, to Appendix~\ref{sec:proofs}, organised by the sections they appear in.
All claims in this paper either find a proof there if they are new, or are given a proper reference to the existing literature.
To help the reader find his way through the many notions and notations we need in this paper, Appendix~\ref{sec:symbols} provides of list of the most common ones, with a short hint at their meaning, and where they are introduced.

\section{Sets of desirable (sets of) things: an overview}\label{sec:desirability}
Let's begin by giving a brief overview of (a version of) Jasper De Bock's theory of desirable things \cite{debock2023:things:arxiv} that's sufficient for the purposes of this paper.
We use somewhat different notations, and our overview differs in a few technical details, but the ideas and the conclusions we draw from them are essentially the same.\footnote{To be more precise, and adopting already here some of the notation introduced further on, De Bock assumes that \(\initialclosure(\emptyset)=\emptyset\) and that the set~\(\beautifulthings\) is added explicitly to the theory, with the requirement that \(\initialclosure(\beautifulthings)\cap\uglythings=\emptyset\), instead of being inferred from the closure operator by letting~\(\beautifulthings\coloneqq\initialclosure(\emptyset)\) as we do here. He also doesn't make use of selection maps to formulate the coherence axioms for the various types of sets of desirable sets of things.}

\subsection{Desirable things}\label{sec:desirable:things}
We begin by considering a non-empty set~\(\things\) of things~\(\thing\) that may or may not have a certain property.
Having this property makes a thing \emph{desirable}.

You, our subject, may entertain ideas about which things are desirable, and You represent these ideas by providing a (not necessarily exhaustive) set of things that You find desirable.
We'll call such a subset \(\sot\subseteq\things\) a \emph{{\SDT}}, or SDT for short (plural: SDTs): a set with the property that You think \emph{each of its elements} desirable.
We denote by \(\setsofthings\) the set of all subsets~\(\sot\) of~\(\things\), or in other words, the collection of all candidate SDTs.

Such SDTs can be ordered by set inclusion.
We interpret \(\sot[1]\subseteq\sot[2]\) to mean that \(\sot[1]\) is \emph{less informative}, or \emph{more conservative}, than~\(\sot[2]\), simply because a subject with an SDT~\(\sot[1]\) finds fewer things desirable than a subject with SDT~\(\sot[2]\).

Our basic assumption is that, mathematically speaking, there are a number of rules that underlie the notion of desirability for things, and that the net effect of these rules can be captured by a closure operator and a set of forbidden things.
Let's explain.

We recall that a \emph{closure operator} on a non-empty set~\(G\) is a map~\(\cl\colon\powerset(G)\to\powerset(G)\) satisfying:
\begin{enumerate}[label={\upshape C}\({}_{\arabic*}\).,ref={\upshape C}\({}_{\arabic*}\),leftmargin=*]
\item\label{axiom:closure:super} \(A\subseteq\cl(A)\) for all~\(A\subseteq G\);
\item\label{axiom:closure:increasing} if \(A\subseteq B\) then \(\cl(A)\subseteq\cl(B)\) for all~\(A,B\subseteq G\);
\item\label{axiom:closure:idempotent} \(\cl(\cl(A))=\cl(A)\) for all~\(A\subseteq G\).
\end{enumerate}
Again, \(\powerset(G)\) denotes the set of all subsets of the set~\(G\).
A closure operator~\(\cl\) is called \emph{finitary}\footnote{Davey and Priestley \cite[Definition~7.12]{davey2002} use the term `algebraic', but `finitary' seems to be the more common name for this concept, so we'll stick with that.} if it's enough to know the closure of finite sets, in the following sense:
\begin{equation*}
\cl(A)=\bigcup\cset{\cl(F)}{F\in\powerset(G)\text{ and }F\Subset A}
\text{ for all~\(A\in\powerset(G)\)},
\end{equation*}
where we use the notation `\(\Subset\)' to mean `is a finite subset of', and agree to call the empty set~\(\emptyset\) finite.

First of all, as already suggested above, we'll assume that there's some inference mechanism that allows us to infer the desirability of a thing from the desirability of other things.
This inference mechanism is represented by a closure operator~\(\initialclosure\colon\setsofthings\to\setsofthings\), in the following sense:
\begin{enumerate}[label={\upshape D}\({}_{\arabic*}\).,ref={\upshape D}\({}_{\arabic*}\),leftmargin=*]
\item\label{axiom:desirable:things:closure} if all things in~\(\sot\) are desirable, then so are all things in~\(\initialclosure(\sot)\).
\end{enumerate}
We collect all sets of things that are \emph{closed} under the inference mechanism in the set~\(\toppedcohsdts\), so
\[
\toppedcohsdts
\coloneqq\cset{\sot\in\setsofthings}{\initialclosure(\sot)=\sot}.
\]
The following result is then a standard conclusion in order theory; see Ref.~\cite[Chapter~7]{davey2002} for the argumentation.

\begin{proposition}\label{prop:coherent:sets:of:desirable:things:consititute:a:complete:lattice}
The partially ordered set~\(\structure{\toppedcohsdts,\subseteq}\) is a complete lattice.
For any non-empty family~\(\sot[i]\), \(i\in I\) of elements of~\(\toppedcohsdts\), we have for its infimum and its supremum that, respectively, \(\inf_{i\in I}\sot[i]=\bigcap_{i\in I}\sot[i]\) and \(\sup_{i\in I}\sot[i]=\initialclosure\group{\bigcup_{i\in I}\sot[i]}\).
\end{proposition}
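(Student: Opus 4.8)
The plan is to recognise this as the standard fact that the fixpoints of a closure operator form a \emph{closure system} (Moore family), and that any closure system, ordered by inclusion, is automatically a complete lattice; see also~\cite[Chapter~7]{davey2002}. The entire argument rests on a single structural observation, namely that $\toppedcohsdts$ is closed under arbitrary intersections. Once this is established, the infima are forced to be intersections, the suprema are forced to be the closures of the unions, and the extremal elements take care of completeness.

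First I would prove the key lemma: for any family $\sot[i]$, $i\in I$, of elements of $\toppedcohsdts$, the set $\sot\coloneqq\bigcap_{i\in I}\sot[i]$ again lies in $\toppedcohsdts$. The inclusion $\sot\subseteq\initialclosure(\sot)$ holds by~\ref{axiom:closure:super}. For the converse, I use that $\sot\subseteq\sot[j]$ for every $j\in I$, so monotonicity~\ref{axiom:closure:increasing} together with the fact that each $\sot[j]$ is a fixpoint gives $\initialclosure(\sot)\subseteq\initialclosure(\sot[j])=\sot[j]$; intersecting these inclusions over $j\in I$ yields $\initialclosure(\sot)\subseteq\sot$, and hence $\initialclosure(\sot)=\sot$. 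With this lemma in hand, the infimum claim is immediate for a non-empty family: $\bigcap_{i\in I}\sot[i]$ is a lower bound that belongs to $\toppedcohsdts$, and every lower bound $U\in\toppedcohsdts$ satisfies $U\subseteq\sot[i]$ for all $i$, hence $U\subseteq\bigcap_{i\in I}\sot[i]$ by elementary set theory, so the intersection is the greatest lower bound.

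Next I would treat the supremum by verifying that $\initialclosure\bigl(\bigcup_{i\in I}\sot[i]\bigr)$ is the least upper bound. It belongs to $\toppedcohsdts$ by idempotency~\ref{axiom:closure:idempotent}, and it is an upper bound because $\sot[j]\subseteq\bigcup_{i\in I}\sot[i]\subseteq\initialclosure\bigl(\bigcup_{i\in I}\sot[i]\bigr)$ for each $j$, using~\ref{axiom:closure:super}. Finally, for any upper bound $U\in\toppedcohsdts$ we have $\bigcup_{i\in I}\sot[i]\subseteq U$, so monotonicity~\ref{axiom:closure:increasing} and the closedness of $U$ give $\initialclosure\bigl(\bigcup_{i\in I}\sot[i]\bigr)\subseteq\initialclosure(U)=U$, as required; this identifies the closure of the union as the join.

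To conclude completeness, I would observe that the extremal (empty-family) cases are also covered: the top element $\things$ is closed, since $\initialclosure$ takes values in $\setsofthings=\powerset(\things)$ and hence $\initialclosure(\things)\subseteq\things$, which with~\ref{axiom:closure:super} forces $\initialclosure(\things)=\things$; and the bottom element $\initialclosure(\emptyset)$ is closed by~\ref{axiom:closure:idempotent} and is contained in every member of $\toppedcohsdts$. Having exhibited a greatest lower bound and a least upper bound for every subset, I would conclude that $\structure{\toppedcohsdts,\subseteq}$ is a complete lattice. I do not anticipate any genuine obstacle: the three closure axioms map precisely onto the three verifications (extensivity supplies the trivial inclusions, monotonicity supplies minimality and maximality, idempotency guarantees the join is again a fixpoint), and the only point requiring a little care is the bookkeeping around the definition of completeness, namely that producing all infima (equivalently, all suprema), including the two extremal elements, is what is needed.
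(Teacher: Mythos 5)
Your proof is correct and is precisely the standard argument the paper relies on: the paper gives no explicit proof of this proposition but defers to Davey and Priestley's treatment of (topped) intersection structures, and your reconstruction—closure of \(\toppedcohsdts\) under arbitrary intersections via \ref{axiom:closure:super} and \ref{axiom:closure:increasing}, suprema as closures of unions via \ref{axiom:closure:idempotent}, and the extremal elements \(\things\) and \(\initialclosure(\emptyset)\)—is exactly that argument. No gaps.
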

\noindent The \emph{bottom} (smallest element)~\(\cohsdtbottom\coloneqq\bigcap\toppedcohsdts\) of this complete lattice of closed sets will also be denoted by~\(\beautifulthings\); it is the set of those things that are \emph{always desirable}, regardless of what You may think.
The \emph{top} (largest element)~\(\cohsdttop\coloneqq\initialclosure\group{\bigcup\toppedcohsdts}\) is clearly the set of all things~\(\things\).

Secondly, we assume that there's a set of so-called \emph{forbidden} things~\(\uglythings\), which are \emph{never desirable}:
\begin{enumerate}[label={\upshape D}\({}_{\arabic*}\).,ref={\upshape D}\({}_{\arabic*}\),leftmargin=*,resume]
\item\label{axiom:desirable:things:forbidden} no thing in~\(\uglythings\) is desirable, so if all things in~\(\sot\) are desirable, then \(\sot\cap\uglythings=\emptyset\).
\end{enumerate}
Of course, because we assume that all things in an SDT are desirable, it can never intersect the set~\(\uglythings\), so this leaves us with the collection
\begin{equation}\label{eq:definition:cohsdts}
\cohsdts
\coloneqq\cset{\sot\in\setsofthings}{\initialclosure(\sot)=\sot\text{ and }\sot\cap\uglythings=\emptyset}
=\cset{\sot\in\toppedcohsdts}{\sot\cap\uglythings=\emptyset}
\end{equation}
of the closed SDTs that we'll call \emph{coherent}.\footnote{It's of course perfectly possible that \(\uglythings=\emptyset\), and in that case coherence and closedness coincide: \(\cohsdts=\toppedcohsdts\).}
We'll use the generic notation~\(\cohsdt\) for such coherent SDTs.
It's a standard result in order theory, and easy to check, that they constitute an \emph{intersection structure}: the intersection of any non-empty family \(\cohsdt[i]\), \(i\in I\neq\emptyset\) of them is still coherent:
\begin{equation}\label{eq:closed:under:intersections}
\group[\big]{\group{\forall i\in I}\cohsdt[i]\in\cohsdts}\then\bigcap_{i\in I}\cohsdt[i]\in\cohsdts.
\end{equation}

Clearly, an SDT~\(\sot\subseteq\things\) can be extended to a coherent one if and only if \(\initialclosure(\sot)\in\cohsdts\), or equivalently, if \(\initialclosure(\sot)\cap\uglythings=\emptyset\), and in that case we'll call this~\(\sot\) \emph{consistent}.

For any consistent SDT~\(\sot\), it's easy to see that
\begin{equation}\label{eq:cohsdtclosure:consistent:sodts}
\initialclosure(\sot)
=\bigcap\cset{\cohsdt\in\cohsdts}{\sot\subseteq\cohsdt},
\end{equation}
so \(\initialclosure(\sot)\) is the smallest, or most conservative, or least informative, coherent SDT that the consistent~\(\sot\) can be extended to.\footnote{As one reviewer remarked, there may be things that aren't forbidden but also never desirable: all things in the set~\(\things[o]\coloneqq\things\setminus\group{\uglythings\cup\bigcup\cohsdts}\). Clearly, since no element of~\(\things[o]\) can belong to a consistent or a coherent SDT, we can safely remove all of them from the set of things~\(\things\), or alternatively, add them to the set of forbidden things~\(\uglythings\), without affecting the essence of the inference mechanism.}

Incidentally, the set~\(\beautifulthings\coloneqq\initialclosure(\emptyset)=\bigcap\toppedcohsdts\) is the smallest closed SDT.
If \(\beautifulthings\) is coherent, or in other words if the empty set \(\emptyset\) is consistent, then \(\beautifulthings\) is the smallest, or most conservative, coherent SDT.
This will be the case if and only if
\begin{enumerate}[label={\upshape D}\({}_{\arabic*}\).,ref={\upshape D}\({}_{\arabic*}\),leftmargin=*,resume]
\item\label{axiom:desirable:things:background:consistency} \(\beautifulthings\cap\uglythings=\emptyset\), or equivalently, \(\cohsdts\neq\emptyset\).
\end{enumerate}
We'll from now on also always assume that this `sanitary' condition is verified.
As already mentioned, all things in~\(\beautifulthings\) are always implicitly desirable, regardless of any of the desirability statements You might make.
For this reason, we can also call \(\beautifulthings\) the \emph{vacuous} set of desirable things.

\subsection{Desirable sets of things}\label{sec:desirability:sds}
When You claim that a set of things~\(\sot\subseteq\things\) is a set of desirable things, this is tantamount to a \emph{conjunctive} statement: You state that ``all things in~\(\sot\) are desirable''.
In the formalism described above, there's no way to deal with \emph{disjunctive} statements of the type ``at least one of the things in~\(\sot\) is desirable''.
Let's now look for a way to also allow for dealing with such disjunctive statements.

We'll say that You consider a \emph{set of things}~\(\sot\) to be \emph{desirable} if You consider at least one thing in~\(\sot\) to be.
In other words, in a set of desirable things (an SDT), all things are desirable, whereas in a desirable set of things, at least one thing is.
As with the desirability of things, You can make many desirability statements for sets of things, and we then collect all of these in a \emph{{\SDS}}---or for short SDS, plural SDSes---\(\sosot\subseteq\setsofthings\).
So \(\sosot\) is an SDS for You if all sets of things~\(\sot\in\sosot\) are desirable to You, in the sense You deem each of them to contain at least one desirable thing.

Sets of desirable sets of things can be ordered by set inclusion too.
We take \(\sosot[1]\subseteq\sosot[2]\) to imply that \(\sosot[1]\) is \emph{less informative}, or \emph{more conservative}, than~\(\sosot[2]\), simply because a subject with SDS~\(\sosot[1]\) finds fewer sets of things desirable than a subject with SDS~\(\sosot[2]\).

The inference mechanism for the desirability of things also has its consequences for the desirability of sets of things, as we'll now make clear.
Consider any set of sets of things~\(\sosot\subseteq\setsofthings\), then we denote by~\(\selections\) the set of all so-called \emph{selection maps}
\begin{equation*}
\selection\colon\sosot\to\things\colon\sot\mapsto\selection(\sot)
\text{ such that }
\selection(\sot)\in\sot
\text{ for all~\(\sot\in\sosot\)}.
\end{equation*}
Each such selection map~\(\selection\in\selections\) selects a single thing~\(\selection(\sot)\) from each set of things~\(\sot\) in~\(\sosot\), and we use the notation
\begin{equation*}
\selection(\sosot)\coloneqq\cset{\selection(\sot)}{\sot\in\sosot}\in\setsofthings
\end{equation*}
for the corresponding set of all these selected things.

We now call an SDS~\(\cohsds\subseteq\setsofthings\) \emph{coherent} if it satisfies the following conditions:
\begin{enumerate}[label=\upshape K\({}_{\arabic*}\).,ref=\upshape K\({}_{\arabic*}\),leftmargin=*]
\item\label{axiom:desirable:sets:consistency} \(\emptyset\notin\cohsds\);
\item\label{axiom:desirable:sets:increasing} if \(\sot[1]\in\cohsds\) and \(\sot[1]\subseteq\sot[2]\) then \(\sot[2]\in\cohsds\), for all~\(\sot[1],\sot[2]\in\setsofthings\);
\item\label{axiom:desirable:sets:forbidden} if \(\sot\in\cohsds\) then \(\sot\setminus\uglythings\in\cohsds\), for all~\(\sot\in\setsofthings\);
\item\label{axiom:desirable:sets:background} \(\set{\thing[{+}]}\in\cohsds\) for all~\(\thing[{+}]\in\beautifulthings\);
\item\label{axiom:desirable:sets:production} if \(\thing[\selection]\in\initialclosure(\selection(\sosot))\) for all~\(\selection\in\selections\), then \(\cset{\thing[\selection]}{\selection\in\selections}\in\cohsds\), for all~\(\emptyset\neq\sosot\subseteq\cohsds\).
\end{enumerate}
The first condition~\ref{axiom:desirable:sets:consistency} takes into account that the empty set of things can't be desirable, as it contains no desirable thing.
The second condition~\ref{axiom:desirable:sets:increasing} reminds us that if a set of things contains a desirable thing, then of course so do all its supersets.
The third condition~\ref{axiom:desirable:sets:forbidden} reflects that things in~\(\uglythings\) can never be desirable, by \ref{axiom:desirable:things:forbidden}, and can therefore safely be removed from any set of things without affecting the latter's desirability.
And, to conclude, we'll see further on that the last two conditions~\ref{axiom:desirable:sets:background} and~\ref{axiom:desirable:sets:production} do a very fine job of lifting the effects of inferential closure from the desirability of things to the desirability of sets of things.
They can be justified as follows.
For~\ref{axiom:desirable:sets:background}, recall from the discussion above that any element of~\(\beautifulthings\) is always implicitly desirable, and so therefore will be any set that contains it.
For~\ref{axiom:desirable:sets:production}, recall that \(\emptyset\neq\sosot\subseteq\cohsds\) means that each set of things~\(\sot\in\sosot\) contains at least one desirable thing, and therefore there must be some selection map~\(\selection[o]\in\selections[\sosot]\) such that \(\selection[o](\sot)\) is a desirable thing for all~\(\sot\in\sosot\).
This implies that all things in~\(\selection[o](\sosot)\) are desirable, and therefore so are all things in~\(\initialclosure(\selection[o](\sosot))\), by~\ref{axiom:desirable:things:closure}.
Whatever \(\thing[{\selection[o]}]\) we chose in~\(\initialclosure(\selection[o](\sosot))\) will therefore be desirable, which guarantees that the set of things~\(\cset{\thing[\selection]}{\selection\in\selections[\sosot]}\) must also be desirable, because it contains the necessarily desirable thing~\(\thing[{\selection[o]}]\).

Interestingly, Axioms~\ref{axiom:desirable:sets:background} and~\ref{axiom:desirable:sets:production} can be replaced by a single axiom, obtained from~\ref{axiom:desirable:sets:production} by also allowing \(\sosot\subseteq\cohsds\) to be \emph{empty}:
\begin{enumerate}[label={\upshape K}\({}_{4,5}\).,ref={\upshape K}\({}_{4,5}\),leftmargin=*]
\item\label{axiom:desirable:sets:production:with:background} if \(\thing[\selection]\in\initialclosure(\selection(\sosot))\) for all~\(\selection\in\selections\), then \(\cset{\thing[\selection]}{\selection\in\selections}\in\cohsds\), for all~\(\sosot\subseteq\cohsds\).
\end{enumerate}

We denote the set of all coherent SDSes by~\(\cohsdss\), and we let~\(\toppedcohsdss\coloneqq\cohsdss\cup\set{\setsofthings}\).
Observe that \(\setsofthings\) is never coherent, by~\ref{axiom:desirable:sets:consistency}.
Since each of the axioms~\ref{axiom:desirable:sets:consistency}--\ref{axiom:desirable:sets:production} is preserved under taking arbitrary non-empty intersections, the set~\(\cohsdss\) of all coherent SDSes constitutes an intersection structure: the intersection of any non-empty family of coherent SDSes is still coherent, or in other and more formal words, for any non-empty family~\(\cohsds[i]\), \(i\in I\neq\emptyset\) of elements of~\(\cohsdss\), we see that still \(\bigcap_{i\in I}\cohsds[i]\in\cohsdss\).\footnote{Similarly, \(\toppedcohsdss=\cohsdss\cup\set{\setsofthings}\) is a \emph{topped} intersection structure: closed under arbitrary (also empty) intersections \cite[Chapter~7]{davey2002}.}
As explained in Ref.~\cite[Chapter~7]{davey2002}, this allows us to capture the inferential aspects of desirability at this level using the closure operator~\(\cohsdsclosure\colon\setsofsetsofthings\to\toppedcohsdss\) associated with the collection~\(\toppedcohsdss\) of \emph{closed} SDSes, defined by
\begin{equation*}
\cohsdsclosure(\sosot)
\coloneqq\bigcap\cset{\cohsds\in\toppedcohsdss}{\sosot\subseteq\cohsds}
=\bigcap\cset{\cohsds\in\cohsdss}{\sosot\subseteq\cohsds}
\text{ for all~\(\sosot\subseteq\setsofthings\)}.
\end{equation*}
If we call an SDS~\(\sosot\) \emph{consistent} if it can be extended to some coherent SDS, or equivalently, if \(\cohsdsclosure(\sosot)\neq\setsofthings\), then we find that \(\cohsdsclosure(\sosot)\) is the smallest, or most conservative, coherent SDS that includes~\(\sosot\), for any consistent~\(\sosot\).
Of course,
\begin{equation*}
\sosot=\cohsdsclosure(\sosot)\ifandonlyif\sosot\in\toppedcohsdss,
\text{ for all~\(\sosot\subseteq\setsofthings\)},
\end{equation*}
and therefore also \(\toppedcohsdss=\cohsdsclosure(\setsofsetsofthings)=\cset{\cohsdsclosure(\sosot)}{\sosot\subseteq\setsofthings}\).
The following result is then again a standard conclusion in order theory; see Ref.~\cite[Chapter~7]{davey2002} for the argumentation.

\begin{proposition}\label{prop:coherent:sds:constitute:a:complete:lattice}
The partially ordered set~\(\structure{\toppedcohsdss,\subseteq}\) is a complete lattice.
For any non-empty family~\(\sosot[i]\), \(i\in I\) of elements of~\(\toppedcohsdss\), we have for its infimum and its supremum that, respectively, \(\inf_{i\in I}\sosot[i]=\bigcap_{i\in I}\sosot[i]\) and \(\sup_{i\in I}\sosot[i]=\cohsdsclosure\group{\bigcup_{i\in I}\sosot[i]}\).
\end{proposition}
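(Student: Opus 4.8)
The plan is to recognise this as the standard correspondence between topped intersection structures (equivalently, closure operators) and complete lattices, exactly mirroring the argument behind Proposition~\ref{prop:coherent:sets:of:desirable:things:consititute:a:complete:lattice}. The crucial structural input, already established in the discussion preceding the statement, is that \(\toppedcohsdss=\cohsdss\cup\set{\setsofthings}\) is closed under arbitrary (including empty) intersections, and that \(\cohsdsclosure\) is precisely the closure operator it induces, with \(\sosot\in\toppedcohsdss\ifandonlyif\sosot=\cohsdsclosure(\sosot)\) for all \(\sosot\subseteq\setsofthings\).

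First I would verify the infimum formula. Given a non-empty family \(\sosot[i]\), \(i\in I\) of elements of \(\toppedcohsdss\), the set \(\bigcap_{i\in I}\sosot[i]\) again lies in \(\toppedcohsdss\) by closure under intersections; it is obviously a lower bound; and any lower bound \(\sosot\in\toppedcohsdss\) satisfies \(\sosot\subseteq\sosot[i]\) for every \(i\in I\) and hence \(\sosot\subseteq\bigcap_{i\in I}\sosot[i]\). So \(\bigcap_{i\in I}\sosot[i]\) is the greatest lower bound.

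Next, for the supremum, I would check that \(\cohsdsclosure\group{\bigcup_{i\in I}\sosot[i]}\) does the job. It belongs to \(\toppedcohsdss\) by definition of the closure; each \(\sosot[j]\subseteq\bigcup_{i\in I}\sosot[i]\subseteq\cohsdsclosure\group{\bigcup_{i\in I}\sosot[i]}\) by extensivity~\ref{axiom:closure:super}, so it is an upper bound; and if \(\sosot\in\toppedcohsdss\) is any upper bound then \(\bigcup_{i\in I}\sosot[i]\subseteq\sosot\), whence monotonicity~\ref{axiom:closure:increasing} and idempotence~\ref{axiom:closure:idempotent} of \(\cohsdsclosure\) give \(\cohsdsclosure\group{\bigcup_{i\in I}\sosot[i]}\subseteq\cohsdsclosure(\sosot)=\sosot\), making it the least upper bound. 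Since every subset then has both an infimum and a supremum---the empty family being handled by the top element \(\setsofthings\) and the bottom element \(\cohsdsclosure(\emptyset)=\bigcap\toppedcohsdss\)---the poset \(\structure{\toppedcohsdss,\subseteq}\) is a complete lattice.

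I don't expect any genuine obstacle here: all the real work sits in the preceding verification that the axioms~\ref{axiom:desirable:sets:consistency}--\ref{axiom:desirable:sets:production} are preserved under arbitrary non-empty intersections, which is exactly what makes \(\toppedcohsdss\) a topped intersection structure. Once that is in hand the result is purely formal, and in a write-up I would simply cite Ref.~\cite[Chapter~7]{davey2002} for the general statement that a topped intersection structure is a complete lattice with infima given by intersection and suprema by the closure of the union.
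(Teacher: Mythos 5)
Your proposal is correct and matches the paper's approach: the paper simply cites the standard order-theoretic fact that a topped intersection structure is a complete lattice with intersections as infima and closures of unions as suprema (Ref.~\cite[Chapter~7]{davey2002}), and your write-up just spells out that standard argument, with all the genuine work residing, exactly as you note, in the earlier verification that the coherence axioms are preserved under arbitrary non-empty intersections.
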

\noindent The top~\(\cohsdstop=\bigcup\toppedcohsdss\) of this complete lattice is the largest closed SDS~\(\setsofthings\); its bottom~\(\cohsdsbottom=\cohsdsclosure(\emptyset)=\bigcap\toppedcohsdss=\bigcap\cohsdss\) is the smallest coherent SDS \(\cohsdsbottom\), which is also easy to identify.

\begin{proposition}\label{prop:the:smallest:cohsds}
\(\cohsdsbottom=\bigcap\cohsdss=\cset{\sot\in\setsofthings}{\sot\cap\beautifulthings\neq\emptyset}\).
\end{proposition}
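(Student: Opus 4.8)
The plan is to prove both set equalities by identifying the right-hand side $\cohsds[0]\coloneqq\cset{\sot\in\setsofthings}{\sot\cap\beautifulthings\neq\emptyset}$ as the smallest coherent SDS. Since the discussion preceding the statement has already established that $\cohsdsbottom=\bigcap\cohsdss$ is the intersection of all coherent SDSes, it suffices to show (i) that $\cohsds[0]$ is itself coherent, which gives $\bigcap\cohsdss\subseteq\cohsds[0]$, and (ii) that $\cohsds[0]\subseteq\cohsds$ for every coherent SDS $\cohsds$, which gives $\cohsds[0]\subseteq\bigcap\cohsdss$. Together these yield $\cohsds[0]=\bigcap\cohsdss=\cohsdsbottom$.

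Claim (ii) is the quicker half, and I would dispatch it first. Given any $\sot\in\cohsds[0]$, pick some $\thing[{+}]\in\sot\cap\beautifulthings$. Axiom~\ref{axiom:desirable:sets:background} gives $\set{\thing[{+}]}\in\cohsds$, and since $\set{\thing[{+}]}\subseteq\sot$, axiom~\ref{axiom:desirable:sets:increasing} yields $\sot\in\cohsds$. Hence $\cohsds[0]\subseteq\cohsds$ for every coherent $\cohsds$.

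For claim (i), axioms~\ref{axiom:desirable:sets:consistency}, \ref{axiom:desirable:sets:increasing} and~\ref{axiom:desirable:sets:background} are immediate from the definition of $\cohsds[0]$ (using, for the second, that $\sot[1]\cap\beautifulthings\subseteq\sot[2]\cap\beautifulthings$ whenever $\sot[1]\subseteq\sot[2]$), while axiom~\ref{axiom:desirable:sets:forbidden} follows from the sanitary condition~\ref{axiom:desirable:things:background:consistency}: if $\sot\cap\beautifulthings\neq\emptyset$ then, because $\beautifulthings\cap\uglythings=\emptyset$, we still have $(\sot\setminus\uglythings)\cap\beautifulthings=\sot\cap\beautifulthings\neq\emptyset$.

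The hard part will be the production axiom~\ref{axiom:desirable:sets:production}, and here I would exploit that $\beautifulthings$ is closed, $\initialclosure(\beautifulthings)=\beautifulthings$, which holds because $\beautifulthings=\initialclosure(\emptyset)$ and $\initialclosure$ is idempotent by~\ref{axiom:closure:idempotent}. Fix $\emptyset\neq\sosot\subseteq\cohsds[0]$ together with a family such that $\thing[\selection]\in\initialclosure(\selection(\sosot))$ for all $\selection\in\selections$. Since every $\sot\in\sosot$ satisfies $\sot\cap\beautifulthings\neq\emptyset$, I can choose for each such $\sot$ a thing $b_\sot\in\sot\cap\beautifulthings$ and assemble these choices into a single selection map $\selection[o]\in\selections$ with $\selection[o](\sot)=b_\sot$; its image then satisfies $\selection[o](\sosot)\subseteq\beautifulthings$. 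Monotonicity~\ref{axiom:closure:increasing} gives $\initialclosure(\selection[o](\sosot))\subseteq\initialclosure(\beautifulthings)=\beautifulthings$, so that $\thing[{\selection[o]}]\in\beautifulthings$. Consequently $\thing[{\selection[o]}]$ witnesses $\cset{\thing[\selection]}{\selection\in\selections}\cap\beautifulthings\neq\emptyset$, that is, $\cset{\thing[\selection]}{\selection\in\selections}\in\cohsds[0]$, which establishes~\ref{axiom:desirable:sets:production} and completes the verification that $\cohsds[0]$ is coherent.
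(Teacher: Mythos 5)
Your proposal is correct and follows essentially the same route as the paper's own proof: reduce everything to showing that \(\cset{\sot\in\setsofthings}{\sot\cap\beautifulthings\neq\emptyset}\) is coherent (with the reverse inclusion handled by~\ref{axiom:desirable:sets:background} and~\ref{axiom:desirable:sets:increasing}), verify~\ref{axiom:desirable:sets:forbidden} via \(\beautifulthings\cap\uglythings=\emptyset\), and verify~\ref{axiom:desirable:sets:production} by constructing a selection map that lands in \(\beautifulthings\) and using \(\initialclosure(\beautifulthings)=\beautifulthings\). The only cosmetic difference is that the paper separately notes the vacuous case \(\beautifulthings=\emptyset\), which your argument covers implicitly since a non-empty \(\sosot\) contained in this SDS forces \(\beautifulthings\neq\emptyset\).
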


\subsection{Desirable sets of things: the finitary case}\label{sec:desirability:sds:finitary}
We call a subset~\(\cohsds\) of~\(\setsofthings\) a \emph{finitely} coherent SDS if it satisfies conditions~\ref{axiom:desirable:sets:consistency}--\ref{axiom:desirable:sets:background}, together with the following finitary version of~\ref{axiom:desirable:sets:production}:
\begin{enumerate}[label={\upshape K}\({}_{5}^{\mathrm{fin}}\).,ref={\upshape K}\({}_{5}^{\mathrm{fin}}\),leftmargin=*]
\item\label{axiom:desirable:sets:finitary:production} if \(\thing[\selection]\in\initialclosure(\selection(\sosot))\) for all~\(\selection\in\selections\), then \(\cset{\thing[\selection]}{\selection\in\selections}\in\cohsds\), for all~\(\emptyset\neq\sosot\Subset\cohsds\);
\end{enumerate}
We can replace \ref{axiom:desirable:sets:background} and \ref{axiom:desirable:sets:finitary:production} by a single axiom, which is the finitary counterpart of~\ref{axiom:desirable:sets:production:with:background}:
\begin{enumerate}[label={\upshape K}\({}_{4,5}^{\mathrm{fin}}\).,ref={\upshape K}\({}_{4,5}^{\mathrm{fin}}\),leftmargin=*]
\item\label{axiom:desirable:sets:finitary:production:with:background} if \(\thing[\selection]\in\initialclosure(\selection(\sosot))\) for all~\(\selection\in\selections\), then \(\cset{\thing[\selection]}{\selection\in\selections}\in\cohsds\), for all~\(\sosot\Subset\cohsds\).
\end{enumerate}
We denote by~\(\fincohsdss\) the set of all finitely coherent SDSes, and we let \(\toppedfincohsdss\coloneqq\fincohsdss\cup\set{\setsofthings}\).

For this finitary version, the discussion, definitions and ensuing results about the intersection structure \(\fincohsdss\), the complete lattice~\(\structure{\toppedfincohsdss,\subseteq}\) with bottom~\(\cset{\sot\in\setsofthings}{\sot\cap\beautifulthings\neq\emptyset}\) and top~\(\setsofthings\), and the associated closure operator~\(\fincohsdsclosure\) (counterparts to Propositions~\ref{prop:coherent:sds:constitute:a:complete:lattice} and~\ref{prop:the:smallest:cohsds}) are completely similar, and we'll refrain from repeating them here.
Observe nevertheless that \(\cohsdss\subseteq\fincohsdss\) and therefore also \(\toppedcohsdss\subseteq\toppedfincohsdss\): since \ref{axiom:desirable:sets:production} clearly implies~\ref{axiom:desirable:sets:finitary:production}, any coherent~\(\cohsds\) is also finitely coherent, so finite coherence is the weaker requirement.
As a consequence, we also find that \(\fincohsdsclosure(\sosot)\subseteq\cohsdsclosure(\sosot)\) for all~\(\sosot\subseteq\setsofthings\).

\subsection{Conjunctive models}\label{sec:desirability:conjunctive:models}
Let's now show that it's possible to order embed the structure~\(\structure{\cohsdts,\subseteq}\) into the structure~\(\structure{\cohsdss,\subseteq}\), and therefore also into the structure~\(\structure{\fincohsdss,\subseteq}\), in a straightforward and natural manner.

If we consider any set of things~\(\sot\) that's an element of the coherent SDS~\(\cohsds\), then we know from the coherence condition~\ref{axiom:desirable:sets:increasing} that all its supersets are also in~\(\cohsds\).
But, of course, not all of its subsets need to be, as is made clear by the coherence condition~\ref{axiom:desirable:sets:consistency}.

This observation brings us to the following idea.
Consider any SDS~\(\sosot\)---not necessarily coherent---and any element \(\sot\in\sosot\).
If there's some finite subset~\(\finsot\) of~\(\sot\) such that \(\finsot\in\sosot\), then we'll call~\(\sot\) \emph{finitary} (in~\(\sosot\)).
If, moreover, all the elements~\(\sot\) of the SDS~\(\sosot\) are finitary, then we'll call~\(\sosot\) \emph{finitary} as well; so any desirable set in a finitary~\(\sosot\) has a desirable finite subset.
The \emph{(finitely) coherent} finitary SDSes will be studied in much more detail in Section~\ref{sec:finitary:models}.
They are special because they are completely determined by their finite elements.

For the present discussion, however, we restrict our attention to an important special case of such finitary SDSes, where each desirable set has a desirable \emph{singleton} subset:

\begin{definition}[Conjunctivity]
We call an SDS~\(\sosot\subseteq\setsofthings\) \emph{conjunctive}\footnote{In earlier papers \cite{debock2018,debock2019b,cooman2021:archimedean:choice} about the desirability of gambles, and the underlying preference models, we used the term `binary' instead of `conjunctive', because the corresponding preference models turn out to be binary relations. But it seems a bit counter-intuitive to call sets based on singletons `binary'. We now use the term `conjunctive' because, as we'll see further on in Proposition~\ref{prop:conjunctive:and:coherent:sds} and Theorem~\ref{thm:sdt:to:sds:order:embedding}, these models are essentially representations of a conjunction of desirability statements for things.} if for all~\(\sot\in\sosot\) there is some~\(\thing\in\sot\) such that \(\set{\thing}\in\sosot\).
\end{definition}
\noindent In the remainder of this section, we'll spend some effort on identifying the conjunctive \emph{coherent} SDSes.

We begin by introducing ways to turn an SDT into an SDS, and vice versa.
Consider, to this end, any~\(\sot[o]\subseteq\things\) and any~\(\sosot[o]\subseteq\setsofthings\), and let
\begin{equation}\label{eq:sos:to:sot:and:sot:to:sos}
\sdtify{\sosot[o]}\coloneqq\cset{\thing\in\things}{\set{\thing}\in\sosot[o]}\subseteq\things
\text{ and }
\sdsify{\sot[o]}\coloneqq\cset{\sot\subseteq\things}{\sot\cap\sot[o]\neq\emptyset}\subseteq\setsofthings.
\end{equation}
So, if \(\sosot[o]\) is Your assessment of desirable sets of things, then \(\sdtify{\sosot[o]}\) is the set of things that You hold desirable, according to that assessment.
And similarly, if \(\sot[o]\) is Your assessment of desirable things, then \(\sdsify{\sot[o]}\) is the set of sets of things that are desirable to You, according to that assessment.
This leads to the introduction of two maps,
\[
\sdsify{\bolleke}\colon\setsofthings\to\setsofsetsofthings\colon\sot\mapsto\sdsify{\sot}
\text{ and }
\sdtify{\bolleke}\colon\setsofsetsofthings\to\setsofthings\colon\sosot\mapsto\sdtify{\sosot},
\]
whose properties we now explore in the next three basic propositions.

The following conclusion is fairly immediate.
It states that \(\sdtify{\bolleke}\) is an order homomorphism (or order-preserving map) and that \(\sdsify{\bolleke}\) is an order embedding.

\begin{proposition}\label{prop:sds:to:sdt:and:back:ordering}
Consider any~\(\sot,\sot[1],\sot[2]\in\setsofthings\) and any~\(\sosot[1],\sosot[2]\subseteq\setsofthings\).
Then \(\sdsify{\sot}\) is conjunctive, and \(\sdtify{\sdsify{\sot}}=\sot\).
Moreover,
\begin{enumerate}[label=\upshape(\roman*),leftmargin=*]
\item\label{it:sds:to:sdt:ordering} if \(\sosot[1]\subseteq\sosot[2]\) then \(\sdtify{\sosot[1]}\subseteq\sdtify{\sosot[2]}\);
\item\label{it:sdt:to:sds:ordering} \(\sot[1]\subseteq\sot[2]\) if and only if \(\sdsify{\sot[1]}\subseteq\sdsify{\sot[2]}\).
\end{enumerate}
\end{proposition}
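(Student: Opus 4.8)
The plan is to prove all four assertions by directly unwinding the two definitions in~\eqref{eq:sos:to:sot:and:sot:to:sos}; each claim will reduce to an elementary statement about set membership and intersection, so nothing beyond those definitions is required. I would prove the two preliminary facts (conjunctivity of \(\sdsify{\sot}\) and the round-trip identity) first, because they make the harder direction of~\ref{it:sdt:to:sds:ordering} almost free.

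First I would check that \(\sdsify{\sot}\) is conjunctive. Taking any \(\sot'\in\sdsify{\sot}\), the definition of \(\sdsify{\bolleke}\) gives \(\sot'\cap\sot\neq\emptyset\), so I can fix a witness \(\thing\in\sot'\cap\sot\). Since \(\thing\in\sot\), the singleton \(\set{\thing}\) meets \(\sot\), whence \(\set{\thing}\in\sdsify{\sot}\); and since \(\thing\in\sot'\), this \(\set{\thing}\) is exactly the singleton subset required by conjunctivity. Next I would establish the round-trip identity \(\sdtify{\sdsify{\sot}}=\sot\) by chaining the two definitions: \(\thing\in\sdtify{\sdsify{\sot}}\) holds iff \(\set{\thing}\in\sdsify{\sot}\), iff \(\set{\thing}\cap\sot\neq\emptyset\), iff \(\thing\in\sot\), and this chain of equivalences delivers the equality.

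For~\ref{it:sds:to:sdt:ordering}, the monotonicity of \(\sdtify{\bolleke}\) is immediate: if \(\thing\in\sdtify{\sosot[1]}\) then \(\set{\thing}\in\sosot[1]\subseteq\sosot[2]\), so \(\thing\in\sdtify{\sosot[2]}\). For~\ref{it:sdt:to:sds:ordering}, the forward implication unwinds just as directly, since a set meeting \(\sot[1]\) also meets any superset \(\sot[2]\) of~\(\sot[1]\). For the converse I would avoid a fresh computation and instead reuse the two facts already proved: applying the monotone map \(\sdtify{\bolleke}\) of~\ref{it:sds:to:sdt:ordering} to the hypothesis \(\sdsify{\sot[1]}\subseteq\sdsify{\sot[2]}\) and then invoking \(\sdtify{\sdsify{\sot}}=\sot\) yields \(\sot[1]=\sdtify{\sdsify{\sot[1]}}\subseteq\sdtify{\sdsify{\sot[2]}}=\sot[2]\).

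I do not expect a genuine obstacle: the whole argument is a sequence of definition-unwindings, each resting only on the equivalence \(\set{\thing}\cap\sot\neq\emptyset\ifandonlyif\thing\in\sot\). The one point worth handling with care is the sequencing---proving conjunctivity and the round-trip identity before~\ref{it:sdt:to:sds:ordering} turns the converse in~\ref{it:sdt:to:sds:ordering} into a one-line consequence rather than a separate singleton argument, which keeps the proof clean.
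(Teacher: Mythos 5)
Your proof is correct, and since the paper labels this proposition as ``fairly immediate'' and omits its proof, your definition-unwinding argument (including the chain \(\thing\in\sdtify{\sdsify{\sot}}\ifandonlyif\set{\thing}\in\sdsify{\sot}\ifandonlyif\set{\thing}\cap\sot\neq\emptyset\ifandonlyif\thing\in\sot\), which the paper itself uses later in the proof of Proposition~\ref{prop:cohsdt:to:cohsds}) is exactly the intended one. Deriving the converse of~\ref{it:sdt:to:sds:ordering} from~\ref{it:sds:to:sdt:ordering} and the round-trip identity is a clean touch but not a substantively different route.
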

\noindent In the following two propositions we find out whether the maps~\(\sdtify{\bolleke}\) and~\(\sdsify{\bolleke}\) preserve (finite) coherence.

\begin{proposition}\label{prop:cohsds:to:cohsdt}
Consider any SDS~\(\cohsds\subseteq\setsofthings\).
If \(\cohsds\) is (finitely) coherent, then \(\sdsify{\sdtify{\cohsds}}\subseteq\cohsds\).
Moreover, the following statements hold:
\begin{enumerate}[label=\upshape(\roman*),leftmargin=*]
\item\label{it:cohsds:to:cohsdt:coherent} if \(\cohsds\) is coherent,  then \(\sdtify{\cohsds}\) is coherent;
\item\label{it:cohsds:to:cohsdt:finitely:coherent} if \(\cohsds\) is finitely coherent and the closure operator~\(\initialclosure\) is finitary, then \(\sdtify{\cohsds}\) is coherent.
\end{enumerate}
\end{proposition}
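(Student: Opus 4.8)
The plan is to abbreviate $\sdtify{\cohsds}=\cset{\thing}{\set{\thing}\in\cohsds}$ and to check the inclusion together with the two coherence claims by reading off the coherence axioms one at a time, keeping careful track of which of them survive the weakening from coherence to finite coherence. For the inclusion $\sdsify{\sdtify{\cohsds}}\subseteq\cohsds$, I would take any $\sot\in\sdsify{\sdtify{\cohsds}}$, unfold the definition of $\sdsify{\bolleke}$ to produce some $\thing\in\sot$ with $\set{\thing}\in\cohsds$, and then observe $\set{\thing}\subseteq\sot$ and invoke increasingness~\ref{axiom:desirable:sets:increasing} to obtain $\sot\in\cohsds$. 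This uses only~\ref{axiom:desirable:sets:increasing}, which both notions share, so the `(finitely)' is dispatched at a stroke.

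Turning to coherence of $\sdtify{\cohsds}$, I would split off its two defining requirements. Disjointness from $\uglythings$ is immediate: were some $\thing\in\sdtify{\cohsds}\cap\uglythings$, then $\set{\thing}\in\cohsds$ while $\set{\thing}\setminus\uglythings=\emptyset$, so~\ref{axiom:desirable:sets:forbidden} would force $\emptyset\in\cohsds$, against~\ref{axiom:desirable:sets:consistency}; once more both notions share these axioms. The real work is closedness, $\initialclosure(\sdtify{\cohsds})\subseteq\sdtify{\cohsds}$ (the reverse inclusion being~\ref{axiom:closure:super}). The device I would use is to feed the production axiom the family of singletons $\sosot\coloneqq\cset{\set{\altopt}}{\altopt\in\sdtify{\cohsds}}\subseteq\cohsds$: since each member of $\sosot$ is a singleton, there is exactly one selection map $\selection[o]\in\selections$, and it reconstructs $\selection[o](\sosot)=\sdtify{\cohsds}$. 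Hence for any $\thing\in\initialclosure(\sdtify{\cohsds})=\initialclosure(\selection[o](\sosot))$, setting $\thing[{\selection[o]}]\coloneqq\thing$ satisfies the premise of~\ref{axiom:desirable:sets:production:with:background} for the unique selection map, and its conclusion gives $\set{\thing}\in\cohsds$, i.e.\ $\thing\in\sdtify{\cohsds}$. Using the combined axiom~\ref{axiom:desirable:sets:production:with:background} rather than~\ref{axiom:desirable:sets:production} lets me absorb the degenerate case $\sdtify{\cohsds}=\emptyset$, where $\sosot=\emptyset$ and the argument collapses to~\ref{axiom:desirable:sets:background}. This finishes~\ref{it:cohsds:to:cohsdt:coherent}.

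The hard part is~\ref{it:cohsds:to:cohsdt:finitely:coherent}, where finite coherence only licenses the finitary production axiom and hence only finite feeding families $\sosot\Subset\cohsds$; if $\sdtify{\cohsds}$ is infinite, the singleton family above is too large to use. This is exactly where I would spend the hypothesis that $\initialclosure$ is finitary: writing $\initialclosure(\sdtify{\cohsds})=\bigcup\cset{\initialclosure(F)}{F\Subset\sdtify{\cohsds}}$, any $\thing\in\initialclosure(\sdtify{\cohsds})$ already lies in $\initialclosure(F)$ for some finite $F\Subset\sdtify{\cohsds}$. I would then rerun the previous paragraph with the finite singleton family $\cset{\set{\altopt}}{\altopt\in F}\Subset\cohsds$, whose unique selection map reconstructs $F$, and apply the finitary combined axiom~\ref{axiom:desirable:sets:finitary:production:with:background} to conclude $\set{\thing}\in\cohsds$, whence $\thing\in\sdtify{\cohsds}$ and closedness again follows. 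I expect the only points needing care to be the bookkeeping around empty feeding families (handled uniformly by the combined axioms) and the verification that the unique-selection-map observation really yields $\selection[o](\sosot)=\sdtify{\cohsds}$, respectively $\selection[o](\sosot)=F$; everything else is a transcription of the coherence axioms.
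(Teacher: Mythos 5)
Your proposal is correct and follows essentially the same route as the paper: the inclusion via \ref{axiom:desirable:sets:increasing}, disjointness from \(\uglythings\) via \ref{axiom:desirable:sets:forbidden} and \ref{axiom:desirable:sets:consistency}, and closedness via the singleton family whose unique selection map reconstructs \(\sdtify{\cohsds}\) (resp.\ a finite \(F\) obtained from the finitary character of \(\initialclosure\)). The only cosmetic difference is that you absorb the degenerate empty case through the combined axioms \ref{axiom:desirable:sets:production:with:background}/\ref{axiom:desirable:sets:finitary:production:with:background}, whereas the paper first notes \(\beautifulthings\subseteq\sdtify{\cohsds}\) from \ref{axiom:desirable:sets:background} and then assumes without loss of generality that the thing under consideration lies outside \(\beautifulthings\); both devices are sound.
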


\begin{proposition}\label{prop:cohsdt:to:cohsds}
Consider any set of things~\(\cohsdt\subseteq\things\), then \(\sdtify{\sdsify{\cohsdt}}=\cohsdt\).
Moreover, consider the following statements:
\begin{enumerate}[label=\upshape(\roman*),leftmargin=*]
\item\label{it:cohsdt:to:cohsds:coherent:sdt} \(\cohsdt\) is a coherent SDT;
\item\label{it:cohsdt:to:cohsds:coherent:sds} \(\sdsify{\cohsdt}\) is a coherent SDS;
\item\label{it:cohsdt:to:cohsds:finitely:coherent:sds} \(\sdsify{\cohsdt}\) is a finitely coherent SDS.
\end{enumerate}
Then \ref{it:cohsdt:to:cohsds:coherent:sdt}\(\ifandonlyif\)\ref{it:cohsdt:to:cohsds:coherent:sds} and \ref{it:cohsdt:to:cohsds:coherent:sds}\(\then\)\ref{it:cohsdt:to:cohsds:finitely:coherent:sds}, so \ref{it:cohsdt:to:cohsds:coherent:sdt}\(\then\)\ref{it:cohsdt:to:cohsds:finitely:coherent:sds}.
Moreover, if the closure operator~\(\initialclosure\) is finitary, then also \ref{it:cohsdt:to:cohsds:coherent:sdt}\(\ifandonlyif\)\ref{it:cohsdt:to:cohsds:finitely:coherent:sds}.
\end{proposition}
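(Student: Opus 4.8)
The plan is to settle the identity first, then prove only the single substantive implication \ref{it:cohsdt:to:cohsds:coherent:sdt}$\then$\ref{it:cohsdt:to:cohsds:coherent:sds} directly, and obtain all the other implications cheaply by feeding $\sdsify{\cohsdt}$ into Proposition~\ref{prop:cohsds:to:cohsdt}. The identity $\sdtify{\sdsify{\cohsdt}}=\cohsdt$ is pure unravelling of the definitions in~\eqref{eq:sos:to:sot:and:sot:to:sos}: a thing $\thing$ belongs to $\sdtify{\sdsify{\cohsdt}}$ exactly when $\set{\thing}\in\sdsify{\cohsdt}$, i.e.\ when $\set{\thing}\cap\cohsdt\neq\emptyset$, i.e.\ when $\thing\in\cohsdt$.

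For \ref{it:cohsdt:to:cohsds:coherent:sdt}$\then$\ref{it:cohsdt:to:cohsds:coherent:sds} I would assume $\initialclosure(\cohsdt)=\cohsdt$ and $\cohsdt\cap\uglythings=\emptyset$ and check the axioms \ref{axiom:desirable:sets:consistency}--\ref{axiom:desirable:sets:production} for $\sdsify{\cohsdt}=\cset{\sot}{\sot\cap\cohsdt\neq\emptyset}$. Axioms \ref{axiom:desirable:sets:consistency} and \ref{axiom:desirable:sets:increasing} are immediate from this intersection description ($\emptyset\cap\cohsdt=\emptyset$; and a superset of a set meeting $\cohsdt$ again meets $\cohsdt$). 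Axiom~\ref{axiom:desirable:sets:forbidden} uses the disjointness $\cohsdt\cap\uglythings=\emptyset$, which gives $(\sot\setminus\uglythings)\cap\cohsdt=\sot\cap\cohsdt$, so deleting forbidden things cannot destroy membership; and \ref{axiom:desirable:sets:background} uses closedness through $\beautifulthings=\initialclosure(\emptyset)\subseteq\initialclosure(\cohsdt)=\cohsdt$, so that $\set{\thing[{+}]}$ meets $\cohsdt$ for every $\thing[{+}]\in\beautifulthings$.

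The only step I expect to require genuine (if mild) work is the production axiom~\ref{axiom:desirable:sets:production}, and this is the main obstacle. Given $\emptyset\neq\sosot\subseteq\sdsify{\cohsdt}$ together with things $\thing[\selection]\in\initialclosure(\selection(\sosot))$ for every $\selection\in\selections$, each $\sot\in\sosot$ meets $\cohsdt$ (and is in particular non-empty, so $\selections\neq\emptyset$), which lets me choose a selection map $\selection[o]\in\selections$ with $\selection[o](\sot)\in\sot\cap\cohsdt$ for all $\sot\in\sosot$. Then $\selection[o](\sosot)\subseteq\cohsdt$, so by monotonicity and closedness $\thing[{\selection[o]}]\in\initialclosure(\selection[o](\sosot))\subseteq\initialclosure(\cohsdt)=\cohsdt$; since $\thing[{\selection[o]}]\in\cset{\thing[\selection]}{\selection\in\selections}$, this set meets $\cohsdt$ and hence lies in $\sdsify{\cohsdt}$, as required.

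For everything else I would avoid further computation. Applying Proposition~\ref{prop:cohsds:to:cohsdt}\ref{it:cohsds:to:cohsdt:coherent} to $\cohsds\coloneqq\sdsify{\cohsdt}$ and invoking the identity $\sdtify{\sdsify{\cohsdt}}=\cohsdt$ gives \ref{it:cohsdt:to:cohsds:coherent:sds}$\then$\ref{it:cohsdt:to:cohsds:coherent:sdt} at once, which closes the equivalence \ref{it:cohsdt:to:cohsds:coherent:sdt}$\ifandonlyif$\ref{it:cohsdt:to:cohsds:coherent:sds}. The implication \ref{it:cohsdt:to:cohsds:coherent:sds}$\then$\ref{it:cohsdt:to:cohsds:finitely:coherent:sds} is merely the inclusion $\cohsdss\subseteq\fincohsdss$ recorded in Section~\ref{sec:desirability:sds:finitary}, so chaining it with \ref{it:cohsdt:to:cohsds:coherent:sdt}$\then$\ref{it:cohsdt:to:cohsds:coherent:sds} yields \ref{it:cohsdt:to:cohsds:coherent:sdt}$\then$\ref{it:cohsdt:to:cohsds:finitely:coherent:sds}. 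Finally, under the extra assumption that $\initialclosure$ is finitary, Proposition~\ref{prop:cohsds:to:cohsdt}\ref{it:cohsds:to:cohsdt:finitely:coherent} applied to $\cohsds\coloneqq\sdsify{\cohsdt}$ (again with the identity) gives \ref{it:cohsdt:to:cohsds:finitely:coherent:sds}$\then$\ref{it:cohsdt:to:cohsds:coherent:sdt}, which combined with the already established converse delivers the claimed equivalence \ref{it:cohsdt:to:cohsds:coherent:sdt}$\ifandonlyif$\ref{it:cohsdt:to:cohsds:finitely:coherent:sds} in the finitary case.
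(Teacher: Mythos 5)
Your proposal is correct and follows essentially the same route as the paper's own proof: the identity by unravelling the definitions, a direct verification of \ref{axiom:desirable:sets:consistency}--\ref{axiom:desirable:sets:production} for \(\sdsify{\cohsdt}\) with the same selection-map argument for the production axiom, and the converse implications obtained by applying Proposition~\ref{prop:cohsds:to:cohsdt} to \(\sdsify{\cohsdt}\) together with the identity \(\sdtify{\sdsify{\cohsdt}}=\cohsdt\). No gaps.
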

\noindent So, if we start out with a coherent SDS~\(\cohsds\), then the coherent SDT~\(\sdtify{\cohsds}\) and the corresponding coherent and conjunctive SDS~\(\sdsify{\sdtify{\cohsds}}\) are conservative approximations of~\(\cohsds\): going from a model~\(\cohsds\) to its \emph{conjunctive part}~\(\sdsify{\sdtify{\cohsds}}=\cset{\sot\in\cohsds}{(\exists\thing\in\sot)\set{\thing}\in\cohsds}\) typically results in a loss of information.
Similar results hold for finitely coherent~SDSes~\(\cohsds\), provided that the closure operator~\(\initialclosure\) is finitary, as then its conjunctive part~\(\sdsify{\sdtify{\cohsds}}\) will be coherent too.
Interestingly, going from a coherent SDT~\(\cohsdt\) to the corresponding coherent and conjunctive SDS~\(\sdsify{\cohsdt}\) does \emph{not} result in a loss of information, as we'll soon see below in Theorem~\ref{thm:sdt:to:sds:order:embedding}.

Based on our findings in Propositions~\ref{prop:sds:to:sdt:and:back:ordering}--\ref{prop:cohsdt:to:cohsds}, we're now finally in a position to find out what the conjunctive and (finitely) coherent SDSes look like.

\begin{proposition}[Conjunctivity]\label{prop:conjunctive:and:coherent:sds}
The following statements hold:
\begin{enumerate}[label=\upshape(\roman*),leftmargin=*]
\item\label{it:conjunctive:and:coherent:sds} An SDS~\(\cohsds\subseteq\setsofthings\) is coherent and conjunctive if and only if there's some coherent SDT~\(\cohsdt\in\cohsdts\) such that \(\cohsds=\sdsify{\cohsdt}\).
\item\label{it:conjunctive:and:finitely:coherent:sds} When the closure operator~\(\initialclosure\) is finitary, then an SDS~\(\cohsds\subseteq\setsofthings\) is finitely coherent and conjunctive if and only if there's some coherent SDT~\(\cohsdt\in\cohsdts\) such that \(\cohsds=\sdsify{\cohsdt}\).
\end{enumerate}
In both these cases then necessarily \(\cohsdt=\sdtify{\cohsds}\).
\end{proposition}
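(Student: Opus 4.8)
The plan is to prove both equivalences simultaneously, using \(\sdtify{\cohsds}\) as the canonical candidate for the witnessing coherent SDT and assembling everything from Propositions~\ref{prop:sds:to:sdt:and:back:ordering}--\ref{prop:cohsdt:to:cohsds}; the uniqueness claim then falls out for free. For the ``if'' directions of both~\ref{it:conjunctive:and:coherent:sds} and~\ref{it:conjunctive:and:finitely:coherent:sds}, I would start from a given coherent SDT~\(\cohsdt\in\cohsdts\) with \(\cohsds=\sdsify{\cohsdt}\). Conjunctivity of~\(\cohsds\) is then immediate, since Proposition~\ref{prop:sds:to:sdt:and:back:ordering} tells us that \(\sdsify{\cohsdt}\) is conjunctive for every~\(\cohsdt\). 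Coherence (for~\ref{it:conjunctive:and:coherent:sds}) follows from the implication \ref{it:cohsdt:to:cohsds:coherent:sdt}\(\then\)\ref{it:cohsdt:to:cohsds:coherent:sds} in Proposition~\ref{prop:cohsdt:to:cohsds}, and finite coherence (for~\ref{it:conjunctive:and:finitely:coherent:sds}) follows from \ref{it:cohsdt:to:cohsds:coherent:sdt}\(\then\)\ref{it:cohsdt:to:cohsds:finitely:coherent:sds} there; note that this direction needs no finitariness assumption.

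For the ``only if'' directions, I would set \(\cohsdt\coloneqq\sdtify{\cohsds}\) and verify two things: that \(\cohsdt\in\cohsdts\), and that \(\cohsds=\sdsify{\cohsdt}\). That \(\sdtify{\cohsds}\) is a coherent SDT comes from Proposition~\ref{prop:cohsds:to:cohsdt}: part~\ref{it:cohsds:to:cohsdt:coherent} handles the coherent case in~\ref{it:conjunctive:and:coherent:sds}, while part~\ref{it:cohsds:to:cohsdt:finitely:coherent}, together with the standing finitariness hypothesis on~\(\initialclosure\), handles the finitely coherent case in~\ref{it:conjunctive:and:finitely:coherent:sds}---this is the only place where finitariness is used. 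The inclusion \(\sdsify{\sdtify{\cohsds}}\subseteq\cohsds\) is supplied directly by Proposition~\ref{prop:cohsds:to:cohsdt}, which asserts it for any (finitely) coherent~\(\cohsds\). So the whole ``only if'' argument reduces to the reverse inclusion \(\cohsds\subseteq\sdsify{\sdtify{\cohsds}}\).

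This reverse inclusion is the crux of the proof, and the only spot where conjunctivity does genuine work. The argument I would give is short: fix any \(\sot\in\cohsds\); conjunctivity produces some \(\thing\in\sot\) with \(\set{\thing}\in\cohsds\), which by the very definition of the map~\(\sdtify{\bolleke}\) means \(\thing\in\sdtify{\cohsds}\); hence \(\sot\cap\sdtify{\cohsds}\neq\emptyset\), i.e.\ \(\sot\in\sdsify{\sdtify{\cohsds}}\). Conceptually, what makes the statement tick is that the inclusion \(\sdsify{\sdtify{\cohsds}}\subseteq\cohsds\) holds for \emph{all} (finitely) coherent SDSes---the conjunctive part is always a conservative approximation---whereas equality fails in general; conjunctivity is exactly the condition that upgrades this inclusion to an identity, so no genuine obstacle remains once that observation is made.

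Finally, for the uniqueness clause, I would note that any witnessing~\(\cohsdt\) with \(\cohsds=\sdsify{\cohsdt}\) satisfies \(\sdtify{\cohsds}=\sdtify{\sdsify{\cohsdt}}=\cohsdt\), where the last equality is the identity \(\sdtify{\sdsify{\cohsdt}}=\cohsdt\) recorded in Proposition~\ref{prop:cohsdt:to:cohsds}. Thus the witness is forced to be \(\sdtify{\cohsds}\), exactly as claimed.
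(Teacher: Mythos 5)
Your proposal is correct and follows essentially the same route as the paper's own proof: the ``if'' directions via Propositions~\ref{prop:sds:to:sdt:and:back:ordering} and~\ref{prop:cohsdt:to:cohsds}, the ``only if'' directions by taking \(\cohsdt=\sdtify{\cohsds}\), invoking Proposition~\ref{prop:cohsds:to:cohsdt} for coherence of \(\sdtify{\cohsds}\) and for the inclusion \(\sdsify{\sdtify{\cohsds}}\subseteq\cohsds\), and using conjunctivity only to get the reverse inclusion, with the uniqueness clause from the identity \(\sdtify{\sdsify{\cohsdt}}=\cohsdt\). Your observation that finitariness is needed only in the ``only if'' half of~\ref{it:conjunctive:and:finitely:coherent:sds} is also accurate.
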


Combining the results of Propositions~\ref{prop:sds:to:sdt:and:back:ordering}\ref{it:sdt:to:sds:ordering}, \ref{prop:cohsdt:to:cohsds} and~\ref{prop:conjunctive:and:coherent:sds} leads at once to a formal statement of what we alluded to in the introduction to this subsection.

\begin{theorem}\label{thm:sdt:to:sds:order:embedding}
The map~\(\sdsify{\bolleke}\) is an order embedding of the (intersection) structure~\(\structure{\cohsdts,\subseteq}\) into the (intersection) structure~\(\structure{\cohsdss,\subseteq}\).
Its image is the set of all conjunctive coherent SDSes, and on this set its inverse is the map~\(\sdtify{\bolleke}\).
Moreover, if the closure operator~\(\initialclosure\) is finitary, then the map~\(\sdsify{\bolleke}\) is also an order embedding of the (intersection) structure~\(\structure{\cohsdts,\subseteq}\) into the (intersection) structure~\(\structure{\fincohsdss,\subseteq}\).
Its image is the set of all conjunctive finitely coherent SDSes, and on this set its inverse is the map~\(\sdtify{\bolleke}\).
\end{theorem}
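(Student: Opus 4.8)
The plan is to assemble the three preceding results, since this theorem is really just their conjunction and no genuinely new argument is needed. First I would check that $\sdsify{\bolleke}$ is well defined as a map from $\cohsdts$ into $\cohsdss$, i.e.\ that it lands in the claimed codomain. This is exactly the implication \ref{it:cohsdt:to:cohsds:coherent:sdt}$\then$\ref{it:cohsdt:to:cohsds:coherent:sds} of Proposition~\ref{prop:cohsdt:to:cohsds}: whenever $\cohsdt$ is a coherent SDT, $\sdsify{\cohsdt}$ is a coherent SDS. That it is an order embedding is then immediate from Proposition~\ref{prop:sds:to:sdt:and:back:ordering}\ref{it:sdt:to:sds:ordering}, whose statement $\sot[1]\subseteq\sot[2]\ifandonlyif\sdsify{\sot[1]}\subseteq\sdsify{\sot[2]}$ is precisely the defining property of an order embedding; injectivity follows by applying this equivalence in both directions.

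Next I would identify the image and the inverse. On the one hand, every value $\sdsify{\cohsdt}$ is conjunctive (the opening sentence of Proposition~\ref{prop:sds:to:sdt:and:back:ordering}) and coherent (just established), so the image is contained in the set of conjunctive coherent SDSes. On the other hand, Proposition~\ref{prop:conjunctive:and:coherent:sds}\ref{it:conjunctive:and:coherent:sds} states that every conjunctive coherent SDS equals $\sdsify{\cohsdt}$ for some coherent SDT $\cohsdt$, so the reverse inclusion holds and the image is exactly the set of conjunctive coherent SDSes. For the inverse, Proposition~\ref{prop:cohsdt:to:cohsds} gives $\sdtify{\sdsify{\cohsdt}}=\cohsdt$, i.e.\ $\sdtify{\bolleke}$ is a left inverse of $\sdsify{\bolleke}$; and for any conjunctive coherent $\cohsds$ in the image the final clause of Proposition~\ref{prop:conjunctive:and:coherent:sds} (namely $\cohsdt=\sdtify{\cohsds}$) yields $\sdsify{\sdtify{\cohsds}}=\cohsds$, so $\sdtify{\bolleke}$ is a genuine two-sided inverse on the image.

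For the finitary part the argument runs in parallel, and I would take care to flag where the extra hypothesis is actually consumed. Well-definedness into $\fincohsdss$ needs nothing new: since every coherent SDS is finitely coherent, the chain \ref{it:cohsdt:to:cohsds:coherent:sdt}$\then$\ref{it:cohsdt:to:cohsds:coherent:sds}$\then$\ref{it:cohsdt:to:cohsds:finitely:coherent:sds} of Proposition~\ref{prop:cohsdt:to:cohsds} already puts $\sdsify{\cohsdt}$ in $\fincohsdss$, and the order-embedding and injectivity part reuses Proposition~\ref{prop:sds:to:sdt:and:back:ordering}\ref{it:sdt:to:sds:ordering} verbatim, that result being indifferent to coherence. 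The finitary assumption enters only in the image characterisation: it is Proposition~\ref{prop:conjunctive:and:coherent:sds}\ref{it:conjunctive:and:finitely:coherent:sds} that equates the conjunctive finitely coherent SDSes with the $\sdsify{\cohsdt}$, and this relies, through Proposition~\ref{prop:cohsds:to:cohsdt}\ref{it:cohsds:to:cohsdt:finitely:coherent}, on $\initialclosure$ being finitary to guarantee that $\sdtify{\cohsds}$ is coherent for finitely coherent $\cohsds$. The inverse identities $\sdtify{\sdsify{\cohsdt}}=\cohsdt$ and $\sdsify{\sdtify{\cohsds}}=\cohsds$ then carry over unchanged.

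Because everything is cited, there is no deep obstacle; the only point that requires care is precisely this bookkeeping of which implication of Proposition~\ref{prop:cohsdt:to:cohsds} and which half of Proposition~\ref{prop:conjunctive:and:coherent:sds} supplies each of the four claims (codomain, embedding, image, inverse), and in particular being honest that the finitary hypothesis is needed for the surjectivity-onto-the-image half of the finitary statement but not for well-definedness. I would also note in passing that the parenthetical \emph{(intersection)} is merely descriptive of the two posets, both of which were shown earlier to be intersection structures, and is not an additional claim that $\sdsify{\bolleke}$ preserves intersections, so nothing further need be verified there.
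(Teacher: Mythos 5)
Your proposal is correct and follows exactly the route the paper itself takes: the theorem is stated as an immediate consequence of combining Propositions~\ref{prop:sds:to:sdt:and:back:ordering}\ref{it:sdt:to:sds:ordering}, \ref{prop:cohsdt:to:cohsds} and~\ref{prop:conjunctive:and:coherent:sds}, and your bookkeeping of which clause supplies the codomain, the embedding property, the image characterisation and the inverse identities (including where the finitary hypothesis on~\(\initialclosure\) is actually consumed) matches the intended argument. Nothing further is needed.
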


In all the results following Proposition~\ref{prop:cohsds:to:cohsdt}, the finitary character of the closure operator~\(\initialclosure\) appears as a \emph{sufficient} condition to, loosely speaking, bijectively connect the finitely coherent SDSes to the coherent SDTs.
Let's give a simple counterexample to show that this one-to-one correspondence may break down for infinitary~\(\initialclosure\).
It was suggested to us by Kevin Blackwell, and is loosely based on Satan's Apple, a paradox for dominance in infinitary decisions due to Arntzenius, Elga, and Hawthorn \cite{arntzenius2004:satans:apple}.

\begin{counterexample}[Satan's Apple]
We consider as a set of things~\(\things\) the set~\(\naturalswithzero\) of all non-negative integers (with zero), and use the following closure operator
\[
\initialclosure(\sot)
\coloneqq
\begin{cases}
\set{1}&\text{if \(\sot=\emptyset\)}\\
\set{1,\dots,\max\sot}&\text{if \(\sot\) is a finite and non-empty subset of~\(\naturals\)},\\
\naturalswithzero&\text{otherwise}
\end{cases}
\quad\text{for all~\(\sot\subseteq\things\)},
\]
where~\(\naturals\) denotes the set of all natural numbers (without zero) and \(\max\sot\) is the maximum of the finite set of natural numbers~\(\sot\).
To see that this closure operator is \emph{not finitary}, check for instance that
\[
\initialclosure(\naturals)
=\naturalswithzero
\supset\naturals
=\bigcup\cset[\big]{\set{1,\dots,\max\sot}}{\emptyset\neq\sot\Subset\naturals}
=\bigcup\cset[\big]{\initialclosure(\sot)}{\sot\Subset\naturals}
.
\]
If we let \(\uglythings\coloneqq\set{0}\), then an SDT~\(\sot\subseteq\things\) is consistent if and only if it's finite and doesn't contain the forbidden thing~\(0\), and coherent if and only if it takes the form~\(\set{1,\dots,n}\) for some~\(n\in\naturals\).
Observe that, also, \(\beautifulthings=\set{1}\).

Let's now consider the SDS~\(\cohsds\coloneqq\powerset(\naturalswithzero)\setminus\set{\emptyset,\set{0}}\).
It is clearly conjunctive, and equal to its conjunctive part~\(\sdsify{\sdtify{\cohsds}}\).
The SDT~\(\sdtify{\cohsds}=\naturals\) is clearly \emph{not coherent}, so we can infer from the equivalence results in Proposition~\ref{prop:cohsdt:to:cohsds} that \(\cohsds=\sdsify{\sdtify{\cohsds}}\) isn't coherent either.

Since the closure operator~\(\initialclosure\) isn't finitary, we can't use Proposition~\ref{prop:cohsdt:to:cohsds} to also prove the \emph{finite incoherence} of \(\cohsds\).
On the contrary, we'll now prove that~\(\cohsds\) \emph{is} in fact \emph{finitely coherent}.

Since~\(\cohsds\) clearly satisfies~\ref{axiom:desirable:sets:consistency}--\ref{axiom:desirable:sets:background}, we concentrate on showing that it also satisfies~\ref{axiom:desirable:sets:finitary:production}.
Consider, to this effect, any non-empty~\(\sosot\Subset\cohsds\), so \(\sot\notin\set{\emptyset,\set{0}}\) for all~\(\sot\in\sosot\).
This already implies that there is some~\(\selection[o]\in\selections\) such that \(\selection[o](\sot)\neq0\) for all~\(\sot\in\sosot\).
Now, consider for any~\(\selection\in\selections\) any choice of the \(\thing[\selection]\in\initialclosure(\selection(\sosot))\), and assume towards contradiction that \(\cset{\thing[\selection]}{\selection\in\selections}\in\set{\emptyset,\set{0}}\).
Since \(\selections\) is non-empty because \(\sosot\) is, it must then be that \(\cset{\thing[\selection]}{\selection\in\selections}=\set{0}\), so it must follow that \(0\in\initialclosure(\selection(\sosot))\), implying that \(\selection(\sosot)\) is an infinite subset of~\(\naturals\) or contains~\(0\), for all~\(\selection\in\selections\).
That \(\selection(\sosot)\) should be infinite, is impossible for finite~\(\sosot\), so we find that \(0\in\selection(\sosot)=\cset{\selection(\sot)}{\sot\in\sosot}\) for all~\(\selection\in\selections\), a contradiction.

That the SDS~\(\cohsds\) is finitely coherent (but not coherent), whereas the SDT~\(\sdtify{\cohsds}\) isn't coherent, shows that the equivalence between the coherence of~\(\sdtify{\cohsds}\) and the finite coherence of~\(\cohsds\) in Proposition~\ref{prop:cohsdt:to:cohsds} in general can't be guaranteed to hold unless the closure operator~\(\initialclosure\) is finitary.
It also shows that, in the context of Proposition~\ref{prop:conjunctive:and:coherent:sds}, the finite coherence of a conjunctive SDS~\(\cohsds\) needn't guarantee that the SDT~\(\sdtify{\cohsds}\), and therefore also the conjunctive SDS~\(\cohsds=\sdsify{\sdtify{\cohsds}}\), are coherent, unless the closure operator~\(\initialclosure\) is finitary.
We'll come back to these issues in Section~\ref{sec:finitary:models}, where we discuss models that are merely finitary.
\hfill\(\triangleleft\)
\end{counterexample}

\section{Sets of desirable finite sets of things}\label{sec:desirability:sdfs}
In order to connect this discussion more directly with our recent work on coherent choice in earlier papers \cite{cooman2021:archimedean:choice,debock2018,debock2020:axiomatic:archimedean,debock2019b,debock2023:things:arxiv}, we'll also consider the case where You are only allowed to express the desirability of \emph{finite} sets of things.

We'll denote by \(\finitesetsofthings\) the set of all finite sets of things:
\begin{equation*}
\finitesetsofthings
\coloneqq\cset{\finsot\in\setsofthings}{\finsot\Subset\things},
\end{equation*}
and follow the convention that the empty set is finite, so \(\emptyset\in\finitesetsofthings\).

As before, we'll say that You consider a \emph{finite set of things}~\(\finsot\in\finitesetsofthings\) to be \emph{desirable} if You consider at least one thing in~\(\finsot\) to be.
We collect all Your desirability statements for finite sets of things in a \emph{{\SDFS}}---or for short SDFS, plural SDFSes---\(\sofinsot\subseteq\finitesetsofthings\).\footnote{We'll generally use a \(\hat{\bolleke}\) to indicate that the set `\(\bolleke\)' is finite at the level of things, or contains only finite sets at the level of sets of things.}

SDFSes can be ordered by set inclusion too, and, here too, we take \(\sofinsot[1]\subseteq\sofinsot[2]\) to imply that \(\sofinsot[1]\) is \emph{less informative}, or \emph{more conservative}, than~\(\sofinsot[2]\).

We call an SDFS~\(\cohsdfs\subseteq\finitesetsofthings\) \emph{finitely coherent} if it satisfies the following conditions:\footnote{We could also introduce an infinitary notion of coherence for SDFDes (see also Ref.~\cite{debock2023:things:arxiv}), but we'll refrain from doing so here, in order not to unduly complicate things: we haven't come across any such notion in the literature, apart from Ref.~\cite{debock2023:things:arxiv}.}
\begin{enumerate}[label=\upshape F\({}_{\arabic*}\).,ref=\upshape F\({}_{\arabic*}\),leftmargin=*]
\item\label{axiom:desirable:finite:sets:consistency} \(\emptyset\notin\cohsdfs\);
\item\label{axiom:desirable:finite:sets:increasing} if \(\finsot[1]\in\cohsdfs\) and \(\finsot[1]\Subset\finsot[2]\) then \(\finsot[2]\in\cohsdfs\), for all~\(\finsot[1],\finsot[2]\in\finitesetsofthings\);
\item\label{axiom:desirable:finite:sets:forbidden} if \(\finsot\in\cohsdfs\) then \(\finsot\setminus\uglythings\in\cohsdfs\), for all~\(\finsot\in\finitesetsofthings\);
\item\label{axiom:desirable:finite:sets:background} \(\set{\thing[{+}]}\in\cohsdfs\) for all~\(\thing[{+}]\in\beautifulthings\);
\item\label{axiom:desirable:finite:sets:production} if \(\thing[\selection]\in\initialclosure(\selection(\sofinsot))\) for all~\(\selection\in\finselections\), then \(\cset{\thing[\selection]}{\selection\in\finselections}\in\cohsdfs\), for all~\(\emptyset\neq\sofinsot\Subset\cohsdfs\).
\end{enumerate}
As before, Axioms~\ref{axiom:desirable:finite:sets:background} and~\ref{axiom:desirable:finite:sets:production} can be replaced by a single axiom, obtained from~\ref{axiom:desirable:finite:sets:production} by also allowing \(\sofinsot\Subset\cohsdfs\) to be empty:
\begin{enumerate}[label={\upshape F}\({}_{4,5}\).,ref={\upshape F}\({}_{4,5}\),leftmargin=*]
\item\label{axiom:desirable:finite:sets:production:with:background} if \(\thing[\selection]\in\initialclosure(\selection(\sofinsot))\) for all~\(\selection\in\finselections\), then \(\cset{\thing[\selection]}{\selection\in\finselections}\in\cohsdfs\), for all~\(\sofinsot\subseteq\cohsdfs\).
\end{enumerate}

We denote the set of all finitely coherent SDFSes by~\(\cohsdfss\), and let~\(\toppedcohsdfss\coloneqq\cohsdfss\cup\set{\finitesetsofthings}\).
Observe that \(\finitesetsofthings\) is never finitely coherent, by~\ref{axiom:desirable:finite:sets:consistency}.
Since each of the axioms~\ref{axiom:desirable:finite:sets:consistency}--\ref{axiom:desirable:finite:sets:production} is preserved under taking arbitrary non-empty intersections, the set~\(\cohsdfss\) of all finitely coherent SDFSes constitutes an intersection structure: the intersection of any non-empty family of finitely coherent SDFSes is still coherent, or in other and more formal words, for any non-empty family~\(\cohsdfs[i]\), \(i\in I\neq\emptyset\) of elements of~\(\cohsdfss\), we see that still \(\bigcap_{i\in I}\cohsdfs[i]\in\cohsdfss\).\footnote{Similarly, \(\toppedcohsdfss=\cohsdfss\cup
\set{\finitesetsofthings}\) is a \emph{topped} intersection structure: closed under arbitrary (also empty) intersections \cite[Chapter~7]{davey2002}.}
Again, as explained in Ref.~\cite[Chapter~7]{davey2002}, this allows us to capture the inferential aspects of desirability at this level using the closure operator~\(\cohsdfsclosure\colon\setsoffinitesetsofthings\to\toppedcohsdfss\) associated with the collection~\(\toppedcohsdfss\) of \emph{closed} SDFSes, defined by
\begin{equation*}
\cohsdfsclosure(\sofinsot)
\coloneqq\bigcap\cset{\cohsdfs\in\toppedcohsdfss}{\sofinsot\subseteq\cohsdfs}
=\bigcap\cset{\cohsdfs\in\cohsdfss}{\sofinsot\subseteq\cohsdfs}
\text{ for all~\(\sofinsot\subseteq\finitesetsofthings\)}.
\end{equation*}
If we call an SDFS~\(\sofinsot\) \emph{finitely consistent} if it can be extended to some finitely coherent SDFS, or equivalently, if \(\cohsdfsclosure(\sofinsot)\neq\finitesetsofthings\), then we find that \(\cohsdfsclosure(\sofinsot)\) is the smallest, or most conservative, finitely coherent SDFS that includes~\(\sofinsot\), for any finitely consistent~\(\sofinsot\).
Of course,
\begin{equation*}
\sofinsot
=\cohsdfsclosure(\sofinsot)\ifandonlyif\sofinsot\in\toppedcohsdfss,
\text{ for all~\(\sofinsot\subseteq\finitesetsofthings\)},
\end{equation*}
and therefore also \(\toppedcohsdfss=\cohsdfsclosure(\setsoffinitesetsofthings)\).
The following result is then once again a standard conclusion in order theory; see Ref.~\cite[Chapter~7]{davey2002} for the argumentation.

\begin{proposition}\label{prop:coherent:sdfs:constitute:a:complete:lattice}
The partially ordered set~\(\structure{\toppedcohsdfss,\subseteq}\) is a complete lattice.
For any non-empty family~\(\sofinsot[i]\), \(i\in I\) of elements of~\(\toppedcohsdfss\), we have for its infimum and its supremum that, respectively, \(\inf_{i\in I}\sofinsot[i]=\bigcap_{i\in I}\sofinsot[i]\) and \(\sup_{i\in I}\sofinsot[i]=\cohsdfsclosure\group{\bigcup_{i\in I}\sofinsot[i]}\).
\end{proposition}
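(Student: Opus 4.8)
The plan is to recognise this as the standard fact that a topped intersection structure, paired with its associated closure operator, forms a complete lattice; see Ref.~\cite[Chapter~7]{davey2002}. We have already observed in the discussion preceding the statement that \(\toppedcohsdfss=\cohsdfss\cup\set{\finitesetsofthings}\) is a topped intersection structure, that is, closed under arbitrary—also empty—intersections, and this is the only structural input the argument needs.

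First I would verify the infimum formula. Given any non-empty family \(\sofinsot[i]\), \(i\in I\), of elements of \(\toppedcohsdfss\), the intersection \(\bigcap_{i\in I}\sofinsot[i]\) again belongs to \(\toppedcohsdfss\), precisely because this collection is an intersection structure. This intersection is clearly a lower bound for the family, and any \(\cohsdfs\in\toppedcohsdfss\) with \(\cohsdfs\subseteq\sofinsot[i]\) for all \(i\in I\) satisfies \(\cohsdfs\subseteq\bigcap_{i\in I}\sofinsot[i]\); hence \(\bigcap_{i\in I}\sofinsot[i]\) is the greatest lower bound, establishing \(\inf_{i\in I}\sofinsot[i]=\bigcap_{i\in I}\sofinsot[i]\).

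Next I would establish the existence of suprema by invoking the general order-theoretic principle that a poset in which every subset has an infimum and which possesses a top element is automatically a complete lattice, with the supremum of a family given by the infimum of its set of upper bounds. The top element here is \(\finitesetsofthings\), so every family has at least one upper bound in \(\toppedcohsdfss\). The upper bounds of \(\sofinsot[i]\), \(i\in I\), are exactly those \(\cohsdfs\in\toppedcohsdfss\) with \(\sofinsot[i]\subseteq\cohsdfs\) for all \(i\in I\), equivalently with \(\bigcup_{i\in I}\sofinsot[i]\subseteq\cohsdfs\). Taking the infimum of this non-empty collection, which by the previous step is just its intersection, yields
\[
\sup_{i\in I}\sofinsot[i]
=\bigcap\cset{\cohsdfs\in\toppedcohsdfss}{\bigcup_{i\in I}\sofinsot[i]\subseteq\cohsdfs}
=\cohsdfsclosure\group{\bigcup_{i\in I}\sofinsot[i]},
\]
where the final equality is exactly the defining expression for the closure operator \(\cohsdfsclosure\).

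I don't expect any genuine obstacle here, since the argument is entirely standard and mirrors the ones underlying Propositions~\ref{prop:coherent:sets:of:desirable:things:consititute:a:complete:lattice} and~\ref{prop:coherent:sds:constitute:a:complete:lattice}. The only points that warrant care are accounting for the empty intersection—this is precisely why we pass from \(\cohsdfss\) to its topped version \(\toppedcohsdfss\), so that the top \(\finitesetsofthings\) is available as a universal upper bound and the poset is bounded above—and matching the supremum formula to the definition of \(\cohsdfsclosure\), which is immediate once the upper bounds have been identified as the closed SDFSes containing \(\bigcup_{i\in I}\sofinsot[i]\).
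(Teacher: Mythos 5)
Your proposal is correct and is exactly the standard argument the paper relies on: the paper gives no explicit proof of this proposition but defers to the topped-intersection-structure result in Ref.~\cite[Chapter~7]{davey2002}, whose content (infima as intersections, suprema as the closure of the union obtained by intersecting all upper bounds) is precisely what you have spelled out. The identification of \(\sup_{i\in I}\sofinsot[i]\) with \(\cohsdfsclosure\group{\bigcup_{i\in I}\sofinsot[i]}\) via the defining formula for the closure operator matches the paper's setup exactly.
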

\noindent The top~\(\cohsdfstop=\bigcup\toppedcohsdfss\) of this complete lattice is the largest closed SDFS~\(\finitesetsofthings\); its bottom~\(\cohsdfsbottom=\cohsdfsclosure(\emptyset)=\bigcap\toppedcohsdfss=\bigcap\cohsdfss\) is the smallest coherent SDFS, which, here too, is easy to identify.

\begin{proposition}\label{prop:the:smallest:cohsdfs}
\(\cohsdfsbottom=\bigcap\cohsdfss=\cset{\finsot\in\finitesetsofthings}{\finsot\cap\beautifulthings\neq\emptyset}\).
\end{proposition}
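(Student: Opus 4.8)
The plan is to prove the nontrivial second equality $\bigcap\cohsdfss=\cset{\finsot\in\finitesetsofthings}{\finsot\cap\beautifulthings\neq\emptyset}$, since the first equality $\cohsdfsbottom=\bigcap\cohsdfss$ has already been recorded in the discussion immediately preceding the statement. Abbreviating the right-hand side as $\cohsdfs[0]\coloneqq\cset{\finsot\in\finitesetsofthings}{\finsot\cap\beautifulthings\neq\emptyset}$, I would prove the two inclusions $\cohsdfs[0]\subseteq\bigcap\cohsdfss$ and $\bigcap\cohsdfss\subseteq\cohsdfs[0]$ separately, exactly mirroring the argument for the non-finitary counterpart in Proposition~\ref{prop:the:smallest:cohsds}.

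For $\cohsdfs[0]\subseteq\bigcap\cohsdfss$, it suffices to show that $\cohsdfs[0]\subseteq\cohsdfs$ for every finitely coherent $\cohsdfs\in\cohsdfss$. So fix such a $\cohsdfs$ and any $\finsot\in\cohsdfs[0]$, and pick some $\thing[{+}]\in\finsot\cap\beautifulthings$. Then $\set{\thing[{+}]}\in\cohsdfs$ by~\ref{axiom:desirable:finite:sets:background}, and since $\set{\thing[{+}]}\Subset\finsot$, applying~\ref{axiom:desirable:finite:sets:increasing} yields $\finsot\in\cohsdfs$, as required.

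For the reverse inclusion I would show that $\cohsdfs[0]$ is itself finitely coherent, since then $\bigcap\cohsdfss\subseteq\cohsdfs[0]$ follows at once. Axioms~\ref{axiom:desirable:finite:sets:consistency}, \ref{axiom:desirable:finite:sets:increasing} and~\ref{axiom:desirable:finite:sets:background} are immediate from the defining condition $\finsot\cap\beautifulthings\neq\emptyset$: the empty set fails it, any finite superset of a set meeting $\beautifulthings$ still meets $\beautifulthings$, and each singleton $\set{\thing[{+}]}$ with $\thing[{+}]\in\beautifulthings$ meets $\beautifulthings$. Axiom~\ref{axiom:desirable:finite:sets:forbidden} is where the sanitary condition~\ref{axiom:desirable:things:background:consistency} is used: if $\thing[{+}]\in\finsot\cap\beautifulthings$, then $\thing[{+}]\notin\uglythings$ because $\beautifulthings\cap\uglythings=\emptyset$, so $\thing[{+}]\in(\finsot\setminus\uglythings)\cap\beautifulthings$ and hence $\finsot\setminus\uglythings\in\cohsdfs[0]$.

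The one genuinely substantive step, and the part I expect to require the most care, is verifying the production axiom~\ref{axiom:desirable:finite:sets:production}. Given any $\emptyset\neq\sofinsot\Subset\cohsdfs[0]$ and any choice of things $\thing[\selection]\in\initialclosure(\selection(\sofinsot))$ for each $\selection\in\finselections$, I must exhibit a member of $\beautifulthings$ among the $\thing[\selection]$. The key idea is to select beautiful things everywhere: since each $\finsot\in\sofinsot$ satisfies $\finsot\cap\beautifulthings\neq\emptyset$, there is a selection map $\selection[o]\in\finselections$ with $\selection[o](\finsot)\in\finsot\cap\beautifulthings$ for every $\finsot\in\sofinsot$, so that $\selection[o](\sofinsot)\subseteq\beautifulthings$. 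Because $\beautifulthings=\initialclosure(\emptyset)$ is a closed set, monotonicity~\ref{axiom:closure:increasing} gives $\initialclosure(\selection[o](\sofinsot))\subseteq\initialclosure(\beautifulthings)=\beautifulthings$, whence $\thing[{\selection[o]}]\in\beautifulthings$. Thus $\cset{\thing[\selection]}{\selection\in\finselections}$ contains the beautiful thing $\thing[{\selection[o]}]$ and therefore lies in $\cohsdfs[0]$, establishing~\ref{axiom:desirable:finite:sets:production}. (Note that the finiteness of $\sofinsot$ is not actually needed in this argument, which is consistent with the fact that the same reasoning proves the non-finitary Proposition~\ref{prop:the:smallest:cohsds}.) Combining the two inclusions gives $\bigcap\cohsdfss=\cohsdfs[0]$, and together with $\cohsdfsbottom=\bigcap\cohsdfss$ this finishes the proof.
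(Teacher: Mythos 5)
Your proposal is correct and follows essentially the same route as the paper's own proof: both reduce the claim to showing that the right-hand side is itself a finitely coherent SDFS (the inclusion into every finitely coherent SDFS being immediate from~\ref{axiom:desirable:finite:sets:background} and~\ref{axiom:desirable:finite:sets:increasing}), and both handle the production axiom~\ref{axiom:desirable:finite:sets:production} by the same "select a beautiful thing from each set" argument combined with \(\initialclosure(\selection[+](\sofinsot))\subseteq\initialclosure(\beautifulthings)=\beautifulthings\). No gaps.
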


Let's now investigate which of the results of Section~\ref{sec:desirability:conjunctive:models} we can recover in the context of SDFSes.
Of course, we'll also call an SDFS~\(\sofinsot\subseteq\finitesetsofthings\) \emph{conjunctive} if for all~\(\finsot\in\sofinsot\) there is some~\(\thing\in\finsot\) such that \(\set{\thing}\in\sofinsot\), and we'll try to find out what the conjunctive \emph{finitely coherent} SDFSes are.
In the light of what we found for finite coherence in Section~\ref{sec:desirability:conjunctive:models}, we expect the finitary character of the closure operator~\(\initialclosure\) to play a role in some of these results.

If we introduce the map
\begin{equation}\label{eq:sofinsot:to:finsot:and:finsot:to:sofinsot}
\sdfsify{\bolleke}
\colon\setsofthings\to\setsoffinitesetsofthings
\colon\sot\mapsto\sdfsify{\sot}
\coloneqq\cset{\finsot\Subset\things}{\finsot\cap\sot\neq\emptyset},
\end{equation}
then it's immediately clear that \(\sdfsify{\bolleke}\) is an order embedding: \(\sot[1]\subseteq\sot[2]\) if and only if \(\sdfsify{\sot[1]}\subseteq\sdfsify{\sot[2]}\), for all \(\sot[1],\sot[2]\subseteq\things\).
We'll see further on in Theorem~\ref{thm:sdt:to:sdfs:order:embedding} that it allows us to embed \(\cohsdts\) into~\(\cohsdfss\).

First of all, the maps~\(\sdtify{\bolleke}\) and~\(\sdfsify{\bolleke}\) preserve (finite) coherence.

\begin{proposition}\label{prop:cohsdfs:to:cohsdt}
Consider any finitely coherent SDFS~\(\cohsdfs\subseteq\finitesetsofthings\), then \(\sdfsify{\sdtify{\cohsdfs}}\subseteq\cohsdfs\).
If, moreover, the closure operator~\(\initialclosure\) is finitary, then \(\sdtify{\cohsdfs}\) is a coherent SDT.
\end{proposition}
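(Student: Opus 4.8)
The plan is to prove the two assertions in turn, using only the finite-coherence axioms~\ref{axiom:desirable:finite:sets:consistency}--\ref{axiom:desirable:finite:sets:production} for~$\cohsdfs$; the argument closely parallels the one for the SDS-level statement in Proposition~\ref{prop:cohsds:to:cohsdt}\ref{it:cohsds:to:cohsdt:finitely:coherent}.

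First I would settle the inclusion~$\sdfsify{\sdtify{\cohsdfs}}\subseteq\cohsdfs$, which requires no finitariness. Unfolding the definitions, a finite set~$\finsot\Subset\things$ belongs to~$\sdfsify{\sdtify{\cohsdfs}}$ precisely when $\finsot\cap\sdtify{\cohsdfs}\neq\emptyset$, i.e.\ when $\finsot$ contains some~$\thing$ with $\set{\thing}\in\cohsdfs$. For such a~$\finsot$ the singleton~$\set{\thing}$ is a finite subset of~$\finsot$ that already lies in~$\cohsdfs$, so \ref{axiom:desirable:finite:sets:increasing} gives $\finsot\in\cohsdfs$ at once.

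Next, assuming $\initialclosure$ finitary, I would check that $\sdtify{\cohsdfs}$ is a coherent SDT, i.e.\ that it avoids~$\uglythings$ and is $\initialclosure$-closed. For the first, were there some $\thing\in\sdtify{\cohsdfs}\cap\uglythings$, then $\set{\thing}\in\cohsdfs$, and \ref{axiom:desirable:finite:sets:forbidden} would force $\set{\thing}\setminus\uglythings=\emptyset\in\cohsdfs$, contradicting \ref{axiom:desirable:finite:sets:consistency}; hence $\sdtify{\cohsdfs}\cap\uglythings=\emptyset$. Since $\sdtify{\cohsdfs}\subseteq\initialclosure(\sdtify{\cohsdfs})$ holds automatically by~\ref{axiom:closure:super}, closedness reduces to the reverse inclusion $\initialclosure(\sdtify{\cohsdfs})\subseteq\sdtify{\cohsdfs}$.

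This reverse inclusion is where the finitary hypothesis and the production axiom do their work, and it is the step I expect to be the main obstacle. Take any $\thing\in\initialclosure(\sdtify{\cohsdfs})$; finitariness yields a finite set $\finsot=\set{\thing[1],\dots,\thing[n]}\Subset\sdtify{\cohsdfs}$ with $\thing\in\initialclosure(\finsot)$. If $\finsot=\emptyset$ then $\thing\in\initialclosure(\emptyset)=\beautifulthings$, so $\set{\thing}\in\cohsdfs$ by~\ref{axiom:desirable:finite:sets:background}. If $\finsot\neq\emptyset$, I would set $\sofinsot\coloneqq\cset{\set{\thing[k]}}{k\in\set{1,\dots,n}}$, a non-empty finite subset of~$\cohsdfs$ because each $\set{\thing[k]}\in\cohsdfs$. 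The key observation is that, since every element of~$\sofinsot$ is a singleton, there is exactly one selection map $\selection\in\finselections$, namely the one with $\selection(\set{\thing[k]})=\thing[k]$, so that $\selection(\sofinsot)=\finsot$. Taking $\thing[\selection]\coloneqq\thing\in\initialclosure(\finsot)=\initialclosure(\selection(\sofinsot))$, the production axiom~\ref{axiom:desirable:finite:sets:production} then delivers $\cset{\thing[\selection]}{\selection\in\finselections}=\set{\thing}\in\cohsdfs$. In either case $\set{\thing}\in\cohsdfs$, i.e.\ $\thing\in\sdtify{\cohsdfs}$, as wanted. The delicate point to get right is precisely this collapse to singletons, which forces the selection map to be unique and thereby converts the set-level production rule into a thing-level closure statement; the finitary hypothesis is what permits the passage from the possibly infinite set~$\sdtify{\cohsdfs}$ to a single finite~$\finsot$, and it is exactly this passage that can fail for infinitary~$\initialclosure$, as the Satan's Apple counterexample illustrates at the SDS level.
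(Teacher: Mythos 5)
Your proposal is correct and follows essentially the same route as the paper's proof: the inclusion via \ref{axiom:desirable:finite:sets:increasing}, the disjointness from~\(\uglythings\) via \ref{axiom:desirable:finite:sets:forbidden} and \ref{axiom:desirable:finite:sets:consistency}, and the closedness by passing (through finitariness) to a finite subset, forming the set of singletons, and exploiting the unique selection map in \ref{axiom:desirable:finite:sets:production}. The only cosmetic difference is that you treat the case \(\finsot=\emptyset\) directly via \ref{axiom:desirable:finite:sets:background}, where the paper instead assumes without loss of generality that the thing under consideration lies outside \(\beautifulthings\); the two manoeuvres are equivalent.
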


\begin{proposition}\label{prop:cohsdt:to:cohsdfs}
Consider any set of things~\(\cohsdt\subseteq\things\), then \(\sdtify{\sdfsify{\cohsdt}}=\cohsdt\).
Moreover, the following statements hold:
\begin{enumerate}[label=\upshape(\roman*),leftmargin=*,noitemsep]
\item if \(\cohsdt\) is a coherent SDT then \(\sdfsify{\cohsdt}\) is a finitely coherent SDFS;
\item if the closure operator~\(\initialclosure\) is finitary, and \(\sdfsify{\cohsdt}\) is a finitely coherent SDFS, then \(\cohsdt\) is a coherent SDT.
\end{enumerate}
Consequently, if the closure operator~\(\initialclosure\) is finitary, then \(\cohsdt\) is a coherent SDT if and only if \(\sdfsify{\cohsdt}\) is a finitely coherent SDFS.
\end{proposition}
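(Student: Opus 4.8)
The plan is to handle the set-identity first, then implications~(i) and~(ii), and finally read off the stated equivalence as their combination under finitariness. The identity \(\sdtify{\sdfsify{\cohsdt}}=\cohsdt\) I would settle purely by unfolding definitions, needing no coherence: \(\thing\in\sdtify{\sdfsify{\cohsdt}}\) means \(\set{\thing}\in\sdfsify{\cohsdt}\), and by the definition of~\(\sdfsify{\bolleke}\) this holds exactly when \(\set{\thing}\cap\cohsdt\neq\emptyset\), i.e.\ \(\thing\in\cohsdt\).

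For~(i) I would verify axioms~\ref{axiom:desirable:finite:sets:consistency}--\ref{axiom:desirable:finite:sets:production} for \(\sdfsify{\cohsdt}\). Here \ref{axiom:desirable:finite:sets:consistency} follows from \(\emptyset\cap\cohsdt=\emptyset\), and \ref{axiom:desirable:finite:sets:increasing} from the fact that \(\finsot[1]\subseteq\finsot[2]\) forces \(\finsot[1]\cap\cohsdt\subseteq\finsot[2]\cap\cohsdt\). For \ref{axiom:desirable:finite:sets:forbidden} I would use coherence in the form \(\cohsdt\cap\uglythings=\emptyset\), so that \((\finsot\setminus\uglythings)\cap\cohsdt=(\finsot\cap\cohsdt)\setminus\uglythings=\finsot\cap\cohsdt\) and deleting forbidden things cannot remove a witness. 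For \ref{axiom:desirable:finite:sets:background} I would note \(\beautifulthings=\initialclosure(\emptyset)\subseteq\initialclosure(\cohsdt)=\cohsdt\) by~\ref{axiom:closure:increasing} and closedness of~\(\cohsdt\), whence \(\thing[{+}]\in\beautifulthings\) gives \(\set{\thing[{+}]}\in\sdfsify{\cohsdt}\). The only real work is \ref{axiom:desirable:finite:sets:production}: for non-empty \(\sofinsot\Subset\sdfsify{\cohsdt}\) every \(\finsot\in\sofinsot\) is a finite set meeting \(\cohsdt\), so some selection map \(\selection[o]\) satisfies \(\selection[o](\finsot)\in\cohsdt\) throughout, giving \(\selection[o](\sofinsot)\subseteq\cohsdt\) and hence \(\initialclosure(\selection[o](\sofinsot))\subseteq\initialclosure(\cohsdt)=\cohsdt\); therefore every admissible \(\thing[{\selection[o]}]\) lies in \(\cohsdt\), so the produced set \(\cset{\thing[\selection]}{\selection\in\finselections}\) meets \(\cohsdt\). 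Since \(\sofinsot\) is a finite collection of finite sets, \(\finselections\) is finite and so is this produced set, placing it in \(\sdfsify{\cohsdt}\); notably, no finitariness of \(\initialclosure\) is used here.

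For~(ii) I would first obtain \(\cohsdt\cap\uglythings=\emptyset\): were some \(\thing\in\cohsdt\cap\uglythings\), then \(\set{\thing}\in\sdfsify{\cohsdt}\), so by~\ref{axiom:desirable:finite:sets:forbidden} also \(\emptyset=\set{\thing}\setminus\uglythings\in\sdfsify{\cohsdt}\), contradicting~\ref{axiom:desirable:finite:sets:consistency}. It then remains to show \(\initialclosure(\cohsdt)\subseteq\cohsdt\), the reverse inclusion being~\ref{axiom:closure:super}. This is where finitariness enters: any \(\thing\in\initialclosure(\cohsdt)\) lies in \(\initialclosure(\finsot)\) for some finite \(\finsot\Subset\cohsdt\). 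If \(\finsot=\emptyset\), then \(\thing\in\initialclosure(\emptyset)=\beautifulthings\), and~\ref{axiom:desirable:finite:sets:background} gives \(\set{\thing}\in\sdfsify{\cohsdt}\), i.e.\ \(\thing\in\cohsdt\). If \(\finsot\neq\emptyset\), I would apply~\ref{axiom:desirable:finite:sets:production} to the non-empty finite family of singletons \(\sofinsot\coloneqq\cset{\set{\thing'}}{\thing'\in\finsot}\Subset\sdfsify{\cohsdt}\): it admits a single selection map \(\selection\), namely \(\selection(\set{\thing'})=\thing'\), so that \(\selection(\sofinsot)=\finsot\); choosing the unique produced value to be \(\thing\in\initialclosure(\finsot)=\initialclosure(\selection(\sofinsot))\) yields \(\set{\thing}\in\sdfsify{\cohsdt}\), again \(\thing\in\cohsdt\). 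Both cases give \(\initialclosure(\cohsdt)\subseteq\cohsdt\), so \(\cohsdt\) is a coherent SDT.

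I expect the main obstacle to be the careful handling of the production axiom in the two directions. In~(i) one must simultaneously secure finiteness of the produced set \(\cset{\thing[\selection]}{\selection\in\finselections}\)---so that it is a legitimate element of \(\finitesetsofthings\)---and its meeting \(\cohsdt\) through the distinguished selection \(\selection[o]\). In~(ii) one must engineer the singleton family \(\sofinsot\) whose unique selection map reproduces \(\finsot\), which is precisely what allows the finitary decomposition of \(\initialclosure(\cohsdt)\) to be routed through~\ref{axiom:desirable:finite:sets:production}; the empty-\(\finsot\) case is exactly the one that \ref{axiom:desirable:finite:sets:background} is designed to cover. The final equivalence under finitariness is then nothing more than the conjunction of~(i) and~(ii).
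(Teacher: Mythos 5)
Your proof is correct and follows essentially the same route as the paper: the identity \(\sdtify{\sdfsify{\cohsdt}}=\cohsdt\) by unfolding definitions, part~(i) by checking \ref{axiom:desirable:finite:sets:consistency}--\ref{axiom:desirable:finite:sets:production} with the distinguished selection \(\selection[o]\) landing in \(\initialclosure(\selection[o](\sofinsot))\subseteq\cohsdt\), and the final equivalence as the conjunction of~(i) and~(ii). The only difference is that for~(ii) the paper simply invokes Proposition~\ref{prop:cohsdfs:to:cohsdt} together with the identity, whereas you inline that proposition's proof (the forbidden-things contradiction via~\ref{axiom:desirable:finite:sets:forbidden} and the singleton-family trick with its unique selection map); your inlined argument coincides with the paper's proof of that cited result.
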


\noindent So, assuming that the closure operator~\(\initialclosure\) is finitary, if we start out with a finitely coherent SDFS~\(\cohsdfs\), then the coherent SDT~\(\sdtify{\cohsdfs}\) and the corresponding finitely coherent and conjunctive SDFS~\(\sdfsify{\sdtify{\cohsdfs}}\) are conservative approximations of~\(\cohsdfs\): going from a model~\(\cohsdfs\) to its \emph{conjunctive part}~\(\sdfsify{\sdtify{\cohsdfs}}=\cset{\finsot\in\cohsdfs}{(\exists\thing\in\finsot)\set{\thing}\in\cohsdfs}\) typically results in a loss of information.
On the other hand, going from a coherent SDT~\(\cohsdt\) to the corresponding finitely coherent and conjunctive SDFS~\(\sdfsify{\cohsdt}\) \emph{never} results in a loss of information; see Theorem~\ref{thm:sdt:to:sdfs:order:embedding} below.

Combining these results allows us to find out what the conjunctive and finitely coherent SDFSes look like.

\begin{proposition}[Conjunctivity]\label{prop:conjunctive:and:coherent:sdfs}
Assume that the closure operator~\(\initialclosure\) is finitary.
An SDFS~\(\cohsdfs\subseteq\finitesetsofthings\) is finitely coherent and conjunctive if and only if there's some coherent SDT~\(\cohsdt\in\cohsdts\) such that \(\cohsdfs=\sdfsify{\cohsdt}\), and then necessarily \(\cohsdt=\sdtify{\cohsdfs}\).
\end{proposition}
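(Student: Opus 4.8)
The plan is to mirror the proof of the analogous Proposition~\ref{prop:conjunctive:and:coherent:sds}, now letting Propositions~\ref{prop:cohsdfs:to:cohsdt} and~\ref{prop:cohsdt:to:cohsdfs} do all the heavy lifting. The guiding idea is that conjunctivity is precisely the property that forces a finitely coherent SDFS to coincide with its conjunctive part~\(\sdfsify{\sdtify{\cohsdfs}}\), so the whole argument reduces to bookkeeping about the maps~\(\sdfsify{\bolleke}\) and~\(\sdtify{\bolleke}\).

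For the \emph{if} direction, I would assume \(\cohsdfs=\sdfsify{\cohsdt}\) for some coherent SDT~\(\cohsdt\in\cohsdts\). Proposition~\ref{prop:cohsdt:to:cohsdfs} then immediately gives that \(\cohsdfs=\sdfsify{\cohsdt}\) is finitely coherent. Conjunctivity I would check directly: any \(\finsot\in\sdfsify{\cohsdt}\) satisfies \(\finsot\cap\cohsdt\neq\emptyset\), so picking \(\thing\in\finsot\cap\cohsdt\) yields \(\set{\thing}\cap\cohsdt\neq\emptyset\) and hence the singleton subset \(\set{\thing}\in\sdfsify{\cohsdt}=\cohsdfs\). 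Finally, the identity \(\sdtify{\sdfsify{\cohsdt}}=\cohsdt\) from Proposition~\ref{prop:cohsdt:to:cohsdfs} gives \(\sdtify{\cohsdfs}=\cohsdt\), which settles the last claim.

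For the \emph{only if} direction, I would set \(\cohsdt\coloneqq\sdtify{\cohsdfs}\) and verify the two requirements on it. That \(\cohsdt\) is a coherent SDT follows from Proposition~\ref{prop:cohsdfs:to:cohsdt}, and this is the one place where the finitary hypothesis on~\(\initialclosure\) is genuinely needed. It then remains to establish \(\cohsdfs=\sdfsify{\sdtify{\cohsdfs}}\). The inclusion \(\sdfsify{\sdtify{\cohsdfs}}\subseteq\cohsdfs\) is supplied for free by Proposition~\ref{prop:cohsdfs:to:cohsdt}, and for the reverse inclusion I would invoke conjunctivity: given any \(\finsot\in\cohsdfs\), there is some \(\thing\in\finsot\) with \(\set{\thing}\in\cohsdfs\), i.e.\ \(\thing\in\sdtify{\cohsdfs}\), whence \(\finsot\cap\sdtify{\cohsdfs}\neq\emptyset\) and so \(\finsot\in\sdfsify{\sdtify{\cohsdfs}}\).

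I expect no serious obstacle, since the two supporting propositions carry the substantive content. The only point that needs care is keeping track of where the finitary assumption is indispensable: it is exactly what guarantees, via Proposition~\ref{prop:cohsdfs:to:cohsdt}, that \(\sdtify{\cohsdfs}\) is a \emph{coherent} SDT. Without it (as the Satan's Apple counterexample illustrates at the SDS level) the candidate \(\cohsdt\) need not be coherent and the representation would break down, even though conjunctivity alone would still deliver the equality \(\cohsdfs=\sdfsify{\sdtify{\cohsdfs}}\).
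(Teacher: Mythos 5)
Your proposal is correct and follows essentially the same route as the paper's own proof: both directions rest on Propositions~\ref{prop:cohsdt:to:cohsdfs} and~\ref{prop:cohsdfs:to:cohsdt}, with the direct check of conjunctivity of \(\sdfsify{\cohsdt}\), the reduction of conjunctivity to the identity \(\cohsdfs=\sdfsify{\sdtify{\cohsdfs}}\), and the finitary hypothesis entering only to guarantee coherence of \(\sdtify{\cohsdfs}\). Your closing observation about exactly where the finitary assumption is indispensable also matches the paper's discussion around the Satan's Apple counterexample.
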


Combining the results of Propositions~\ref{prop:cohsdt:to:cohsdfs} and~\ref{prop:conjunctive:and:coherent:sdfs} leads at once to a formal statement of what we alluded to in the introduction to this section.

\begin{theorem}\label{thm:sdt:to:sdfs:order:embedding}
Assume that the closure operator~\(\initialclosure\) is finitary.
The map~\(\sdfsify{\bolleke}\) is an order embedding of the (intersection) structure~\(\structure{\cohsdts,\subseteq}\) into the (intersection) structure~\(\structure{\cohsdfss,\subseteq}\).
Its image is the set of all conjunctive finitely coherent SDFSes, and on this set its inverse is the map~\(\sdtify{\bolleke}\).
\end{theorem}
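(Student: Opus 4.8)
The plan is to assemble the theorem directly from Propositions~\ref{prop:cohsdt:to:cohsdfs} and~\ref{prop:conjunctive:and:coherent:sdfs}, which between them already supply every ingredient needed to recognise $\sdfsify{\bolleke}$ as an order embedding with the stated image and inverse. Recall that an order embedding is a map that is both order preserving and order reflecting (the latter forcing injectivity once we also have order preservation and antisymmetry); after that, identifying the image and the inverse is a matter of bookkeeping. First I would check that $\sdfsify{\bolleke}$ genuinely maps $\cohsdts$ into $\cohsdfss$: this is precisely Proposition~\ref{prop:cohsdt:to:cohsdfs}\,(i), which tells us that $\sdfsify{\cohsdt}$ is a finitely coherent SDFS whenever $\cohsdt$ is a coherent SDT. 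So the restriction of $\sdfsify{\bolleke}$ to $\cohsdts$ is a well-defined map into $\cohsdfss$.

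For the order-embedding property itself, I would invoke the observation recorded just after Equation~\eqref{eq:sofinsot:to:finsot:and:finsot:to:sofinsot}, namely that $\sot[1]\subseteq\sot[2]$ if and only if $\sdfsify{\sot[1]}\subseteq\sdfsify{\sot[2]}$ for \emph{all} $\sot[1],\sot[2]\subseteq\things$. Restricting this equivalence to coherent SDTs shows at once that $\sdfsify{\bolleke}$ is both order preserving and order reflecting on $\cohsdts$, and hence injective. Note that this equivalence holds unconditionally, so the finitary hypothesis plays no role here.

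Next I would pin down the image and the inverse. Proposition~\ref{prop:conjunctive:and:coherent:sdfs} states, under the standing finitary assumption, that an SDFS is finitely coherent and conjunctive exactly when it is of the form $\sdfsify{\cohsdt}$ for some coherent SDT $\cohsdt$; this identifies the image of the restricted map as precisely the set of all conjunctive finitely coherent SDFSes. Finally, the identity $\sdtify{\sdfsify{\cohsdt}}=\cohsdt$ from Proposition~\ref{prop:cohsdt:to:cohsdfs} shows that $\sdtify{\bolleke}$ composed with $\sdfsify{\bolleke}$ is the identity on $\cohsdts$, so $\sdtify{\bolleke}$ is a left inverse; and the closing clause of Proposition~\ref{prop:conjunctive:and:coherent:sdfs}, that necessarily $\cohsdt=\sdtify{\cohsdfs}$ for any $\cohsdfs$ in the image, shows it is also a right inverse there. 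Hence $\sdtify{\bolleke}$ is the inverse of $\sdfsify{\bolleke}$ on its image, as claimed.

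The only place where genuine care is needed---and thus the sole real obstacle---is keeping track of where the finitary hypothesis on $\initialclosure$ is indispensable. It is needed neither for the well-definedness of the forward map nor for the order-embedding equivalence, both of which hold for arbitrary $\initialclosure$; it enters only in the image characterisation, via Proposition~\ref{prop:conjunctive:and:coherent:sdfs}, to guarantee that every conjunctive finitely coherent SDFS arises from a genuinely \emph{coherent} (rather than merely consistent) SDT. The Satan's Apple counterexample makes clear that this hypothesis cannot simply be dropped. Everything else is a routine composition of the cited results.
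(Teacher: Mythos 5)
Your proposal is correct and follows exactly the route the paper takes: the paper itself offers no separate proof beyond the remark that the theorem follows ``at once'' from combining Propositions~\ref{prop:cohsdt:to:cohsdfs} and~\ref{prop:conjunctive:and:coherent:sdfs} with the order-embedding observation recorded after Equation~\eqref{eq:sofinsot:to:finsot:and:finsot:to:sofinsot}, and you have simply spelled out that combination. Your accounting of where the finitary hypothesis is genuinely needed (only in the image characterisation, via Proposition~\ref{prop:conjunctive:and:coherent:sdfs}) is also accurate.
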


\section{Towards a representation with filters}\label{sec:towards:representation}
It's a well-established consequence of Stone's Representation Theorem \cite[Chapters~5, 10 and~11]{davey2002} that filters of subsets of a space constitute abstract ways of dealing with deductively closed sets of propositions about elements of that space, or in other words and very simply put, they allow us to do propositional logic with statements about elements of the space.
In Stone's Theorem, the subsets that the filters are composed of constitute a Boolean lattice.

Further on, we'll also have occasion to work with filters of subsets, but where the subsets no longer constitute a Boolean, but only a bounded distributive lattice.
The role of Stone's Theorem will therefore be taken over by its generalisation to that more general class of lattices, which is the Prime Filter Representation Theorem.
So, where do such filters on distributive lattices come from in the context described above?

In general, we'll denote a space by~\(\mathscr{X}\).
A \emph{filter} of subsets of~\(\mathscr{X}\)---also called a filter on~\(\structure{\powerset(\mathscr{X}),\subseteq}\) ---is then a non-empty subset~\(\filter\) of the power set~\(\powerset(\mathscr{X})\) of~\(\mathscr{X}\) such that:
\begin{enumerate}[label=\upshape SF\({}_{\arabic*}\).,ref=\upshape SF\({}_{\arabic*}\),leftmargin=*]
\item\label{axiom:filters:increasing} if \(A\in\filter\) and~\(A\subseteq B\) then also \(B\in\filter\), for all~\(A,B\in\powerset(\mathscr{X})\);
\item\label{axiom:filters:intersections} if \(A\in\filter\) and \(B\in\filter\) then also \(A\cap B\in\filter\), for all~\(A,B\in\powerset(\mathscr{X})\).
\end{enumerate}
We call a filter \emph{proper} if \(\filter\neq\powerset(\mathscr{X})\), or equivalently, if \(\emptyset\notin\filter\).
In these definitions, the collection of subsets \(\powerset(\mathscr{X})\) may also be replaced by a bounded (distributive) lattice of subsets, where intersection plays the role of infimum and union the role of supremum.

The particular space~\(\mathscr{X}\) that we'll be considering in this paper, is the set~\(\cohsdts\) of all coherent SDTs.
To guide the interpretation of what we're doing, we'll assume that there's an \emph{actual} (but unknown) SDT, which we'll denote by~\(\idealcohsdt\).
This SDT~\(\idealcohsdt\) is assumed to be coherent, and therefore a specific element of the set~\(\cohsdts\).
The elements of~\(\idealcohsdt\) are the things that \emph{actually} are desirable, and all other things in~\(\things\) aren't.
Moreover, each coherent SDT~\(\cohsdt\in\cohsdts\) is a \emph{possible} identification of this actual set~\(\idealcohsdt\).

Any non-contradictory propositional statement about~\(\idealcohsdt\) corresponds to some non-empty subset~\(A\subseteq\cohsdts\) of coherent SDTs for which the statement holds true, and this subset~\(A\) represents the remaining possible identifications of~\(\idealcohsdt\) after the statement has been made.
We'll call such subsets \emph{events}.
The empty subset of~\(\cohsdts\) is the event that represents contradictory propositional statements.

Any proper filter~\(\filter\) of such events~\(A\subseteq\cohsdts\) then corresponds to a deductively closed collection of propositional statements---a so-called theory---about~\(\idealcohsdt\), where intersection of events represents the conjunction of propositional statements, and inclusion of events represents implication of propositional statements.
The only improper filter~\(\powerset(\cohsdts)\), which contains the empty event, then represents logical contradiction at this level.

We can interpret the desirability statements studied in Sections~\ref{sec:desirability} and~\ref{sec:desirability:sdfs} as statements about such an actual~\(\idealcohsdt\).
Let's make this more clear.
Stating that a `\emph{set of things~\(\sot\) is desirable}' corresponds to the event
\begin{equation}\label{eq:the:cohsdtify:operator}
\cohsdts[\sot]
\coloneqq\cset{\cohsdt\in\cohsdts}{\sot\cap\cohsdt\neq\emptyset},
\end{equation}
as this amounts to requiring that at least one element of~\(\sot\) must be actually desirable, and must therefore belong to~\(\idealcohsdt\).\footnote{It will be obvious that whatever is said about sets of things~\(\sot\subseteq\things\) and sets of sets of things~\(\sosot\subseteq\setsofthings\) in this section, also holds in particular for their finite versions~\(\finsot\Subset\things\) and \(\sofinsot\Subset\finitesetsofthings\).}
In other words, the desirability statement is equivalent to `\(\idealcohsdt\in\cohsdts[\sot]\)'.
As a special case, stating that a thing~\(\thing\) is desirable corresponds to the event~\(\cohsdts[\set{\thing}]\coloneqq\cset{\cohsdt\in\cohsdts}{\thing\in\cohsdt}\), as it amounts to requiring that \(\thing\) must belong to~\(\idealcohsdt\).
Also observe that \(\cohsdts[\emptyset]=\emptyset\), the so-called \emph{impossible event}.

More generally, working with SDSes~\(\sosot\) as we did in Sections~\ref{sec:desirability} and~\ref{sec:desirability:sdfs}, therefore corresponds to dealing with a conjunction of the desirability statements `the set of things~\(\sot\) is desirable' for all~\(\sot\in\sosot\), or in other words with events of the type
\begin{equation}\label{eq:definition:event}
\event{\sosot}
\coloneqq\bigcap_{\sot\in\sosot}\cohsdts[\sot]
=\bigcap_{\sot\in\sosot}\cset{\cohsdt\in\cohsdts}{\sot\cap\cohsdt\neq\emptyset},
\text{ where \(\sosot\subseteq\setsofthings\)},
\end{equation}
where, of course, as a special case we find that~\(W=\emptyset\) corresponds to a vacuous assessment, which leads to no restrictions on~\(\idealcohsdt\): \(\event{\emptyset}=\cohsdts\).\footnote{This is in accordance with Equation~\eqref{eq:definition:event}, as the empty intersection of subsets of~\(\cohsdts\) is \(\cohsdts\) itself.}

Working with the filters of subsets of~\(\cohsdts\)---filters of events---that are generated by such collections, then represents doing propositional logic with basic statements of the type `the set of things~\(\sot\) is desirable', for \(\sot\in\setsofthings\).
We might therefore suspect that the language of such filters could be able to represent, explain, and perhaps also refine the relationships between the inference mechanisms that lie behind the intersection structures and closure operators in Sections~\ref{sec:desirability} and~\ref{sec:desirability:sdfs}.
Investigating this type of representation in terms of filters of events is the main aim of this paper.

There is, however, a particular aspect of the inference mechanisms at hand that tends to complicate---or is it simplify?---matters somewhat.
Not all events in~\(\powerset(\cohsdts)\) are relevant to our problem; only the ones that are intersections (and, as we'll see further on in Theorem~\ref{thm:basic:sets:constitute:a:bounded:lattice}, unions) of the basic events of the type~\(\cohsdts[\sot]\), \(\sot\subseteq\things\) seem to require attention.
We'll therefore restrict our focus to these, and as a result, the representing collection of events will no longer constitute a Boolean lattice, but only a specific \emph{distributive sublattice}.
As we'll see in Sections~\ref{sec:representation:finitary}--\ref{sec:representation:finitary:finite}, the effect will be two-fold: we'll broadly speaking be led to a more general \emph{prime filter} rather than an \emph{ultrafilter representation}, and this representation will be an isomorphism rather than an endomorphism.

\section{The basic representation lattices}\label{sec:representation:lattices}
Since we expect the events~\(\event{\sosot}\), \(\sosot\subseteq\setsofthings\) to become important in what follows, let's take some time in this section to study them a bit closer.

In order to do this, we need to introduce some new notation, which we'll have occasion to use later in other sections as well.
In any partially ordered set~\(\structure{\lattice,\leq}\), we let
\begin{multline}\label{eq:up:operators}
\posetupset{a}
\coloneqq\cset{b\in\lattice}{a\leq b}
\text{ and }
\posetUpset{A}
\coloneqq\bigcup_{a\in A}\posetupset{a}
=\cset{b\in\lattice}{(\exists a\in A)a\leq b}\\
\text{ for all~\(a\in\lattice\) and~\(A\subseteq\lattice\)}.
\end{multline}
Further on, we'll also make use of the fact that \(\posetUpset{\bigcup_{i\in I}A_i}=\bigcup_{i\in I}\posetUpset{A_i}\) and that \(\posetUpset{\posetUpset{A}}=\posetUpset{A}\).
In fact, \(\posetUpset{\bolleke}\) is a closure operator.

In the special case that \(\structure{\lattice,\leq}=\structure{\cohsdts,\subseteq}\) that we're about to consider now, we get
\begin{multline*}
\cohsdtupset{\cohsdt}
=\cset{\cohsdt'\in\cohsdts}{\cohsdt\subseteq\cohsdt'}
\text{ and }
\cohsdtUpset{A}
=\cset{\cohsdt'\in\cohsdts}{(\exists\cohsdt\in A)\cohsdt\subseteq\cohsdt'}\\
\text{ for all~\(\cohsdt\in\cohsdts\) and~\(A\subseteq\cohsdts\)}.
\end{multline*}

The following proposition is fairly easy to prove, but it will be instrumental in our discussion further on.
It links the events~\(\event{\sosot}\) to our discussion of desirability in Section~\ref{sec:desirability}.
We start with a definition of a type of event that's intricately linked with the production axioms~\ref{axiom:desirable:sets:production} and~\ref{axiom:desirable:sets:finitary:production} there:\footnote{We can call \ref{axiom:desirable:sets:increasing}, \ref{axiom:desirable:sets:forbidden} and \ref{axiom:desirable:sets:production} \emph{production} axioms, because they allow us to produce new desirable sets from old ones. The axiom~\ref{axiom:desirable:sets:consistency} serves a more destructive role.}
\begin{equation}\label{eq:definition:production:event}
\basicevent{\sosot}
\coloneqq\cset[\big]{\initialclosure\group{\selection(\sosot)}}
{\selection\in\selections}
\cap\cohsdts,
\text{ for~\(\sosot\subseteq\setsofthings\)}.
\end{equation}
Observe that \(\basicevent{\sosot}\subseteq\cohsdts\) for all~\(\sosot\subseteq\setsofthings\); in particular, \(\basicevent{\emptyset}=\set{\initialclosure\group{\emptyset}}\cap\cohsdts=\set{\beautifulthings}\).

\begin{proposition}\label{prop:the:two:systems}
Consider any~\(\sosot\subseteq\setsofthings\), then \(\event{\sosot}=\cohsdtUpset{\basicevent{\sosot}}\).
\end{proposition}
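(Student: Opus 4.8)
The plan is to prove the set equality $\event{\sosot}=\cohsdtUpset{\basicevent{\sosot}}$ by double inclusion, unwinding both sides to their defining membership conditions phrased in terms of a generic coherent SDT $\cohsdt\in\cohsdts$. Recall that by Equation~\eqref{eq:definition:event}, $\cohsdt\in\event{\sosot}$ means $\sot\cap\cohsdt\neq\emptyset$ for every $\sot\in\sosot$, i.e.\ $\cohsdt$ meets every desirable set in $\sosot$. On the other side, by Equation~\eqref{eq:up:operators} specialised to $\structure{\cohsdts,\subseteq}$, membership $\cohsdt\in\cohsdtUpset{\basicevent{\sosot}}$ means there is some $\selection\in\selections$ with $\initialclosure\group{\selection(\sosot)}\in\cohsdts$ and $\initialclosure\group{\selection(\sosot)}\subseteq\cohsdt$.

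For the inclusion $\cohsdtUpset{\basicevent{\sosot}}\subseteq\event{\sosot}$, I would start from such a selection map $\selection$ with $\initialclosure\group{\selection(\sosot)}\subseteq\cohsdt$. For any $\sot\in\sosot$, the selected element $\selection(\sot)$ lies in $\sot$ by the definition of a selection map, and $\selection(\sot)\in\selection(\sosot)\subseteq\initialclosure\group{\selection(\sosot)}\subseteq\cohsdt$ using the extensivity axiom~\ref{axiom:closure:super}. Hence $\selection(\sot)\in\sot\cap\cohsdt$, so $\sot\cap\cohsdt\neq\emptyset$; since $\sot\in\sosot$ was arbitrary, $\cohsdt\in\event{\sosot}$.

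The reverse inclusion $\event{\sosot}\subseteq\cohsdtUpset{\basicevent{\sosot}}$ is where the real content lies, and I expect it to be the main obstacle. Assume $\cohsdt\in\event{\sosot}$, so $\sot\cap\cohsdt\neq\emptyset$ for every $\sot\in\sosot$. The key construction is to pick, for each $\sot\in\sosot$, some element of the non-empty intersection $\sot\cap\cohsdt$; this defines a selection map $\selection\in\selections$ with the special property that $\selection(\sot)\in\cohsdt$ for all $\sot\in\sosot$, so that $\selection(\sosot)\subseteq\cohsdt$. Then I would apply monotonicity~\ref{axiom:closure:increasing} and idempotence together with $\cohsdt\in\cohsdts$ (so $\initialclosure(\cohsdt)=\cohsdt$) to get $\initialclosure\group{\selection(\sosot)}\subseteq\initialclosure(\cohsdt)=\cohsdt$. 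Because $\cohsdt$ is coherent and $\initialclosure\group{\selection(\sosot)}$ is a closed subset of $\cohsdt$, it is itself closed and disjoint from $\uglythings$, hence coherent, so $\initialclosure\group{\selection(\sosot)}\in\basicevent{\sosot}$. This exhibits a witness for $\cohsdt\in\cohsdtUpset{\basicevent{\sosot}}$.

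The one subtlety I would flag carefully concerns the case $\sosot=\emptyset$ and, more generally, the well-definedness of selection maps: when $\sosot=\emptyset$ there is a unique (empty) selection map and $\selection(\sosot)=\emptyset$, so both sides reduce to $\cohsdtUpset{\set{\beautifulthings}}=\cset{\cohsdt\in\cohsdts}{\beautifulthings\subseteq\cohsdt}=\cohsdts$, matching $\event{\emptyset}=\cohsdts$; I would note this is consistent since $\beautifulthings\subseteq\cohsdt$ holds for every coherent $\cohsdt$. When $\sosot\neq\emptyset$, the construction of $\selection$ in the reverse inclusion requires each $\sot\cap\cohsdt$ to be non-empty, which is exactly the hypothesis $\cohsdt\in\event{\sosot}$; if $\emptyset\in\sosot$ then $\event{\sosot}$ would already be empty (since $\emptyset\cap\cohsdt=\emptyset$) and there is nothing to prove. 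Verifying that the $\initialclosure\group{\selection(\sosot)}$ produced really lands in $\cohsdts$ (not merely in $\toppedcohsdts$) by inheriting disjointness from $\uglythings$ via $\initialclosure\group{\selection(\sosot)}\subseteq\cohsdt$ and $\cohsdt\cap\uglythings=\emptyset$ is the remaining routine check.
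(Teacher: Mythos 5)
Your proposal is correct and follows essentially the same route as the paper's own proof: the paper organises the argument as a single chain of equivalences on membership of a generic $\cohsdt\in\cohsdts$, whereas you split it into two inclusions, but the substantive steps are identical (constructing a selection map from the non-empty intersections $\sot\cap\cohsdt$, using \ref{axiom:closure:super}--\ref{axiom:closure:idempotent} with $\initialclosure(\cohsdt)=\cohsdt$ to pass between $\selection(\sosot)\subseteq\cohsdt$ and $\initialclosure(\selection(\sosot))\subseteq\cohsdt$, and observing that $\initialclosure(\selection(\sosot))$ inherits coherence from $\cohsdt$ so that it lands in $\basicevent{\sosot}$). Your extra remark on the case $\sosot=\emptyset$ is a harmless bonus the paper leaves implicit.
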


It turns out that the propositional statement that `\(\sosot\) is an SDS' is never contradictory as soon as \(\sosot\) is a subset of some coherent SDS.
A similar result holds for finitely coherent SDSes, of course, at least for finite~\(\sosot\).

\begin{proposition}[Consistency]\label{prop:filter:consistency}
The following statements hold.
\begin{enumerate}[label=\upshape(\roman*),leftmargin=*,noitemsep]
\item For any coherent SDS~\(\cohsds\in\cohsdss\): \(\event{\sosot}\neq\emptyset\) for all~\(\sosot\subseteq\cohsds\);
\item For any finitely coherent SDS~\(\cohsds\in\fincohsdss\): \(\event{\sosot}\neq\emptyset\) for all~\(\sosot\Subset\cohsds\);
\item For any finitely coherent SDFS~\(\cohsdfs\in\cohsdfss\): \(\event{\sofinsot}\neq\emptyset\) for all~\(\sofinsot\Subset\cohsdfs\).
\end{enumerate}
\end{proposition}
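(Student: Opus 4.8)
The plan is to reduce the three claims to the non-emptiness of the production events $\basicevent{\cdot}$ and then to extract a contradiction from the coherence axioms. The starting point is Proposition~\ref{prop:the:two:systems}, which gives $\event{\sosot}=\cohsdtUpset{\basicevent{\sosot}}\supseteq\basicevent{\sosot}$, and (via the footnote extending all definitions to finite objects) likewise $\event{\sofinsot}\supseteq\basicevent{\sofinsot}$. So in each part it suffices to show that the relevant production event is non-empty. Unpacking~\eqref{eq:definition:production:event}, $\basicevent{\sosot}\neq\emptyset$ means precisely that there is some selection map $\selection$ for which $\initialclosure(\selection(\sosot))$ is coherent; and since $\initialclosure(\selection(\sosot))$ is a fixpoint of $\initialclosure$, hence closed, its coherence is equivalent to $\initialclosure(\selection(\sosot))\cap\uglythings=\emptyset$.

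First I would treat part~(i). Assume towards a contradiction that $\basicevent{\sosot}=\emptyset$ for some $\sosot\subseteq\cohsds$ with $\cohsds\in\cohsdss$. Then for every $\selection\in\selections$ the closed set $\initialclosure(\selection(\sosot))$ fails to be coherent, so it meets the forbidden things, and I pick $\thing[\selection]\in\initialclosure(\selection(\sosot))\cap\uglythings$. Because $\sosot\subseteq\cohsds$, the combined production axiom~\ref{axiom:desirable:sets:production:with:background} applied to this choice yields $\cset{\thing[\selection]}{\selection\in\selections}\in\cohsds$. This set lies entirely in $\uglythings$, so the forbidden-things axiom~\ref{axiom:desirable:sets:forbidden} gives $\cset{\thing[\selection]}{\selection\in\selections}\setminus\uglythings=\emptyset\in\cohsds$, contradicting the consistency axiom~\ref{axiom:desirable:sets:consistency}. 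Using the \emph{combined} form of the production axiom is convenient because it also covers the degenerate case $\sosot=\emptyset$ in one stroke: there the single (empty) selection map gives $\initialclosure(\emptyset)=\beautifulthings$, and the argument collapses to the sanitary assumption~\ref{axiom:desirable:things:background:consistency} that $\beautifulthings\cap\uglythings=\emptyset$.

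Parts~(ii) and~(iii) follow the very same template, with only the range of the production axiom changing: for~(ii), with $\cohsds\in\fincohsdss$ and $\sosot\Subset\cohsds$, I would invoke the finitary combined axiom~\ref{axiom:desirable:sets:finitary:production:with:background}, and for~(iii), with $\cohsdfs\in\cohsdfss$ and $\sofinsot\Subset\cohsdfs$, the SDFS axiom~\ref{axiom:desirable:finite:sets:production:with:background} together with~\ref{axiom:desirable:finite:sets:forbidden} and~\ref{axiom:desirable:finite:sets:consistency}. The one point that genuinely needs care — and the place I expect the (minor) obstacle to sit — is in part~(iii), where the produced set $\cset{\thing[\selection]}{\selection\in\finselections}$ must itself be a \emph{finite} set of things to be eligible for membership in $\cohsdfs\subseteq\finitesetsofthings$. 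This is exactly where finiteness is used: since $\sofinsot$ is a finite collection of finite sets, each non-empty by~\ref{axiom:desirable:finite:sets:consistency}, there are only finitely many selection maps $\selection\in\finselections$, so the produced set is finite and the production axiom applies. The harmless analogue for parts~(i) and~(ii) is merely that no element of $\sosot$ is empty (again by~\ref{axiom:desirable:sets:consistency}), so that selection maps exist and the chosen forbidden things are well defined.
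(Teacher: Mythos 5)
Your proof is correct and coincides with the paper's own argument: assume \(\event{\sosot}=\emptyset\), reduce to \(\basicevent{\sosot}=\emptyset\) via Proposition~\ref{prop:the:two:systems}, pick \(\thing[\selection]\in\initialclosure(\selection(\sosot))\cap\uglythings\) for each selection map, and use the combined production axiom together with the forbidden-things axiom to conclude \(\emptyset\in\cohsds\), contradicting consistency. Your additional remark about the finiteness of \(\finselections\) in part~(iii) is a useful detail that the paper leaves implicit under ``completely analogous'', but it does not change the route.
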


There are two simple properties (in a finitary and infinitary version) that all our results about filter representation will essentially rest upon.
Indeed, the following proposition is the crucial workhorse for the argumentation in the coming sections.
It nicely connects the order-theoretic aspects of working with events to those of working with (finitely) coherent SD(F)Ses.

\begin{proposition}\label{prop:it:does:not:matter:which}
Consider any coherent SDS~\(\cohsds\in\cohsdss\), then the following statements hold for all~\(\sosot[1],\sosot[2]\subseteq\setsofthings\):
\begin{enumerate}[label=\upshape(\roman*),leftmargin=*]
\item\label{it:it:does:not:matter:which:inclusion:infinitary} if \(\eventwithindex{1}\subseteq\eventwithindex{2}\) and \(\sosot[1]\subseteq\cohsds\) then also \(\sosot[2]\subseteq\cohsds\);
\item\label{it:it:does:not:matter:which:equality:infinitary} if \(\eventwithindex{1}=\eventwithindex{2}\) then \(\sosot[1]\subseteq\cohsds\ifandonlyif\sosot[2]\subseteq\cohsds\).
\end{enumerate}
Similarly, consider any finitely coherent SDS~\(\cohsds\in\fincohsdss\), then the following statements hold for all~\(\sosot[1],\sosot[2]\Subset\setsofthings\):
\begin{enumerate}[label=\upshape(\roman*),leftmargin=*]
\item\label{it:it:does:not:matter:which:inclusion} if \(\eventwithindex{1}\subseteq\eventwithindex{2}\) and \(\sosot[1]\Subset\cohsds\) then also \(\sosot[2]\Subset\cohsds\);
\item\label{it:it:does:not:matter:which:equality} if \(\eventwithindex{1}=\eventwithindex{2}\) then \(\sosot[1]\Subset\cohsds\ifandonlyif\sosot[2]\Subset\cohsds\).
\end{enumerate}
And finally, consider any finitely coherent SDFS~\(\cohsdfs\in\cohsdfss\), then the following statements hold for all~\(\sofinsot[1],\sofinsot[2]\Subset\finitesetsofthings\):
\begin{enumerate}[label=\upshape(\roman*),leftmargin=*]
\item\label{it:it:does:not:matter:which:inclusion:finite} if \(\fineventwithindex{1}\subseteq\fineventwithindex{2}\) and \(\sofinsot[1]\Subset\cohsdfs\) then also \(\sofinsot[2]\Subset\cohsdfs\);
\item\label{it:it:does:not:matter:which:equality:finite} if \(\fineventwithindex{1}=\fineventwithindex{2}\) then \(\sofinsot[1]\Subset\cohsdfs\ifandonlyif\sofinsot[2]\Subset\cohsdfs\).
\end{enumerate}
\end{proposition}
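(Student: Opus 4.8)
The plan is to reduce all six statements to a single core argument. First, each equality claim~(\ref{it:it:does:not:matter:which:equality:infinitary}, \ref{it:it:does:not:matter:which:equality}, \ref{it:it:does:not:matter:which:equality:finite}) follows at once from the matching inclusion claim by applying the latter in both directions, so I would only prove the three inclusion claims. Next, I would note that these three share one proof skeleton, differing only in which production and increasingness axioms are invoked---\ref{axiom:desirable:sets:production:with:background} together with \ref{axiom:desirable:sets:increasing} for coherent SDSes, the finitary counterpart \ref{axiom:desirable:sets:finitary:production:with:background} for finitely coherent SDSes, and \ref{axiom:desirable:finite:sets:production:with:background} together with \ref{axiom:desirable:finite:sets:increasing} for SDFSes---and in a finiteness bookkeeping step. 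I would therefore carry the argument out in full for~\ref{it:it:does:not:matter:which:inclusion:infinitary} and indicate the routine adaptations for the other two.

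For~\ref{it:it:does:not:matter:which:inclusion:infinitary}, suppose \(\eventwithindex{1}\subseteq\eventwithindex{2}\) and \(\sosot[1]\subseteq\cohsds\). Since \(\eventwithindex{2}=\bigcap_{\sot\in\sosot[2]}\cohsdts[\sot]\subseteq\cohsdts[{\sot[2]}]\) for every \(\sot[2]\in\sosot[2]\), it suffices to show, for each fixed \(\sot[2]\in\sosot[2]\), that \(\eventwithindex{1}\subseteq\cohsdts[{\sot[2]}]\) forces \(\sot[2]\in\cohsds\); ranging over all \(\sot[2]\in\sosot[2]\) then yields \(\sosot[2]\subseteq\cohsds\). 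So I would first reduce to a singleton~\(\sosot[2]\).

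The crux is then to feed a suitable set of things into the production axiom. For each selection map \(\selection\in\selections[{\sosot[1]}]\) I would distinguish two cases for the closed set \(\initialclosure(\selection(\sosot[1]))\). If it is coherent, it lies in \(\basicevent{\sosot[1]}\) and hence, by Proposition~\ref{prop:the:two:systems}, in \(\eventwithindex{1}=\cohsdtUpset{\basicevent{\sosot[1]}}\subseteq\cohsdts[{\sot[2]}]\), so that \(\initialclosure(\selection(\sosot[1]))\cap\sot[2]\neq\emptyset\). If it is not coherent, then, being a closed set failing~\eqref{eq:definition:cohsdts}, it must meet~\(\uglythings\). Either way \(\initialclosure(\selection(\sosot[1]))\cap(\sot[2]\cup\uglythings)\neq\emptyset\), so I may pick \(\thing[\selection]\in\initialclosure(\selection(\sosot[1]))\cap(\sot[2]\cup\uglythings)\) for every~\(\selection\). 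Applying~\ref{axiom:desirable:sets:production:with:background} to \(\sosot[1]\subseteq\cohsds\) then yields \(\cset{\thing[\selection]}{\selection\in\selections[{\sosot[1]}]}\in\cohsds\); this set is contained in \(\sot[2]\cup\uglythings\), so~\ref{axiom:desirable:sets:forbidden} places \(\cset{\thing[\selection]}{\selection\in\selections[{\sosot[1]}]}\setminus\uglythings\subseteq\sot[2]\) in~\(\cohsds\), whence~\ref{axiom:desirable:sets:increasing} delivers \(\sot[2]\in\cohsds\).

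The main obstacle is exactly this handling of forbidden things: the event \(\eventwithindex{1}\) only registers those selection maps whose closure is coherent, whereas the production axiom quantifies over all of \(\selections[{\sosot[1]}]\), so I must route the non-coherent closures through~\(\uglythings\) and then clean them up with~\ref{axiom:desirable:sets:forbidden}. Using the combined background-and-production axiom~\ref{axiom:desirable:sets:production:with:background} rather than~\ref{axiom:desirable:sets:production} is also what lets the degenerate case \(\sosot[1]=\emptyset\) run through the same argument, with \(\selections[\emptyset]\) the singleton consisting of the empty selection and \(\initialclosure(\emptyset)=\beautifulthings\in\cohsdts\). For~\ref{it:it:does:not:matter:which:inclusion} and~\ref{it:it:does:not:matter:which:inclusion:finite} I would repeat this verbatim with the finitary and SDFS axioms in place of their general versions; the only additional point to verify is that the produced set \(\cset{\thing[\selection]}{\selection\in\selections[{\sosot[1]}]}\) (respectively \(\cset{\thing[\selection]}{\selection\in\selections[{\sofinsot[1]}]}\)) is admissible for those axioms---finiteness of \(\sosot[1]\) (respectively of \(\sofinsot[1]\) together with the finiteness of each of its elements) secures this---and that the reduction to a singleton \(\sosot[2]\) (respectively \(\sofinsot[2]\)) preserves finiteness.
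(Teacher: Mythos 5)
Your proposal is correct and follows essentially the same route as the paper's proof: reduce to the inclusion statements, fix a single \(\sot[2]\in\sosot[2]\), split the selection maps according to whether \(\initialclosure(\selection(\sosot[1]))\) is coherent (in which case Proposition~\ref{prop:the:two:systems} puts it in \(\eventwithindex{1}\subseteq\cohsdts[{\sot[2]}]\), so it meets \(\sot[2]\)) or not (in which case it meets \(\uglythings\)), then chain the production, forbidden and increasingness axioms. The paper makes the same two-case split explicit via a partition of \(\selections[{\sosot[1]}]\) and works out the finitely coherent SDS version in detail rather than the infinitary one, but these are presentational differences only.
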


We're now ready to introduce the particular collections of events we'll build our further discussion on.
Let's consider the sets of events
\begin{equation*}
\events\coloneqq\cset[\big]{\event{\sosot}}{\sosot\subseteq\setsofthings}\text{, }
\finevents\coloneqq\cset[\big]{\event{\sosot}}{\sosot\Subset\setsofthings}
\text{ and }
\finfinevents\coloneqq\cset[\big]{\event{\sofinsot}}{\sofinsot\Subset\finitesetsofthings},
\end{equation*}
and order each of them by set inclusion~\(\subseteq\).
That doing so leads to bounded distributive lattices, will be crucial for our filter representation efforts, since it will allow us to use the Prime Filter Representation Theorem, as we'll explain in the next section.

\begin{theorem}\label{thm:basic:sets:constitute:a:bounded:lattice}
The following statements hold:
\begin{enumerate}[label=\upshape(\roman*),leftmargin=*]
\item the partially ordered set~\(\structure{\events,\subseteq}\) is a completely distributive complete lattice, with union as join and intersection as meet, \(\emptyset\) as bottom and \(\cohsdts\) as top;
\item the partially ordered set \(\structure{\finevents,\subseteq}\) is a bounded distributive lattice, with union as join and intersection as meet, \(\emptyset\) as bottom and \(\cohsdts\) as top;
\item the partially ordered set \(\structure{\finfinevents,\subseteq}\) is a bounded distributive lattice, with union as join and intersection as meet, \(\emptyset\) as bottom and \(\cohsdts\) as top.
\end{enumerate}
\end{theorem}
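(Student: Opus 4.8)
The plan is to realise all three structures as sub-posets of $\structure{\powerset(\cohsdts),\subseteq}$, which is a completely distributive complete (indeed Boolean) lattice whose join is union and whose meet is intersection, with bottom~$\emptyset$ and top~$\cohsdts$. Once I show that each of~$\events$, $\finevents$ and~$\finfinevents$ contains~$\emptyset$ and~$\cohsdts$ and is closed under the appropriate (binary, respectively arbitrary) unions and intersections, the rest comes for free: a sub-poset of a power set that is closed under the ambient $\cap$ and $\cup$ is automatically a sublattice on which meets and joins are computed exactly as in $\powerset(\cohsdts)$, and the (complete) distributive laws, being identities in these two operations, then transfer verbatim from $\powerset(\cohsdts)$.

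Everything rests on two identities, which I would establish first. For meets, the definition $\event{\sosot}=\bigcap_{\sot\in\sosot}\cohsdts[\sot]$ gives at once $\bigcap_{i\in I}\event{\sosot[i]}=\event{\bigcup_{i\in I}\sosot[i]}$, so intersections of events are events, with the index collections combined by union. For joins, I would first record the pointwise fact $\cohsdts[\sot[1]]\cup\cohsdts[\sot[2]]=\cohsdts[\sot[1]\cup\sot[2]]$, which is immediate from $\cohsdts[\sot]=\cset{\cohsdt\in\cohsdts}{\sot\cap\cohsdt\neq\emptyset}$ together with $(\sot[1]\cup\sot[2])\cap\cohsdt=(\sot[1]\cap\cohsdt)\cup(\sot[2]\cap\cohsdt)$. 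Distributing $\cup$ over $\cap$ in $\powerset(\cohsdts)$ then yields $\event{\sosot[1]}\cup\event{\sosot[2]}=\event{\cset{\sot[1]\cup\sot[2]}{\sot[1]\in\sosot[1]\text{, }\sot[2]\in\sosot[2]}}$, so unions of events are again events, with the index collections combined element-wise by union.

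From these identities the bounded-lattice claims follow quickly. The top is $\event{\emptyset}=\cohsdts$ and the bottom is $\event{\set{\emptyset}}=\cohsdts[\emptyset]=\emptyset$, both realised inside each of the three collections (using that $\emptyset\in\setsofthings$ and $\emptyset\in\finitesetsofthings$). For~$\finevents$ and~$\finfinevents$ I would simply note that a union of two finite index collections is finite, that the element-wise union combination of two finite collections is finite, and that a union of two finite sets of things is again finite, so both are closed under binary meet and join; with distributivity inherited from $\powerset(\cohsdts)$ this settles parts~(ii) and~(iii). For part~(i) I would upgrade the join identity to arbitrary unions: the complete distributive law in $\powerset(\cohsdts)$ rewrites $\bigcup_{i\in I}\bigcap_{\sot\in\sosot[i]}\cohsdts[\sot]$ as the intersection, over all choice maps~$\selection$ picking one $\selection(i)\in\sosot[i]$ for each~$i$, of $\bigcup_{i\in I}\cohsdts[\selection(i)]=\cohsdts[\bigcup_{i\in I}\selection(i)]$, which is again an event. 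Hence $\events$ is closed under arbitrary intersections and unions, making it a complete sublattice on which meets and joins coincide with $\cap$ and $\cup$, so that complete distributivity transfers from the power set.

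The main obstacle is the join step, and within it the passage to arbitrary unions for part~(i): one has to apply the complete distributive law correctly and keep the choice-function bookkeeping straight, while also checking that the resulting index family is a genuine subset of $\setsofthings$ and, in the finitary cases, preserves the finiteness constraints. The meet identity, the identification of the bounds, and the transfer of distributivity are routine by comparison.
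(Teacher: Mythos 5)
Your proposal is correct and follows essentially the same route as the paper's proof: the same identity \(\bigcap_{i\in I}\event{\sosot[i]}=\event{\bigcup_{i\in I}\sosot[i]}\) for meets, the same choice-map computation \(\bigcup_{i\in I}\cohsdts[\selection(i)]=\cohsdts[\bigcup_{i\in I}\selection(i)]\) combined with the complete distributive law for joins, the same identification of the bounds via \(\event{\set{\emptyset}}=\emptyset\) and \(\event{\emptyset}=\cohsdts\), and the same observation that (complete) distributivity is automatic for a lattice of sets whose operations are union and intersection.
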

\noindent Obviously, \(\finfinevents\subseteq\finevents\subseteq\events\), and the inclusion is also a sublattice relation.

\section{A brief primer on (inference with) filters}\label{sec:filters}
The introductory discussion in Section~\ref{sec:towards:representation} led us to try and represent inference about desirability statements using filters on appropriate sets of events.
After spending some effort on identifying these sets of events as bounded distributive lattices in Section~\ref{sec:representation:lattices}, we're now ready to start looking at how to do inference with filters, and how to use that inference mechanism to represent reasoning about desirability statements.
The present section summarises those aspects of filters and filter inference on (bounded distributive) lattices that are relevant to our representation effort.

We begin by recalling the definition of a filter on a bounded lattice~\(\structure{\lattice,\leq}\) with meet~\(\meet\) and join~\(\join\); we'll denote the lattice's top by~\(\latticetop\) and its bottom by~\(\latticebottom\).
It's an immediate generalisation of the definition of a filter of subsets we gave near the beginning of Section~\ref{sec:towards:representation}.

\begin{definition}[Filters]\label{def:filters}
Let~\(\structure{\lattice,\leq}\) be a bounded lattice with meet~\(\meet\) and join~\(\join\), bottom~\(\latticebottom\) and top~\(\latticetop\).
A non-empty subset~\(\filter\) of the set~\(\lattice\) is called a \emph{filter} on~\(\structure{\lattice,\leq}\) if it satisfies the following properties:
\begin{enumerate}[label=\upshape LF\({}_{\arabic*}\).,ref=\upshape LF\({}_{\arabic*}\),leftmargin=*]
\item\label{axiom:lattice:filters:increasing} if \(a\in\filter\) and~\(a\leq b\) then also \(b\in\filter\), for all~\(a,b\in\lattice\);
\item\label{axiom:lattice:filters:intersections} if \(a\in\filter\) and \(b\in\filter\) then also \(a\meet b\in\filter\), for all~\(a,b\in\lattice\).
\end{enumerate}
We call a filter~\(\filter\) \emph{proper} if \(\filter\neq\lattice\), or equivalently, if \(\latticebottom\notin\filter\).
We denote the set of all proper filters of~\(\structure{\lattice,\leq}\) by~\(\properlatticefilters\), and the set of all filters by~\(\latticefilters=\properlatticefilters\cup\set{\lattice}\).
\end{definition}

We continue with a short discussion of filter bases, inspired by the discussion in, for instance, Ref.~\cite[Section~12]{willard1970}.
This will be relevant for some of the proofs in the Appendix.
A  non-empty subset~\(\filterbase\) of a proper filter~\(\filter\) is called a \emph{filter base} for~\(\filter\) if
\[
\group{\forall a\in\lattice}
\group[\big]{a\in\filter\ifandonlyif\group{\exists b\in\filterbase}b\leq a},
\]
or in other words if \(\filter=\posetUpset{\filterbase}=\cset{a\in\lattice}{\group{\exists b\in\filterbase}b\leq a}\).
Clearly, a non-empty  subset~\(\filterbase\) of \(\lattice\setminus\set{\latticebottom}\) is a filter base for some proper filter if and only if it is \emph{directed downwards}, meaning that
\begin{equation}\label{eq:directed:downwards}
\group{\forall b_1,b_2\in\filterbase}\group{\exists b\in\filterbase}b\leq b_1\meet b_2.
\end{equation}

The inference mechanism that's associated with filters is, as are all such mechanisms, based on the idea of \emph{closure} and \emph{intersection structures}, which we brought to the fore in Section~\ref{sec:desirability}.
Here too, it's easy to see that the set~\(\properlatticefilters\) of all proper filters on a \emph{bounded}\footnote{That the lattice is bounded, guarantees that intersections of filters always contain the lattice top, and are therefore non-empty.} lattice~\(\structure{\lattice,\leq}\) is indeed an intersection structure, meaning that it's closed under arbitrary non-empty intersections: for any non-empty family~\(\filter[i]\), \(i\in I\) of elements of~\(\properlatticefilters\), we see that still \(\bigcap_{i\in I}\filter[i]\in\properlatticefilters\).\footnote{Similarly, \(\latticefilters=\properlatticefilters\cup\set{\lattice}\) is a \emph{topped} intersection structure: closed under arbitrary (also empty) intersections \cite[Chapter~7]{davey2002}.}

Again, if we associate with this intersection structure the map~\(\filterclosure\colon\powerset(\lattice)\to\latticefilters\) defined by
\begin{equation*}
\filterclosure(\setofpropositions)
\coloneqq\bigcap\cset{\filter\in\latticefilters}{\setofpropositions\subseteq\filter}
=\bigcap\cset{\filter\in\properlatticefilters}{\setofpropositions\subseteq\filter}
\text{ for all \(\setofpropositions\subseteq\lattice\)},
\end{equation*}
then this map is a \emph{closure operator}.

In this language, the filters are the perfect, or deductively closed, subsets of the bounded lattice~\(\structure{\lattice,\leq}\), and the closure operator can be used to extend any set of lattice elements to the smallest deductively closed set that includes it.
If we call a set of lattice elements~\(\setofpropositions\) \emph{filterisable} if it's included in some proper filter, or equivalently, if \(\filterclosure(\setofpropositions)\neq\lattice\), then we find that \(\filterclosure(\setofpropositions)\) is the smallest proper filter that includes~\(\setofpropositions\), for any filterisable set~\(\setofpropositions\).
Of course,
\begin{equation*}
\setofpropositions=\filterclosure(\setofpropositions)\ifandonlyif\setofpropositions\in\latticefilters,
\text{ for all~\(\setofpropositions\subseteq\lattice\)},
\end{equation*}
and therefore also \(\latticefilters=\filterclosure(\powerset(\lattice))\).
Once again, the following result is then a standard conclusion in order theory \cite[Chapter~7]{davey2002}.

\begin{proposition}\label{prop:filters:constititute:a:complete:lattice}
The partially ordered set~\(\structure{\latticefilters,\subseteq}\) is a complete lattice.
For any non-empty family~\(\filter[i]\), \(i\in I\) of elements of~\(\latticefilters\), we have for its infimum and its supremum that, respectively, \(\inf_{i\in I}\filter[i]=\bigcap_{i\in I}\filter[i]\) and \(\sup_{i\in I}\filter[i]=\filterclosure\group{\bigcup_{i\in I}\filter[i]}\).
\end{proposition}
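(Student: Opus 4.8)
The plan is to invoke the standard order-theoretic fact that a topped intersection structure, ordered by inclusion, is automatically a complete lattice whose meets are intersections and whose joins are closures of unions (see Ref.~\cite[Chapter~7]{davey2002}). Indeed, it's already been observed that \(\latticefilters=\properlatticefilters\cup\set{\lattice}\) is a topped intersection structure, and that the associated map~\(\filterclosure\) is a closure operator on~\(\powerset(\lattice)\) whose fixed points are exactly the filters, in the sense that \(\setofpropositions=\filterclosure(\setofpropositions)\) if and only if \(\setofpropositions\in\latticefilters\). Everything then follows by specialising that general result, but it's worth spelling out the two extremal formulas explicitly.

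First I would check the infimum. For a non-empty family~\(\filter[i]\), \(i\in I\) of filters, the set~\(\bigcap_{i\in I}\filter[i]\) lies in~\(\latticefilters\) precisely because \(\latticefilters\) is closed under arbitrary non-empty intersections; it's clearly a lower bound, and any filter~\(\filter\) with \(\filter\subseteq\filter[i]\) for all~\(i\) satisfies \(\filter\subseteq\bigcap_{i\in I}\filter[i]\), so it's the greatest lower bound, establishing \(\inf_{i\in I}\filter[i]=\bigcap_{i\in I}\filter[i]\).

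Next I would check the supremum. By definition of the closure operator, \(\filterclosure\group{\bigcup_{i\in I}\filter[i]}\) is a filter, and since \(\filter[j]\subseteq\bigcup_{i\in I}\filter[i]\subseteq\filterclosure\group{\bigcup_{i\in I}\filter[i]}\) for every~\(j\in I\) by extensivity of~\(\filterclosure\), it's an upper bound. To see it's the least one, take any filter~\(\filter\) with \(\filter[i]\subseteq\filter\) for all~\(i\); then \(\bigcup_{i\in I}\filter[i]\subseteq\filter\), and applying~\(\filterclosure\) together with its monotonicity and the fact that \(\filterclosure(\filter)=\filter\) (filters being the fixed points of~\(\filterclosure\)) gives \(\filterclosure\group{\bigcup_{i\in I}\filter[i]}\subseteq\filter\), so \(\sup_{i\in I}\filter[i]=\filterclosure\group{\bigcup_{i\in I}\filter[i]}\).

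Completeness then follows because every non-empty family has both a supremum and an infimum, the empty intersection supplies the top~\(\lattice\), and \(\filterclosure(\emptyset)\) supplies the bottom; equivalently, a poset in which all infima (including the empty one) exist is a complete lattice. There is no real obstacle here: the only point that requires a moment's care is the least-upper-bound verification, where one must use the fixed-point characterisation of filters under~\(\filterclosure\) rather than merely its monotonicity.
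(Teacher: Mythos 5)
Your proposal is correct and is exactly the standard argument the paper relies on: the paper gives no explicit proof but simply cites this as a standard conclusion about topped intersection structures and their associated closure operators (Davey and Priestley, Chapter~7), and your verification of the infimum and supremum formulas is precisely what that citation unpacks to. The one step you rightly flag as needing care---using the fixed-point characterisation \(\filterclosure(\filter)=\filter\) for filters in the least-upper-bound argument---is handled correctly.
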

\noindent The top~\(\bigcup\latticefilters\) of this complete lattice is the largest filter~\(\lattice\); its bottom~\(\filterclosure(\emptyset)=\bigcap\latticefilters=\bigcap\properlatticefilters\) is the smallest proper filter~\(\set{\latticetop}\).

Two special types of filters deserve more attention in the light of what's to come.

\subsection*{Principal filters}
In the special case that \(\structure{\lattice,\leq}\) is a \emph{complete} lattice, we can replace the finite meets in~\ref{axiom:lattice:filters:intersections} by arbitrary, possibly infinite ones, as in
\begin{enumerate}[label={\upshape LF}\({}_2^{\mathrm{p}}\).,ref={\upshape LF}\({}_2^{\mathrm{p}}\),leftmargin=*]
\item\label{axiom:lattice:filters:arbitrary:intersections} if \(A\subseteq\filter\) then also \(\inf A\in\filter\), for all non-empty~\(A\subseteq\lattice\).
\end{enumerate}
In particular, we then find that \(\inf\filter\in\filter\), and it's not hard to show that then \(\filter=\posetupset{\inf\filter}\).
Such a so-called \emph{principal filter} is clearly proper if and only if \(\inf\filter\neq\latticebottom\); see Ref.~\cite[Section~2.20]{davey2002} for the terminology.
We see that the set of all principal filters, partially ordered by set inclusion, is trivially order-isomorphic to the complete lattice~\(\structure{\lattice,\leq}\) itself.

\subsection*{Prime filters on distributive lattices}
A \emph{prime filter}~\(\primefilter\) on~\(\structure{\lattice,\leq}\) is a proper filter that also satisfies the following condition:
\begin{enumerate}[label=\upshape LPF.,ref=\upshape LPF,leftmargin=*]
\item\label{axiom:lattice:filters:prime} if \(a\join b\in\primefilter\) then also \(a\in\primefilter\) or \(b\in\primefilter\), for all~\(a,b\in\lattice\).
\end{enumerate}
We denote the set of all prime filters on~\(\structure{\lattice,\leq}\) by \(\latticeprimefilters\).
When \(\structure{\lattice,\leq}\) is a bounded distributive lattice, any proper filter can be represented by prime filters, as it's the intersection of all the prime filters that include it.
This is the Prime Filter Representation Theorem; see Ref.~\cite[Sections~10.7--21]{davey2002} for more details.

\begin{theorem}[Prime filter representation]\label{thm:prime:filter:representation}
Let \(\structure{\lattice,\leq}\) be a bounded distributive lattice.
Then any non-empty~\(\filter\subseteq\lattice\) is a filter if and only if \(\filter=\bigcap\cset{\primefilter\in\latticeprimefilters}{\filter\subseteq\primefilter}\).
\end{theorem}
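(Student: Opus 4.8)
The plan is to prove the two implications separately. The direction that assumes $\filter=\bigcap\cset{\primefilter\in\latticeprimefilters}{\filter\subseteq\primefilter}$ is routine: every prime filter is a filter and $\latticefilters$ is a topped intersection structure, so any intersection of (prime) filters — including the empty one, which yields $\lattice$ — is again a filter; since $\filter$ is non-empty and equals such an intersection, it is itself a filter. This also disposes of the degenerate case $\filter=\lattice$, since, prime filters being proper, none can contain $\lattice$, so the indexing family is empty and its intersection is $\lattice$, making the identity trivial. Henceforth I may assume $\filter$ is proper; the forward direction carries all the weight and relies on the distributivity of the lattice.

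For that forward direction, the inclusion $\filter\subseteq\bigcap\cset{\primefilter\in\latticeprimefilters}{\filter\subseteq\primefilter}$ is immediate, as each element of $\filter$ lies in every prime filter containing $\filter$. The content lies in the reverse inclusion, which I would prove in contrapositive form: given $a\in\lattice$ with $a\notin\filter$, I must exhibit a prime filter $\primefilter$ with $\filter\subseteq\primefilter$ and $a\notin\primefilter$. The first observation is that the principal ideal $I\coloneqq\cset{b\in\lattice}{b\leq a}$ is disjoint from $\filter$: if some $b\in\filter$ satisfied $b\leq a$, then $a\in\filter$ by~\ref{axiom:lattice:filters:increasing}, contradicting $a\notin\filter$.

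The heart of the matter is then a prime-filter separation result, which I would obtain by Zorn's lemma followed by an appeal to distributivity. Consider the collection of all filters that include $\filter$ and stay disjoint from $I$; it is non-empty (it contains $\filter$) and closed under unions of chains, since a union of a chain of filters is again a filter and meets $I$ only if one of its members already does. A maximal element $\primefilter$ then includes $\filter$, avoids $I$ — hence omits $a$ and, since $\latticebottom\in I$, is proper — and I claim it is prime. Suppose $x\join y\in\primefilter$ while $x\notin\primefilter$ and $y\notin\primefilter$. Maximality forces the filter generated by $\primefilter\cup\set{x}$ to meet $I$, so there is some $p_1\in\primefilter$ with $p_1\meet x\leq a$, and likewise some $p_2\in\primefilter$ with $p_2\meet y\leq a$. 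Put $p\coloneqq p_1\meet p_2\in\primefilter$; then $p\meet x\leq a$ and $p\meet y\leq a$, so by distributivity $p\meet(x\join y)=(p\meet x)\join(p\meet y)\leq a$, placing $p\meet(x\join y)$ in $I$. Yet $p\in\primefilter$ and $x\join y\in\primefilter$ give $p\meet(x\join y)\in\primefilter$, contradicting $\primefilter\cap I=\emptyset$. Hence $\primefilter$ is prime, which completes the separation and therefore the reverse inclusion.

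I expect the verification of primeness to be the genuinely delicate step: one must convert the non-membership of $x$ and $y$ into the concrete witnesses $p_1\meet x\leq a$ and $p_2\meet y\leq a$ through maximality, and then use the distributive identity $p\meet(x\join y)=(p\meet x)\join(p\meet y)$ to force the contradiction. The remaining ingredients — the chain bound for Zorn, the disjointness of $\filter$ from $I$, and the trivial inclusion — are bookkeeping. Since, as noted, the argument is entirely standard, in the final write-up I would compress the Zorn-and-distributivity core to a pointer to Davey and Priestley \cite[Sections~10.7--21]{davey2002}.
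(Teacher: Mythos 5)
Your proof is correct. The paper gives no proof of Theorem~\ref{thm:prime:filter:representation} of its own but defers entirely to Ref.~\cite[Sections~10.7--21]{davey2002}; your argument---the trivial inclusion, the degenerate case \(\filter=\lattice\) via the empty intersection, and then a Zorn's-lemma maximalisation of the filters that include \(\filter\) and avoid the principal ideal below \(a\), with distributivity used to show the maximal such filter is prime---is precisely the standard separation proof found in that reference, so there is nothing substantive to compare.
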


\subsection*{Taking stock}
Now that we know what the inference mechanism underlying filters is, we can clarify what we mean by \emph{filter representation} of other inference mechanisms.
Axioms~\ref{axiom:desirable:sets:consistency}--\ref{axiom:desirable:sets:production} govern the inference mechanism behind the desirability of sets of things, and we've seen in Section~\ref{sec:desirability:sds}, and in particular in Proposition~\ref{prop:coherent:sds:constitute:a:complete:lattice}, that its mathematical essence can be condensed into the complete lattice~\(\structure{\toppedcohsdss,\subseteq}\) and the closure operator~\(\cohsdsclosure\).
Similarly, the finitary version of this inference mechanism is laid down in Axioms~\ref{axiom:desirable:sets:consistency}--\ref{axiom:desirable:sets:finitary:production}, and is captured by the complete lattice~\(\structure{\toppedfincohsdss,\subseteq}\) and the closure operator~\(\fincohsdsclosure\).

In Section~\ref{sec:towards:representation}, we voiced our suspicion that the language of filters might be able to represent, explain, and perhaps also refine the relationships between the inference mechanisms that lie behind the intersection structures and closure operators in Sections~\ref{sec:desirability} and \ref{sec:desirability:sdfs}.
Reduced to its essence, this suspicion leads to the following question: can we find bounded lattices~\(\structure{\lattice,\leq}\) such that the complete lattice~\(\structure{\latticefilters,\subseteq}\) and the closure operator~\(\filterclosure\) are essentially the same as---can be identified through an order isomorphism with---the complete lattice~\(\structure{\toppedcohsdss,\subseteq}\) and the closure operator~\(\cohsdsclosure\); or in the finitary case, the same as the complete lattice~\(\structure{\toppedfincohsdss,\subseteq}\) and the closure operator~\(\fincohsdsclosure\)?
We'll show in Sections~\ref{sec:representation:finitary} and~\ref{sec:representation} that, indeed, we can find such bounded lattices: the completely distributive complete lattice of events~\(\structure{\events,\subseteq}\) and the bounded distributive lattice of events~\(\structure{\finevents,\subseteq}\), respectively.

Why bother?
What's so special about such representations in terms of filters of events?
The answer's twofold.

First of all, there's the issue of interpretation we've already drawn attention to in Section~\ref{sec:towards:representation}.
The events in~\(\events\), \(\finevents\) and~\(\finfinevents\) represent propositional statements about the desirability of things, and filters of such events represent collections of such propositional statements that are closed under logical deduction---conjunction and modus ponens.
The order isomorphisms that we'll identify below then simply tell us that making inferences about desirable things and desirable sets of things based on the Axioms~\ref{axiom:desirable:things:closure}--\ref{axiom:desirable:things:background:consistency} and~\ref{axiom:desirable:sets:consistency}--\ref{axiom:desirable:sets:production}/\ref{axiom:desirable:sets:finitary:production} (and \ref{axiom:desirable:finite:sets:consistency}--\ref{axiom:desirable:finite:sets:production}) is mathematically equivalent to doing a type of propositional logic with propositional statements about the desirability of things.

The second reason has a more mathematical flavour.
Since the sets of events~\(\events\), \(\finevents\) and~\(\finfinevents\) constitute bounded distributive lattices when ordered by set inclusion, we can make use of the Prime Filter Representation Theorem on such bounded distributive lattices, which states that any filter can be written as the intersection of all the prime filters it's included in.
The order isomorphisms we're about to identify in the following sections, will then allow us to transport this theorem to the context of (finitely) coherent SD(F)Ses, and write these as intersections of special types of them, namely the conjunctive ones.
This will lead us directly to the so-called conjunctive representation results for coherent SDSes in Theorem~\ref{thm:conjunctive:representation}, for finitely coherent SDSes in Theorem~\ref{thm:conjunctive:representation:finitary}, and for finitely coherent SDFSes in Corollary~\ref{cor:conjunctive:representation:finitely:coherent:finite} below.

\section{Filter representation for finitely coherent SDSes}\label{sec:representation:finitary}
We're now first going to focus on \emph{finitely} coherent SDSes, and try to relate them to the filters on the bounded distributive lattice~\(\structure{\finevents,\subseteq}\).
This will lead directly to a so-called conjunctive representation result of finitely coherent SDSes in terms of---as limits inferior of---conjunctive ones.
We'll then see in subsequent sections that coherent SDSes and finitely coherent SDFSes also have conjunctive representation results, which turn out to be formally simpler.

We adapt the notations and definitions related to filters in Section~\ref{sec:filters} from a generic bounded distributive lattice~\(\smash{\structure{\lattice,\leq}}\) to the specific bounded distributive lattice~\(\structure{\finevents,\subseteq}\).
This leads to the complete lattice~\(\structure{\fineventfilters,\subseteq}\) of all filters on~\(\finevents\), with bottom~\(\fineventfiltersbottom=\bigcap\fineventfilters=\set{\cohsdts}\) and top~\(\fineventfilterstop=\finevents\); see also Theorem~\ref{thm:basic:sets:constitute:a:bounded:lattice} and Proposition~\ref{prop:filters:constititute:a:complete:lattice}.
We also denote the set of all proper filters by~\(\properfineventfilters\coloneqq\fineventfilters\setminus\set{\finevents}\), and the corresponding closure operator by~\(\fineventfilterclosure\).

In order to establish the existence of an order isomorphism between the complete lattices~\(\structure{\toppedfincohsdss,\subseteq}\) and~\(\structure{\fineventfilters,\subseteq}\), we consider the maps
\begin{equation*}
\finfilterise\colon\setsofsetsofthings\to\powerset(\finevents)
\colon\cohsds\mapsto\finfilterise(\cohsds)\coloneqq\cset{\event{\sosot}}{\sosot\Subset\cohsds}
\end{equation*}
and
\begin{equation*}
\findesirify\colon\powerset(\finevents)\to\setsofsetsofthings
\colon\filter\mapsto\findesirify(\filter)
\coloneqq\cset{\sot\in\setsofthings}{\cohsdts[\sot]\in\filter},
\end{equation*}
which, as we'll see presently, do the job.

\begin{theorem}[Order isomorphism: finitely coherent SDSes]\label{thm:the:finitary:representation:theorem}
The following statements hold, for all \(\cohsds,\cohsds[1],\cohsds[2]\subseteq\setsofthings\) and all \(\filter,\filter[1],\filter[2]\subseteq\finevents\):
\begin{enumerate}[label=\upshape(\roman*),leftmargin=*,noitemsep]
\item\label{it:the:finitary:representation:theorem:if:cohsds:then:filter} if \(\cohsds\) is a finitely coherent SDS then \(\finfilterise(\cohsds)\) is a proper filter on~\(\structure{\finevents,\subseteq}\);
\item\label{it:the:finitary:representation:theorem:if:filter:then:cohsds} if \(\filter\) is a proper filter on~\(\structure{\finevents,\subseteq}\), then \(\findesirify(\filter)\) is a finitely coherent SDS;
\item\label{it:the:finitary:representation:theorem:cohsds:embedding} if \(\cohsds\) is a finitely coherent SDS, then \((\findesirify\circ\finfilterise)(\cohsds)=\cohsds\);
\item\label{it:the:finitary:representation:theorem:filter:embedding} if \(\filter\) is a proper filter on~\(\structure{\finevents,\subseteq}\), then \((\finfilterise\circ\findesirify)(\filter)=\filter\);
\item\label{it:the:finitary:representation:theorem:cohsds:order:preserving} if \(\cohsds[1]\subseteq\cohsds[2]\) then \(\finfilterise(\cohsds[1])\subseteq\finfilterise(\cohsds[2])\);
\item\label{it:the:finitary:representation:theorem:filter:order:preserving} if \(\filter[1]\subseteq\filter[2]\) then \(\findesirify(\filter[1])\subseteq\findesirify(\filter[2])\);
\item\label{it:the:finitary:representation:theorem:filterise:bounds} \(\finfilterise(\fincohsdsbottom)=\fineventfiltersbottom\) and \(\finfilterise(\fincohsdstop)=\fineventfilterstop\);
\item\label{it:the:finitary:representation:theorem:desirify:bounds} \(\findesirify(\fineventfiltersbottom)=\fincohsdsbottom\) and \(\findesirify(\fineventfilterstop)=\fincohsdstop\).
\end{enumerate}
This tells us that \(\finfilterise\) is an order isomorphism between~\(\structure{\toppedfincohsdss,\subseteq}\) and~\(\structure{\fineventfilters,\subseteq}\), with inverse order isomorphism~\(\findesirify\).
Moreover,
\begin{enumerate}[label=\upshape(\roman*),leftmargin=*,resume]
\item\label{it:the:finitary:representation:theorem:completeness} if the proper filter~\(\filter=\finfilterise(\cohsds)\) and the finitely coherent SDS~\(\cohsds=\findesirify(\filter)\) are related by this order isomorphism, then \(\filter\) is a prime filter on~\(\structure{\finevents,\subseteq}\) if and only if \(\cohsds\) satisfies the so-called \emph{completeness condition}
\begin{equation}\label{eq:complete:cohsds}
\group{\forall\sot[1],\sot[2]\subseteq\things}
\group[\big]{\sot[1]\cup\sot[2]\in\cohsds\then
\group{\sot[1]\in\cohsds\text{ or }\sot[2]\in\cohsds}}.
\end{equation}
\end{enumerate}
\end{theorem}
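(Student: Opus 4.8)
The plan is to verify the eight itemised claims, assemble them into the order-isomorphism statement, and then treat the completeness/primeness equivalence \ref{it:the:finitary:representation:theorem:completeness} separately. Two elementary facts about basic events will be used repeatedly: monotonicity and the union and intersection identities \(\cohsdts[{\sot[1]}]\cup\cohsdts[{\sot[2]}]=\cohsdts[{\sot[1]\cup\sot[2]}]\) and \(\event{\sosot[1]\cup\sosot[2]}=\event{\sosot[1]}\cap\event{\sosot[2]}\) (immediate from \eqref{eq:definition:event}), exhibiting every element of \(\finevents\) as a finite meet of basic events; together with the coherence facts that \(\cohsdt\cap\uglythings=\emptyset\) and \(\beautifulthings\subseteq\cohsdt\) for every \(\cohsdt\in\cohsdts\). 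For \ref{it:the:finitary:representation:theorem:if:cohsds:then:filter} I would then check the filter axioms of Definition~\ref{def:filters} directly: non-emptiness since \(\emptyset\Subset\cohsds\) gives \(\event{\emptyset}=\cohsdts\in\finfilterise(\cohsds)\); closure under meets \ref{axiom:lattice:filters:intersections} since \(\sosot[1],\sosot[2]\Subset\cohsds\) forces \(\sosot[1]\cup\sosot[2]\Subset\cohsds\) and \(\event{\sosot[1]}\cap\event{\sosot[2]}=\event{\sosot[1]\cup\sosot[2]}\); upward closure \ref{axiom:lattice:filters:increasing} is exactly Proposition~\ref{prop:it:does:not:matter:which}\ref{it:it:does:not:matter:which:inclusion}; and properness follows from Proposition~\ref{prop:filter:consistency}, which gives \(\event{\sosot}\neq\emptyset\) for all \(\sosot\Subset\cohsds\), so \(\emptyset\notin\finfilterise(\cohsds)\).

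The main obstacle is \ref{it:the:finitary:representation:theorem:if:filter:then:cohsds}, and within it the finitary production axiom \ref{axiom:desirable:sets:finitary:production}. Conditions \ref{axiom:desirable:sets:consistency}--\ref{axiom:desirable:sets:background} for \(\findesirify(\filter)\) translate into easy statements about \(\filter\): \ref{axiom:desirable:sets:consistency} holds since \(\cohsdts[\emptyset]=\emptyset\notin\filter\) by properness; \ref{axiom:desirable:sets:increasing} uses monotonicity of \(\sot\mapsto\cohsdts[\sot]\) with \ref{axiom:lattice:filters:increasing}; \ref{axiom:desirable:sets:forbidden} uses \(\cohsdts[\sot]=\cohsdts[\sot\setminus\uglythings]\); and \ref{axiom:desirable:sets:background} uses \(\cohsdts[\set{\thing[{+}]}]=\cohsdts\) for \(\thing[{+}]\in\beautifulthings\), which is the top and so lies in \(\filter\). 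For \ref{axiom:desirable:sets:finitary:production}, given \(\emptyset\neq\sosot\Subset\findesirify(\filter)\) and a choice \(\thing[\selection]\in\initialclosure(\selection(\sosot))\) for all \(\selection\in\selections\), set \(\tau\coloneqq\cset{\thing[\selection]}{\selection\in\selections}\); since each \(\cohsdts[\sot]\in\filter\), the finite meet \(\event{\sosot}=\bigcap_{\sot\in\sosot}\cohsdts[\sot]\) is in \(\filter\) by \ref{axiom:lattice:filters:intersections}. The crux is the inclusion \(\event{\sosot}\subseteq\cohsdts[\tau]\): for any \(\cohsdt\in\event{\sosot}\), picking a point of \(\cohsdt\cap\sot\) for each \(\sot\in\sosot\) defines \(\selection[o]\in\selections\) with \(\selection[o](\sosot)\subseteq\cohsdt\), so \(\thing[{\selection[o]}]\in\initialclosure(\selection[o](\sosot))\subseteq\initialclosure(\cohsdt)=\cohsdt\) by closedness, whence \(\tau\cap\cohsdt\neq\emptyset\). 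Because \(\cohsdts[\tau]=\event{\set{\tau}}\in\finevents\) (a singleton set of sets, irrespective of \(\tau\) being infinite), upward closure \ref{axiom:lattice:filters:increasing} gives \(\cohsdts[\tau]\in\filter\), i.e.\ \(\tau\in\findesirify(\filter)\).

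The inverse relations \ref{it:the:finitary:representation:theorem:cohsds:embedding} and \ref{it:the:finitary:representation:theorem:filter:embedding} both reduce to the equivalence \(\sosot\Subset\cohsds\ifandonlyif\event{\sosot}\in\finfilterise(\cohsds)\): the forward direction is definitional and the converse is Proposition~\ref{prop:it:does:not:matter:which}\ref{it:it:does:not:matter:which:equality}. This gives \(\sot\in\cohsds\ifandonlyif\cohsdts[\sot]\in\finfilterise(\cohsds)\) for \ref{it:the:finitary:representation:theorem:cohsds:embedding}; for \ref{it:the:finitary:representation:theorem:filter:embedding} the nontrivial inclusion takes \(e\in\filter\), writes \(e=\event{\sosot}\) with \(\sosot\Subset\setsofthings\), observes \(\event{\sosot}\subseteq\cohsdts[\sot]\in\finevents\) forces \(\cohsdts[\sot]\in\filter\) for each \(\sot\in\sosot\) by \ref{axiom:lattice:filters:increasing}, hence \(\sosot\Subset\findesirify(\filter)\) and \(e\in\finfilterise(\findesirify(\filter))\). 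Order preservation \ref{it:the:finitary:representation:theorem:cohsds:order:preserving}--\ref{it:the:finitary:representation:theorem:filter:order:preserving} is immediate. Extending to the tops via \(\finfilterise(\setsofthings)=\finevents\) and \(\findesirify(\finevents)=\setsofthings\), items \ref{it:the:finitary:representation:theorem:if:cohsds:then:filter}--\ref{it:the:finitary:representation:theorem:filter:order:preserving} show that \(\finfilterise\) and \(\findesirify\) are mutually inverse order-preserving bijections between \(\structure{\toppedfincohsdss,\subseteq}\) and \(\structure{\fineventfilters,\subseteq}\), hence order isomorphisms; the bound identities \ref{it:the:finitary:representation:theorem:filterise:bounds}--\ref{it:the:finitary:representation:theorem:desirify:bounds} then follow, the bottom case resting on \(\cohsdts[\sot]=\cohsdts\ifandonlyif\sot\cap\beautifulthings\neq\emptyset\) (the nontrivial direction using the coherent witness \(\beautifulthings\)).

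Finally, for \ref{it:the:finitary:representation:theorem:completeness} I would combine \(\cohsdts[{\sot[1]}]\cup\cohsdts[{\sot[2]}]=\cohsdts[{\sot[1]\cup\sot[2]}]\) with \(\sot\in\cohsds\ifandonlyif\cohsdts[\sot]\in\filter\). If \(\filter\) is prime, then \(\sot[1]\cup\sot[2]\in\cohsds\) gives \(\cohsdts[{\sot[1]}]\cup\cohsdts[{\sot[2]}]\in\filter\), and \ref{axiom:lattice:filters:prime} yields \(\sot[1]\in\cohsds\) or \(\sot[2]\in\cohsds\), which is \eqref{eq:complete:cohsds}. The harder converse needs the join of two \emph{arbitrary} events, and this is the second place where real work is required: writing \(a=\event{\sosot[1]}=\bigcap_{\sot\in\sosot[1]}\cohsdts[\sot]\) and \(b=\event{\sosot[2]}=\bigcap_{\altoptset\in\sosot[2]}\cohsdts[\altoptset]\) as finite meets of basic events, distributivity of \(\structure{\finevents,\subseteq}\) (Theorem~\ref{thm:basic:sets:constitute:a:bounded:lattice}) gives \(a\cup b=\bigcap_{\sot\in\sosot[1],\,\altoptset\in\sosot[2]}\cohsdts[{\sot\cup\altoptset}]=\event{\cset{\sot\cup\altoptset}{\sot\in\sosot[1],\altoptset\in\sosot[2]}}\), so \(a\cup b\in\filter\) means every \(\sot\cup\altoptset\in\cohsds\). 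Were neither \(a\in\filter\) nor \(b\in\filter\), some \(\sot[o]\in\sosot[1]\) and \(\altoptset[o]\in\sosot[2]\) would lie outside \(\cohsds\), contradicting \eqref{eq:complete:cohsds} for the pair \(\sot[o],\altoptset[o]\); hence \(\filter\) is prime. The two substantive steps throughout are thus the production verification in \ref{it:the:finitary:representation:theorem:if:filter:then:cohsds} and this distributivity reduction in \ref{it:the:finitary:representation:theorem:completeness}.
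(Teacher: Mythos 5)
Your proposal is correct and follows essentially the same route as the paper's proof: direct verification of the filter axioms for \(\finfilterise(\cohsds)\) and of \ref{axiom:desirable:sets:consistency}--\ref{axiom:desirable:sets:finitary:production} for \(\findesirify(\filter)\) (with the production axiom handled via the inclusion \(\event{\sosot}\subseteq\cohsdts[\tau]\), which you re-derive inline where the paper cites Proposition~\ref{prop:the:two:systems}), the same use of Proposition~\ref{prop:it:does:not:matter:which} and the characterisation \(\sosot\Subset\findesirify(\filter)\ifandonlyif\event{\sosot}\in\filter\) for the two inverse relations, and the same product-set/distributivity reduction for the prime-filter versus completeness equivalence. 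The only cosmetic difference is that you deduce the bound identities \ref{it:the:finitary:representation:theorem:filterise:bounds}--\ref{it:the:finitary:representation:theorem:desirify:bounds} from the isomorphism rather than computing them directly, which is equally valid.
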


As we're about to show, an important consequence of the existence of the order isomorphism in Theorem~\ref{thm:the:finitary:representation:theorem} is that it allows us to represent any finitely coherent SDS in terms of coherent but conjunctive models.
This is interesting because, by Proposition~\ref{prop:conjunctive:and:coherent:sds}, such coherent conjunctive SDSes are conceptually much simpler, as they represent SDTs---they only represent \emph{conjunctive} desirability statements.

To see how this representation in terms of conjunctive models comes about, we begin by recalling that the events \(\event{\sosot}\) for~\(\sosot\Subset\setsofthings\) are sets of coherent SDTs.
They are completely determined by the following chain of equivalences: consider any~\(\cohsdt\in\cohsdts\), then
\begin{equation*}
\cohsdt\in\event{\sosot}
\ifandonlyif\cohsdt\in\bigcap_{\sot\in\sosot}\cohsdts[\sot]
\ifandonlyif\group{\forall\sot\in\sosot}\sot\cap\cohsdt\neq\emptyset
\ifandonlyif\sosot\Subset\sdsify{\cohsdt},
\end{equation*}
where we recall from Equation~\eqref{eq:sos:to:sot:and:sot:to:sos} that \(\sdsify{\cohsdt}\coloneqq\cset{\sot\in\setsofthings}{\sot\cap\cohsdt\neq\emptyset}\).
This tells us that
\begin{equation}\label{eq:connection:with:binary:model:finitary}
\event{\sosot}=\cset{\cohsdt\in\cohsdts}{\sosot\Subset\sdsify{\cohsdt}},
\text{ for all~\(\sosot\Subset\setsofthings\)}.
\end{equation}

As a next step, we now consider any proper filter~\(\filter\in\properfineventfilters\) and any finitely coherent SDS~\(\cohsds\in\fincohsdss\) that correspond, in the sense that \(\cohsds=\findesirify(\filter)\) and \(\filter=\finfilterise(\cohsds)\).
On the one hand, we infer from \(\cohsds=\findesirify(\filter)\) that for any~\(\sot\in\setsofthings\):
\begin{align*}
\sot\in\cohsds
\ifandonlyif\cohsdts[\sot]\in\filter
\ifandonlyif\group{\exists\sop\in\filter}\sop\subseteq\cohsdts[\sot]
&\ifandonlyif\group{\exists\sop\in\filter}\group{\forall\cohsdt\in\sop}\sot\cap\cohsdt\neq\emptyset\\
&\ifandonlyif\group{\exists\sop\in\filter}\group{\forall\cohsdt\in\sop}\sot\in\sdsify{\cohsdt},
\end{align*}
where the second equivalence follows from the filter property~\ref{axiom:filters:increasing}.
This chain of equivalences therefore tells us that
\begin{equation}\label{eq:from:filter:to:cohsds:finitary}
\cohsds=\bigcup_{\sop\in\filter}\bigcap_{\cohsdt\in\sop}\sdsify{\cohsdt}.
\end{equation}
On the other hand, we infer from \(\filter=\finfilterise(\cohsds)\) and Equation~\eqref{eq:connection:with:binary:model:finitary} that
\begin{equation}\label{eq:from:cohsds:to:filter:finitary}
\filter=\cset[\big]{\cset{\cohsdt\in\cohsdts}{\sosot\Subset\sdsify{\cohsdt}}}{\sosot\Subset\cohsds}.
\end{equation}

Taking into account the consequences of Equations~\eqref{eq:connection:with:binary:model:finitary}--\eqref{eq:from:cohsds:to:filter:finitary} leads to the following representation result for finite consistency, finite coherence, and the corresponding closure operator~\(\fincohsdsclosure\) in terms of the conjunctive models~\(\sdsify{\cohsdt}\).

\begin{theorem}[Conjunctive representation]\label{thm:conjunctive:representation:finite}
Consider any SDS~\(\cohsds\subseteq\setsofthings\), then the following statements hold:
\begin{enumerate}[label=\upshape(\roman*),leftmargin=*]
\item\label{it:conjunctive:representation:finite:consistency} \(\cohsds\) is finitely consistent if and only if \(\event{\sosot}=\cset{\cohsdt\in\cohsdts}{\sosot\Subset\sdsify{\cohsdt}}\neq\emptyset\) for all~\(\sosot\Subset\cohsds\);
\item\label{it:conjunctive:representation:finite:closure} \(\fincohsdsclosure(\cohsds)=\bigcup_{\sosot\Subset\cohsds}\bigcap_{\cohsdt\in\cohsdts\colon\sosot\Subset\sdsify{\cohsdt}}\sdsify{\cohsdt}\);
\item\label{it:conjunctive:representation:finite:coherence} \(\cohsds\) is finitely coherent if and only if \(\cohsds\) is finitely consistent and
\[
\cohsds
=\bigcup_{\sosot\Subset\cohsds}\bigcap_{\cohsdt\in\cohsdts\colon\sosot\Subset\sdsify{\cohsdt}}
\sdsify{\cohsdt}.
\]
\end{enumerate}
\end{theorem}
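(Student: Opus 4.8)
The plan is to establish the three parts in order, reducing everything to the filter machinery of Theorem~\ref{thm:the:finitary:representation:theorem} and the consistency result of Proposition~\ref{prop:filter:consistency}. The key simplifying observation is that, by Equation~\eqref{eq:connection:with:binary:model:finitary}, the set \(\cset{\cohsdt\in\cohsdts}{\sosot\Subset\sdsify{\cohsdt}}\) occurring throughout the statement is exactly the event \(\event{\sosot}\); this lets me rewrite the displayed unions and intersections over conjunctive models \(\sdsify{\cohsdt}\) as statements about events and the maps \(\finfilterise,\findesirify\).

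For part~\ref{it:conjunctive:representation:finite:consistency}, the direct implication is immediate: if \(\cohsds\) is finitely consistent it extends to some finitely coherent \(\cohsds'\), and every \(\sosot\Subset\cohsds\) then satisfies \(\sosot\Subset\cohsds'\), so \(\event{\sosot}\neq\emptyset\) by Proposition~\ref{prop:filter:consistency}. For the converse I would observe that \(\finfilterise(\cohsds)=\cset{\event{\sosot}}{\sosot\Subset\cohsds}\) is a non-empty filter base on \(\structure{\finevents,\subseteq}\): it contains \(\event{\emptyset}=\cohsdts\) and is closed under meets because \(\event{\sosot[1]}\cap\event{\sosot[2]}=\event{\sosot[1]\cup\sosot[2]}\) with \(\sosot[1]\cup\sosot[2]\Subset\cohsds\). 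The hypothesis that \(\event{\sosot}\neq\emptyset\) for all \(\sosot\Subset\cohsds\) says precisely that \(\emptyset\notin\finfilterise(\cohsds)\), so its upward closure \(\filter\coloneqq\fineventUpset{\finfilterise(\cohsds)}\) is a \emph{proper} filter. By Theorem~\ref{thm:the:finitary:representation:theorem}\ref{it:the:finitary:representation:theorem:if:filter:then:cohsds}, \(\findesirify(\filter)\) is finitely coherent, and since \(\cohsdts[\sot]=\event{\set{\sot}}\in\finfilterise(\cohsds)\subseteq\filter\) for every \(\sot\in\cohsds\), we get \(\cohsds\subseteq\findesirify(\filter)\); hence \(\cohsds\) is finitely consistent.

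Part~\ref{it:conjunctive:representation:finite:closure} is the heart of the argument, and its main obstacle is showing that the two closure operators correspond under the isomorphism, namely \(\finfilterise(\fincohsdsclosure(\cohsds))=\fineventUpset{\finfilterise(\cohsds)}\) whenever \(\cohsds\) is finitely consistent. I would prove this by double inclusion. Writing \(\cohsds^{*}=\fincohsdsclosure(\cohsds)\) and \(\filter^{*}=\finfilterise(\cohsds^{*})\), which is a proper filter by Theorem~\ref{thm:the:finitary:representation:theorem}\ref{it:the:finitary:representation:theorem:if:cohsds:then:filter}, the inclusion \(\cohsds\subseteq\cohsds^{*}\) together with order-preservation (Theorem~\ref{thm:the:finitary:representation:theorem}\ref{it:the:finitary:representation:theorem:cohsds:order:preserving}) gives \(\finfilterise(\cohsds)\subseteq\filter^{*}\) and hence \(\fineventUpset{\finfilterise(\cohsds)}\subseteq\filter^{*}\). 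Conversely, with \(\filter=\fineventUpset{\finfilterise(\cohsds)}\) the proper filter from part~\ref{it:conjunctive:representation:finite:consistency}, Theorem~\ref{thm:the:finitary:representation:theorem}\ref{it:the:finitary:representation:theorem:if:filter:then:cohsds} makes \(\findesirify(\filter)\) finitely coherent with \(\cohsds\subseteq\findesirify(\filter)\), so minimality of the closure yields \(\cohsds^{*}\subseteq\findesirify(\filter)\); applying \(\finfilterise\) and using \(\finfilterise\circ\findesirify=\mathrm{id}\) on proper filters (Theorem~\ref{thm:the:finitary:representation:theorem}\ref{it:the:finitary:representation:theorem:filter:embedding}) gives \(\filter^{*}\subseteq\filter\). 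With the two filters identified, I would then unwind \(\findesirify\): \(\cohsdts[\sot]\in\fineventUpset{\finfilterise(\cohsds)}\) holds iff \(\event{\sosot}\subseteq\cohsdts[\sot]\) for some \(\sosot\Subset\cohsds\), and \(\event{\sosot}\subseteq\cohsdts[\sot]\) says exactly that \(\sot\in\bigcap_{\cohsdt\in\event{\sosot}}\sdsify{\cohsdt}\), so \(\fincohsdsclosure(\cohsds)=\findesirify(\filter^{*})=\bigcup_{\sosot\Subset\cohsds}\bigcap_{\cohsdt\in\event{\sosot}}\sdsify{\cohsdt}\), which is the claimed right-hand side once \(\event{\sosot}\) is rewritten as \(\cset{\cohsdt\in\cohsdts}{\sosot\Subset\sdsify{\cohsdt}}\). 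The finitely \emph{inconsistent} case is disposed of separately: then \(\fincohsdsclosure(\cohsds)=\setsofthings\), while by part~\ref{it:conjunctive:representation:finite:consistency} some \(\sosot[0]\Subset\cohsds\) has \(\event{\sosot[0]}=\emptyset\), so the empty-intersection convention makes the term \(\bigcap_{\cohsdt\in\emptyset}\sdsify{\cohsdt}=\setsofthings\) and forces the right-hand side to equal \(\setsofthings\) as well.

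Part~\ref{it:conjunctive:representation:finite:coherence} then follows with no further work: \(\cohsds\) is finitely coherent exactly when it is a closed element of \(\toppedfincohsdss\) distinct from \(\setsofthings\), that is, when \(\cohsds=\fincohsdsclosure(\cohsds)\) and \(\cohsds\) is finitely consistent; substituting the expression for \(\fincohsdsclosure(\cohsds)\) obtained in part~\ref{it:conjunctive:representation:finite:closure} gives precisely the stated equivalence. Thus the only genuinely delicate step is the closure-correspondence identity in part~\ref{it:conjunctive:representation:finite:closure}; the remainder is bookkeeping with the order isomorphism already in hand and with the empty-intersection convention.
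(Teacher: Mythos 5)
Your proposal is correct and follows essentially the same route as the paper's proof: both establish part~(i) via Proposition~\ref{prop:filter:consistency} and the filter base \(\cset{\event{\sosot}}{\sosot\Subset\cohsds}\), both identify \(\fincohsdsclosure(\cohsds)\) with \(\findesirify\) of the generated proper filter, and both dispose of the inconsistent case by the empty-intersection convention. The only (harmless) divergence is in the reverse inclusion of part~(ii): you close it by applying minimality of \(\fincohsdsclosure(\cohsds)\) against \(\findesirify(\filter)\) and then transporting through \(\finfilterise\), whereas the paper compares \(\findesirify(\filter)\) against every finitely coherent \(\cohsds''\supseteq\cohsds\) using Proposition~\ref{prop:it:does:not:matter:which}; your version is marginally more streamlined but substantively the same argument.
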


\noindent This tells us that an SDS is finitely consistent if and only if each of its finite subsets is included in some conjunctive model, and that any finitely coherent SDS can be written as a limit inferior of conjunctive models.
Even if the representation in terms of such limits inferior is formally somewhat complicated, it has the advantage that the basic representing models are the conjunctive ones, which are easy to identify and `construct'.

There is, however, another representation result that's formally simpler, but where the representing models are less easy to `construct': a representation that's based on the representing role that prime filters play in bounded distributive lattices; see the discussion in Ref.~\cite[Sections~10.7--21]{davey2002} and Section~\ref{sec:filters} for more details.
Let's now, in the remainder of this section, explain how it comes about.

\begin{definition}[Completeness]
We call an SDS~\(\sosot\subseteq\setsofthings\) \emph{complete} if it satisfies the completeness condition~\eqref{eq:complete:cohsds}:
\begin{equation*}
\group{\forall\sot[1],\sot[2]\subseteq\things}
\group[\big]{\sot[1]\cup\sot[2]\in\sosot\then
\group{\sot[1]\in\sosot\text{ or }\sot[2]\in\sosot}};
\end{equation*}
we denote by~\(\completefincohsdss\) the set of all complete and finitely coherent SDSes, and by~\(\completesdss\) the set of all complete and coherent SDSes.
\end{definition}
\noindent Theorem~\ref{thm:the:finitary:representation:theorem}\ref{it:the:finitary:representation:theorem:completeness} tells us that the complete finitely coherent SDSes are in a one-to-one relationship with the prime filters on the distributive lattice~\(\structure{\finevents,\subseteq}\), and the order isomorphism~\(\findesirify\) identified in that theorem allows us to easily transform the prime filter representation result of Theorem~\ref{thm:prime:filter:representation} into the following alternative representation theorem for finitely coherent SDSes.

\begin{theorem}[Prime filter representation]\label{thm:prime:filter:representation:finitely:coherent}
A finitely consistent SDS~\(\cohsds\subseteq\setsofthings\) is finitely coherent if and only if \(\cohsds=\bigcap\cset{\cohsds'\in\completefincohsdss}{\cohsds\subseteq\cohsds'}\).
\end{theorem}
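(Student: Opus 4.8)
The plan is to transport the Prime Filter Representation Theorem (Theorem~\ref{thm:prime:filter:representation}) from the bounded distributive lattice~\(\structure{\finevents,\subseteq}\)---which is bounded and distributive by Theorem~\ref{thm:basic:sets:constitute:a:bounded:lattice}---to the lattice~\(\structure{\toppedfincohsdss,\subseteq}\) through the order isomorphism established in Theorem~\ref{thm:the:finitary:representation:theorem}. The crucial ingredient is part~\ref{it:the:finitary:representation:theorem:completeness} of that theorem, which matches the prime filters on~\(\structure{\finevents,\subseteq}\) exactly with the complete finitely coherent SDSes: the proper filter~\(\filter\) is prime if and only if the corresponding \(\findesirify(\filter)\) is complete. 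Since~\(\findesirify\) is an order isomorphism between complete lattices, it preserves arbitrary infima, and these infima are set intersections on both sides (Propositions~\ref{prop:filters:constititute:a:complete:lattice} and~\ref{prop:coherent:sds:constitute:a:complete:lattice}); this is what lets me turn the intersection-of-prime-filters formula into the desired intersection-of-complete-SDSes formula.

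For the direct implication, I would start from a finitely coherent~\(\cohsds\), so that \(\filter\coloneqq\finfilterise(\cohsds)\) is a proper filter on~\(\structure{\finevents,\subseteq}\) by Theorem~\ref{thm:the:finitary:representation:theorem}\ref{it:the:finitary:representation:theorem:if:cohsds:then:filter}. Theorem~\ref{thm:prime:filter:representation} gives \(\filter=\bigcap\cset{\primefilter\in\fineventprimefilters}{\filter\subseteq\primefilter}\), and this family is non-empty precisely because~\(\filter\) is proper (an empty intersection of subsets of~\(\finevents\) would return the whole lattice~\(\finevents\neq\filter\)). Applying~\(\findesirify\) and using that it preserves this non-empty intersection yields \(\cohsds=\findesirify(\filter)=\bigcap\cset{\findesirify(\primefilter)}{\filter\subseteq\primefilter}\). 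It then remains to rewrite the index set: by part~\ref{it:the:finitary:representation:theorem:completeness} each~\(\findesirify(\primefilter)\) is a complete finitely coherent SDS, and since~\(\findesirify\) preserves and reflects inclusions, \(\filter\subseteq\primefilter\) is equivalent to \(\cohsds=\findesirify(\filter)\subseteq\findesirify(\primefilter)\). Hence the family \(\cset{\findesirify(\primefilter)}{\filter\subseteq\primefilter}\) coincides with \(\cset{\cohsds'\in\completefincohsdss}{\cohsds\subseteq\cohsds'}\), giving the claimed equality.

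For the converse, suppose \(\cohsds=\bigcap\cset{\cohsds'\in\completefincohsdss}{\cohsds\subseteq\cohsds'}\). Because \(\completefincohsdss\subseteq\fincohsdss\) and~\(\fincohsdss\) is an intersection structure, it suffices to check that the indexing family is non-empty; then~\(\cohsds\), being a non-empty intersection of finitely coherent SDSes, is itself finitely coherent. Non-emptiness is where finite consistency enters: the finitely consistent~\(\cohsds\) is included in the finitely coherent SDS~\(\fincohsdsclosure(\cohsds)\), whose image~\(\finfilterise(\fincohsdsclosure(\cohsds))\) is a proper filter, which by Theorem~\ref{thm:prime:filter:representation} is contained in at least one prime filter~\(\primefilter\); then~\(\findesirify(\primefilter)\) is a complete finitely coherent SDS including~\(\cohsds\).

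I expect the main obstacle to be bookkeeping rather than deep difficulty: I must make sure the relevant intersections really are the lattice infima that the order isomorphism is guaranteed to preserve (this is immediate from Propositions~\ref{prop:filters:constititute:a:complete:lattice} and~\ref{prop:coherent:sds:constitute:a:complete:lattice}), and I must handle the non-emptiness of the indexing families in both directions carefully, since the empty-intersection conventions differ between the two lattices and the whole argument would collapse for a finitely inconsistent~\(\cohsds\). The properness-versus-non-emptiness link supplied by the Prime Filter Representation Theorem is exactly what closes these gaps.
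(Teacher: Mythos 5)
Your proposal is correct and follows essentially the same route as the paper's proof: transport Theorem~\ref{thm:prime:filter:representation} through the order isomorphism of Theorem~\ref{thm:the:finitary:representation:theorem}, use part~\ref{it:the:finitary:representation:theorem:completeness} to identify prime filters with complete finitely coherent SDSes, and use preservation of (non-empty) intersections to rewrite the representing family. The only cosmetic difference is in the converse direction, where the paper deduces non-emptiness of the family directly from \(\cohsds\neq\setsofthings\) and the empty-intersection convention, whereas you exhibit a witnessing complete finitely coherent SDS via \(\fincohsdsclosure(\cohsds)\) and a prime filter above its image; both are valid.
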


As suggested above, a disadvantage of this type of representation is that, generally speaking, the complete SDSes are---much like their prime filter counterparts---hard if not impossible to identify `constructively'.
That, however, they include all coherent conjunctive models, will be very helpful in our discussion of the coherent finitary models in Section~\ref{sec:finitary:models} further on.

\begin{proposition}\label{prop:conjunctive:is:complete}
Consider any coherent SDT~\(\cohsdt\in\cohsdts\), then the (finitely) coherent conjunctive SDS~\(\sdsify{\cohsdt}\) is complete.
\end{proposition}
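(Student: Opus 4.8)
The plan is to reduce the claim to the elementary distributivity of intersection over union. Completeness is a condition on the SDS $\sdsify{\cohsdt}$ alone, so I would first separate the two halves of the phrase ``(finitely) coherent conjunctive SDS'': the (finite) coherence of $\sdsify{\cohsdt}$ is already furnished by Proposition~\ref{prop:cohsdt:to:cohsds} (using that $\cohsdt$ is a coherent SDT), so the only genuine work is to verify the completeness condition~\eqref{eq:complete:cohsds}.

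To that end, I would take arbitrary $\sot[1],\sot[2]\subseteq\things$ and assume $\sot[1]\cup\sot[2]\in\sdsify{\cohsdt}$. Recalling from Equation~\eqref{eq:sos:to:sot:and:sot:to:sos} that $\sdsify{\cohsdt}=\cset{\sot\subseteq\things}{\sot\cap\cohsdt\neq\emptyset}$, this assumption reads $(\sot[1]\cup\sot[2])\cap\cohsdt\neq\emptyset$. Since $(\sot[1]\cup\sot[2])\cap\cohsdt=(\sot[1]\cap\cohsdt)\cup(\sot[2]\cap\cohsdt)$, a union can be non-empty only when one of its two terms is, so $\sot[1]\cap\cohsdt\neq\emptyset$ or $\sot[2]\cap\cohsdt\neq\emptyset$; by the same definition this is precisely $\sot[1]\in\sdsify{\cohsdt}$ or $\sot[2]\in\sdsify{\cohsdt}$, which is exactly what~\eqref{eq:complete:cohsds} demands.

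There is essentially no obstacle here. Conceptually, what makes a conjunctive model complete is that membership of a set $\sot$ in $\sdsify{\cohsdt}$ is decided by the single fixed SDT $\cohsdt$ through a non-empty-intersection test, and such a test is manifestly prime with respect to unions. The only point worth a moment's care is bookkeeping: one should invoke Proposition~\ref{prop:cohsdt:to:cohsds} to justify the statement's qualifier ``(finitely) coherent''---for a coherent $\cohsdt$ that proposition makes $\sdsify{\cohsdt}$ a coherent SDS, and hence also a finitely coherent one---rather than re-deriving coherence from the axioms. Note too that the verification of~\eqref{eq:complete:cohsds} never uses finiteness of $\sot[1]$ or $\sot[2]$, so it covers the completeness condition in its full (arbitrary-subset) form as stated.
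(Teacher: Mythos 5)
Your proposal is correct and follows exactly the same route as the paper's own proof: coherence of \(\sdsify{\cohsdt}\) is delegated to Proposition~\ref{prop:cohsdt:to:cohsds}, and completeness follows from the identity \((\sot[1]\cup\sot[2])\cap\cohsdt=(\sot[1]\cap\cohsdt)\cup(\sot[2]\cap\cohsdt)\) together with the observation that a non-empty union must have a non-empty term. Nothing further is needed.
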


\section{Filter representation for coherent SDSes}\label{sec:representation}
We next turn to a representation result for the coherent, rather than merely finitely coherent, SDSes, and as is to be expected, we'll focus on the filters on the set~\(\events\) in order to achieve that.

It turns out that our representation will only involve the proper \emph{principal} filters on this set~\(\events\).
Let's denote the set of all principal filters by \(\eventprincipalfilters\coloneqq\cset{\eventupset{\event{\sosot}}}{\sosot\subseteq\setsofthings}\), and the set of all proper principal filters on~\(\events\) by \(\propereventprincipalfilters\), where
\begin{equation*}
\propereventprincipalfilters
=\cset{\eventupset{\event{\sosot}}}{\sosot\subseteq\setsofthings}\setminus\set{\events}
=\cset{\eventupset{\event{\sosot}}}{\sosot\subseteq\setsofthings\text{ and }\event{\sosot}\neq\emptyset},
\end{equation*}
and where we let, as a special case of the general definition in Equation~\eqref{eq:up:operators},
\begin{equation*}
\eventupset{\event{\sosot}}
=\cset{\event{\sosot'}}{\sosot'\subseteq\setsofthings\text{ and }\event{\sosot}\subseteq\event{\sosot'}}.
\end{equation*}
It's readily verified that~\(\eventprincipalfilters\) is closed under arbitrary non-empty intersections, and therefore an intersection structure: for any~\(\sosot\subseteq\setsofthings\) and any non-empty family~\(\sosot[i]\subseteq\setsofthings\), \(i\in I\) we see that
\begin{equation*}
\group{\forall i\in I}\event{\sosot[i]}\subseteq\event{\sosot}
\ifandonlyif\bigcup_{i\in I}\event{\sosot[i]}\subseteq\event{\sosot},
\end{equation*}
and Equation~\eqref{eq:union:of:possibles} in Appendix~\ref{sec:proofs:representation:lattices} guarantees that there's some~\(\sosot'\subseteq\setsofthings\) for which \(\event{\sosot'}=\bigcup_{i\in I}\event{\sosot[i]}\).
It's in fact clearly also a topped intersection structure, and therefore \(\structure{\eventprincipalfilters,\subseteq}\) is a complete lattice, with intersection as infimum, and with bottom~\(\eventprincipalfiltersbottom=\set{\cohsdts}\) and top~\(\eventprincipalfilterstop=\events\).

In order to establish the existence of an order isomorphism between the complete lattices~\(\structure{\toppedcohsdss,\subseteq}\) and~\(\structure{\eventprincipalfilters,\subseteq}\), we now consider the maps
\begin{equation*}
\filterise\colon\setsofsetsofthings\to\powerset(\events)
\colon\cohsds\mapsto\filterise(\cohsds)\coloneqq\cset{\event{\sosot}}{\sosot\subseteq\cohsds}
\end{equation*}
and
\begin{equation*}
\desirify\colon\powerset(\events)\to\setsofsetsofthings
\colon\filter\mapsto\desirify(\filter)
\coloneqq\cset{\sot\in\setsofthings}{\cohsdts[\sot]\in\filter},
\end{equation*}
which are the obvious counterparts of the maps defined in Section~\ref{sec:representation:finitary}, and which as we'll see presently, do the job in this infinitary context.

\begin{theorem}[Order isomorphism: coherent SDSes]\label{thm:the:representation:theorem}
The following statements hold, for all \(\cohsds,\cohsds[1],\cohsds[2]\subseteq\setsofthings\) and all \(\filter,\filter[1],\filter[2]\subseteq\events\):
\begin{enumerate}[label=\upshape(\roman*),leftmargin=*]
\item\label{it:the:representation:theorem:if:cohsds:then:filter} if \(\cohsds\) is a coherent SDS then \(\filterise(\cohsds)\) is a proper principal filter on~\(\structure{\events,\subseteq}\);
\item\label{it:the:representation:theorem:if:filter:then:cohsds} if \(\filter\) is a proper principal filter on~\(\structure{\events,\subseteq}\), then \(\desirify(\filter)\) is a coherent SDS;
\item\label{it:the:representation:theorem:cohsds:embedding} if \(\cohsds\) is a coherent SDS, then \((\desirify\circ\filterise)(\cohsds)=\cohsds\);
\item\label{it:the:representation:theorem:filter:embedding} if \(\filter\) is a proper principal filter on~\(\structure{\events,\subseteq}\), then \((\filterise\circ\desirify)(\filter)=\filter\);
\item\label{it:the:representation:theorem:cohsds:order:preserving} if \(\cohsds[1]\subseteq\cohsds[2]\) then \(\filterise(\cohsds[1])\subseteq\filterise(\cohsds[2])\);
\item\label{it:the:representation:theorem:filter:order:preserving} if \(\filter[1]\subseteq\filter[2]\) then \(\desirify(\filter[1])\subseteq\desirify(\filter[2])\);
\item\label{it:the:representation:theorem:filterise:bounds} \(\filterise(\cohsdsbottom)=\eventprincipalfiltersbottom\) and \(\filterise(\cohsdstop)=\eventprincipalfilterstop\);
\item\label{it:the:representation:theorem:desirify:bounds} \(\desirify(\eventprincipalfiltersbottom)=\cohsdsbottom\) and \(\desirify(\eventprincipalfilterstop)=\cohsdstop\).
\end{enumerate}
This tells us that \(\filterise\) is an order isomorphism between~\(\structure{\toppedcohsdss,\subseteq}\) and~\(\structure{\propereventprincipalfilters,\subseteq}\), with inverse order isomorphism~\(\desirify\).
\end{theorem}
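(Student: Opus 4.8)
The plan is to establish the eight itemised claims individually, in close parallel with the finitary order isomorphism of Theorem~\ref{thm:the:finitary:representation:theorem}, the one genuinely new feature being that \emph{principal} filters---those closed under arbitrary non-empty infima, which here are intersections---take over the role that ordinary filters play in the finitary setting. Everything is driven by the single identity $\event{\bigcup_{i\in I}\sosot[i]}=\bigcap_{i\in I}\event{\sosot[i]}$, which is immediate from $\event{\sosot}=\bigcap_{\sot\in\sosot}\cohsdts[\sot]$ and which turns unions of SDSes into intersections of events. This is precisely what makes $\filterise(\cohsds)$ closed under arbitrary intersections, and so explains why the infinitary production axiom~\ref{axiom:desirable:sets:production} gets matched with principal rather than merely finitary filters.

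For claim~\ref{it:the:representation:theorem:if:cohsds:then:filter} I would verify directly that $\filterise(\cohsds)$ is a proper principal filter: it is non-empty since it contains $\event{\emptyset}=\cohsdts$; properness (that $\emptyset\notin\filterise(\cohsds)$) is exactly the statement for coherent SDSes in Proposition~\ref{prop:filter:consistency}; the upward-closure property~\ref{axiom:lattice:filters:increasing} is Proposition~\ref{prop:it:does:not:matter:which}\ref{it:it:does:not:matter:which:inclusion:infinitary}; and closure under arbitrary non-empty intersections---which yields both the meet axiom~\ref{axiom:lattice:filters:intersections} and principality~\ref{axiom:lattice:filters:arbitrary:intersections}, and gives $\filterise(\cohsds)=\eventupset{\event{\cohsds}}$---follows from the displayed identity together with $\bigcup_{i\in I}\sosot[i]\subseteq\cohsds$ whenever each $\sosot[i]\subseteq\cohsds$. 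For claim~\ref{it:the:representation:theorem:if:filter:then:cohsds} I would check Axioms~\ref{axiom:desirable:sets:consistency}--\ref{axiom:desirable:sets:production} for $\desirify(\filter)$: \ref{axiom:desirable:sets:consistency} from properness via $\cohsdts[\emptyset]=\emptyset$; \ref{axiom:desirable:sets:increasing} from $\sot\subseteq\sot'\Rightarrow\cohsdts[\sot]\subseteq\cohsdts[\sot']$ and upward closure of $\filter$; \ref{axiom:desirable:sets:forbidden} from the fact that each coherent $\cohsdt$ misses $\uglythings$, so that $\cohsdts[\sot]=\cohsdts[\sot\setminus\uglythings]$; and \ref{axiom:desirable:sets:background} from any $\thing\in\beautifulthings$ belonging to every coherent SDT, whence $\cohsdts[\set{\thing}]=\cohsdts$ is the filter top.

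The crux is the production axiom~\ref{axiom:desirable:sets:production}. Given $\emptyset\neq\sosot\subseteq\desirify(\filter)$, principality gives $\event{\sosot}=\bigcap_{\sot\in\sosot}\cohsdts[\sot]\in\filter$; writing $\sot^{*}\coloneqq\cset{\thing[\selection]}{\selection\in\selections}$, I would then prove $\event{\sosot}\subseteq\cohsdts[\sot^{*}]$ by means of Proposition~\ref{prop:the:two:systems}: every $\cohsdt\in\event{\sosot}=\cohsdtUpset{\basicevent{\sosot}}$ contains some $\initialclosure(\selection[o](\sosot))$ with $\selection[o]\in\selections$, and since $\thing[{\selection[o]}]\in\initialclosure(\selection[o](\sosot))\subseteq\cohsdt$ while also $\thing[{\selection[o]}]\in\sot^{*}$, we get $\sot^{*}\cap\cohsdt\neq\emptyset$. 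Upward closure of $\filter$ then delivers $\cohsdts[\sot^{*}]\in\filter$, i.e.\ $\sot^{*}\in\desirify(\filter)$. I expect this to be the main obstacle, since it is the only place where Proposition~\ref{prop:the:two:systems} and the infinitary (principal) closure of $\filter$ are both genuinely needed; it is exactly here that the proof departs from its finitary counterpart.

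The two composition identities are then short. For~\ref{it:the:representation:theorem:cohsds:embedding}, $\sot\in\cohsds$ gives $\cohsdts[\sot]=\event{\set{\sot}}\in\filterise(\cohsds)$, while conversely $\cohsdts[\sot]=\event{\sosot}$ for some $\sosot\subseteq\cohsds$ forces $\sot\in\cohsds$ through the equality clause Proposition~\ref{prop:it:does:not:matter:which}\ref{it:it:does:not:matter:which:equality:infinitary}. For~\ref{it:the:representation:theorem:filter:embedding}, the inclusion $(\filterise\circ\desirify)(\filter)\subseteq\filter$ again uses principality, whereas $\filter\subseteq(\filterise\circ\desirify)(\filter)$ holds because every element of $\filter\subseteq\events$ has the form $\event{\sosot}$, and $\event{\sosot}\in\filter$ forces each $\cohsdts[\sot]\in\filter$ by upward closure, so $\sosot\subseteq\desirify(\filter)$. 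Order preservation~\ref{it:the:representation:theorem:cohsds:order:preserving}--\ref{it:the:representation:theorem:filter:order:preserving} is immediate from the definitions, and the bounds~\ref{it:the:representation:theorem:filterise:bounds}--\ref{it:the:representation:theorem:desirify:bounds} follow from Proposition~\ref{prop:the:smallest:cohsds}: for $\sot\in\cohsdsbottom$ we have $\sot\cap\beautifulthings\neq\emptyset$, hence $\cohsdts[\sot]=\cohsdts$, giving $\filterise(\cohsdsbottom)=\set{\cohsdts}=\eventprincipalfiltersbottom$ and $\desirify(\set{\cohsdts})=\cohsdsbottom$, while $\cohsdstop=\setsofthings$ and $\eventprincipalfilterstop=\events$ match at once. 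Taken together, these eight facts exhibit $\filterise$ and $\desirify$ as mutually inverse, order-preserving bijections between the relevant complete lattices, which is the asserted order isomorphism.
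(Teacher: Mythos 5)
Your proposal is correct and follows essentially the same route as the paper's proof: the same eight-part verification, with properness from Proposition~\ref{prop:filter:consistency}, upward closure and the embedding identities from the infinitary clauses of Proposition~\ref{prop:it:does:not:matter:which}, closure under arbitrary intersections from \(\event{\bigcup_{i}\sosot[i]}=\bigcap_{i}\eventwithindex{i}\), the production axiom~\ref{axiom:desirable:sets:production} handled via Proposition~\ref{prop:the:two:systems} together with principality, and the bounds via Proposition~\ref{prop:the:smallest:cohsds}. Your observation that principality is exactly what absorbs the infinitary production axiom is also the paper's guiding idea, so there is nothing of substance to add.
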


To use this order isomorphism to find a representation of coherent SDSes in terms of the conjunctive models, we begin by recalling the earlier finitary argument and transporting it to the current infinitary context: consider any~\(\sosot\subseteq\setsofthings\) and any~\(\cohsdt\in\cohsdts\), then
\[
\cohsdt\in\event{\sosot}
\ifandonlyif\cohsdt\in\bigcap_{\sot\in\sosot}\cohsdts[\sot]
\ifandonlyif\group{\forall\sot\in\sosot}\sot\cap\cohsdt\neq\emptyset
\ifandonlyif\sosot\subseteq\sdsify{\cohsdt},
\]
so
\begin{equation}\label{eq:connection:with:binary:model}
\event{\sosot}=\cset{\cohsdt\in\cohsdts}{\sosot\subseteq\sdsify{\cohsdt}},
\text{ for all~\(\sosot\subseteq\setsofthings\)}.
\end{equation}

Again similarly to what we did for finitely coherent SDSes in Section~\ref{sec:representation:finitary}, we now consider any proper principal filter~\(\filter\in\propereventprincipalfilters\) and any coherent SDS~\(\cohsds\in\cohsdss\) that correspond, in the sense that \(\cohsds=\desirify(\filter)\) and \(\filter=\filterise(\cohsds)\).
Observe that the principal filter~\(\filter=\filterise(\cohsds)\) is completely determined by its smallest element \(\bigcap\filter\), which is the subset of~\(\cohsdts\) given by:
\begin{equation}\label{eq:conjunctive:representation:smallest:element}
\bigcap\filter
=\bigcap\filterise(\cohsds)
=\bigcap\cset{\event{\sosot}}{\sosot\subseteq\cohsds}
=\event{\cohsds},
\end{equation}
where the last equality follows readily from Equation~\eqref{eq:definition:event}.
Hence, on the one hand,
\begin{equation}\label{eq:from:cohsds:to:filter}
\filter=\filterise(\cohsds)=\eventupset{\event{\cohsds}}.
\end{equation}
On the other hand, we infer from \(\cohsds=\desirify(\filter)\) that for any~\(\sot\in\setsofthings\),
\begin{align*}
\sot\in\cohsds
\ifandonlyif\cohsdts[\sot]\in\filter
\ifandonlyif\cohsdts[\sot]\in\eventupset{\event{\cohsds}}
\ifandonlyif\event{\cohsds}\subseteq\cohsdts[\sot]
&\ifandonlyif\group{\forall\cohsdt\in\event{\cohsds}}\cohsdt\cap\sot\neq\emptyset\\
&\ifandonlyif\group{\forall\cohsdt\in\event{\cohsds}}\sot\in\sdsify{\cohsdt}.
\end{align*}
So, if we combine this chain of equivalences with Equation~\eqref{eq:connection:with:binary:model}, we can conclude that
\begin{equation}\label{eq:from:filter:to:cohsds}
\cohsds
=\bigcap_{\cohsdt\in\event{\cohsds}}\sdsify{\cohsdt}
=\bigcap_{\cohsdt\in\cohsdts\colon\cohsds\subseteq\sdsify{\cohsdt}}\sdsify{\cohsdt}.
\end{equation}

Taking into account the consequences of Equations~\eqref{eq:connection:with:binary:model}--\eqref{eq:from:filter:to:cohsds} leads to the following representation result for consistency, coherence, and the corresponding closure operator~\(\cohsdsclosure\) in terms of the conjunctive models~\(\sdsify{\cohsdt}\).

\begin{theorem}[Conjunctive representation]\label{thm:conjunctive:representation}
Consider any SDS~\(\cohsds\subseteq\setsofthings\), then the following statements hold:
\begin{enumerate}[label=\upshape(\roman*),leftmargin=*]
\item\label{it:conjunctive:representation:consistency} \(\cohsds\) is consistent if and only if \(\event{\cohsds}=\cset{\cohsdt\in\cohsdts}{\cohsds\subseteq\sdsify{\cohsdt}}\neq\emptyset\);
\item\label{it:conjunctive:representation:closure} \(\cohsdsclosure(\cohsds)=\bigcap_{\cohsdt\in\cohsdts\colon\cohsds\subseteq\sdsify{\cohsdt}}\sdsify{\cohsdt}\);
\item\label{it:conjunctive:representation:coherence} \(\cohsds\) is coherent if and only if \(\cohsds\) is consistent and \(\cohsds=\bigcap_{\cohsdt\in\cohsdts\colon\cohsds\subseteq\sdsify{\cohsdt}}\sdsify{\cohsdt}\).
\end{enumerate}
\end{theorem}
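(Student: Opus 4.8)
The plan is to deduce all three items from the order isomorphism in Theorem~\ref{thm:the:representation:theorem} together with the two identities~\eqref{eq:connection:with:binary:model} and~\eqref{eq:from:filter:to:cohsds} derived from it, so that no fresh combinatorial work about selection maps is needed. Two elementary facts are used throughout. First, the map \(\sosot\mapsto\event{\sosot}\) is \emph{antitone}: since \(\event{\sosot}=\bigcap_{\sot\in\sosot}\cohsdts[\sot]\), enlarging \(\sosot\) intersects more sets, so \(\sosot[1]\subseteq\sosot[2]\) implies \(\event{\sosot[2]}\subseteq\event{\sosot[1]}\). Second, each \(\sdsify{\cohsdt}\) with \(\cohsdt\in\cohsdts\) is a coherent SDS by Proposition~\ref{prop:cohsdt:to:cohsds}, so any \emph{non-empty} intersection \(\bigcap_{\cohsdt\in\event{\cohsds}}\sdsify{\cohsdt}\) is again coherent by the intersection-structure property of~\(\cohsdss\).

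For item~\ref{it:conjunctive:representation:consistency} I would argue both implications directly. If \(\event{\cohsds}\neq\emptyset\), pick \(\cohsdt\in\event{\cohsds}\); by~\eqref{eq:connection:with:binary:model} this means \(\cohsds\subseteq\sdsify{\cohsdt}\), and since \(\sdsify{\cohsdt}\) is coherent, \(\cohsds\) extends to a coherent SDS and is therefore consistent. Conversely, if \(\cohsds\) is consistent, choose a coherent \(\cohsds'\in\cohsdss\) with \(\cohsds\subseteq\cohsds'\). By Theorem~\ref{thm:the:representation:theorem}\ref{it:the:representation:theorem:if:cohsds:then:filter}, \(\filterise(\cohsds')\) is a \emph{proper} principal filter on~\(\structure{\events,\subseteq}\), so its generator \(\event{\cohsds'}=\bigcap\filterise(\cohsds')\) (see~\eqref{eq:conjunctive:representation:smallest:element}) differs from the bottom~\(\emptyset\); taking any \(\cohsdt\in\event{\cohsds'}\) gives \(\cohsds\subseteq\cohsds'\subseteq\sdsify{\cohsdt}\), that is \(\cohsdt\in\event{\cohsds}\), whence \(\event{\cohsds}\neq\emptyset\). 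This is the one step where the representation theorem does the real work: the existence of a coherent SDT hitting every member of a coherent SDS is exactly the properness of its principal filter.

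For item~\ref{it:conjunctive:representation:closure} I would split on consistency. If \(\cohsds\) is inconsistent, then \(\cohsdsclosure(\cohsds)=\setsofthings\) while, by item~\ref{it:conjunctive:representation:consistency}, the index set \(\event{\cohsds}\) is empty, so the right-hand side is the empty intersection of subsets of~\(\setsofthings\), namely \(\setsofthings\) itself; the two sides agree. If \(\cohsds\) is consistent, write \(R\coloneqq\bigcap_{\cohsdt\in\event{\cohsds}}\sdsify{\cohsdt}\); this is a non-empty intersection of coherent SDSes, hence coherent, and it contains \(\cohsds\) because \(\cohsds\subseteq\sdsify{\cohsdt}\) for every \(\cohsdt\in\event{\cohsds}\), so \(\cohsdsclosure(\cohsds)\subseteq R\) by minimality of the closure. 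For the reverse inclusion I would apply~\eqref{eq:from:filter:to:cohsds} to the coherent SDS \(\cohsds''\coloneqq\cohsdsclosure(\cohsds)\), which yields \(\cohsds''=\bigcap_{\cohsdt\in\event{\cohsds''}}\sdsify{\cohsdt}\); since \(\cohsds\subseteq\cohsds''\) gives \(\event{\cohsds''}\subseteq\event{\cohsds}\) by antitonicity, intersecting over the smaller index set yields \(R\subseteq\bigcap_{\cohsdt\in\event{\cohsds''}}\sdsify{\cohsdt}=\cohsds''=\cohsdsclosure(\cohsds)\). Hence \(R=\cohsdsclosure(\cohsds)\).

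Finally, item~\ref{it:conjunctive:representation:coherence} follows formally from item~\ref{it:conjunctive:representation:closure} and the fixed-point characterisation \(\cohsds=\cohsdsclosure(\cohsds)\ifandonlyif\cohsds\in\toppedcohsdss\). If \(\cohsds\) is coherent it is consistent and equals its own closure, so item~\ref{it:conjunctive:representation:closure} gives the displayed equality. Conversely, if \(\cohsds\) is consistent and equals \(R=\cohsdsclosure(\cohsds)\), then \(\cohsds\) is a fixed point of \(\cohsdsclosure\), hence in \(\toppedcohsdss\), and consistency rules out \(\cohsds=\setsofthings\), leaving \(\cohsds\in\cohsdss\). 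The only genuinely non-routine ingredient is the forward direction of item~\ref{it:conjunctive:representation:consistency}; the remaining steps are bookkeeping with antitonicity, the intersection structure, and the empty-intersection convention.
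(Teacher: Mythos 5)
Your proof is correct, and while it follows the paper's broad strategy---everything is funnelled through Theorem~\ref{thm:the:representation:theorem} and Equations~\eqref{eq:connection:with:binary:model} and~\eqref{eq:from:filter:to:cohsds}---several individual steps are executed differently, and in places more economically. For item~\ref{it:conjunctive:representation:consistency}, the paper proves sufficiency by building the principal filter generated by \(\event{\cohsds}\) and applying \(\desirify\) to it; you instead just pick a witness \(\cohsdt\in\event{\cohsds}\) and observe that the coherent \(\sdsify{\cohsdt}\) already extends \(\cohsds\), which is shorter and avoids the filter machinery entirely. For necessity the paper cites Proposition~\ref{prop:filter:consistency} directly, whereas you extract the same fact from the properness of \(\filterise(\cohsds')\); this is a harmless level of indirection, since that properness is itself established via Proposition~\ref{prop:filter:consistency}. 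For item~\ref{it:conjunctive:representation:closure}, the paper again routes through \(\desirify(\eventupset{\event{\cohsds}})\), whereas you obtain \(\cohsdsclosure(\cohsds)\subseteq R\) from the intersection-structure property of \(\cohsdss\) (a non-empty intersection of the coherent \(\sdsify{\cohsdt}\) is coherent and contains \(\cohsds\)) and the reverse inclusion from antitonicity of \(\event{\bolleke}\) together with Equation~\eqref{eq:from:filter:to:cohsds} applied to the coherent SDS \(\cohsdsclosure(\cohsds)\). Both treatments are sound; yours makes the role of the conjunctive models as an intersection structure more visible, while the paper's keeps the filter correspondence in the foreground throughout. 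Item~\ref{it:conjunctive:representation:coherence} is handled identically in both.
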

\noindent A similar result was derived in a different manner, without the simplifying use of the order-isomorphisms~\(\desirify\) and \(\filterise\), by De Bock in Ref.~\cite[Theorem~1]{debock2023:things:arxiv}.

\section{Finitary SDSes}\label{sec:finitary:models}
We conclude from the discussion in the previous section that the conjunctive representation for coherent SDSes is remarkably simpler than the one for merely finitely coherent SDSes.
But, as we'll explain presently, it turns out that we can recover the simpler conjunctive representation---in terms of intersections rather than limits inferior---also for finitely coherent SDSes, \emph{provided that we focus on their finite elements}.
This was already proved by De Bock \cite{debock2023:things:arxiv}, based on ideas in our earlier papers~\cite{debock2018,debock2019b}, but we intend to derive this remarkable result here using our filter representation approach, which allows for a different and arguably somewhat simpler proof, based on the Prime Filter Representation Theorem.

\subsection*{Finitary SDSes}
Recall that we denote by \(\finitesetsofthings\) the set of all finite sets of things, \(\finitesetsofthings=\cset{\sot\in\setsofthings}{\sot\Subset\things}\), and that we follow the convention that the empty set~\(\emptyset\) is finite, so \(\emptyset\in\finitesetsofthings\).

If \(\sosot\subseteq\setsofthings\) is an SDS, then we call \(\finpart{\sosot}\coloneqq\sosot\cap\finitesetsofthings\) its \emph{finite part} and
\begin{equation*}
\fintypart{\sosot}
\coloneqq\sotUpset{\finpart{\sosot}}
=\sotUpset{\sosot\cap\finitesetsofthings}
=\cset{\sot\in\setsofthings}{(\exists\finsot\in\sosot\cap\finitesetsofthings)\finsot\Subset\sot}
\end{equation*}
its \emph{finitary part}, where we let, for ease of notation, for all~\(\sot\in\setsofthings\) and~\(\sosot\subseteq\setsofthings\):
\begin{multline*}
\sotupset{\sot}\coloneqq\cset{\sot'\in\setsofthings}{\sot\subseteq\sot'}
\text{ and }\\
\sotUpset{\sosot}
\coloneqq\bigcup_{\sot\in\sosot}\sotupset{\sot}
=\cset{\sot'\in\setsofthings}{\group{\exists\sot''\in\sosot}\sot''\subseteq\sot'}.
\end{multline*}
We'll call an SDS~\(W\) \emph{finitary} if each of its desirable sets has a finite desirable subset, meaning that
\begin{equation*}
\group{\forall\sot\in\sosot}(\exists\finsot\in\sosot\cap\finitesetsofthings)\finsot\Subset\sot
\text{, or equivalently, }
\sosot\subseteq\fintypart{\sosot},
\end{equation*}

Interestingly, for any (finitely) coherent SDS~\(\cohsds\), the coherence condition~\ref{axiom:desirable:sets:increasing} guarantees that, since \(\cohsds\cap\finitesetsofthings\subseteq\cohsds\), also \(\fintypart{\cohsds}\subseteq\cohsds\).
This tells us that a (finitely) coherent SDS~\(\cohsds\) is finitary if and only if \(\cohsds=\fintypart{\cohsds}\), or in other words:

\begin{proposition}\label{prop:finitary:equal:to:finitary:part}
A finitary and (finitely) coherent SDS~\(\cohsds\) is equal to its finitary part~\(\fintypart{\cohsds}\), and therefore completely determined by its finite part~\(\finpart{\cohsds}=\cohsds\cap\finitesetsofthings\).
\end{proposition}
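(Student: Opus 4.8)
The plan is to prove the two inclusions \(\fintypart{\cohsds}\subseteq\cohsds\) and \(\cohsds\subseteq\fintypart{\cohsds}\), whose combination gives \(\cohsds=\fintypart{\cohsds}\); the assertion that \(\cohsds\) is determined by its finite part then follows immediately from the explicit form of \(\fintypart{\bolleke}\).

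First I would establish \(\fintypart{\cohsds}\subseteq\cohsds\), which is where (finite) coherence enters and which in fact holds for \emph{any} (finitely) coherent SDS. Take any \(\sot\in\fintypart{\cohsds}=\sotUpset{\cohsds\cap\finitesetsofthings}\); by the definition of \(\sotUpset{\bolleke}\) there is some \(\finsot\in\cohsds\cap\finitesetsofthings\) with \(\finsot\subseteq\sot\). Since \(\finsot\in\cohsds\) and \(\finsot\subseteq\sot\), the increasing axiom~\ref{axiom:desirable:sets:increasing}---common to the coherence and the finite coherence requirements alike---yields \(\sot\in\cohsds\). Because this step is phrased purely in terms of \ref{axiom:desirable:sets:increasing}, it applies uniformly in both the coherent and the finitely coherent cases.

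The reverse inclusion \(\cohsds\subseteq\fintypart{\cohsds}\) is nothing but the definition of \(\cohsds\) being finitary, so it is handed to us directly by the hypothesis. Combining the two inclusions gives \(\cohsds=\fintypart{\cohsds}\). Finally, since \(\fintypart{\cohsds}=\sotUpset{\finpart{\cohsds}}\) by definition, we obtain \(\cohsds=\sotUpset{\finpart{\cohsds}}\), so that \(\cohsds\) is recovered from its finite part \(\finpart{\cohsds}=\cohsds\cap\finitesetsofthings\) by a single application of \(\sotUpset{\bolleke}\); in particular, two finitary (finitely) coherent SDSes sharing the same finite part must coincide. There is essentially no obstacle here---the result falls out once the two inclusions are aligned---and the only point needing a little care is to confirm that the monotonicity step goes through identically under both coherence notions, which it does because the single axiom~\ref{axiom:desirable:sets:increasing} does all the work.
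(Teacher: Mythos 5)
Your proof is correct and follows essentially the same route as the paper: the inclusion \(\fintypart{\cohsds}\subseteq\cohsds\) comes from the increasing axiom~\ref{axiom:desirable:sets:increasing} applied to the finite elements \(\finsot\in\cohsds\cap\finitesetsofthings\), the reverse inclusion is just the definition of being finitary, and the determination by \(\finpart{\cohsds}\) follows since \(\fintypart{\cohsds}=\sotUpset{\finpart{\cohsds}}\). This matches the argument the paper gives inline in the paragraph immediately preceding the proposition.
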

\noindent Moreover, it's easy to see that \(\fintypart{\fintypart{\cohsds}}=\fintypart{\cohsds}\) for any (finitely) coherent SDS~\(\cohsds\), implying that \emph{its finitary part \(\fintypart{\cohsds}\) is always finitary.}
If nothing else, this shows that our terminology is consistent.

Does the (finite) coherence of an SDS imply the coherence (finite or otherwise) of its finitary part?
The following proposition provides the beginning of an answer, which we'll be able to complete further on in Corollary~\ref{cor:finitary:and:coherent}.

\begin{proposition}\label{prop:finite:coherence:of:finitary:part}
If an SDS~\(\cohsds\) is (finitely) coherent, then its finitary part~\(\fintypart{\cohsds}\) is finitely coherent.
\end{proposition}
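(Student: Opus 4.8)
The plan is to check directly that $\fintypart{\cohsds}$ satisfies the five defining conditions of finite coherence, namely~\ref{axiom:desirable:sets:consistency}--\ref{axiom:desirable:sets:background} together with~\ref{axiom:desirable:sets:finitary:production}. Throughout I would exploit that a \emph{(finitely) coherent} $\cohsds$ satisfies~\ref{axiom:desirable:sets:finitary:production} in either case, since~\ref{axiom:desirable:sets:production} implies~\ref{axiom:desirable:sets:finitary:production}. The working characterisation I would rely on is that, by definition of $\fintypart{\cohsds}=\sotUpset{\finpart{\cohsds}}$, a set $\sot$ belongs to $\fintypart{\cohsds}$ if and only if there is some finite $\finsot\in\cohsds$ with $\finsot\subseteq\sot$.

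The first four conditions are routine. Condition~\ref{axiom:desirable:sets:increasing} is immediate, because $\fintypart{\cohsds}=\sotUpset{\finpart{\cohsds}}$ is by construction an up-set. For~\ref{axiom:desirable:sets:consistency}, were $\emptyset\in\fintypart{\cohsds}$, there would be a finite $\finsot\in\cohsds$ with $\finsot\subseteq\emptyset$, forcing $\emptyset\in\cohsds$ and contradicting~\ref{axiom:desirable:sets:consistency} for $\cohsds$. For~\ref{axiom:desirable:sets:forbidden}, if $\sot\in\fintypart{\cohsds}$ with finite witness $\finsot\in\cohsds$, then $\finsot\setminus\uglythings\in\cohsds$ by~\ref{axiom:desirable:sets:forbidden} for $\cohsds$, is again finite, and satisfies $\finsot\setminus\uglythings\subseteq\sot\setminus\uglythings$, so $\sot\setminus\uglythings\in\fintypart{\cohsds}$. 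For~\ref{axiom:desirable:sets:background}, every singleton $\set{\thing[{+}]}$ with $\thing[{+}]\in\beautifulthings$ lies in $\cohsds$ and is finite, hence in $\fintypart{\cohsds}$.

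The main obstacle is condition~\ref{axiom:desirable:sets:finitary:production}. Here I would take any non-empty $\sosot\Subset\fintypart{\cohsds}$ and any choice of things $\thing[\selection]\in\initialclosure(\selection(\sosot))$ for $\selection\in\selections$, and aim to show that $B\coloneqq\cset{\thing[\selection]}{\selection\in\selections}$ contains a finite element of $\cohsds$, which is exactly what $B\in\fintypart{\cohsds}$ means. For each $\sot\in\sosot$ I would fix a finite witness $\finsot_{\sot}\in\cohsds$ with $\finsot_{\sot}\subseteq\sot$, and set $\sofinsot\coloneqq\cset{\finsot_{\sot}}{\sot\in\sosot}$; this is a non-empty subset of $\cohsds$ consisting of finitely many finite sets, so $\sofinsot\Subset\cohsds$, and every member of $\sofinsot$ is non-empty by~\ref{axiom:desirable:sets:consistency} for $\cohsds$. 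The key construction is that each selection $\selection[\ast]\in\selections[\sofinsot]$ induces a selection $\hat{\selection[\ast]}\in\selections$ on $\sosot$, defined by $\hat{\selection[\ast]}(\sot)\coloneqq\selection[\ast](\finsot_{\sot})$; this is legitimate since $\selection[\ast](\finsot_{\sot})\in\finsot_{\sot}\subseteq\sot$, and it satisfies $\hat{\selection[\ast]}(\sosot)=\selection[\ast](\sofinsot)$.

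Given this correspondence, I would put $\thing[{\selection[\ast]}]\coloneqq\thing[{\hat{\selection[\ast]}}]$ for each $\selection[\ast]\in\selections[\sofinsot]$, where the right-hand side is the thing already selected above for the $\sosot$-selection $\hat{\selection[\ast]}$; then $\thing[{\selection[\ast]}]\in\initialclosure(\hat{\selection[\ast]}(\sosot))=\initialclosure(\selection[\ast](\sofinsot))$. Applying~\ref{axiom:desirable:sets:finitary:production} for $\cohsds$ to the non-empty $\sofinsot\Subset\cohsds$ then yields $\cset{\thing[{\selection[\ast]}]}{\selection[\ast]\in\selections[\sofinsot]}\in\cohsds$. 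Since each member of $\sofinsot$ is finite and non-empty, $\selections[\sofinsot]$ is finite, so this set is finite; and since each $\hat{\selection[\ast]}\in\selections$, it is a subset of $B$. Hence $B$ contains a finite element of $\cohsds$, giving $B\in\fintypart{\cohsds}$ as required. The only delicate point is that distinct $\sot,\sot'\in\sosot$ may share the same witness $\finsot_{\sot}=\finsot_{\sot'}$; defining $\sofinsot$ as a set absorbs such duplicates, and the identity $\hat{\selection[\ast]}(\sosot)=\selection[\ast](\sofinsot)$ still holds because the value $\selection[\ast](\finsot_{\sot})$ depends only on the set $\finsot_{\sot}$ and not on the particular $\sot$.
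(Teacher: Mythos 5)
Your proposal is correct and follows essentially the same route as the paper's own proof: reduce to the finitely coherent case, verify \ref{axiom:desirable:sets:consistency}--\ref{axiom:desirable:sets:background} directly from the up-set structure, and handle \ref{axiom:desirable:sets:finitary:production} by choosing finite witnesses, lifting each selection on the finite witness family to a selection on the original family, and observing that the resulting finite desirable set produced by \ref{axiom:desirable:sets:finitary:production} for \(\cohsds\) is a subset of the target set. Your explicit remark about duplicate witnesses is a point the paper passes over with ``with obvious notations'', but the argument is the same.
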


\subsection*{The relation with completeness and conjunctivity}
Let's now find out more about how, for a (finitely) coherent SDS, being finitary relates to being complete, and in particular to being conjunctive.
This will allow us to establish in Theorem~\ref{thm:conjunctive:representation:finitary} below that for finitary and finitely coherent SDSes, the conjunctive representation of Theorem~\ref{thm:conjunctive:representation:finite} simplifies.

For a start, all coherent and conjunctive SDSes are, of course, finitary.

\begin{proposition}\label{prop:conjunctive:is:finitary}
Consider any coherent SDT~\(\cohsdt\in\cohsdts\), then the (finitely) coherent conjunctive SDS~\(\sdsify{\cohsdt}\) is finitary: \(\fintypart{\sdsify{\cohsdt}}=\sdsify{\cohsdt}\).
\end{proposition}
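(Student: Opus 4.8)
The plan is to split the desired equality $\fintypart{\sdsify{\cohsdt}}=\sdsify{\cohsdt}$ into its two inclusions and to notice that one of them comes for free. By Proposition~\ref{prop:cohsdt:to:cohsds}, the conjunctive SDS~$\sdsify{\cohsdt}$ is (finitely) coherent whenever $\cohsdt$ is a coherent SDT, and the argument just before Proposition~\ref{prop:finitary:equal:to:finitary:part}---which only uses the monotonicity axiom~\ref{axiom:desirable:sets:increasing}---shows that $\fintypart{\cohsds}\subseteq\cohsds$ holds for \emph{every} (finitely) coherent SDS~$\cohsds$. Applying this with $\cohsds=\sdsify{\cohsdt}$ yields the inclusion $\fintypart{\sdsify{\cohsdt}}\subseteq\sdsify{\cohsdt}$ immediately. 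Hence, by Proposition~\ref{prop:finitary:equal:to:finitary:part}, it only remains to establish the reverse inclusion, which is exactly the assertion that $\sdsify{\cohsdt}$ is finitary.

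To prove that $\sdsify{\cohsdt}$ is finitary, I would verify the defining condition $\sdsify{\cohsdt}\subseteq\fintypart{\sdsify{\cohsdt}}$ directly, by exhibiting a finite desirable subset of each desirable set. So first I would take an arbitrary $\sot\in\sdsify{\cohsdt}$, which by the definition $\sdsify{\cohsdt}=\cset{\sot\in\setsofthings}{\sot\cap\cohsdt\neq\emptyset}$ means $\sot\cap\cohsdt\neq\emptyset$. Then I would pick any $\thing\in\sot\cap\cohsdt$ and use the singleton $\set{\thing}$ as the witness: it is a finite subset of~$\sot$, so $\set{\thing}\Subset\sot$, and since $\thing\in\set{\thing}\cap\cohsdt$ we also have $\set{\thing}\cap\cohsdt\neq\emptyset$, i.e.\ $\set{\thing}\in\sdsify{\cohsdt}\cap\finitesetsofthings$. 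This is precisely a finite element of $\sdsify{\cohsdt}$ contained in~$\sot$, so $\sot\in\fintypart{\sdsify{\cohsdt}}$, as required.

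Conceptually, this witness is nothing but the one supplied by conjunctivity: Proposition~\ref{prop:sds:to:sdt:and:back:ordering} already records that $\sdsify{\cohsdt}$ is conjunctive, and a conjunctive SDS is automatically finitary because its singleton witnesses are finite sets. I expect no genuine obstacle here; the only point requiring a little care is the bookkeeping about which of the two inclusions comes for free from (finite) coherence and which needs the explicit singleton witness, together with checking that the witness $\set{\thing}$ is simultaneously finite, a subset of~$\sot$, and a member of~$\sdsify{\cohsdt}$.
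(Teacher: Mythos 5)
Your proposal is correct and follows essentially the same route as the paper's own proof: invoke Proposition~\ref{prop:cohsdt:to:cohsds} for (finite) coherence, get the inclusion \(\fintypart{\sdsify{\cohsdt}}\subseteq\sdsify{\cohsdt}\) from~\ref{axiom:desirable:sets:increasing}, and establish the converse by taking any \(\thing\in\sot\cap\cohsdt\) and using the singleton \(\set{\thing}\) as the finite desirable witness. No gaps; the bookkeeping you flag is handled exactly as the paper does it.
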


But what is the relation for a coherent SDS between its being conjunctive and its being complete?
On the one hand, coherent SDSes that are conjunctive are always complete; see Propositions~\ref{prop:conjunctive:and:coherent:sds} and~\ref{prop:conjunctive:is:complete}.
On the other hand, complete coherent SDSes are not necessarily conjunctive, but we're now about to find out that they \emph{necessarily have a conjunctive finitary part}.
Consequently, the coherent conjunctive SDSes are exactly the complete and coherent SDSes that are finitary; and the same holds for finitely coherent conjunctive SDSes provided that the closure operator~\(\initialclosure\) is finitary.

The following proposition gives a more detailed statement.

\begin{proposition}\label{prop:complete:finitary:is:conjunctive}
For any complete and coherent SDS \(\cohsds\in\completesdss\), there's some~\(\cohsdt\in\cohsdts\) such that \(\finpart{\cohsds}=\finpart{\sdsify{\cohsdt}}\), and therefore \(\fintypart{\cohsds}=\fintypart{\sdsify{\cohsdt}}=\sdsify{\cohsdt}\), namely \(\cohsdt=\sdtify{\cohsds}\).
Moreover, if the closure operator~\(\initialclosure\) is finitary, then for any complete and finitely coherent SDS \(\cohsds\in\completefincohsdss\), there's some~\(\cohsdt\in\cohsdts\) such that \(\finpart{\cohsds}=\finpart{\sdsify{\cohsdt}}\), and therefore \(\fintypart{\cohsds}=\fintypart{\sdsify{\cohsdt}}=\sdsify{\cohsdt}\), namely \(\cohsdt=\sdtify{\cohsds}\).
\end{proposition}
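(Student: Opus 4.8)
The plan is to verify that the obvious candidate $\cohsdt\coloneqq\sdtify{\cohsds}$ does the job, the whole matter reducing to an equality of \emph{finite} parts. First I would invoke Proposition~\ref{prop:cohsds:to:cohsdt}: in the coherent case its part~\ref{it:cohsds:to:cohsdt:coherent} guarantees that $\cohsdt=\sdtify{\cohsds}$ is a coherent SDT, so $\cohsdt\in\cohsdts$, while its opening statement gives $\sdsify{\cohsdt}=\sdsify{\sdtify{\cohsds}}\subseteq\cohsds$. Intersecting this inclusion with $\finitesetsofthings$ immediately yields one half of what I want, namely $\finpart{\sdsify{\cohsdt}}\subseteq\finpart{\cohsds}$. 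Since the finitary part $\fintypart{\bolleke}=\sotUpset{\finpart{\bolleke}}$ depends only on the finite part, once I establish the reverse inclusion $\finpart{\cohsds}\subseteq\finpart{\sdsify{\cohsdt}}$ I will have $\finpart{\cohsds}=\finpart{\sdsify{\cohsdt}}$, hence $\fintypart{\cohsds}=\fintypart{\sdsify{\cohsdt}}$, and Proposition~\ref{prop:conjunctive:is:finitary} will supply the final equality $\fintypart{\sdsify{\cohsdt}}=\sdsify{\cohsdt}$.

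The crux, and the only place where completeness is used, is the reverse inclusion. Unwinding the definitions, $\sdsify{\cohsdt}=\cset{\sot}{\sot\cap\sdtify{\cohsds}\neq\emptyset}=\cset{\sot}{(\exists\thing\in\sot)\set{\thing}\in\cohsds}$, so this inclusion amounts exactly to the claim that \emph{every finite $\finsot\in\cohsds$ contains a thing $\thing$ with $\set{\thing}\in\cohsds$}. I would prove this by induction on the cardinality of $\finsot$, which is finite and, by~\ref{axiom:desirable:sets:consistency}, non-empty. For $\abs{\finsot}=1$ the set is itself the desired singleton. For $\abs{\finsot}\geq2$, pick any $\thing\in\finsot$ and write $\finsot=\set{\thing}\cup(\finsot\setminus\set{\thing})$; applying the completeness condition~\eqref{eq:complete:cohsds} to this union, which lies in $\cohsds$, gives either $\set{\thing}\in\cohsds$---and then $\thing$ is the thing we seek---or $\finsot\setminus\set{\thing}\in\cohsds$, a strictly smaller non-empty finite element of $\cohsds$ to which the induction hypothesis applies. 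This is the main obstacle in the sense that it is the only genuinely non-formal step; everything else is definition-chasing. Combining it with the first inclusion gives $\finpart{\cohsds}=\finpart{\sdsify{\cohsdt}}$, and the displayed chain $\fintypart{\cohsds}=\fintypart{\sdsify{\cohsdt}}=\sdsify{\cohsdt}$ follows as indicated.

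For the finitely coherent case under the extra assumption that $\initialclosure$ is finitary, I expect the argument to be \emph{verbatim} the same, with two bookkeeping changes. To obtain $\cohsdt=\sdtify{\cohsds}\in\cohsdts$ I would now invoke part~\ref{it:cohsds:to:cohsdt:finitely:coherent} of Proposition~\ref{prop:cohsds:to:cohsdt}, which is precisely where the finitary character of $\initialclosure$ enters; the inclusion $\sdsify{\sdtify{\cohsds}}\subseteq\cohsds$ is already stated there for finitely coherent $\cohsds$ as well, so the half-inclusion of finite parts is unaffected. The completeness induction relies only on~\ref{axiom:desirable:sets:consistency}, the condition~\eqref{eq:complete:cohsds}, and~\ref{axiom:desirable:sets:increasing}, all of which hold for complete and finitely coherent SDSes, so it carries over unchanged, and Proposition~\ref{prop:conjunctive:is:finitary} again closes the argument with $\fintypart{\sdsify{\cohsdt}}=\sdsify{\cohsdt}$.
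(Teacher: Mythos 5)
Your proposal is correct and follows essentially the same route as the paper's own proof: take \(\cohsdt=\sdtify{\cohsds}\), get \(\sdsify{\cohsdt}\subseteq\cohsds\) and the coherence of \(\cohsdt\) from Proposition~\ref{prop:cohsds:to:cohsdt} (using its finitary part exactly where you do), and establish the reverse inclusion of finite parts by repeatedly applying the completeness condition to strip a finite element of \(\cohsds\) down to a singleton, finishing with Proposition~\ref{prop:conjunctive:is:finitary}. Your induction on cardinality is just a slightly more explicit packaging of the paper's recursion.
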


Since the finitary part of a complete and (finitely) coherent SDS is conjunctive, the Prime Filter Representation Theorem (in the version formulated as Theorem~\ref{thm:prime:filter:representation:finitely:coherent}) results in a representation with conjunctive models, provided that the closure operator~\(\initialclosure\) is finitary.

\begin{theorem}[Conjunctive representation]\label{thm:conjunctive:representation:finitary}
If the closure operator~\(\initialclosure\) is finitary, then a finitary and finitely consistent SDS~\(\cohsds\subseteq\setsofthings\) is finitely coherent if and only if \(\cohsds=\bigcap\cset{\sdsify{\cohsdt}}{\cohsdt\in\cohsdts\text{ and }\cohsds\subseteq\sdsify{\cohsdt}}\).
\end{theorem}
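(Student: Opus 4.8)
The plan is to prove the equivalence by exploiting the Prime Filter Representation Theorem for finitely coherent SDSes (Theorem~\ref{thm:prime:filter:representation:finitely:coherent}) together with the structural Proposition~\ref{prop:complete:finitary:is:conjunctive}. Write $\cohsds[*]\coloneqq\bigcap\cset{\sdsify{\cohsdt}}{\cohsdt\in\cohsdts\text{ and }\cohsds\subseteq\sdsify{\cohsdt}}$ for the right-hand side of the claimed identity. Every member of this intersection includes~$\cohsds$, so $\cohsds\subseteq\cohsds[*]$ holds unconditionally (reading the empty intersection as~$\setsofthings$); hence in the `only if' direction it remains to prove $\cohsds[*]\subseteq\cohsds$, and in the `if' direction it remains to prove finite coherence.

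For the `only if' direction, assume $\cohsds$ is (finitary and) finitely coherent, so in particular finitely consistent. Theorem~\ref{thm:prime:filter:representation:finitely:coherent} then gives $\cohsds=\bigcap\cset{\cohsds'\in\completefincohsdss}{\cohsds\subseteq\cohsds'}$, and I would aim to show $\cohsds[*]\subseteq\cohsds'$ for each complete, finitely coherent $\cohsds'\supseteq\cohsds$; intersecting over all such $\cohsds'$ then yields $\cohsds[*]\subseteq\cohsds$. Fixing such a $\cohsds'$, Proposition~\ref{prop:complete:finitary:is:conjunctive} (here the finitariness of $\initialclosure$ is used) provides $\sdtify{\cohsds'}\in\cohsdts$ with $\fintypart{\cohsds'}=\sdsify{\sdtify{\cohsds'}}$. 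The decisive step is to verify that $\sdtify{\cohsds'}$ is one of the indices over which $\cohsds[*]$ is taken, i.e.\ that $\cohsds\subseteq\sdsify{\sdtify{\cohsds'}}$: since $\cohsds$ is finitary, $\cohsds=\fintypart{\cohsds}$, and from $\cohsds\subseteq\cohsds'$ we get $\finpart{\cohsds}\subseteq\finpart{\cohsds'}$ and hence, by monotonicity of the upward closure, $\fintypart{\cohsds}\subseteq\fintypart{\cohsds'}=\sdsify{\sdtify{\cohsds'}}$. It then follows that $\cohsds[*]\subseteq\sdsify{\sdtify{\cohsds'}}=\fintypart{\cohsds'}\subseteq\cohsds'$, where the last inclusion is the observation, recorded just before Proposition~\ref{prop:finitary:equal:to:finitary:part}, that the finitary part of a finitely coherent SDS is contained in it. This is exactly what was needed.

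For the `if' direction, assume $\cohsds=\cohsds[*]$ with $\cohsds$ finitary and finitely consistent. Finite consistency forces $\cohsds\neq\setsofthings$ (a finitely consistent SDS satisfies $\fincohsdsclosure(\cohsds)\neq\setsofthings$, so $\cohsds\subsetneq\setsofthings$), whence the index set $\cset{\cohsdt\in\cohsdts}{\cohsds\subseteq\sdsify{\cohsdt}}$ cannot be empty, for otherwise $\cohsds[*]=\setsofthings$. Each $\sdsify{\cohsdt}$ with $\cohsdt\in\cohsdts$ is a coherent, hence finitely coherent, SDS by Proposition~\ref{prop:cohsdt:to:cohsds}, and $\fincohsdss$ is an intersection structure (closed under non-empty intersections). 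Therefore $\cohsds=\cohsds[*]$ is a non-empty intersection of finitely coherent SDSes, and so is itself finitely coherent.

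The hard part is the `only if' direction, and within it the single verification that each complete finitely coherent extension $\cohsds'$ contributes, through its conjunctive finitary part $\sdsify{\sdtify{\cohsds'}}$, an index of $\cohsds[*]$. This is precisely where the two finitariness hypotheses are indispensable and interact: the finitariness of $\initialclosure$ is what lets Proposition~\ref{prop:complete:finitary:is:conjunctive} realise $\fintypart{\cohsds'}$ as a genuine conjunctive model $\sdsify{\sdtify{\cohsds'}}$, while the finitariness of $\cohsds$ (i.e.\ $\cohsds=\fintypart{\cohsds}$) is what upgrades the inclusion of finite parts to $\cohsds\subseteq\sdsify{\sdtify{\cohsds'}}$. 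Everything else reduces to the monotonicity of finite and finitary parts and to the intersection-structure facts already established, so I expect no further obstacle.
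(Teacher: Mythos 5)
Your proposal is correct and follows essentially the same route as the paper's own proof: the Prime Filter Representation Theorem for finitely coherent SDSes combined with Proposition~\ref{prop:complete:finitary:is:conjunctive} to turn each complete extension $\cohsds'$ into a conjunctive index via the chain $\cohsds=\fintypart{\cohsds}\subseteq\fintypart{\cohsds'}=\sdsify{\sdtify{\cohsds'}}\subseteq\cohsds'$, and the intersection-structure argument for sufficiency. The only difference is cosmetic (you intersect the bound $\cohsds[*]\subseteq\cohsds'$ directly rather than sandwiching $\cohsds$ between three nested intersections as the paper does).
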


This leads to the remarkable conclusion that for finitary SDSes there's no difference between finite coherence and coherence, as long as the closure operator~\(\initialclosure\) is finitary.
Similar conclusions, arrived at in a very different manner, were drawn by De Bock in Ref.~\cite[Corollary~1 and Proposition~1]{debock2023:things:arxiv}

\begin{corollary}\label{cor:finitary:and:coherent}
If the closure operator~\(\initialclosure\) is finitary, then
\begin{enumerate}[label=\upshape(\roman*),leftmargin=*]
\item\label{it:finitary:and:coherent:finitely:coherent} any finitary SDS is finitely coherent if and only if it's coherent;
\item\label{it:finitary:and:coherent:finitary:is:coherent} the finitary part \(\fintypart{\cohsds}\) of any (finitely) coherent SDS~\(\cohsds\) is coherent.
\end{enumerate}
\end{corollary}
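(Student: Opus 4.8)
The plan is to obtain both items as quick consequences of the conjunctive representation results already in place, chiefly Theorem~\ref{thm:conjunctive:representation:finitary} and Theorem~\ref{thm:conjunctive:representation}, together with the observation that the two intersection expressions appearing in them are \emph{syntactically identical}, so that establishing the finitary form of the representation automatically establishes the coherent one.

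For item~\ref{it:finitary:and:coherent:finitely:coherent}, one direction is free: every coherent SDS is finitely coherent, since \ref{axiom:desirable:sets:production} implies \ref{axiom:desirable:sets:finitary:production}, and this needs no finitariness of~\(\initialclosure\). For the converse I would start from a finitary and finitely coherent SDS~\(\cohsds\). Being finitely coherent, \(\cohsds\) is in particular finitely consistent, so Theorem~\ref{thm:conjunctive:representation:finitary} applies and yields \(\cohsds=\bigcap\cset{\sdsify{\cohsdt}}{\cohsdt\in\cohsdts\text{ and }\cohsds\subseteq\sdsify{\cohsdt}}\). The next step is to recognise this as exactly the intersection that characterises \emph{coherence} in Theorem~\ref{thm:conjunctive:representation}\ref{it:conjunctive:representation:coherence}. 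To conclude coherence directly, I would argue as follows: each factor~\(\sdsify{\cohsdt}\) with~\(\cohsdt\in\cohsdts\) is a coherent SDS by Proposition~\ref{prop:cohsdt:to:cohsds}, and the collection~\(\cohsdss\) of coherent SDSes is an intersection structure, so it suffices to check that the indexing family \(\event{\cohsds}=\cset{\cohsdt\in\cohsdts}{\cohsds\subseteq\sdsify{\cohsdt}}\) is non-empty. If it were empty, the intersection would collapse to the top~\(\setsofthings\), forcing \(\cohsds=\setsofthings\); but \(\emptyset\in\setsofthings\) contradicts~\ref{axiom:desirable:sets:consistency}, so finite coherence rules this out. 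Hence the family is non-empty and \(\cohsds\), being the intersection of a non-empty family of coherent SDSes, is itself coherent.

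Item~\ref{it:finitary:and:coherent:finitary:is:coherent} then follows almost for free. Given any (finitely) coherent SDS~\(\cohsds\), its finitary part~\(\fintypart{\cohsds}\) is finitely coherent by Proposition~\ref{prop:finite:coherence:of:finitary:part}, and it is finitary because \(\fintypart{\fintypart{\cohsds}}=\fintypart{\cohsds}\), as already noted earlier in the section. Applying item~\ref{it:finitary:and:coherent:finitely:coherent} to the finitary and finitely coherent SDS~\(\fintypart{\cohsds}\) then immediately gives that \(\fintypart{\cohsds}\) is coherent.

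The only genuine obstacle is the non-emptiness bookkeeping in the first item: one must be sure that finite coherence—rather than mere finite consistency—is enough to exclude the degenerate value \(\cohsds=\setsofthings\) and thereby guarantee a non-empty intersection. This is immediate from~\ref{axiom:desirable:sets:consistency}, so no real difficulty arises; the remainder is just matching the two identical intersection formulas and invoking the intersection-structure property of~\(\cohsdss\).
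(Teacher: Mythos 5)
Your proposal is correct and follows essentially the same route as the paper: both deduce item (i) from the conjunctive representation of Theorem~\ref{thm:conjunctive:representation:finitary}, the coherence of the models \(\sdsify{\cohsdt}\) via Proposition~\ref{prop:cohsdt:to:cohsds}, and the fact that \(\cohsdss\) is an intersection structure, and both obtain item (ii) by applying Proposition~\ref{prop:finite:coherence:of:finitary:part} and then item (i) to \(\fintypart{\cohsds}\). Your explicit check that the indexing family is non-empty (so that the intersection-structure property actually applies) is a small point the paper leaves implicit, and is handled correctly.
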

\noindent Observe that the Satan's Apple counterexample in Section~\ref{sec:desirability:conjunctive:models} shows that the finitary character of the closure operator~\(\initialclosure\) cannot generally be let go of in this result.

\section{Filter representation for finitely coherent SDFSes}\label{sec:representation:finitary:finite}
At this point, we have gathered enough background material to turn to finitely coherent SDFSes, and try to relate them to the filters on the distributive lattice~\(\structure{\finfinevents,\subseteq}\).
As is the case for finitely coherent SDSes, this will lead directly to a conjunctive representation result of finitely coherent SDSFes in terms of---as limits inferior of---conjunctive ones.
And, as a next step, we'll then use the Prime Filter Representation Theorem and the relevant results in Sections~\ref{sec:representation:finitary} and~\ref{sec:finitary:models} to investigate when a simpler representation using \emph{intersections} of conjunctive SDFSes is possible.

The first step is almost completely analogous to the discussion of filter representation for finitely coherent SDSes in Section~\ref{sec:representation:finitary}.
We adapt the notations and definitions related to filters in Section~\ref{sec:filters} from a generic bounded distributive lattice~\(\smash{\structure{\lattice,\leq}}\) to the specific bounded distributive lattice~\(\structure{\finfinevents,\subseteq}\).
This leads to the complete lattice~\(\structure{\finfineventfilters,\subseteq}\) of all filters on~\(\finfinevents\), with bottom~\(\finfineventfiltersbottom=\bigcap\finfineventfilters=\set{\cohsdts}\) and top~\(\finfineventfilterstop=\finfinevents\); see also Theorem~\ref{thm:basic:sets:constitute:a:bounded:lattice} and Proposition~\ref{prop:filters:constititute:a:complete:lattice}.
We also denote the set of all proper filters by~\(\properfinfineventfilters\coloneqq\finfineventfilters\setminus\set{\finfinevents}\), and the corresponding closure operator by~\(\finfineventfilterclosure\).

In order to establish the existence of an order isomorphism between the complete lattices~\(\structure{\toppedcohsdfss,\subseteq}\) and~\(\structure{\finfineventfilters,\subseteq}\), we consider the maps
\begin{equation*}
\finfinfilterise\colon\setsoffinitesetsofthings\to\powerset(\finfinevents)
\colon\cohsdfs\mapsto\finfinfilterise(\cohsdfs)\coloneqq\cset{\event{\sofinsot}}{\sofinsot\Subset\cohsdfs}
\end{equation*}
and
\begin{equation*}
\finfindesirify\colon\powerset(\finfinevents)\to\setsoffinitesetsofthings
\colon\filter\mapsto\finfindesirify(\filter)
\coloneqq\cset{\finsot\in\finitesetsofthings}{\cohsdts[\finsot]\in\filter},
\end{equation*}
which, as we'll see presently, do the job.

\begin{theorem}[Order isomorphism: finitely coherent SDFSes]\label{thm:the:finitary:representation:theorem:finite}
The following statements hold, for all \(\cohsdfs,\cohsdfs[1],\cohsdfs[2]\subseteq\finitesetsofthings\) and all \(\filter,\filter[1],\filter[2]\subseteq\finevents\):
\begin{enumerate}[label=\upshape(\roman*),leftmargin=*]
\item\label{it:the:finitary:representation:theorem:finite:if:cohsds:then:filter} if \(\cohsdfs\) is a finitely coherent SDFS then \(\finfinfilterise(\cohsdfs)\) is a proper filter on~\(\structure{\finfinevents,\subseteq}\);
\item\label{it:the:finitary:representation:theorem:finite:if:filter:then:cohsds} if \(\filter\) is a proper filter on~\(\structure{\finfinevents,\subseteq}\), then \(\finfindesirify(\filter)\) is a finitely coherent SDFS;
\item\label{it:the:finitary:representation:theorem:finite:cohsds:embedding} if \(\cohsdfs\) is a finitely coherent SDFS, then \((\finfindesirify\circ\finfinfilterise)(\cohsdfs)=\cohsdfs\);
\item\label{it:the:finitary:representation:theorem:finite:filter:embedding} if \(\filter\) is a proper filter on~\(\structure{\finfinevents,\subseteq}\), then \((\finfinfilterise\circ\finfindesirify)(\filter)=\filter\);
\item\label{it:the:finitary:representation:theorem:finite:cohsds:order:preserving} if \(\cohsdfs[1]\subseteq\cohsdfs[2]\) then \(\finfinfilterise(\cohsdfs[1])\subseteq\finfinfilterise(\cohsdfs[2])\);
\item\label{it:the:finitary:representation:theorem:finite:filter:order:preserving} if \(\filter[1]\subseteq\filter[2]\) then \(\finfindesirify(\filter[1])\subseteq\finfindesirify(\filter[2])\);
\item\label{it:the:finitary:representation:theorem:finite:filterise:bounds} \(\finfinfilterise(\cohsdfsbottom)=\finfineventfiltersbottom\) and \(\finfinfilterise(\cohsdfstop)=\finfineventfilterstop\);
\item\label{it:the:finitary:representation:theorem:finite:desirify:bounds} \(\finfindesirify(\finfineventfiltersbottom)=\cohsdfsbottom\) and \(\finfindesirify(\finfineventfilterstop)=\cohsdfstop\).
\end{enumerate}
This tells us that \(\finfinfilterise\) is an order isomorphism between~\(\structure{\toppedcohsdfss,\subseteq}\) and~\(\structure{\finfineventfilters,\subseteq}\), with inverse order isomorphism~\(\finfindesirify\).
Moreover,
\begin{enumerate}[label=\upshape(\roman*),leftmargin=*,resume]
\item\label{it:the:finitary:representation:theorem:finite:completeness} if the proper filter~\(\filter=\finfinfilterise(\cohsdfs)\) and the finitely coherent SDFS~\(\cohsdfs=\finfindesirify(\filter)\) are related by this order isomorphism, then \(\filter\) is a prime filter on~\(\structure{\finfinevents,\subseteq}\) if and only if \(\cohsdfs\) satisfies the so-called \emph{completeness condition}
\begin{equation}\label{eq:complete:cohsdfs}
\group{\forall\finsot[1],\finsot[2]\Subset\things}
\group[\big]{\finsot[1]\cup\finsot[2]\in\cohsdfs\then
\group{\finsot[1]\in\cohsdfs\text{ or }\finsot[2]\in\cohsdfs}}.
\end{equation}
\end{enumerate}
\end{theorem}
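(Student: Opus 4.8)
The plan is to follow the blueprint of Theorem~\ref{thm:the:finitary:representation:theorem} almost verbatim, systematically replacing \(\setsofthings\) by \(\finitesetsofthings\) and arbitrary subsets by finite ones, and leaning on the \emph{finitary} versions of the workhorse results: Proposition~\ref{prop:it:does:not:matter:which}\ref{it:it:does:not:matter:which:inclusion:finite}--\ref{it:it:does:not:matter:which:equality:finite}, the consistency statement in Proposition~\ref{prop:filter:consistency}, and the lattice structure of \(\structure{\finfinevents,\subseteq}\) from Theorem~\ref{thm:basic:sets:constitute:a:bounded:lattice}. Every computation rests on two elementary event identities that I would record first: from the definition~\eqref{eq:definition:event}, \(\event{\sofinsot[1]}\cap\event{\sofinsot[2]}=\event{\sofinsot[1]\cup\sofinsot[2]}\); and from~\eqref{eq:the:cohsdtify:operator}, \(\cohsdts[{\finsot[1]\cup\finsot[2]}]=\cohsdts[{\finsot[1]}]\cup\cohsdts[{\finsot[2]}]\).

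For~\ref{it:the:finitary:representation:theorem:finite:if:cohsds:then:filter} I would check the axioms of Definition~\ref{def:filters} for \(\finfinfilterise(\cohsdfs)\): non-emptiness holds because \(\emptyset\Subset\cohsdfs\) contributes the top \(\event{\emptyset}=\cohsdts\); upward closure~\ref{axiom:lattice:filters:increasing} is exactly Proposition~\ref{prop:it:does:not:matter:which}\ref{it:it:does:not:matter:which:inclusion:finite}; closure under meets~\ref{axiom:lattice:filters:intersections} follows from the first event identity together with the fact that the union of two finite subsets of \(\cohsdfs\) is again one; and properness is Proposition~\ref{prop:filter:consistency}. For~\ref{it:the:finitary:representation:theorem:finite:if:filter:then:cohsds} I would verify~\ref{axiom:desirable:finite:sets:consistency}--\ref{axiom:desirable:finite:sets:production} for \(\cohsdfs\coloneqq\finfindesirify(\filter)\). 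The first four are routine: \ref{axiom:desirable:finite:sets:consistency} uses \(\cohsdts[\emptyset]=\emptyset\notin\filter\) (properness); \ref{axiom:desirable:finite:sets:increasing} uses \(\finsot[1]\subseteq\finsot[2]\then\cohsdts[{\finsot[1]}]\subseteq\cohsdts[{\finsot[2]}]\) and~\ref{axiom:lattice:filters:increasing}; \ref{axiom:desirable:finite:sets:forbidden} uses \(\cohsdts[\finsot]=\cohsdts[{\finsot\setminus\uglythings}]\) (coherent SDTs avoid \(\uglythings\)); and \ref{axiom:desirable:finite:sets:background} uses \(\cohsdts[{\set{\thing[{+}]}}]=\cohsdts\) for \(\thing[{+}]\in\beautifulthings\), which lies in every filter. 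The only real work is the production axiom~\ref{axiom:desirable:finite:sets:production}: given \(\emptyset\neq\sofinsot\Subset\cohsdfs\) and \(\thing[\selection]\in\initialclosure(\selection(\sofinsot))\) for all~\(\selection\), the produced set \(\finsot[o]\coloneqq\cset{\thing[\selection]}{\selection\in\finselections}\) is finite (there are finitely many selections), and I would show \(\event{\sofinsot}\subseteq\cohsdts[{\finsot[o]}]\) directly: any \(\cohsdt\in\event{\sofinsot}\) meets every \(\finsot\in\sofinsot\), so some selection \(\selection[o]\) sends \(\sofinsot\) into \(\cohsdt\), whence \(\thing[{\selection[o]}]\in\initialclosure(\selection[o](\sofinsot))\subseteq\initialclosure(\cohsdt)=\cohsdt\); since \(\event{\sofinsot}\in\filter\) (a finite meet of the \(\cohsdts[\finsot]\in\filter\)), upward closure places \(\cohsdts[{\finsot[o]}]\) in \(\filter\), i.e. \(\finsot[o]\in\cohsdfs\).

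The bijection, order, and bound parts follow the same pattern: \ref{it:the:finitary:representation:theorem:finite:cohsds:embedding} is Proposition~\ref{prop:it:does:not:matter:which}\ref{it:it:does:not:matter:which:equality:finite} applied through \(\event{\set{\finsot}}=\cohsdts[\finsot]\); \ref{it:the:finitary:representation:theorem:finite:filter:embedding} holds because every \(A\in\filter\subseteq\finfinevents\) equals some \(\event{\sofinsot}\), and then \(\event{\sofinsot}\subseteq\cohsdts[\finsot]\) for each \(\finsot\in\sofinsot\) forces, by upward closure, \(\sofinsot\Subset\cohsdfs\); \ref{it:the:finitary:representation:theorem:finite:cohsds:order:preserving} and~\ref{it:the:finitary:representation:theorem:finite:filter:order:preserving} are immediate from the definitions; and \ref{it:the:finitary:representation:theorem:finite:filterise:bounds}--\ref{it:the:finitary:representation:theorem:finite:desirify:bounds} reduce, via Proposition~\ref{prop:the:smallest:cohsdfs}, to the observation that \(\cohsdts[\finsot]=\cohsdts\) exactly when \(\finsot\cap\beautifulthings\neq\emptyset\), plus \(\finfinfilterise(\finitesetsofthings)=\finfinevents\) for the top. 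Together these yield the asserted order isomorphism between \(\structure{\toppedcohsdfss,\subseteq}\) and \(\structure{\finfineventfilters,\subseteq}\).

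The main obstacle is the completeness equivalence~\ref{it:the:finitary:representation:theorem:finite:completeness}. The forward direction is short: if \(\filter\) is prime and \(\finsot[1]\cup\finsot[2]\in\cohsdfs\), then \(\cohsdts[{\finsot[1]}]\cup\cohsdts[{\finsot[2]}]=\cohsdts[{\finsot[1]\cup\finsot[2]}]\in\filter\), so primality~\ref{axiom:lattice:filters:prime} forces \(\finsot[1]\in\cohsdfs\) or \(\finsot[2]\in\cohsdfs\). For the converse, given a join \(\event{\sofinsot[1]}\cup\event{\sofinsot[2]}\in\filter\), I would use distributivity of \(\structure{\finfinevents,\subseteq}\) together with the second event identity to rewrite it as \(\event{\sofinsot}\) with \(\sofinsot\coloneqq\cset{\finsot\cup\finsot'}{\finsot\in\sofinsot[1]\text{ and }\finsot'\in\sofinsot[2]}\); upward closure then gives \(\finsot\cup\finsot'\in\cohsdfs\) for all such pairs, and a short propositional argument using completeness shows that either every \(\finsot\in\sofinsot[1]\) or every \(\finsot'\in\sofinsot[2]\) lies in \(\cohsdfs\), i.e. \(\event{\sofinsot[1]}\in\filter\) or \(\event{\sofinsot[2]}\in\filter\). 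The degenerate cases \(\sofinsot[1]=\emptyset\) or \(\sofinsot[2]=\emptyset\), where the corresponding event is the top \(\cohsdts\in\filter\), must be treated separately but are trivial. I expect most of this to transcribe directly from the proof of Theorem~\ref{thm:the:finitary:representation:theorem}\ref{it:the:finitary:representation:theorem:completeness}, the only genuinely new bookkeeping being the restriction to finite sets, which is precisely what keeps every produced object inside \(\finitesetsofthings\) and \(\finfinevents\).
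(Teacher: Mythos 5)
Your proposal is correct and follows essentially the same route as the paper's own proof, which likewise transcribes the argument for Theorem~\ref{thm:the:finitary:representation:theorem} to the finite setting using Proposition~\ref{prop:it:does:not:matter:which}\ref{it:it:does:not:matter:which:inclusion:finite}--\ref{it:it:does:not:matter:which:equality:finite}, Proposition~\ref{prop:filter:consistency}, Lemma~\ref{lem:the:cohsdtify:operator}, the event identities behind Equations~\eqref{eq:intersection:of:possibles} and~\eqref{eq:union:of:possibles}, and the same singleton/recursion argument for the primality--completeness equivalence. The only cosmetic difference is that you unfold Proposition~\ref{prop:the:two:systems} inline when verifying~\ref{axiom:desirable:finite:sets:production}, where the paper cites it directly.
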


To see how the representation in terms of conjunctive models comes about in this context, we recall that the events \(\event{\sofinsot}\) for~\(\sofinsot\Subset\finitesetsofthings\) are sets of coherent SDTs, completely determined by
\begin{equation*}
\cohsdt\in\event{\sofinsot}
\ifandonlyif\cohsdt\in\bigcap_{\finsot\in\sofinsot}\cohsdts[\finsot]
\ifandonlyif\group{\forall\finsot\in\sofinsot}\finsot\cap\cohsdt\neq\emptyset
\ifandonlyif\sofinsot\Subset\sdfsify{\cohsdt},
\text{ for all~\(\cohsdt\in\cohsdts\),}
\end{equation*}
where we recall from Equation~\eqref{eq:sofinsot:to:finsot:and:finsot:to:sofinsot} that \(\sdfsify{\cohsdt}\coloneqq\cset{\finsot\Subset\things}{\finsot\cap\cohsdt\neq\emptyset}\).
This tells us that
\begin{equation}\label{eq:connection:with:binary:model:finitary:finite}
\event{\sofinsot}=\cset{\cohsdt\in\cohsdts}{\sofinsot\Subset\sdfsify{\cohsdt}},
\text{ for all~\(\sofinsot\Subset\finitesetsofthings\)}.
\end{equation}

As a next step, we now consider any proper filter~\(\filter\in\properfinfineventfilters\) and any finitely coherent SDFS~\(\cohsdfs\in\cohsdfss\) that correspond, in the sense that \(\cohsdfs=\finfindesirify(\filter)\) and \(\filter=\finfinfilterise(\cohsdfs)\).
On the one hand, we infer from \(\cohsdfs=\finfindesirify(\filter)\) that for any~\(\finsot\in\finitesetsofthings\):
\begin{align*}
\finsot\in\cohsdfs
\ifandonlyif\cohsdts[\finsot]\in\filter
\ifandonlyif\group{\exists\sop\in\filter}\sop\subseteq\cohsdts[\finsot]
&\ifandonlyif\group{\exists\sop\in\filter}\group{\forall\cohsdt\in\sop}\finsot\cap\cohsdt\neq\emptyset\\
&\ifandonlyif\group{\exists\sop\in\filter}\group{\forall\cohsdt\in\sop}\finsot\in\sdfsify{\cohsdt},
\end{align*}
where the second equivalence follows from the filter property~\ref{axiom:filters:increasing}.
This chain of equivalences therefore tells us that
\begin{equation}\label{eq:from:filter:to:cohsdfs}
\cohsdfs=\bigcup_{\sop\in\filter}\bigcap_{\cohsdt\in\sop}\sdfsify{\cohsdt}.
\end{equation}
On the other hand, we infer from \(\filter=\finfinfilterise(\cohsdfs)\) and Equation~\eqref{eq:connection:with:binary:model:finitary:finite} that
\begin{equation}\label{eq:from:cohsdfs:to:filter}
\filter
=\cset[\big]{\cset{\cohsdt\in\cohsdts}{\sofinsot\Subset\sdfsify{\cohsdt}}}{\sofinsot\Subset\cohsdfs}.
\end{equation}
Taking into account the consequences of Equations~\eqref{eq:connection:with:binary:model:finitary:finite}--\eqref{eq:from:cohsdfs:to:filter} leads to the following representation result for finite consistency, finite coherence, and the corresponding closure operator~\(\cohsdfsclosure\) in terms of the conjunctive models~\(\sdfsify{\cohsdt}\).

\begin{theorem}[Conjunctive representation]\label{thm:conjunctive:representation:finite:finite}
Consider any SDFS~\(\cohsdfs\subseteq\finitesetsofthings\), then the following statements hold:
\begin{enumerate}[label=\upshape(\roman*),leftmargin=*]
\item\label{it:conjunctive:representation:finite:finite:consistency} \(\cohsdfs\) is finitely consistent if and only if \(\event{\sofinsot}=\cset{\cohsdt\in\cohsdts}{\sofinsot\Subset\sdfsify{\cohsdt}}\neq\emptyset\) for all~\(\sofinsot\Subset\cohsdfs\);
\item\label{it:conjunctive:representation:finite:finite:closure} \(\cohsdfsclosure(\cohsdfs)=\bigcup_{\sofinsot\Subset\cohsdfs}\bigcap_{\cohsdt\in\cohsdts\colon\sofinsot\Subset\sdfsify{\cohsdt}}\sdfsify{\cohsdt}\);
\item\label{it:conjunctive:representation:finite:finite:coherence} \(\cohsdfs\) is finitely coherent if and only if \(\cohsdfs\) is finitely consistent and
\[
\cohsdfs
=\bigcup_{\sofinsot\Subset\cohsdfs}\bigcap_{\cohsdt\in\cohsdts\colon\sofinsot\Subset\sdfsify{\cohsdt}}
\sdfsify{\cohsdt}.
\]
\end{enumerate}
\end{theorem}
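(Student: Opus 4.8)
The plan is to transcribe the proof of Theorem~\ref{thm:conjunctive:representation:finite} into the finite‑set setting, with $\sdsify{\cohsdt}$ replaced by $\sdfsify{\cohsdt}$, the lattice $\finevents$ by $\finfinevents$, and the maps $\finfilterise/\findesirify$ by $\finfinfilterise/\finfindesirify$. The engine is the order isomorphism of Theorem~\ref{thm:the:finitary:representation:theorem:finite} together with the identity~\eqref{eq:connection:with:binary:model:finitary:finite}, and the elementary observation — immediate from~\eqref{eq:definition:event} — that $\event{\sofinsot[1]}\cap\event{\sofinsot[2]}=\event{\sofinsot[1]\cup\sofinsot[2]}$, so that $\finfinfilterise(\cohsdfs)=\cset{\event{\sofinsot}}{\sofinsot\Subset\cohsdfs}$ is always closed under binary intersection (as $\sofinsot[1]\cup\sofinsot[2]\Subset\cohsdfs$) and hence directed downwards: it is a filter base for a proper filter as soon as it avoids the lattice bottom $\emptyset$.

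For part~(i), the displayed equality $\event{\sofinsot}=\cset{\cohsdt\in\cohsdts}{\sofinsot\Subset\sdfsify{\cohsdt}}$ is just~\eqref{eq:connection:with:binary:model:finitary:finite}. If $\cohsdfs$ is finitely consistent it extends to some finitely coherent SDFS, and Proposition~\ref{prop:filter:consistency} (finitely coherent SDFS version) yields $\event{\sofinsot}\neq\emptyset$ for each $\sofinsot\Subset\cohsdfs$. Conversely, if all these events are non-empty, the base $\finfinfilterise(\cohsdfs)$ generates a proper filter $\filter$ on $\structure{\finfinevents,\subseteq}$; by Theorem~\ref{thm:the:finitary:representation:theorem:finite} the SDFS $\finfindesirify(\filter)$ is then finitely coherent, and because $\cohsdts[\finsot]=\event{\set{\finsot}}\in\filter$ for every $\finsot\in\cohsdfs$ we get $\cohsdfs\subseteq\finfindesirify(\filter)$, so $\cohsdfs$ is finitely consistent.

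For part~(ii), first suppose $\cohsdfs$ is finitely consistent and let $\filter\coloneqq\finfineventfilterclosure(\finfinfilterise(\cohsdfs))$ be the proper filter generated by the base $\finfinfilterise(\cohsdfs)$. I would prove $\cohsdfsclosure(\cohsdfs)=\finfindesirify(\filter)$ by two inclusions: ``$\subseteq$'' because $\finfindesirify(\filter)$ is a finitely coherent SDFS containing $\cohsdfs$, and ``$\supseteq$'' by applying the order-preserving $\finfindesirify$ to $\filter\subseteq\finfinfilterise(\cohsdfsclosure(\cohsdfs))$ and using that $\finfindesirify$ inverts $\finfinfilterise$ on finitely coherent SDFSes (Theorem~\ref{thm:the:finitary:representation:theorem:finite}). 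Unfolding $\finfindesirify(\filter)=\cset{\finsot\in\finitesetsofthings}{(\exists\sofinsot\Subset\cohsdfs)\,\event{\sofinsot}\subseteq\cohsdts[\finsot]}$ and rewriting $\event{\sofinsot}\subseteq\cohsdts[\finsot]$ via~\eqref{eq:connection:with:binary:model:finitary:finite} and $\cohsdts[\finsot]=\cset{\cohsdt\in\cohsdts}{\finsot\in\sdfsify{\cohsdt}}$ as ``$\finsot\in\sdfsify{\cohsdt}$ for every $\cohsdt$ with $\sofinsot\Subset\sdfsify{\cohsdt}$'' then turns this into the claimed $\bigcup_{\sofinsot\Subset\cohsdfs}\bigcap_{\cohsdt\in\cohsdts\colon\sofinsot\Subset\sdfsify{\cohsdt}}\sdfsify{\cohsdt}$. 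If instead $\cohsdfs$ is not finitely consistent, then $\cohsdfsclosure(\cohsdfs)=\finitesetsofthings$, while part~(i) provides some $\sofinsot[0]\Subset\cohsdfs$ with $\event{\sofinsot[0]}=\emptyset$, i.e.\ with empty index set $\cset{\cohsdt}{\sofinsot[0]\Subset\sdfsify{\cohsdt}}$; the corresponding inner intersection is then the empty intersection $\finitesetsofthings$, so the right-hand side also collapses to $\finitesetsofthings$ and the formula still holds. Part~(iii) is then immediate, since $\cohsdfs$ is finitely coherent precisely when it is finitely consistent and equal to $\cohsdfsclosure(\cohsdfs)$.

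The step I expect to be the genuine obstacle is the closure identity $\cohsdfsclosure(\cohsdfs)=\finfindesirify(\finfineventfilterclosure(\finfinfilterise(\cohsdfs)))$ underlying part~(ii): Theorem~\ref{thm:the:finitary:representation:theorem:finite} is an isomorphism of the \emph{closed} structures $\structure{\toppedcohsdfss,\subseteq}$ and $\structure{\finfineventfilters,\subseteq}$, whereas here I must push the two closure operators $\cohsdfsclosure$ and $\finfineventfilterclosure$ through it for an arbitrary, possibly non-closed, $\cohsdfs$ and verify that the filter generated by the base $\finfinfilterise(\cohsdfs)$ coincides with $\finfinfilterise(\cohsdfsclosure(\cohsdfs))$. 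The two-sided inclusion above is exactly where the interplay between the two closure operators is used; everything else — the filter-base property, the consistency direction, and the empty-intersection bookkeeping in the inconsistent case — is routine rewriting, with the empty-intersection convention being a second, minor, point that must be stated explicitly so that the inconsistent case is covered.
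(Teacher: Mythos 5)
Your proposal is correct and follows essentially the same route as the paper's own proof: both reduce the statement to the filter picture via the order isomorphism of Theorem~\ref{thm:the:finitary:representation:theorem:finite}, observe that \(\cset{\event{\sofinsot}}{\sofinsot\Subset\cohsdfs}\) is a downward-directed filter base whenever it avoids \(\emptyset\), identify \(\cohsdfsclosure(\cohsdfs)\) with \(\finfindesirify\) of the generated filter by a two-sided inclusion, and dispose of the inconsistent case by the empty-intersection convention. The only (immaterial) difference is that for the inclusion \(\finfindesirify(\filter)\subseteq\cohsdfsclosure(\cohsdfs)\) you compare the generated filter directly with \(\finfinfilterise(\cohsdfsclosure(\cohsdfs))\), whereas the paper quantifies over all finitely coherent \(\cohsdfs''\supseteq\cohsdfs\) and invokes Proposition~\ref{prop:it:does:not:matter:which}; both arguments are valid and rest on the same facts.
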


As a next step, we'll allow ourselves to be inspired by the developments in Section~\ref{sec:representation:finitary}: we'll use the Prime Filter Representation Theorem~\ref{thm:prime:filter:representation} to find a simpler representation result in terms of complete and finitely coherent SDFSes.

\begin{definition}[Completeness]
We call an SDFS~\(\sofinsot\subseteq\finitesetsofthings\) \emph{complete} if it satisfies the completeness condition~\eqref{eq:complete:cohsdfs}:
\begin{equation*}
\group{\forall\finsot[1],\finsot[2]\Subset\things}
\group[\big]{\finsot[1]\cup\finsot[2]\in\sofinsot\then
\group{\finsot[1]\in\sofinsot\text{ or }\sot[2]\in\sofinsot}},
\end{equation*}
and we denote by~\(\completecohsdfss\) the set of all complete and finitely coherent SDFSes.
\end{definition}

\begin{theorem}[Prime filter representation]\label{thm:prime:filter:representation:finitely:coherent:finite}
A finitely consistent SDFS~\(\cohsdfs\subseteq\finitesetsofthings\) is finitely coherent if and only if \(\cohsdfs=\bigcap\cset{\cohsdfs'\in\completecohsdfss}{\cohsdfs\subseteq\cohsdfs'}\).
\end{theorem}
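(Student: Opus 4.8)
The plan is to transport the Prime Filter Representation Theorem~\ref{thm:prime:filter:representation} on the bounded distributive lattice~\(\structure{\finfinevents,\subseteq}\) through the order isomorphism established in Theorem~\ref{thm:the:finitary:representation:theorem:finite}. The structural facts I would lean on are: that \(\structure{\finfinevents,\subseteq}\) is a bounded distributive lattice (Theorem~\ref{thm:basic:sets:constitute:a:bounded:lattice}), that \(\finfinfilterise\) and \(\finfindesirify\) are mutually inverse order isomorphisms between the complete lattices \(\structure{\toppedcohsdfss,\subseteq}\) and \(\structure{\finfineventfilters,\subseteq}\) (so both preserve infima, which are intersections in each), and crucially that this isomorphism carries prime filters on \(\finfinevents\) \emph{exactly} onto the complete finitely coherent SDFSes, by Theorem~\ref{thm:the:finitary:representation:theorem:finite}\ref{it:the:finitary:representation:theorem:finite:completeness}.

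For the \emph{only if} direction, I would start from a finitely coherent \(\cohsdfs\) and set \(\filter\coloneqq\finfinfilterise(\cohsdfs)\), a proper filter by Theorem~\ref{thm:the:finitary:representation:theorem:finite}\ref{it:the:finitary:representation:theorem:finite:if:cohsds:then:filter}. Applying Theorem~\ref{thm:prime:filter:representation} gives \(\filter=\bigcap\cset{\primefilter\in\finfineventprimefilters}{\filter\subseteq\primefilter}\). Applying \(\finfindesirify\), which preserves intersections, turns this into \(\cohsdfs=\bigcap\cset{\finfindesirify(\primefilter)}{\primefilter\in\finfineventprimefilters,\ \filter\subseteq\primefilter}\). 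The completeness correspondence then identifies this index set with \(\cset{\cohsdfs'\in\completecohsdfss}{\cohsdfs\subseteq\cohsdfs'}\): primeness of \(\primefilter\) is equivalent to completeness of \(\finfindesirify(\primefilter)\), and \(\filter\subseteq\primefilter\) is equivalent to \(\cohsdfs\subseteq\finfindesirify(\primefilter)\) because \(\finfindesirify\) is order-preserving with order-preserving inverse. This yields the claimed equality.

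For the \emph{if} direction, I would note that every member of the family \(\cset{\cohsdfs'\in\completecohsdfss}{\cohsdfs\subseteq\cohsdfs'}\) is finitely coherent and that \(\cohsdfss\) is an intersection structure (closed under non-empty intersections, as recorded in Section~\ref{sec:desirability:sdfs}), so the intersection is finitely coherent \emph{provided the family is non-empty}. I expect this non-emptiness to be the single point needing care, and I would secure it from finite consistency: \(\cohsdfs\) extends to some finitely coherent \(\cohsdfs''\), whose image \(\finfinfilterise(\cohsdfs'')\) is a \emph{proper} filter; by Theorem~\ref{thm:prime:filter:representation} that proper filter is the intersection of the prime filters above it, and that prime-filter family cannot be empty (an empty intersection would return the lattice top \(\finfinevents\), contradicting properness). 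Pulling any one such prime filter back through \(\finfindesirify\) yields a complete finitely coherent SDFS containing \(\cohsdfs''\supseteq\cohsdfs\), so the family is non-empty and the intersection-structure argument closes the proof.

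The main obstacle, as just indicated, is the bookkeeping around properness and the empty-family edge case: one must check that finite consistency rules out a vacuous intersection, and that the prime-filter/complete-SDFS dictionary of Theorem~\ref{thm:the:finitary:representation:theorem:finite}\ref{it:the:finitary:representation:theorem:finite:completeness} aligns the inclusion conditions on both sides. Everything else is a direct transcription of the argument already used for finitely coherent SDSes in Theorem~\ref{thm:prime:filter:representation:finitely:coherent}, with \(\finevents\) replaced by \(\finfinevents\) and the conjunctive map \(\sdsify{\bolleke}\) by \(\sdfsify{\bolleke}\).
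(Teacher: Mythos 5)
Your proposal is correct and follows essentially the same route as the paper's proof: transport Theorem~\ref{thm:prime:filter:representation} through the order isomorphism of Theorem~\ref{thm:the:finitary:representation:theorem:finite}, using part~\ref{it:the:finitary:representation:theorem:finite:completeness} to translate prime filters into complete finitely coherent SDFSes, and then invoke the intersection-structure property for sufficiency. The only (immaterial) difference is in how you secure non-emptiness of the representing family: the paper simply observes that finite consistency gives \(\cohsdfs\neq\finitesetsofthings\), so the hypothesised equality forces the family to be non-empty, whereas you construct a complete finitely coherent SDFS above \(\cohsdfs\) via a prime filter over an extension—both work.
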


Contrary to the case of SDSes, the finitely coherent and complete SDFSes can be identified `constructively'.

\begin{proposition}\label{prop:complete:and:conjunctive:finite}
Consider any coherent SDT~\(\cohsdt\in\cohsdts\) and any finitely coherent and complete~\(\cohsdfs\in\completecohsdfss\).
Then the following statements hold:
\begin{enumerate}[label=\upshape(\roman*),leftmargin=*]
\item\label{it:complete:and:conjunctive:finite:sdt:to:sdfs} the finitely coherent conjunctive SDFS~\(\sdfsify{\cohsdt}\) is complete;
\item\label{it:complete:and:conjunctive:finite:sdfs:is:conjunctive} \(\cohsdfs=\sdfsify{\sdtify{\cohsdfs}}\), so \(\cohsdfs\) is conjunctive.
\end{enumerate}
Hence, if the closure operator~\(\initialclosure\) is finitary, then a finitely coherent SDFS is complete if and only if it is conjunctive, and then \(\completecohsdfss=\cset{\sdfsify{\cohsdt}}{\cohsdt\in\cohsdts}\).
\end{proposition}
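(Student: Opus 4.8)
The plan is to prove the two enumerated statements directly and then assemble the concluding equivalence from them together with the earlier conjunctivity results.

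For~\ref{it:complete:and:conjunctive:finite:sdt:to:sdfs}, I would just unfold the definition $\sdfsify{\cohsdt}=\cset{\finsot\Subset\things}{\finsot\cap\cohsdt\neq\emptyset}$. That $\sdfsify{\cohsdt}$ is finitely coherent in the first place is already guaranteed by Proposition~\ref{prop:cohsdt:to:cohsdfs}, so only completeness needs checking. Given $\finsot[1]\cup\finsot[2]\in\sdfsify{\cohsdt}$, i.e.\ $(\finsot[1]\cup\finsot[2])\cap\cohsdt\neq\emptyset$, the set identity $(\finsot[1]\cup\finsot[2])\cap\cohsdt=(\finsot[1]\cap\cohsdt)\cup(\finsot[2]\cap\cohsdt)$ forces at least one of $\finsot[1]\cap\cohsdt$, $\finsot[2]\cap\cohsdt$ to be non-empty, which is exactly $\finsot[1]\in\sdfsify{\cohsdt}$ or $\finsot[2]\in\sdfsify{\cohsdt}$. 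This part is a one-line set manipulation.

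For~\ref{it:complete:and:conjunctive:finite:sdfs:is:conjunctive}, I would first recall that Proposition~\ref{prop:cohsdfs:to:cohsdt} already yields the inclusion $\sdfsify{\sdtify{\cohsdfs}}\subseteq\cohsdfs$, so the work lies in the reverse inclusion, which is equivalent to showing that $\cohsdfs$ is conjunctive: every $\finsot\in\cohsdfs$ contains some $\thing$ with $\set{\thing}\in\cohsdfs$. I would prove this by induction on $\abs{\finsot}$, which is legitimate because $\finsot$ is finite and, by~\ref{axiom:desirable:finite:sets:consistency}, non-empty. The base case $\abs{\finsot}=1$ is immediate. For the inductive step, pick any $\thing\in\finsot$ and write $\finsot=\set{\thing}\cup(\finsot\setminus\set{\thing})$; completeness~\eqref{eq:complete:cohsdfs} then forces $\set{\thing}\in\cohsdfs$ (and we are done) or $\finsot\setminus\set{\thing}\in\cohsdfs$ (and the induction hypothesis applies to the strictly smaller, still non-empty set $\finsot\setminus\set{\thing}$). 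Either way we exhibit a desirable singleton inside $\finsot$. This peeling-off induction is the main obstacle; everything else is bookkeeping, but it is short once completeness is applied in the right way.

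Finally, for the concluding equivalence under the assumption that $\initialclosure$ is finitary, one direction (complete $\then$ conjunctive) is exactly~\ref{it:complete:and:conjunctive:finite:sdfs:is:conjunctive}. For the converse (conjunctive $\then$ complete), I would invoke Proposition~\ref{prop:conjunctive:and:coherent:sdfs} to write a finitely coherent conjunctive $\cohsdfs$ as $\sdfsify{\cohsdt}$ with $\cohsdt=\sdtify{\cohsdfs}\in\cohsdts$, and then apply~\ref{it:complete:and:conjunctive:finite:sdt:to:sdfs} to conclude that $\cohsdfs$ is complete. The displayed identity $\completecohsdfss=\cset{\sdfsify{\cohsdt}}{\cohsdt\in\cohsdts}$ then follows: for the inclusion $\supseteq$ I would combine Proposition~\ref{prop:cohsdt:to:cohsdfs} (finite coherence of $\sdfsify{\cohsdt}$) with~\ref{it:complete:and:conjunctive:finite:sdt:to:sdfs} (its completeness), and for $\subseteq$ I would use~\ref{it:complete:and:conjunctive:finite:sdfs:is:conjunctive} to write $\cohsdfs=\sdfsify{\sdtify{\cohsdfs}}$ and Proposition~\ref{prop:cohsdfs:to:cohsdt} (where finitariness of $\initialclosure$ is exactly what guarantees $\sdtify{\cohsdfs}\in\cohsdts$).
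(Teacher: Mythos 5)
Your proposal is correct and follows essentially the same route as the paper's own proof: part (i) is the identical one-line set identity $(\finsot[1]\cup\finsot[2])\cap\cohsdt=(\finsot[1]\cap\cohsdt)\cup(\finsot[2]\cap\cohsdt)$, part (ii) is the same descent-by-completeness to a singleton (the paper phrases it as a recursion on strict subsets, you as an induction peeling off one element at a time, which is a cleaner formalisation of the same idea), and the final equivalence is assembled from Propositions~\ref{prop:cohsdt:to:cohsdfs}, \ref{prop:cohsdfs:to:cohsdt} and~\ref{prop:conjunctive:and:coherent:sdfs} exactly as the paper does.
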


Here too, our Satan's Apple counterexample can be used to show that we cannot generally let go of the finitary character of the closure operator~\(\initialclosure\) in these results.

\begin{counterexample}[Satan's Apple]
As before, we consider the set of things~\(\things\coloneqq\naturalswithzero\) with the (non-finitary) closure operator
\[
\initialclosure(\sot)
\coloneqq
\begin{cases}
\set{1}&\text{if \(\sot=\emptyset\)}\\
\set{1,\dots,\max\sot}&\text{if \(\sot\) is a finite and non-empty subset of~\(\naturals\)},\\
\naturalswithzero&\text{otherwise}
\end{cases}
\quad\text{for all~\(\sot\subseteq\things\)},
\]
and the set of forbidden things~\(\uglythings\coloneqq\set{0}\).
Recall that here \(\cohsdts=\cset{\set{1,\dots,n}}{n\in\naturals}\) and \(\beautifulthings=\set{1}\).

We consider the SDFS~\(\cohsdfs\coloneqq\finites(\naturalswithzero)\setminus\set{\emptyset,\set{0}}\).
It's easy to check that \(\cohsdfs\) is complete.
It is clearly conjunctive, and equal to its conjunctive part~\(\sdfsify{\sdtify{\cohsdfs}}\).
The SDT~\(\sdtify{\cohsdfs}=\naturals\) is clearly \emph{not coherent}.
Since the closure operator~\(\initialclosure\) isn't finitary, we can't use Proposition~\ref{prop:cohsdt:to:cohsdfs} to also prove the \emph{finite incoherence} of \(\cohsdfs\).
On the contrary, we'll now prove that~\(\cohsdfs\) \emph{is} in fact a \emph{finitely coherent} SDFS.
Nevertheless, it can't be associated with a coherent set of desirable things, as \(\sdtify{\cohsdfs}=\naturals\).

Indeed, since~\(\cohsdfs\) clearly satisfies~\ref{axiom:desirable:finite:sets:consistency}--\ref{axiom:desirable:finite:sets:background}, we concentrate on~\ref{axiom:desirable:finite:sets:production}.
Consider any non-empty~\(\sofinsot\Subset\cohsdfs\), so \(\finsot\notin\set{\emptyset,\set{0}}\) for all~\(\finsot\in\sofinsot\).
This already implies that there is some~\(\selection[o]\in\finselections\) such that \(\selection[o](\finsot)\neq0\) for all~\(\finsot\in\sofinsot\).
Now, consider for any~\(\selection\in\finselections\) any choice of the \(\thing[\selection]\in\initialclosure(\selection(\sofinsot))\), and assume towards contradiction that \(\cset{\thing[\selection]}{\selection\in\finselections}\in\set{\emptyset,\set{0}}\).
Since \(\finselections\) is non-empty because \(\sofinsot\) is, it must then be that \(\cset{\thing[\selection]}{\selection\in\finselections}=\set{0}\), so it must follow that \(0\in\initialclosure(\selection(\sofinsot))\), implying that \(\selection(\sofinsot)\) is an infinite subset of~\(\naturals\) or contains~\(0\), for all~\(\selection\in\finselections\).
That \(\selection(\sofinsot)\) should be infinite, is impossible for finite~\(\sofinsot\), so we find that \(0\in\selection(\sofinsot)=\cset{\selection(\finsot)}{\finsot\in\sofinsot}\) for all~\(\selection\in\finselections\), a contradiction.
\hfill\(\triangleleft\)
\end{counterexample}

If we now combine Theorem~\ref{thm:prime:filter:representation:finitely:coherent:finite} with Proposition~\ref{prop:complete:and:conjunctive:finite}, we find a simplified conjunctive representation result.
This provides an alternative and arguably simpler proof for a theorem that was also recently proved by Jasper De Bock \cite[Theorem~3]{debock2023:things:arxiv}, and that generalises our earlier results when things are gambles \cite{cooman2021:archimedean:choice,debock2018,debock2020:axiomatic:archimedean,debock2019b}, and whose proofs were much more involved.

\begin{corollary}[Conjunctive representation]\label{cor:conjunctive:representation:finitely:coherent:finite}
Assume that the closure operator~\(\initialclosure\) is finitary.
Then a finitely consistent SDFS~\(\cohsdfs\subseteq\finitesetsofthings\) is finitely coherent if and only if \(\cohsdfs=\bigcap\cset{\sdfsify{\cohsdt}}{\cohsdt\in\cohsdts\text{ and }\cohsdfs\subseteq\sdfsify{\cohsdt}}\).
\end{corollary}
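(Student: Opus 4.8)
The plan is to derive this corollary by composing two results already in hand: the prime filter representation of Theorem~\ref{thm:prime:filter:representation:finitely:coherent:finite}, which represents any finitely consistent and finitely coherent $\cohsdfs$ as an intersection of the complete finitely coherent SDFSes that include it, and the concluding statement of Proposition~\ref{prop:complete:and:conjunctive:finite}, which—crucially using that $\initialclosure$ is finitary—identifies the complete finitely coherent SDFSes with the conjunctive ones via $\completecohsdfss=\cset{\sdfsify{\cohsdt}}{\cohsdt\in\cohsdts}$. The whole argument is then essentially a change of index set in an intersection.

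For the direct implication, I would take $\cohsdfs$ finitely consistent and finitely coherent and invoke Theorem~\ref{thm:prime:filter:representation:finitely:coherent:finite} to write $\cohsdfs=\bigcap\cset{\cohsdfs'\in\completecohsdfss}{\cohsdfs\subseteq\cohsdfs'}$. The next step is purely a rewriting of the indexing family: since $\completecohsdfss=\cset{\sdfsify{\cohsdt}}{\cohsdt\in\cohsdts}$, every $\cohsdfs'\in\completecohsdfss$ equals $\sdfsify{\cohsdt}$ for some $\cohsdt\in\cohsdts$, so the constraint $\cohsdfs\subseteq\cohsdfs'$ turns into $\cohsdfs\subseteq\sdfsify{\cohsdt}$, giving $\cset{\cohsdfs'\in\completecohsdfss}{\cohsdfs\subseteq\cohsdfs'}=\cset{\sdfsify{\cohsdt}}{\cohsdt\in\cohsdts\text{ and }\cohsdfs\subseteq\sdfsify{\cohsdt}}$ as collections of SDFSes. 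Substituting this into the intersection yields the claimed representation $\cohsdfs=\bigcap\cset{\sdfsify{\cohsdt}}{\cohsdt\in\cohsdts\text{ and }\cohsdfs\subseteq\sdfsify{\cohsdt}}$.

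For the converse, I would assume $\cohsdfs$ is finitely consistent and equals $\bigcap\cset{\sdfsify{\cohsdt}}{\cohsdt\in\cohsdts\text{ and }\cohsdfs\subseteq\sdfsify{\cohsdt}}$. Each $\sdfsify{\cohsdt}$ in the indexing family is a finitely coherent SDFS by Proposition~\ref{prop:cohsdt:to:cohsdfs}, and the finitely coherent SDFSes form an intersection structure, so a \emph{non-empty} intersection of them is again finitely coherent. It therefore only remains to exclude an empty indexing family: if no $\cohsdt\in\cohsdts$ satisfied $\cohsdfs\subseteq\sdfsify{\cohsdt}$, the intersection would collapse to the top $\finitesetsofthings$, which contains $\emptyset$ and hence fails~\ref{axiom:desirable:finite:sets:consistency}, so $\cohsdfs=\finitesetsofthings$ could not be finitely consistent—a contradiction. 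Thus the family is non-empty and $\cohsdfs$ is finitely coherent.

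The only place where genuine content (beyond bookkeeping with the intersection-structure property and the consistency axiom) enters is the index-set conversion in the direct implication, and behind it the identification $\completecohsdfss=\cset{\sdfsify{\cohsdt}}{\cohsdt\in\cohsdts}$ supplied by Proposition~\ref{prop:complete:and:conjunctive:finite}. This is exactly where the finitary hypothesis on $\initialclosure$ is indispensable: without it, completeness no longer forces conjunctivity, the complete models need not take the form $\sdfsify{\cohsdt}$, and—as the Satan's Apple counterexample illustrates—the clean intersection representation fails. So I expect no real obstacle once those two results are combined, provided the finitary assumption is carried through the identification of the complete models.
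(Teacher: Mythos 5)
Your proposal is correct and follows essentially the same route as the paper, which obtains the corollary precisely by combining Theorem~\ref{thm:prime:filter:representation:finitely:coherent:finite} with the identification \(\completecohsdfss=\cset{\sdfsify{\cohsdt}}{\cohsdt\in\cohsdts}\) from Proposition~\ref{prop:complete:and:conjunctive:finite}, the finitary hypothesis entering exactly where you place it. Your direct argument for the converse (non-empty intersection of finitely coherent SDFSes, with emptiness excluded by finite consistency) matches the sufficiency arguments the paper uses for its analogous representation theorems.
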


The formal similarity between this conjunctive representation result for finitely coherent SDFSes and the one in Theorem~\ref{thm:conjunctive:representation:finitary} for finitely coherent finitary SDSes, is striking.
That this is no coincidence, is also made clear by our final result, which shows that when the closure operator~\(\initialclosure\) is finitary, there is a one-to-one correspondence between the finitely coherent SDFSes and the finitely coherent finitary SDSes; see also Ref.~\cite[Proposition~10]{debock2023:things:arxiv}.
The maps that provide this one-to-one connection, and which are each others inverses on the relevant sets, are \(\finpart{\bolleke}\) and \(\sotUpset{\bolleke}\).

\begin{theorem}\label{thm:finitely:coherent:isomorphism}
Assume that the closure operator~\(\initialclosure\) is finitary.
\begin{enumerate}[label=\upshape(\roman*),leftmargin=*]
\item\label{it:finitely:coherent:isomorphism:sdfs:to:sds} if \(\cohsdfs\) is a finitely coherent SDFS, then \(\sotUpset\cohsdfs\) is a finitely coherent SDS;
\item\label{it:finitely:coherent:isomorphism:sds:to:sdfs} if \(\cohsds\) is a (finitely) coherent SDS, then \(\finpart{\cohsds}\) is a finitely coherent SDFS;
\item\label{it:finitely:coherent:isomorphism:up:after:fin} for all finitary~\(\cohsds\in\fincohsdss\), \(\sotUpset{\finpart{\cohsds}}=\cohsds\);
\item\label{it:finitely:coherent:isomorphism:fin:after:up} for all~\(\cohsdfs\in\cohsdfss\), \(\finpart{\sotUpset{\cohsdfs}}=\cohsdfs\).
\end{enumerate}
\end{theorem}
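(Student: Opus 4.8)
The plan is to verify that the two maps $\finpart{\bolleke}$ and $\sotUpset{\bolleke}$ are well defined on the relevant classes, which is items~\ref{it:finitely:coherent:isomorphism:sdfs:to:sds} and~\ref{it:finitely:coherent:isomorphism:sds:to:sdfs}, and then that they invert one another, which is items~\ref{it:finitely:coherent:isomorphism:up:after:fin} and~\ref{it:finitely:coherent:isomorphism:fin:after:up}. I expect item~\ref{it:finitely:coherent:isomorphism:sds:to:sdfs} and the two inversion identities to be routine, and the verification of the finitary production axiom~\ref{axiom:desirable:sets:finitary:production} for $\sotUpset{\cohsdfs}$ in item~\ref{it:finitely:coherent:isomorphism:sdfs:to:sds} to be the only genuine obstacle.

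First I would dispatch the easy pieces. For item~\ref{it:finitely:coherent:isomorphism:sds:to:sdfs}, I would check \ref{axiom:desirable:finite:sets:consistency}--\ref{axiom:desirable:finite:sets:production} for $\finpart{\cohsds}=\cohsds\cap\finitesetsofthings$ by restricting the axioms \ref{axiom:desirable:sets:consistency}--\ref{axiom:desirable:sets:finitary:production} satisfied by $\cohsds$; the only subtlety is that for a nonempty $\sofinsot\Subset\finpart{\cohsds}$ the set $\cset{\thing[\selection]}{\selection\in\selections[\sofinsot]}$ produced by~\ref{axiom:desirable:sets:finitary:production} is finite, since $\sofinsot$ is a finite family of finite sets, so it lands in $\finpart{\cohsds}$ and not merely in $\cohsds$. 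For item~\ref{it:finitely:coherent:isomorphism:up:after:fin}, the definition of the finitary part gives $\sotUpset{\finpart{\cohsds}}=\fintypart{\cohsds}$, and since $\cohsds$ is finitary and (finitely) coherent, Proposition~\ref{prop:finitary:equal:to:finitary:part} turns this into $\fintypart{\cohsds}=\cohsds$. For item~\ref{it:finitely:coherent:isomorphism:fin:after:up}, the inclusion $\cohsdfs\subseteq\finpart{\sotUpset{\cohsdfs}}$ is immediate, and conversely any finite $\finsot\in\sotUpset{\cohsdfs}$ contains some $\finsot'\in\cohsdfs$, whence $\finsot\in\cohsdfs$ by~\ref{axiom:desirable:finite:sets:increasing}.

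The hard part will be item~\ref{it:finitely:coherent:isomorphism:sdfs:to:sds}. Axioms~\ref{axiom:desirable:sets:consistency}--\ref{axiom:desirable:sets:background} for $\sotUpset{\cohsdfs}$ are straightforward consequences of~\ref{axiom:desirable:finite:sets:consistency}--\ref{axiom:desirable:finite:sets:background} together with the fact that $\sotUpset{\cohsdfs}$ is an up-set, so everything hinges on the finitary production axiom~\ref{axiom:desirable:sets:finitary:production}. To establish it, I would fix a nonempty $\sosot\Subset\sotUpset{\cohsdfs}$ and things $\thing[\selection]\in\initialclosure(\selection(\sosot))$ for every $\selection\in\selections[\sosot]$, and aim to exhibit a finite member of $\cohsdfs$ inside $\cset{\thing[\selection]}{\selection\in\selections[\sosot]}$. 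For each $\sot\in\sosot$, using $\sot\in\sotUpset{\cohsdfs}$, I would choose a finite $\finsot[\sot]\in\cohsdfs$ with $\finsot[\sot]\subseteq\sot$, and set $\sofinsot\coloneqq\cset{\finsot[\sot]}{\sot\in\sosot}$, a nonempty finite subset of $\cohsdfs$. The crucial device is to lift every selection map $\selection\in\selections[\sofinsot]$ to a selection map $\hat{\selection}\in\selections[\sosot]$ by setting $\hat{\selection}(\sot)\coloneqq\selection(\finsot[\sot])\in\finsot[\sot]\subseteq\sot$; since $\hat{\selection}(\sosot)=\selection(\sofinsot)$, the given thing $\thing[{\hat{\selection}}]$ satisfies $\thing[{\hat{\selection}}]\in\initialclosure(\hat{\selection}(\sosot))=\initialclosure(\selection(\sofinsot))$. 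Feeding these things into the SDFS production axiom~\ref{axiom:desirable:finite:sets:production} for $\cohsdfs$ applied to $\sofinsot$ then yields $\cset{\thing[{\hat{\selection}}]}{\selection\in\selections[\sofinsot]}\in\cohsdfs$; this set is finite, because there are only finitely many selection maps on the finite family of finite sets $\sofinsot$, and it is by construction contained in $\cset{\thing[\selection]}{\selection\in\selections[\sosot]}$. Hence the latter lies in $\sotUpset{\cohsdfs}$, which is exactly~\ref{axiom:desirable:sets:finitary:production}.

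The main obstacle, then, is this lifting of selection maps from the finite sets $\finsot[\sot]$ to their ambient (possibly infinite) supersets $\sot$, together with the bookkeeping identity $\hat{\selection}(\sosot)=\selection(\sofinsot)$ that makes the production hypotheses match up. I would note, finally, that the finitary character of $\initialclosure$ is not itself used in these four verifications beyond Proposition~\ref{prop:finitary:equal:to:finitary:part}; its role is to guarantee, via Corollary~\ref{cor:finitary:and:coherent}, that the finitely coherent objects appearing here are in fact coherent, so that the correspondence is genuinely between finitely coherent SDFSes and finitary coherent SDSes.
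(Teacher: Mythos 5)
Your proposal is correct and follows essentially the same route as the paper's proof: the easy verifications for \ref{it:finitely:coherent:isomorphism:sds:to:sdfs}--\ref{it:finitely:coherent:isomorphism:fin:after:up} are handled identically (including the finiteness observation for the produced set and the appeal to Proposition~\ref{prop:finitary:equal:to:finitary:part}), and your lifting of each selection map on the finite family $\sofinsot$ to a selection map on $\sosot$ with the identity $\hat{\selection}(\sosot)=\selection(\sofinsot)$ is exactly the paper's construction of the maps $\selection_{\finite{\selection}}$ in its verification of~\ref{axiom:desirable:sets:finitary:production} for $\sotUpset{\cohsdfs}$. Your closing remark that the finitary character of $\initialclosure$ plays no role in these four verifications themselves is also consistent with the paper's argument.
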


What does it really mean that the conjunctive representation changes from its form in Theorem~\ref{thm:conjunctive:representation:finite:finite} to the simpler version in Corollary~\ref{cor:conjunctive:representation:finitely:coherent:finite}? (And {\itshape mutatis mutandis}, from its form in Theorem~\ref{thm:conjunctive:representation:finite} to the simpler one in Theorem~\ref{thm:conjunctive:representation:finitary}; which has a similar answer).

First of all, since we can infer from Theorem~\ref{thm:conjunctive:representation:finite:finite}\ref{it:conjunctive:representation:finite:finite:closure} that \(\cohsdfsclosure(\sofinsot)=\bigcap\cset{\sdfsify{\cohsdt}}{\sofinsot\Subset\sdfsify{\cohsdt}}\) for all~\(\sofinsot\Subset\finitesetsofthings\), the same result then guarantees that \(\cohsdfsclosure(\cohsdfs)=\bigcup_{\sofinsot\Subset\cohsdfs}\cohsdfsclosure(\sofinsot)\) for all~\(\cohsdfs\subseteq\finitesetsofthings\): the coherence axioms~\ref{axiom:desirable:finite:sets:consistency}--\ref{axiom:desirable:finite:sets:production} make sure that the resulting closure operator~\(\cohsdfsclosure\) is \emph{finitary}, and the `complicated form' of the conjunctive representation is simply an expression of this finitary character!

But then, where does the simplification in Corollary~\ref{cor:conjunctive:representation:finitely:coherent:finite} find its origin?
Explaining this will take some steps, so bear with us.

In a first step, we define the map
\[
\sdfs(\bolleke)\colon
\finites(\finitesetsofthings)\to\toppedcohsdfss\colon
\sofinsot\mapsto\sdfs(\sofinsot)\coloneqq\bigcap\cset{\sdfsify{\cohsdt}}{\cohsdt\in\cohsdts\text{ and }\sofinsot\Subset\sdfsify{\cohsdt}},
\]
in analogy with the map [see also Equations~\eqref{eq:definition:event} and~\eqref{eq:connection:with:binary:model:finitary:finite}]
\[
\event{\bolleke}\colon
\finites(\finitesetsofthings)\to\finfinevents\colon
\sofinsot\mapsto\event{\sofinsot}\coloneqq\cset{\cohsdt\in\cohsdts}{\sofinsot\Subset\sdfsify{\cohsdt}},
\]
then clearly \(\sdfs(\bolleke)=\bigcap\cset{\sdfsify{\cohsdt}}{\cohsdt\in\event{\bolleke}}\).

But, observe as a second step that the map \(\event{\bolleke}\) has actually been defined on a larger domain in Equation~\eqref{eq:definition:event}: indeed, \(\event{\sosot}=\cset{\cohsdt\in\cohsdts}{\sosot\subseteq\sdfsify{\cohsdt}}\) for all~\(\sosot\subseteq\finitesetsofthings\); see how Equation~\eqref{eq:connection:with:binary:model} actually extends Equation~~\eqref{eq:connection:with:binary:model:finitary:finite}.
This suggests that we can similarly extend the map~\(\sdfs(\bolleke)\) from~\(\finites(\finitesetsofthings)\) to the larger domain~\(\powerset(\finitesetsofthings)\), as follows:
\[
\sdfs(\bolleke)\colon
\powerset(\finitesetsofthings)\to\toppedcohsdfss\colon
\sosot\mapsto\sdfs(\sosot)\coloneqq\bigcap\cset{\sdfsify{\cohsdt}}{\cohsdt\in\cohsdts\text{ and }\sosot\subseteq\sdfsify{\cohsdt}},
\]
and then still \(\sdfs(\bolleke)=\bigcap\cset{\sdfsify{\cohsdt}}{\cohsdt\in\event{\bolleke}}\).

Thirdly, check that \(\event{\sdfsify{\cohsdt}}=\cohsdtupset{\cohsdt}\) and therefore \(\sdfs(\sdfsify{\cohsdt})=\sdfsify{\cohsdt}\) for all~\(\cohsdt\in\cohsdts\).
This allows us to conclude that \(\sdfs(\sosot)=\bigcap_{\cohsdt\in\event{\sosot}}\sdfs(\sdfsify{\cohsdt})\) for all~\(\sosot\subseteq\finitesetsofthings\): the map~\(\sdfs\) is completely determined by the (identity) values~\(\sdfsify{\cohsdt}\) it assumes on the~\(\sdfsify{\cohsdt}\), \(\cohsdt\in\cohsdts\), which---at least when the closure operator~\(\initialclosure\) is finitary---are all the conjunctive finitely coherent models [see Proposition~\ref{prop:complete:and:conjunctive:finite}].

Now, as a fourth step, observe that the conjunctive representation result in Theorem~\ref{thm:conjunctive:representation:finite:finite} tells us that, in general,
\[
\cohsdfsclosure(\sofinsot)=\sdfs(\sofinsot)
\text{ for all~\(\sofinsot\Subset\finitesetsofthings\)}
\]
and that, therefore,\footnote{Here and below, the `\(\lim\dots\)' is the (Moore--Smith) limit of the net~\(\sdfs(\sofinsot)\) associated with the set of finite subsets~\(\sofinsot\) of~\(\sosot\), which is a directed set under set inclusion \cite[Section~11]{willard1970}.}
\begin{equation}\label{eq:continuity:first:step}
\cohsdfsclosure(\sosot)
=\bigcup_{\sofinsot\Subset\sosot}\sdfs(\sofinsot)
=\lim_{\sofinsot\Subset\sosot}\sdfs(\sofinsot)
\text{ for all~\(\sosot\subseteq\finitesetsofthings\).}
\end{equation}
When the closure operator~\(\initialclosure\) is finitary, the simplified conjunctive representation result in Corollary~\ref{cor:conjunctive:representation:finitely:coherent:finite} tells us that, essentially,
\begin{equation}\label{eq:continuity:second:step}
\cohsdfsclosure(\sosot)=\sdfs(\sosot)=\bigcap\cset{\sdfsify{\cohsdt}}{\cohsdt\in\event{\sosot}}
\text{ for all~\(\sosot\subseteq\finitesetsofthings\)},
\end{equation}
and therefore, combining Equations~\eqref{eq:continuity:first:step} and~\eqref{eq:continuity:second:step}, leads to the important conclusion that
\[
\sdfs(\sosot)
=\lim_{\sofinsot\Subset\sosot}\sdfs(\sofinsot)
\text{ for all~\(\sosot\subseteq\finitesetsofthings\)},
\]
which is a `continuity' result for the map~\(\sdfs(\bolleke)\).

It's this `continuity' that allows us to achieve a simpler filter representation for SDFSes~\(\cohsdfs\), now no longer in terms of the filters~\(\finfinfilterise(\cohsdfs)=\cset{\event{\sofinsot}}{\sofinsot\Subset\cohsdfs}\) on the bounded distributive lattice \(\structure{\finfinevents,\subseteq}\), but rather in terms of the principal filters~\(\cset{\event{\sosot}}{\sosot\subseteq\cohsdfs}\) on the completely distributive complete lattice \(\structure{\events,\subseteq}\), which are representationally simpler, as they are completely characterised by their smallest elements~\(\event{\cohsdfs}\); observe that~\(\event{\cohsdfs}\in\events\), but not necessarily~\(\event{\cohsdfs}\in\finfinevents\).
The `continuity' of the map~\(\sdfs\) then allows us to go \emph{directly} from these smallest elements \(\event{\cohsdfs}\) to the closures \(\cohsdfsclosure(\cohsdfs)=\sdfs(\cohsdfs)\), rather than indirectly via the limits of the values \(\sdfs(\sofinsot)\) for \(\sofinsot\Subset\cohsdfs\), which we would be forced to if we were to restrict ourselves to working only with the events~\(\event{\sofinsot}\), \(\sofinsot\Subset\cohsdfs\) in the set~\(\finfinevents\).

\section{Example: propositional logic}\label{sec:propositional:logic}
In the remainder of this paper, we'll illustrate the ideas in the previous sections by looking at two interesting and relevant special cases.

As a first and fairly straightforward example, we consider propositional logic under the standard axiomatisation; see Ref.~\cite[Section 11.11 onwards]{davey2002} for a more detailed account of the facts we're about to summarise below.
We'll assume the reader to be familiar with the basic set-up of this logic using well-formed formulas.

\subsection*{The basic setup}
In this context, the things~\(\thing\) in~\(\things\) are the well-formed formulas (wffs) in the logical language; `desirable' means `true' or `derivable'; the set~\(\uglythings\) contains all contradictions and the set~\(\beautifulthings\) all tautologies.
We'll denote and-ing by~`\(\wedge\)', or-ing by~`\(\vee\)' and negation by~`\(\neg\)'.
Moreover, `closed' means `deductively closed'; the closure operator \(\initialclosure\) represents the usual deductive closure under finitary conjunction and modus ponens, and the `coherent' sets~\(\cohsdt\) in~\(\cohsdts\) are the sets of wffs that are deductively closed and contain no contradictions; see the summary in Table~\ref{table:propositional:logic}.

\begin{table}[th]
\centering
\begin{tabular}{r|l}
{\bfseries abstract theory of things} & {\bfseries propositional logic}\\[.5ex]
thing & wff\\
desirable thing & true, derivable wff\\
\(\beautifulthings\) & all tautologies\\
\(\uglythings\) & all contradictions\\
closed & deductively closed\\
consistent SDT & logically consistent set of wffs\\
coherent SDT & logically consistent and deductively closed set of wffs\\[1ex]
\end{tabular}
\caption{Propositional logic as a special case of desirable sets of things}\label{table:propositional:logic}
\end{table}

To see how distributive---in this case even Boolean---lattices and the corresponding filters get to play a role in this context, it will be useful to consider the Lindenbaum algebra associated with this propositional logic.
We use the notations `\(\thing[1]\entails\thing[2]\)' for `\(\thing[2]\in\initialclosure(\set{\thing[1]})\)' and `\(\thing[1]\equiv\thing[2]\)' for `\(\thing[1]\entails\thing[2]\) and \(\thing[2]\entails\thing[1]\)'.
Then the equivalence relation~\(\equiv\) partitions the set~\(\things\) into classes of logically equivalent wffs, which we collect in the set
\begin{equation*}
\lindenbaum=\toclass{\things}\coloneqq\cset{\thingclass}{\thing\in\things},
\text{ where }
\thingclass\coloneqq\cset{\thing'\in\things}{\thing'\equiv\thing}.
\end{equation*}
This set~\(\lindenbaum\) can be ordered by the partial order~\(\leq\), defined by
\begin{equation*}
\thingclass[1]\leq\thingclass[2]
\ifandonlyif\thing[1]\entails\thing[2],
\text{ for all~\(\thing[1],\thing[2]\in\things\)},
\end{equation*}
which turns it into a Boolean lattice---or Boolean algebra, the so-called \emph{Lindenbaum algebra}---with meet~\(\meet\) and join~\(\join\) defined by
\begin{equation*}
\thingclass[1]\meet\thingclass[2]\coloneqq\toclass{(\thing[1]\wedge\thing[2])}
\text{ and }
\thingclass[1]\join\thingclass[2]\coloneqq\toclass{(\thing[1]\vee\thing[2])}
\text{ for all~\(\thing[1],\thing[2]\in\things\)},
\end{equation*}
and complement operator~\(\compl\) given by
\begin{equation*}
\compl(\thingclass)\coloneqq\toclass{(\neg\thing)}
\text{ for all~\(\thing\in\things\)}.
\end{equation*}
Its top collects all tautologies, and its bottom all contradictions.

To get to filters, it's important to recall---or observe---that the map~\(\toclass\bolleke\colon\things\to\lindenbaum\) connects the coherent sets of desirable wffs to the proper filters of the Lindenbaum algebra~\(\lindenbaum\): for any coherent set of desirable wffs~\(\cohsdt\in\cohsdts\), the corresponding set of equivalence classes
\begin{equation*}
\toclass{\cohsdt}\coloneqq\cset{\thingclass}{\thing\in\cohsdt}
\end{equation*}
is a proper filter on~\(\lindenbaum\), and conversely, for any proper filter~\(\filter\in\lindenbaumproperfilters\) on~\(\lindenbaum\), the corresponding set of wffs
\begin{equation*}
\cset{\thing\in\things}{\thingclass\in\filter}
\end{equation*}
is a coherent set of desirable wffs.
Moreover, the closed but inconsistent set of wffs~\(\things\) is mapped to the improper filter~\(\lindenbaum\).

\subsection*{Towards desirable sets}
Let's now bring desirable \emph{sets of} wffs to the forefront, in order to complete the picture of Table~\ref{table:propositional:logic}.
As the closure operator for propositional logic is \emph{finitary}---since it's based on the \emph{finitary} conjunction and modus ponens production rules---we'll focus on the finitary aspects of this type of coherence, and rely on the representation results of Section~\ref{sec:representation:finitary:finite}.

To allow ourselves to be inspired by interpretation, we recall that a \emph{finite} set of wffs~\(\finsot\in\finitesetsofthings\) is considered to be desirable if it contains at least one desirable (true) wff, or equivalently in this special case, if its disjunction
\begin{equation*}
\bigvee\finsot\coloneqq\thing[1]\vee\dots\vee\thing[n]
\text{ with }
\finsot=\set{\thing[1],\dots,\thing[n]}
\end{equation*}
is desirable (true).
This simple observation leads us to the following definition, for which we can prove a basic but very revealing proposition.
We let, for any SDFS~\(\cohsdfs\),
\begin{equation*}
\cohsdt(\cohsdfs)\coloneqq\cset[\Big]{\bigvee\finsot}{\finsot\in\cohsdfs}.
\end{equation*}

\begin{proposition}\label{prop:propositional:basic}
Consider any finitely coherent set of desirable finite sets of wffs~\(\cohsdfs\in\cohsdfss\), then the following statements hold:
\begin{enumerate}[label=\upshape(\roman*),leftmargin=*]
\item\label{it:propositional:basic:is:coherent} \(\cohsdt(\cohsdfs)\) is a coherent set of desirable wffs;
\item\label{it:propositional:basic:is:smallest} \(\cohsdt(\cohsdfs)\) is the smallest~\(\cohsdt\in\cohsdts\) such that \(\cohsdfs\subseteq\sdfsify{\cohsdt}\);
\item\label{it:propositional:basic:representation} \(\cohsdfs=\sdfsify{\cohsdt(\cohsdfs)}\).
\end{enumerate}
\end{proposition}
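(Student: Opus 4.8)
The plan is to route everything through the identity $\cohsdt(\cohsdfs)=\sdtify{\cohsdfs}$, combined with the propositional fact that desirability of a finite set of wffs and desirability of its disjunction determine the \emph{same} event in~$\cohsdts$. First I would prove the identity. The inclusion $\sdtify{\cohsdfs}\subseteq\cohsdt(\cohsdfs)$ is immediate, since $\thing\in\sdtify{\cohsdfs}$ means $\set{\thing}\in\cohsdfs$ and then $\thing=\bigvee\set{\thing}\in\cohsdt(\cohsdfs)$. For the reverse inclusion, take any $\finsot\in\cohsdfs$ and apply the production axiom~\ref{axiom:desirable:finite:sets:production} to the singleton family $\sofinsot\coloneqq\set{\finsot}$, choosing $\thing[\selection]\coloneqq\bigvee\finsot$ for every selection map $\selection\in\finselections$; this is legitimate because the selected element $\selection(\finsot)\in\finsot$ entails the disjunction, i.e.\ $\bigvee\finsot\in\initialclosure(\selection(\sofinsot))$. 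The axiom then delivers $\set{\bigvee\finsot}\in\cohsdfs$, so $\bigvee\finsot\in\sdtify{\cohsdfs}$, whence $\cohsdt(\cohsdfs)\subseteq\sdtify{\cohsdfs}$. Claim~\ref{it:propositional:basic:is:coherent} is then immediate: since $\initialclosure$ is finitary (conjunction and modus ponens are finitary rules), Proposition~\ref{prop:cohsdfs:to:cohsdt} guarantees that $\sdtify{\cohsdfs}=\cohsdt(\cohsdfs)$ is a coherent SDT.

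For claims~\ref{it:propositional:basic:is:smallest} and~\ref{it:propositional:basic:representation} I would first establish the bridge $\cohsdts[\finsot]=\cohsdts[\set{\bigvee\finsot}]$ for every non-empty $\finsot\in\finitesetsofthings$, that is, $\event{\set{\finsot}}=\event{\set{\set{\bigvee\finsot}}}$. Granting this, Proposition~\ref{prop:it:does:not:matter:which}\ref{it:it:does:not:matter:which:equality:finite}, applied to $\sofinsot[1]=\set{\finsot}$ and $\sofinsot[2]=\set{\set{\bigvee\finsot}}$, upgrades it to the pivotal equivalence $\finsot\in\cohsdfs\ifandonlyif\set{\bigvee\finsot}\in\cohsdfs$. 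Claim~\ref{it:propositional:basic:representation} then drops out from the chain $\finsot\in\sdfsify{\cohsdt(\cohsdfs)}\ifandonlyif\finsot\cap\cohsdt(\cohsdfs)\neq\emptyset\ifandonlyif\bigvee\finsot\in\cohsdt(\cohsdfs)\ifandonlyif\set{\bigvee\finsot}\in\cohsdfs\ifandonlyif\finsot\in\cohsdfs$, where the second equivalence is the bridge applied to the coherent SDT $\cohsdt=\cohsdt(\cohsdfs)$ and the third uses $\cohsdt(\cohsdfs)=\sdtify{\cohsdfs}$. For claim~\ref{it:propositional:basic:is:smallest}, the representation already gives $\cohsdfs\subseteq\sdfsify{\cohsdt(\cohsdfs)}$; and if some $\cohsdt'\in\cohsdts$ also satisfies $\cohsdfs\subseteq\sdfsify{\cohsdt'}$, then every $\thing\in\cohsdt(\cohsdfs)=\sdtify{\cohsdfs}$ has $\set{\thing}\in\cohsdfs\subseteq\sdfsify{\cohsdt'}$, forcing $\thing\in\cohsdt'$, so that $\cohsdt(\cohsdfs)\subseteq\cohsdt'$ is the smallest such SDT. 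Via Proposition~\ref{prop:conjunctive:and:coherent:sdfs} this is exactly the statement that $\cohsdfs$ is conjunctive with conjunctive part $\sdtify{\cohsdfs}$.

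The step I expect to be the main obstacle is the substantive half of the bridge, namely $\cohsdts[\set{\bigvee\finsot}]\subseteq\cohsdts[\finsot]$: that $\bigvee\finsot\in\cohsdt$ forces at least one disjunct of $\finsot$ to lie in~$\cohsdt$. The opposite inclusion is trivial, since any $\thing\in\finsot\cap\cohsdt$ satisfies $\thing\entails\bigvee\finsot$ and $\cohsdt=\initialclosure(\cohsdt)$. This `disjunction property' is precisely where the Boolean structure of propositional logic must be invoked, and it is the delicate point, because it is exactly the feature separating the coherent sets of wffs that matter here from arbitrary deductively closed and consistent theories. I would therefore isolate it as a separate lemma tying $\initialclosure$ to the forbidden set $\uglythings$ of contradictions and prove it by the usual prime-theory/maximality argument for classical propositional theories; once it is in place, all the order-theoretic bookkeeping above is routine.
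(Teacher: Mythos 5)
Your first paragraph is sound and in fact takes a different, arguably cleaner route to part~\ref{it:propositional:basic:is:coherent} than the paper does: the identity \(\cohsdt(\cohsdfs)=\sdtify{\cohsdfs}\), obtained by applying~\ref{axiom:desirable:finite:sets:production} to the singleton family \(\set{\finsot}\) with the constant choice \(\thing[\selection]\coloneqq\bigvee\finsot\), is correct, and Proposition~\ref{prop:cohsdfs:to:cohsdt} then delivers coherence in one line, where the paper instead verifies deductive closure and consistency of \(\cohsdt(\cohsdfs)\) by hand.

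The bridge, however, is where the proposal breaks down, and the break is not one your proposed lemma can repair. The inclusion \(\cohsdts[{\set{\bigvee\finsot}}]\subseteq\cohsdts[\finsot]\) is false: the coherent SDTs of Table~\ref{table:propositional:logic} are \emph{arbitrary} consistent, deductively closed sets of wffs, and such theories do not have the disjunction property. Concretely, \(\cohsdt\coloneqq\initialclosure(\set{p\vee q})\) is coherent and contains \(\bigvee\set{p,q}=p\vee q\) but neither \(p\) nor \(q\), so \(\cohsdt\in\cohsdts[{\set{p\vee q}}]\setminus\cohsdts[{\set{p,q}}]\). The prime-theory/maximality argument you invoke yields the disjunction property only for \emph{maximal} consistent theories, a strict subclass of~\(\cohsdts\). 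Your ``pivotal equivalence'' fails for the same reason: for the finitely coherent \(\cohsdfs\coloneqq\sdfsify{\initialclosure(\set{p\vee q})}\) one has \(\set{p\vee q}\in\cohsdfs\) but \(\set{p,q}\notin\cohsdfs\). With the bridge gone, the derivations of~\ref{it:propositional:basic:is:smallest} and~\ref{it:propositional:basic:representation} collapse.

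That said, you have located a genuine difficulty rather than a defect of your own route alone. Parts~\ref{it:propositional:basic:is:smallest} and~\ref{it:propositional:basic:representation} appear to fail as stated: \(\cohsdfs\coloneqq\sdfsify{\initialclosure(\set{p})}\cap\sdfsify{\initialclosure(\set{q})}\) is finitely coherent (a non-empty intersection of finitely coherent SDFSes), contains \(\set{p,q}\) but neither singleton, and one checks that \(\cohsdt(\cohsdfs)=\sdtify{\cohsdfs}=\initialclosure(\set{p\vee q})\), so that \(\set{p,q}\notin\sdfsify{\cohsdt(\cohsdfs)}\) and hence \(\cohsdfs\not\subseteq\sdfsify{\cohsdt(\cohsdfs)}\). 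The paper's own proof passes from part~\ref{it:propositional:basic:is:coherent} and the lower-bound property \(\cohsdt(\cohsdfs)\subseteq\cohsdt\) directly to part~\ref{it:propositional:basic:is:smallest}, tacitly assuming \(\cohsdfs\subseteq\sdfsify{\cohsdt(\cohsdfs)}\) --- which is exactly the disjunction property you isolated as the delicate point.
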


We conclude that in the special case of propositional logic, all finitely coherent sets of desirable finite sets of wffs are conjunctive, or in other words, that working with desirable sets of wffs does not lead to anything more interesting than simply working with desirable wffs.
The reason for this is, of course, that in the case of propositional logic, the language of desirable wffs is already powerful enough to also accommodate for or-ing desirability statements, besides the and-ing that's inherently possible in any language of desirable things.

\section{Example: coherent choice}\label{sec:choice:functions}
As a second and final example, we'll consider coherent choice functions, and coherent sets of desirable gamble sets.
Our discussion here will be based mostly on earlier work by some of us \cite{cooman2021:archimedean:choice,debock2019b,debock2018,debock2021:S-independence}, to which we refer for the full details, and for explicit proofs of the results mentioned below.

Consider a variable~\(X\), and suppose that the value it takes in a \emph{finite} set~\(\states\) is unknown.
Any map~\(h\colon\states\to\reals\) then corresponds to a real-valued uncertain reward~\(h(X)\), and is called a \emph{gamble} on~\(X\).
We'll typically assume this reward is expressed in units of some linear utility scale.
The set~\(\gbls\) of all such gambles~\(h\) constitutes a linear  space.

In a typical decision problem, You are uncertain about the value of~\(X\), and are asked to express Your preferences between several possible decisions or acts, where each such act has an associated uncertain reward, or gamble.

We consider the strict vector ordering~\(\gblgt\), defined by
\begin{equation*}
h\gblgt g
\ifandonlyif(\forall x\in\states)h(x)>g(x),
\end{equation*}
as a background ordering, reflecting those (minimal) strict preferences we want any subject who's uncertain about~\(X\) to always have, regardless of their beliefs about~\(X\).
We'll also consider the sets~\(\posgbls=\cset{h\in\gbls}{\gblgt0}\) and ~\(\neggbls=\cset{h\in\gbls}{h\gbllt0}=-\posgbls\).

We'll also be concerned with so-called \emph{linear previsions}~\(\prev\) on~\(\gbls\), i.e.~the expectation operators associated with the probability mass functions~\(p\in\simplex\), so
\begin{equation*}
\prev(h)=E_p(h)\coloneqq\sum_{x\in\states}p(x)h(x)
\text{ and }
\simplex\coloneqq\cset[\bigg]{p\colon\states\to\nonnegreals}{\sum_{x\in\states}p(x)=1}.
\end{equation*}
For more details about linear previsions, we point to Refs.~\cite{walley1991,augustin2013:itip,troffaes2013:lp}.

\subsection*{Choice and rejection functions}
In the general, abstract setting, the gambles~\(\opt\) in the linear space~\(\gbls\) are intended to represent possible options in a decision problem under uncertainty.

In a specific application, there will be a \emph{finite} subset~\(\finsot\Subset\gbls\) containing gambles that You have to---in some way or other---express preferences between.
Such a decision problem may be `solved' when You specify Your subset~\(\rf\group{\finsot}\subseteq\finsot\) of \emph{rejected}, or \emph{inadmissible}, gambles.
We may interpret rejecting a gamble~\(h\in\rf\group{\finsot}\) from~\(\finsot\) as `\(\finsot\) contains another gamble that You prefer to~\(h\)'.
The remaining options~\(\cf\group{\finsot}\coloneqq\finsot\setminus\rf\group{\finsot}\) are then Your \emph{admissible} or non-rejected gambles in~\(\finsot\).

Initially, You may be uninformed, which will be reflected by Your set~\(\rf\group{\finsot}\) of rejected options being small.
You may gather more information, and when You do so, You may be able to additionally identify options that You reject, yielding a larger set~\(\rf\group{\finsot}\) and hence a smaller set of still admissible options~\(\cf\group{\finsot}\).
It may happen that You are optimally informed, and in this case Your set~\(\cf\group{\finsot}\) of admissible options will be a singleton, or possibly a set of `best' options that are `equally good' in the sense that You are indifferent between them.
However, we'll not assume that this state of being optimally informed is always attainable: You needn't always be indifferent between the options in~\(\cf\group{\finsot}\).
Instead, these admissible options may be incomparable, and as such, the setup we're describing here can deal with partial preferences, and leads to so-called imprecise-probabilistic decision-making approaches.
This makes this approach to decision theory more general than the classical idea of maximising expected utility where all the admissible options in~\(\cf\group{\finsot}\) have the same highest expected utility, so You are indifferent between them.

These ideas formalise to all decision problems---all sets in~\(\finiteoptsets\)---as follows.
The function \(\rf\colon\finiteoptsets\to\finiteoptsets\colon\finsot\mapsto\rf\group{\finsot}\subseteq\finsot\) that maps any finite gamble set~\(\finsot\in\finiteoptsets\) to its subset of rejected options, is called a \emph{rejection function}, and the corresponding dual function \(\cf\colon\finiteoptsets\to\finiteoptsets\colon\finsot\mapsto\cf\group{\finsot}\subseteq\finsot\) that identifies the admissible options, is called a \emph{choice function}.
As~\(\cf\group{\finsot}=\finsot\setminus\rf\group{\finsot}\) for every~\(\finsot\) in~\(\finiteoptsets\), either function can be retrieved from the other, so they are both equivalent representations of the same information.
Choice functions in an imprecise-probabilistic decision-making context were first introduced by~\citet{Kadane2004}, who later \cite{seidenfeld2010} also advanced a representation result for what they called coherent choice functions in terms of sets of probabilities and a decision criterion called \emph{E-admissibility}, going back to Isaac Levi \cite{levi1980a}.
Some time after that, some of us \cite{debock2018,cooman2021:archimedean:choice} established, amongst other things, more general representations in more abstract contexts.
We intend to briefly report on this work below, and to show how it relates to the desirable sets of things framework.

Recall that the background ordering~\(\gblgt\) for the option space~\(\opts\) reflects those strict preferences between gambles that it is rational for You to have, regardless of any information or beliefs You might have about the decision problem at hand.
We'll assume that this idea is reflected by the following requirement on rejection functions: If~\(\opt\gblgt\altopt\) then~\(\altopt\in\rf\group{\set{\opt,\altopt}}\), for all~\(\opt\) and~\(\altopt\) in~\(\opts\).
This will imply, together with~\citeauthor{sen1977:social:choice:re-examination}'s~\cite{sen1977:social:choice:re-examination} Property~\(\alpha\),\footnote{\citeauthor{sen1977:social:choice:re-examination}'s~\cite{sen1977:social:choice:re-examination} Property~\(\alpha\) is the requirement that~\(\rf\group{\optset[1]}\subseteq\rf\group{\optset[2]}\), for all~\(\optset[1]\) and~\(\optset[2]\) in~\(\finiteoptsets\) such that~\(\optset[1]\subseteq\optset[2]\), which is an axiom for choice under uncertainty that is almost always assumed.} that~\(\altopt\) is inadmissible in a decision problem~\(\finsot\) as soon as~\(\finsot\) contains another option~\(\opt\gblgt\altopt\), or in other words, as soon as it is \emph{dominated} by some option in~\(\finsot\);
\begin{equation}\label{eq:rejection:functions:dominance}
\group{\forall\altopt\in\finsot}
\group[\Big]{\group[\big]{\group{\exists\opt\in\finsot}\opt\gblgt\altopt}\then\altopt\in\rf\group{\finsot}};
\text{ for all \(\finsot\in\finiteoptsets\)}.
\end{equation}
The linearity of the utility scale is reflected in the assumption that any so-called \emph{coherent} rejection function~\(\rf\) must at least satisfy
\begin{equation}\label{eq:rejection:functions:linearity}
\group{\forall\opt\in\finsot}
\group[\big]{\opt\in\rf\group{\finsot}\ifandonlyif0\in\rf\group{\finsot-\set{\opt}}};
\text{ for all~\(\finsot\) in~\(\finiteoptsets\)},
\end{equation}
where \(\finsot[1]+\finsot[2]\coloneqq\cset{\opt[1]+\opt[2]}{\opt[1]\in\finsot[1],\opt[2]\in\finsot[2]}\) and \(-\finsot\coloneqq\cset{-\opt}{\opt\in\finsot}\) for any~\(\finsot,\finsot[1],\finsot[2]\Subset\opts\).
As a precursor to the discussion below, we see that to infer whether an option~\(\opt\) is rejected from~\(\finsot\)---whether \(\opt\in\rf\group{\finsot}\)---it suffices to check whether \(0\in\rf\group{\finsot-\set{\opt}}\), in which case we'll call~\(\finsot-\set{\opt}\) a \emph{desirable finite gamble set}.

\subsection*{Binary choice and sets of coherent sets of desirable options}
Equation~\eqref{eq:rejection:functions:linearity} has a particularly interesting consequence for \emph{binary decision problems}, which are decision problems that focus on binary gamble sets~\(\finsot=\set{\opt,\altopt}\).
Indeed, it implies that
\begin{equation}
\altopt\in\rf\group{\set{\opt,\altopt}}
\ifandonlyif0\in\rf\group{\set{0,\opt-\altopt}};
\text{ for all~\(\opt\) and~\(\altopt\) in~\(\opts\)}.
\end{equation}
But \(0\in\rf\group{\set{0,\opt-\altopt}}\) means that You reject the status quo represented by~\(0\) from the gamble set \(\set{0,\opt-\altopt}\), which can also be interpreted to mean that You strictly prefer the gamble~\(\opt-\altopt\) to the status quo~\(0\); we'll then call \(\opt-\altopt\) a \emph{desirable gamble}.
Your \emph{set of desirable gambles}~\(\cohsdt\) is the subset of~\(\opts\) that contains those gambles that are desirable to You in the sense that You strictly prefer them to~\(0\).
Your set of desirable gambles~\(\cohsdt\) determines Your binary preferences, since
\[
\altopt\in\rf\group{\set{\opt,\altopt}}
\ifandonlyif0\in\rf\group{\set{0,\opt-\altopt}}
\ifandonlyif\opt-\altopt\in\cohsdt.
\]
We'll call a set of desirable gambles~\(\cohsdt\) \emph{coherent} \cite{cooman2021:archimedean:choice} (but see also Refs.~\cite{walley2000,couso2011:desirable,cooman2010,seidenfeld1995} for related definitions) when
\begin{enumerate}[label=\upshape OD\({}_{\arabic*}\).,ref=\upshape OD\({}_{\arabic*}\),leftmargin=*]
\item\label{axiom:set:of:desirable:options:consistency} \(0\notin\cohsdt\);
\item\label{axiom:set:of:desirable:options:background} \(\posopts\subseteq\cohsdt\);
\item\label{axiom:set:of:desirable:options:combination} if \(\opt,\altopt\in\cohsdt\) and~\((\lambda,\mu)>0\),\footnote{We'll use the notation~\((\lambda,\mu)>0\) to mean that~\(\lambda\geq0\) and~\(\mu\geq0\) but not both equal to zero.} then \(\lambda\opt+\mu\altopt\in\cohsdt\), for all~\(\opt,\altopt\in\opts\) and~\(\lambda,\mu\in\reals\).
\end{enumerate}
We see that if we identify gambles as special cases of the abstract things, where we let the set of things~\(\things\) be the set of gambles~\(\opts\), and desirable gambles with desirable things, then we have made a start with identifying the correspondences between things and gambles in Table~\ref{table:desirabe:options}.
Let's now work towards completing this table, beginning at the level of desirable things, where we still have to identify the sets~\(\uglythings\), \(\beautifulthings\) and the closure operator~\(\initialclosure\).

\begin{table}[th]
\centering
\begin{tabular}{r|l}
{\bfseries abstract theory of things} & {\bfseries desirable gamble sets} \\[.5ex]
set of all things~\(\things\) & linear space of all gambles~\(\opts\)\\
thing & gamble\\
desirable thing & desirable gamble\\
\(\uglythings\) & all gambles in~\(\nonposopts\)\\
\(\initialclosure\) & \(\posi(\,\bolleke\,\cup\posopts)\)\\
\(\beautifulthings\) & all gambles in~\(\posopts\)\\
closed & closed under the coherence axioms\\
consistent SDT & consistent set of desirable gambles\\
coherent SDT & coherent set of desirable gambles\\
desirable finite set of things & desirable finite gamble set\\
consistent SDFS & consistent set of desirable finite gamble sets\\
finitely coherent SDFS & finitely coherent set of desirable finite gamble sets\\[1ex]
\end{tabular}
\caption{Desirable finite gamble sets as a special case of desirable finite sets of things}\label{table:desirabe:options}
\end{table}

First of all, observe that a coherent~\(\cohsdt\) can't have anything in common with the set~\(\nonposopts\coloneqq\negopts\cup\set{0}\).
Indeed, assume towards contradiction that \(\cohsdt\) contains some gamble~\(\opt\in\nonposopts\), then necessarily~\(\opt\optlt0\) by~\ref{axiom:set:of:desirable:options:consistency}.
Since then \(-\opt\gblgt0\) because~\(\gblgt\) is a vector ordering, we infer from~\ref{axiom:set:of:desirable:options:background} that \(-\opt\in\cohsdt\), and therefore, by~\ref{axiom:set:of:desirable:options:combination}, that \(0=\opt-\opt\in\cohsdt\), contradicting~\ref{axiom:set:of:desirable:options:consistency}.
Hence, indeed, \(\cohsdt\cap\nonposopts=\emptyset\), indicating that the convex cone~\(\nonposopts\) plays the role of the set of forbidden things~\(\uglythings\).

To identify the closure operator governing the desirability of gambles, we observe that \ref{axiom:set:of:desirable:options:combination} makes sure that coherent sets of desirable gambles~\(\cohsdt\) are \emph{convex cones}: they satisfy \(\cohsdt=\posi\group{\cohsdt}\), where
\begin{equation*}\label{eq:posioperator}
\posi\group{\sot}\coloneqq\cset[\bigg]{\sum_{k=1}^n\lambda_k\opt[k]}{n\in\naturals,\lambda_k\in\posreals,\opt[k]\in\sot}
\text{ for any~\(\sot\subseteq\opts\)}
\end{equation*}
is the set of all positive linear combinations of elements of~\(\sot\), and therefore the smallest convex cone that includes~\(\sot\).
We see that the coherent sets of desirable gambles~\(\cohsdt\subseteq\opts\) are exactly the convex cones in~\(\opts\) that include~\(\posopts\) and have nothing in common with~\(\nonposopts\), which tells us that the map~\(\posi(\,\bolleke\,\cup\posopts)\) takes the role of the closure operator \(\initialclosure\), and implements the inference mechanism behind the desirability of gambles, also called \emph{natural extension}~\cite{cooman2010}.
Interestingly, it is clear from its definition that this closure operator is \emph{finitary}.
Also observe that \(\posi(\emptyset\cup\posopts)=\posi(\posopts)=\posopts\) plays the role of the set~\(\beautifulthings\).

To conclude, let's check that with these identifications, the desirability axioms~\ref{axiom:desirable:things:closure}--\ref{axiom:desirable:things:background:consistency} in Section~\ref{sec:desirable:things} are verified.
For~\ref{axiom:desirable:things:closure}, assume that all gambles in \(\sot\subseteq\opts\) are desirable to You.
Then we infer from~\ref{axiom:set:of:desirable:options:background} and \ref{axiom:set:of:desirable:options:combination} that any positive linear combination of elements of \(\sot\) and \(\posopts\) must also be desirable to You.
As these positive linear combinations are precisely the elements of the closure \(\posi(\sot\cup\posopts)\) of the set \(\sot\), we see that \ref{axiom:desirable:things:closure} is indeed satisfied.
We have already argued that \(\cohsdt\cap\nonposopts=\emptyset\) for Your set of desirable gambles~\(\cohsdt\), so \ref{axiom:desirable:things:forbidden} is satisfied as well.
And finally, for~\ref{axiom:desirable:things:background:consistency}, note that, indeed, \(\beautifulthings\cap\uglythings=\posopts\cap\nonposopts=\emptyset\).

\subsection*{Coherent sets of desirable finite gamble sets}
To continue filling out Table~\ref{table:desirabe:options}, we now lift the framework of sets of desirable gambles to \emph{sets of desirable finite gamble sets}, as was done in Refs.~\cite{debock2018,debock2019b,cooman2021:archimedean:choice,debock2023:things:arxiv}.\footnote{The terminology used there is slightly different, because our `finite gamble sets' are simply called `gamble sets' there.}
The underlying idea is that, rather than merely use gambles as elements that are potentially desirable, we now turn to finite \emph{gamble sets}, instead.
In doing so, we'll move from (strict) binary preferences between gambles to more general preferences that aren't necessarily binary.

We'll allow You to state for a \emph{finite} gamble set~\(\finsot\in\finiteoptsets\) that at least one of its elements is desirable to You, but without Your needing to specify which; we'll then say that \(\finsot\) is \emph{desirable} to You, and call \(\finsot\) a desirable finite gamble set.
Your set of desirable finite gamble sets~\(\cohsdfs\) may then contain singletons~\(\set{\opt}\), reflecting that You find~\(\opt\) desirable, but also finite gamble sets~\(\finsot\) that aren't singletons.
In fact, it's perfectly possible for \(\cohsdfs\) to contain no singletons, apart from the~\(\set{\opt}\) for \(\opt\in\posopts\) that result from the background ordering.
It would then contain no non-trivial binary preferences.

Generally speaking, we'll call a set of desirable finite gamble sets~\(\cohsdfs\subseteq\finiteoptsets\) \emph{finitely coherent} when
\begin{enumerate}[label=\upshape OF\({}_{\arabic*}\).,ref=\upshape OF\({}_{\arabic*}\),leftmargin=*]
\item\label{axiom:desirable:option:sets:consistency} \(\emptyset\notin\cohsdfs\);
\item\label{axiom:desirable:option:sets:increasing} if \(\finsot[1]\in\cohsdfs\) and \(\finsot[1]\Subset\finsot[2]\) then \(\finsot[2]\in\cohsdfs\), for all~\(\finsot[1],\finsot[2]\in\finiteoptsets\);
\item\label{axiom:desirable:option:sets:forbidden} if \(\finsot\in\cohsdfs\) then \(\finsot\setminus\nonposopts\in\cohsdfs\), for all~\(\finsot\in\finiteoptsets\);
\item\label{axiom:desirable:option:sets:background} \(\set{\opt[{+}]}\in\cohsdfs\) for all~\(\opt[{+}]\in\posopts\);
\item\label{axiom:desirable:option:sets:production} if, with \(n\in\naturals\), \(\finsot[1],\dots,\finsot[n]\in\cohsdfs\) then also \(\cset{\sum_{k=1}^n\lambda^k_{\opt[1],\dots,\opt[n]}\opt[k]}{\opt[k]\in\finsot[k],k=1,\dots,n}\in\cohsdfs\), with \(\lambda^k_{{\opt[1],\dots,\opt[n]}}\geq0\) and \(\sum_{k=1}^n\lambda^k_{\opt[1],\dots,\opt[n]}>0\).
\end{enumerate}
It can be argued (see, for instance, Ref.~\cite{debock2023:things:arxiv}) that these finite coherence axioms are equivalent to the coherence axioms proposed in Refs.~\cite{cooman2021:archimedean:choice,debock2018,debock2019b}.
It's not too difficult to see that these finite coherence requirements can be reinterpreted as, essentially, Axioms~\ref{axiom:desirable:finite:sets:consistency}--\ref{axiom:desirable:finite:sets:production}, after a proper identification of the relevant concepts here with those in the abstract treatment of SFDSes, as summarised in Table~\ref{table:desirabe:options}.
As a consequence, the inference mechanism for finitely coherent SDFSes expressed by the closure operator~\(\cohsdfsclosure\), and all concomitant machinery, can be applied to sets of desirable finite gamble sets.

\subsection*{Back to choice and rejection functions}
We can now easily relate sets of desirable finite gamble sets~\(\cohsdfs\) back to rejection functions~\(\rf\).
To do so, we'll consider any finite gamble set~\(\finsot\), and any gamble~\(\opt\in\finsot\) for which we want to find out whether it's being rejected from~\(\finsot\).
We'll follow Ref.~\cite{cooman2021:archimedean:choice} in introducing the corresponding finite gamble set~\(\finsot\ominus\opt\coloneqq\group{\finsot\setminus\set{\opt}}-\set{\opt}\), which then allows for an efficient connection, taking into account Equation~\eqref{eq:rejection:functions:linearity} and our interpretation of rejecting a gamble:
\begin{equation}\label{eq:translating:back}
\opt\in\rf\group{\finsot}
\ifandonlyif
0\in\rf\group{\finsot-\set{\opt}}
\ifandonlyif
\group{\exists\altopt\in\finsot\ominus\opt}\text{\(\altopt\) is preferred to~\(0\)}
\ifandonlyif
\finsot\ominus\opt\in\cohsdfs.
\end{equation}
This connection allows for a one-to-one correspondence between rejection functions~\(\rf\) and sets of desirable finite gamble sets~\(\cohsdfs\), allowing us to transport the finite coherence notions from the latter to the former.

\subsection*{What about filter representation?}
So, now that we know that working with desirable gambles and desirable finite gamble sets fits in the context of the present paper, we also know that there will be representations in terms of (principal) filters of events.

We've seen above that coherent sets of desirable finite gamble sets are special instances of the finitely coherent SDFSes that are derived from the finitary closure operator~\(\initialclosure=\posi(\,\bolleke\,\cup\posopts)\).
This puts us squarely in the context of Section~\ref{sec:representation:finitary:finite}, and of the representation results proved therein.
What underlies all of these results is the lattice of events \(\finfinevents=\cset{\event{\sofinsot}}{\sofinsot\Subset\finiteoptsets}\), where each \(\event{\sofinsot}\) is some subset of \(\cohsdts\), so some collection of coherent sets of desirable gambles, which can be interpreted as a set of possible identifications of the actual set of desirable gambles~\(\idealcohsdt\); see the discussion in Section~\ref{sec:towards:representation}.

Now, as discussed in great detail in Refs.~\cite{debock2019b,cooman2021:archimedean:choice}, it's possible to impose additional (rationality) requirements on sets of desirable finite gamble sets~\(\cohsdfs\), besides coherence, and it will be interesting to briefly hint at them here.
There is a representation result that guarantees that a set of desirable finite gamble sets~\(\cohsdfs\) satisfies certain specific \emph{Archimedeanity} and \emph{mixingness} conditions if and only if there's some non-empty set of probability mass functions, also called \emph{credal set}, \(\solp\subseteq\simplex\) such that
\begin{equation}\label{eq:representation:archimedean:mixing}
\cohsdfs=\bigcap\cset{\sdfsify{E_p}}{p\in\solp}
\text{ where }
\sdfsify{E_p}\coloneqq\cset{\finsot\in\finiteoptsets}{\group{\exists\opt\in\finsot}E_p(\opt)>0},
\end{equation}
and where the largest such representing credal set \(\solp\) is given by
\[
\solp(\cohsdfs)
\coloneqq\cset{p\in\simplex}{\group{\forall\finsot\in\cohsdfs}\group{\exists\opt\in\finsot}E_p(\opt)>0}.
\]
This leads to an interesting conclusion.
Rather than saying something about an actual model \(\idealcohsdt\in\cohsdts\), the desirability statements present in an Archimedean and mixing~\(\cohsdfs\) can be interpreted as propositional statements about an actual model~\(p_\true\) in a set of possible identifications~\(\simplex\), and the corresponding `event'~\(\solp(\cohsdfs)\) is then the set of all possible identifications of \(p_\true\) that remain after making the desirability statements in~\(\cohsdfs\).
In fact, this allows us to identify a representation in terms of principal filters of subsets of~\(\simplex\).
We therefore recover, as a special case, the filter representation results proved in the seminal work by Catrin Campbell\textendash Moore \cite{campbellmoore2021:probability:filters:isipta2021}.

Let us, to conclude this section, find out what the representation result~\eqref{eq:representation:archimedean:mixing} tells us about the choice function~\(\cf\) and the rejection function~\(\rf\) that are associated with an Archimedean and mixing set of desirable finite gamble sets~\(\cohsdfs\), where we use the correspondence established in Equation~\eqref{eq:translating:back}.
First of all, we find that for any~\(p\in\simplex\) and any~\(\finsot\in\finiteoptsets\),
\[
\finsot\ominus\opt\in\sdfsify{E_p}
\ifandonlyif\group{\exists\altopt\in\finsot\setminus\set{\opt}}E_p(\altopt-\opt)>0
\ifandonlyif\group{\exists\altopt\in\finsot}E_p(\altopt)>E_p(\opt),
\]
and therefore
\[
\opt\in\rf\group{\finsot}
\ifandonlyif\group{\forall p\in\solp(\cohsdfs)}\group{\exists\altopt\in\finsot}E_p(\altopt)>E_p(\opt),
\]
and
\begin{equation}\label{eq:e:admissibility}
\opt\in\cf\group{\finsot}
\ifandonlyif\group{\exists p\in\solp(\cohsdfs)}\group{\forall\altopt\in\finsot}E_p(\altopt)\leq E_p(\opt).
\end{equation}
This tells us that a gamble~\(\opt\) is admissible in a gamble set~\(\finsot\) if it is \emph{Bayes-admissible} for at least one mass function in the credal set~\(\solp(\cohsdfs)\), in the sense that it maximises expectation.
We therefore \emph{recover Levi's E-admissibility criterion} \cite{levi1980a} in decision making as a special consequence of our representation results, and in this sense, all representation results in this paper can be seen as generalisations of Levi's E-admissibility.

\section{Discussion and conclusions}\label{sec:conclusion}
What this paper studies, discusses and eventually solves, is (i) how to deal with disjunctive statements in deductive inference systems that by their very nature deal mainly with conjunction, and (ii) how to identify the event-and-filter type of inference mechanism that underlies all of these systems.

Laying bare the exact nature of the event-and-filter type conservative inference mechanism behind coherent SDSes has allowed us to prove powerful representation results for such coherent SDSes in terms of the simpler, conjunctive, models which are essentially coherent SDTs.
These resulting representations, in their simplest form (Theorem~\ref{thm:conjunctive:representation} and Corollary~\ref{cor:conjunctive:representation:finitely:coherent:finite}), are reminiscent of, and in fact formal generalisations of, decision making using Levi's E-admissibility Rule \cite{levi1980a}.

Indeed, E-admissibility can be recovered as a very special consequence, where the desirable things are desirable gambles, and where rather than mere coherence, stronger requirements of Arch\-imedeanity and mixingness are imposed on sets of desirable gamble sets.

In another interesting special case, where the desirable things are asserted propositions in propositional logic, the additional layer of working with asserted sets of propositions---as instances of desirable sets of things---doesn't add anything new: all coherent sets of desirable sets of things are conjunctive there.
This is, of course, not really surprising, as desirable \emph{sets} of things are introduced to deal with disjunctive statements, which are already present in the language of \emph{things themselves} as propositions in propositional logic.
It would be interesting to find out whether something similar also happens in other logical languages, besides classical propositional logic, for which the way to deal with disjunctive statements is also already present in the language itself.

The case where things are gambles, on the other hand, shows that in other inference contexts where disjunctive statements are not already part of the language of things, going from desirable things to desirable sets of things is indeed meaningful and useful.

A more detailed and comprehensive study of these and other special cases is the topic of current research.

\section*{Acknowledgments}
The authors are grateful to Catrin Campbell\textendash Moore, Kevin Blackwell and Jason Konek for quite a number of animated discussions about this topic.
Catrin and Jason's ideas about using filters of sets of probabilities provided the initial inspiration for some of the developments here.
Gert wishes to express his gratitude to Jason Konek for funding several short research stays, as well as a one-month sabbatical stay, at University of Bristol's Department of Philosophy, which allowed him to work more closely together with Arthur, and discuss results with Kevin, Catrin and Jason.
Gert's research on this paper was partly supported by a sabbatical grant from Ghent University, and from the FWO, reference number K801523N.
Work by Gert and Jasper on this paper was also partly supported by the Research Foundation -- Flanders (FWO), project number 3G028919.
Jasper's work was furthermore supported by his BOF Starting Grant “Rational decision making under uncertainty: a new paradigm based on choice functions”, number 01N04819.
Arthur Van Camp's research was funded by Jason Konek's ERC Starting Grant ``Epistemic Utility for Imprecise Probability'' under the European Union’s Horizon 2020 research and innovation programme (grant agreement no. 852677).

\ifbibincluded

\else
\bibliographystyle{plainnat}
\bibliography{general}
\fi

\appendix
\section{Proofs and technical results}\label{sec:proofs}

\subsection{Proofs of results in Section~\ref{sec:desirability}}\label{sec:proofs:desirability}

\begin{proof}[Proof of Equation~\eqref{eq:closed:under:intersections}]
Let \(\cohsdt\coloneqq\bigcap_{i\in I}\cohsdt[i]\) for ease of notation.
Since, clearly, \(\cohsdt\cap\uglythings=\emptyset\), we need only prove that \(\cohsdt\in\toppedcohsdts\), or in other words, that \(\initialclosure(\cohsdt)\subseteq\cohsdt\) [use \ref{axiom:closure:super}].
For any~\(i\in I\), we have that \(\cohsdt\subseteq\cohsdt[i]\), and therefore also that \(\initialclosure(\cohsdt)\subseteq\initialclosure(\cohsdt[i])=\cohsdt[i]\), where the inclusion follows from~\ref{axiom:closure:increasing}, and the equality from the assumption that \(\cohsdt[i]\in\cohsdts\).
Hence, indeed, \(\initialclosure(\cohsdt)\subseteq\bigcap_{i\in I}\cohsdt[i]=\cohsdt\).
\end{proof}

\begin{proof}[Proof of Equation~\eqref{eq:cohsdtclosure:consistent:sodts}]
To show that \(\initialclosure(\sot)\subseteq\bigcap\cset{\cohsdt\in\cohsdts}{\sot\subseteq\cohsdt}\), consider any~\(\cohsdt\) in~\(\cohsdts\) for which \(\sot\subseteq\cohsdt\).
Then also \(\initialclosure(\sot)\subseteq\initialclosure(\cohsdt)=\cohsdt\), where the inclusion follows from~\ref{axiom:closure:increasing} and the equality holds because \(\cohsdt\in\cohsdts\).
Hence, indeed, \(\initialclosure(\sot)\subseteq\bigcap\cset{\cohsdt\in\cohsdts}{\sot\subseteq\cohsdt}\).

For the converse inclusion, note that \(\sot\subseteq\initialclosure(\sot)\) by~\ref{axiom:closure:super}.
But since also \(\initialclosure(\sot)\in\cohsdts\) by the assumed consistency of~\(\sot\), we find that \(\initialclosure(\sot)\in\cset{\cohsdt\in\cohsdts}{\sot\subseteq\cohsdt}\), whence, indeed, \(\bigcap\cset{\cohsdt\in\cohsdts}{\sot\subseteq\cohsdt}\subseteq\initialclosure(\sot)\).
\end{proof}

\begin{proof}[Proof that \(\beautifulthings\) is the smallest closed SDT]
To prove that \(\initialclosure(\emptyset)\subseteq\bigcap\toppedcohsdts\), we consider that for any~\(\cohsdt\in\toppedcohsdts\), \(\initialclosure(\emptyset)\subseteq\initialclosure(\cohsdt)=\cohsdt\), where the inclusion follows from~\ref{axiom:closure:increasing}.
Hence, indeed, \(\initialclosure(\emptyset)\subseteq\bigcap\toppedcohsdts\).

For the converse inclusion, simply observe that \(\initialclosure(\emptyset)\in\toppedcohsdts\), by~\ref{axiom:closure:idempotent}.
\end{proof}

\begin{proof}[Proof that \ref{axiom:desirable:sets:background}\&\ref{axiom:desirable:sets:production} are equivalent to~\ref{axiom:desirable:sets:production:with:background}]
It clearly suffices to show that~\ref{axiom:desirable:sets:production:with:background} restricted to the case that \(\sosot=\emptyset\) is equivalent to~\ref{axiom:desirable:sets:background}.
But there, \(\selections\) contains only the empty map~\(\selection[\emptyset]\) with \(\selection[\emptyset](\sosot)=\emptyset\), and therefore \(\initialclosure(\selection[\emptyset](\sosot))=\initialclosure(\emptyset)=\beautifulthings\), so \ref{axiom:desirable:sets:production:with:background} now simply requires that for any~\(\thing[{+}]\in\beautifulthings\), also \(\set{\thing[{+}]}\in\cohsds\), which is, of course, exactly the purport of~\ref{axiom:desirable:sets:background}.
\end{proof}

\begin{proof}[Proof of Proposition~\ref{prop:the:smallest:cohsds}]
If we let, for ease of notation, \(\cohsds[+]\coloneqq\cset{\sot\in\setsofthings}{\sot\cap\beautifulthings\neq\emptyset}\), then it follows from~\ref{axiom:desirable:sets:background} and~\ref{axiom:desirable:sets:increasing} that it clearly suffices to prove that \(\cohsds[+]\) is a coherent SDS.

It's a matter of immediate verification that it trivially satisfies~\ref{axiom:desirable:sets:consistency}, \ref{axiom:desirable:sets:increasing} and~\ref{axiom:desirable:sets:background}.

For~\ref{axiom:desirable:sets:forbidden}, consider that for any~\(\sot\in\setsofthings\), \((\sot\setminus\uglythings)\cap\beautifulthings=\sot\cap(\beautifulthings\setminus\uglythings)=\sot\cap\beautifulthings\), where the last equality follows from the fact that \(\beautifulthings\cap\uglythings=\emptyset\) [use~\ref{axiom:desirable:things:background:consistency}].

For~\ref{axiom:desirable:sets:production}, first observe that if \(\beautifulthings=\emptyset\), and therefore also \(\cohsds[+]=\emptyset\), then the condition is satisfied vacuously.
Otherwise, consider any non-empty~\(\sosot\subseteq\cohsds[+]\) and observe that there always is some~\(\selection[+]\in\selections[\sosot]\) such that \(\selection[+](\sot)\in\sot\cap\beautifulthings\) for all~\(\sot\in\sosot\).
Then \(\selection[+](\sosot)\subseteq\beautifulthings\), and therefore also \(\initialclosure(\selection[+](\sosot))=\beautifulthings\) [to see why this equality holds, observe that \(\emptyset\subseteq\selection[+](\sosot)\subseteq\beautifulthings\) implies \(\initialclosure(\emptyset)\subseteq\initialclosure(\selection[+](\sosot))\subseteq\initialclosure(\beautifulthings)\) by~\ref{axiom:closure:increasing}, and recall that \(\beautifulthings=\initialclosure(\emptyset)\) by definition, and that then \(\initialclosure(\beautifulthings)=\initialclosure(\initialclosure(\emptyset))=\initialclosure(\emptyset)=\beautifulthings\) by \ref{axiom:closure:idempotent}].
If we now choose any~\(\thing[\selection]\in\initialclosure(\selection(\sosot))\) for all~\(\selection\in\selections[\sosot]\), then we must prove that \(\sot[+]\in\cohsds[+]\) for the corresponding \(\sot[+]\coloneqq\cset{\thing[\selection]}{\selection\in\selections[\sosot]}\), or in other words, that \(\sot[+]\cap\beautifulthings\neq\emptyset\).
But this is obvious, since on the one hand \(\thing[{\selection[+]}]\in\sot[+]\), and on the other hand \(\thing[{\selection[+]}]\in\initialclosure(\selection[+](\sosot))=\beautifulthings\).
\end{proof}

\begin{proof}[Proof of Proposition~\ref{prop:cohsds:to:cohsdt}]
Before we really begin, we can already observe that if \(\cohsds\) is (finitely) coherent, it follows from~\ref{axiom:desirable:sets:background} that \(\beautifulthings\subseteq\sdtify{\cohsds}\).

For the proof of the first statement, consider any~\(\sot\) in~\(\sdsify{\sdtify{\cohsds}}\).
Then \(\sot\cap\sdtify{\cohsds}\neq\emptyset\), or, in other words, \(\thing[o]\in\sdtify{\cohsds}\) for some~\(\thing[o]\) in~\(\sot\).
But then \(\set{\thing[o]}\in\cohsds\), and therefore \ref{axiom:desirable:sets:increasing} guarantees that also \(\sot\in\cohsds\), since \(\set{\thing[o]}\subseteq\sot\).

To prove the coherence of~\(\sdtify{\cohsds}\), it is [by~\ref{axiom:closure:super}] enough to show that \(\initialclosure(\sdtify{\cohsds})\subseteq\sdtify{\cohsds}\) and that \(\sdtify{\cohsds}\cap\uglythings=\emptyset\).

To show that \(\sdtify{\cohsds}\cap\uglythings=\emptyset\), we only need to rely on \(\cohsds\) satisfying the conditions~\ref{axiom:desirable:sets:consistency}--\ref{axiom:desirable:sets:forbidden}, and therefore not on the finitary or infinitary aspect of the coherence of~\(\cohsds\), nor on whether the closure operator~\(\initialclosure\) is finitary or not.
Indeed, assume towards contradiction that \(\uglythings\cap\sdtify{\cohsds}\neq\emptyset\), so there's some~\(\thing[-]\in\uglythings\cap\sdtify{\cohsds}\).
This implies that \(\set{\thing[-]}\in\cohsds\), and therefore, [use~\ref{axiom:desirable:sets:forbidden} with \(\set{\thing[-]}\setminus\uglythings=\emptyset\)], that \(\emptyset\in\cohsds\), contradicting~\ref{axiom:desirable:sets:consistency}.

To prove that \(\initialclosure(\sdtify{\cohsds})\subseteq\sdtify{\cohsds}\), we'll need a different approach in \ref{it:cohsds:to:cohsdt:finitely:coherent} the finitary and in \ref{it:cohsds:to:cohsdt:coherent} the infinitary case.
But we may in both cases assume without loss of generality that \(\initialclosure(\sdtify{\cohsds})\neq\emptyset\).

\underline{\ref{it:cohsds:to:cohsdt:finitely:coherent}}.
Consider any~\(\thing[o]\) in~\(\initialclosure\group{\sdtify{\cohsds}}\).
Since we already know that \(\beautifulthings\subseteq\sdtify{\cohsds}\), we may assume without loss of generality that \(\thing[o]\notin\beautifulthings\).
Since \(\initialclosure\) is finitary, this implies that there's some non-empty~\(\sot\Subset\sdtify{\cohsds}\) for which \(\thing[o]\in\initialclosure\group{\sot}\) [\(\sot\) must be non-empty because, otherwise, we'd have that \(\thing[o]\in\initialclosure(\sot)=\initialclosure(\emptyset)=\beautifulthings\), contradicting our assumption that \(\thing[o]\notin\beautifulthings\)].
If we now let \(\sosot\coloneqq\cset{\set{\thing}}{\thing\in\sot}\) then clearly \(\emptyset\neq\sosot\Subset\cohsds\).
Every element of~\(\sosot\) is a singleton, so \(\selections\) consists of the single map~\(\selection[o]\colon\sosot\to\things\) defined by \(\selection[o](\set{\thing})\coloneqq\thing\) for all~\(\thing\in\sot\).
But then of course \(\selection[o]\group{\sosot}=\cset{\selection[o]\group{\set{\thing}}}{\thing\in\sot}=\sot\) and therefore \(\initialclosure\group{\selection[o]\group{\sosot}}=\initialclosure\group{\sot}\).
If we now consider the special choice~\(\thing[{\selection[o]}]\coloneqq\thing[o]\in\initialclosure\group{\selection[o]\group{\sosot}}\), then \ref{axiom:desirable:sets:finitary:production} guarantees that \(\set{\thing[o]}=\cset{\thing[\selection]}{\selection\in\selections}\in\cohsds\), so \(\thing[o]\in\sdtify{\cohsds}\).
Hence, indeed, \(\initialclosure\group{\sdtify{\cohsds}}\subseteq\sdtify{\cohsds}\).

\underline{\ref{it:cohsds:to:cohsdt:coherent}}.
In this case, the proof is fairly similar, but let's spell it out anyway, in order to verify explicitly that we don't need the finitary character of the closure operator~\(\initialclosure\) here.
Again, consider any~\(\thing[o]\) in~\(\initialclosure\group{\sdtify{\cohsds}}\).
Since we already know that \(\beautifulthings\subseteq\sdtify{\cohsds}\), we may assume without loss of generality that \(\thing[o]\notin\beautifulthings\).
But this implies that \(\sdtify{\cohsds}\) must be non-empty.
If we now let \(\sosot\coloneqq\cset{\set{\thing}}{\thing\in\sdtify{\cohsds}}\) then clearly \(\emptyset\neq\sosot\subseteq\cohsds\).
Every element of~\(\sosot\) is a singleton, so \(\selections\) consists of the single map~\(\selection[o]\colon\sosot\to\things\) defined by \(\selection[o](\set{\thing})\coloneqq\thing\) for all~\(\thing\in\sdtify{\cohsds}\).
But then of course \(\selection[o]\group{\sosot}=\cset{\selection[o]\group{\set{\thing}}}{\thing\in\sdtify{\cohsds}}=\sdtify{\cohsds}\) and therefore \(\initialclosure\group{\selection[o]\group{\sosot}}=\initialclosure\group{\sdtify{\cohsds}}\).
If we now consider the special choice~\(\thing[{\selection[o]}]\coloneqq\thing[o]\in\initialclosure\group{\selection[o]\group{\sosot}}\), then \ref{axiom:desirable:sets:production} guarantees that \(\set{\thing[o]}=\cset{\thing[\selection]}{\selection\in\selections}\in\cohsds\), so \(\thing[o]\in\sdtify{\cohsds}\).
Hence, indeed, \(\initialclosure\group{\sdtify{\cohsds}}\subseteq\sdtify{\cohsds}\).
\end{proof}

\begin{proof}[Proof of Proposition~\ref{prop:cohsdt:to:cohsds}]
First off, consider the following chain of equivalences, valid for any~\(\thing\in\things\):
\begin{equation*}
\thing\in\sdtify{\sdsify{\cohsdt}}
\ifandonlyif\set{\thing}\in\sdsify{\cohsdt}
\ifandonlyif\set{\thing}\cap\cohsdt\neq\emptyset
\ifandonlyif\thing\in\cohsdt,
\end{equation*}
which implies that, generally speaking, \(\sdtify{\sdsify{\cohsdt}}=\cohsdt\).

\underline{\ref{it:cohsdt:to:cohsds:coherent:sdt}\(\then\)\ref{it:cohsdt:to:cohsds:coherent:sds}}.
Assume that~\(\cohsdt\) is coherent, then we must show that \ref{axiom:desirable:sets:consistency}--\ref{axiom:desirable:sets:production} are satisfied for~\(\sdsify{\cohsdt}\).
We concentrate on the proofs for~\ref{axiom:desirable:sets:forbidden}--\ref{axiom:desirable:sets:production}, since the proofs for~\ref{axiom:desirable:sets:consistency} and~\ref{axiom:desirable:sets:increasing} are trivial.

\underline{\ref{axiom:desirable:sets:forbidden}}.
Assume that \(\sot\in\sdsify{\cohsdt}\), meaning that \(\sot\cap\cohsdt\neq\emptyset\).
Since \(\cohsdt\cap\uglythings=\emptyset\) by Equation~\eqref{eq:definition:cohsdts}, this implies that
\begin{equation*}
\emptyset
\subset\sot\cap\cohsdt
=\sot\cap\group[\big]{(\cohsdt\cap\uglythings)\cup(\cohsdt\setminus\uglythings)}
=\sot\cap(\cohsdt\setminus\uglythings)
=(\sot\setminus\uglythings)\cap\cohsdt,
\end{equation*}
so, indeed, \(\sot\setminus\uglythings\in\sdsify{\cohsdt}\).

\underline{\ref{axiom:desirable:sets:background}}.
Since the coherence of \(\cohsdt\) implies that \(\beautifulthings\subseteq\cohsdt\), it follows trivially that, indeed, \(\set{\thing[{+}]}\in\sdsify{\cohsdt}\) for all~\(\thing[{+}]\in\beautifulthings\).

\underline{\ref{axiom:desirable:sets:production}}.
Consider any non-empty~\(\sosot\subseteq\sdsify{\cohsdt}\), meaning that \(\sot\cap\cohsdt\neq\emptyset\) for all~\(\sot\in\sosot\).
This implies that we can always find some~\(\selection[o]\in\selections\) for which \(\selection[o](\sot)\in\cohsdt\) for all~\(\sot\in\sosot\), and therefore \(\selection[o](\sosot)\subseteq\cohsdt\), so we also find that \(\initialclosure(\selection[o](\sosot))\subseteq\cohsdt\).
This guarantees that always \(\thing[{\selection[o]}]\in\cohsdt\), and therefore that, indeed, \(\cset{\thing[\selection]}{\selection\in\selections}\in\sdsify{\cohsdt}\).

\underline{\ref{it:cohsdt:to:cohsds:coherent:sds}\(\then\)\ref{it:cohsdt:to:cohsds:coherent:sdt}}.
Assume that \(\sdsify{\cohsdt}\) is coherent, then Proposition~\ref{prop:cohsds:to:cohsdt}\ref{it:cohsds:to:cohsdt:coherent} guarantees that the SDT~\(\sdtify{\sdsify{\cohsdt}}\) is coherent.
Since we proved above that \(\sdtify{\sdsify{\cohsdt}}=\cohsdt\), this implies that \(\cohsdt\) is coherent.

\underline{\ref{it:cohsdt:to:cohsds:coherent:sds}\(\then\)\ref{it:cohsdt:to:cohsds:finitely:coherent:sds}}.
Trivial, because \ref{axiom:desirable:sets:production} implies \ref{axiom:desirable:sets:finitary:production}.

\underline{\ref{it:cohsdt:to:cohsds:finitely:coherent:sds}\(\then\)\ref{it:cohsdt:to:cohsds:coherent:sdt}}.
Assume that \(\sdsify{\cohsdt}\) is finitely coherent and that the closure operator~\(\initialclosure\) is finitary, then Proposition~\ref{prop:cohsds:to:cohsdt}\ref{it:cohsds:to:cohsdt:finitely:coherent} guarantees that the SDT~\(\sdtify{\sdsify{\cohsdt}}\) is coherent.
Since we proved above that \(\sdtify{\sdsify{\cohsdt}}=\cohsdt\), this implies that \(\cohsdt\) is coherent.
\end{proof}

\begin{proof}[Proof of Proposition~\ref{prop:conjunctive:and:coherent:sds}]
The `if' statements follow from Propositions\ref{prop:cohsdt:to:cohsds} and~\ref{prop:sds:to:sdt:and:back:ordering}, so we turn to the proof of the `only if' statements.
First off, consider the following chain of equivalences, valid for any (finitely) coherent SDS~\(\cohsds\):
\begin{align*}
\text{\(\cohsds\) is conjunctive}
&\ifandonlyif\group{\forall\sot\in\cohsds}\group{\exists\thing\in\sot}\set{\thing}\in\cohsds
\ifandonlyif\group{\forall\sot\in\cohsds}\group{\exists\thing\in\sot}\thing\in\sdtify{\cohsds}\\
&\ifandonlyif\group{\forall\sot\in\cohsds}\sot\cap\sdtify{\cohsds}\neq\emptyset
\ifandonlyif\group{\forall\sot\in\cohsds}\sot\in\sdsify{\sdtify{\cohsds}}
\ifandonlyif\cohsds\subseteq\sdsify{\sdtify{\cohsds}}
\ifandonlyif\cohsds=\sdsify{\sdtify{\cohsds}},
\end{align*}
where the last equivalence follows from Proposition~\ref{prop:cohsds:to:cohsdt}, which tells us that \(\sdsify{\sdtify{\cohsds}}\subseteq\cohsds\), because \(\cohsds\) was assumed to be (finitely) coherent.
It remains to argue that the SDT~\(\sdtify{\cohsds}\) is coherent.
In the case of~\ref{it:conjunctive:and:coherent:sds} this follows immediately from Proposition~\ref{prop:cohsds:to:cohsdt}\ref{it:cohsds:to:cohsdt:coherent}; and in the case of~\ref{it:conjunctive:and:finitely:coherent:sds} this follows immediately from Proposition~\ref{prop:cohsds:to:cohsdt}\ref{it:cohsds:to:cohsdt:finitely:coherent}.

Finally, if \(\cohsds=\sdsify{\cohsdt}\) then necessarily \(\sdtify{\cohsds}=\sdtify{\sdsify{\cohsdt}}=\cohsdt\), where the last equality follows from Proposition~\ref{prop:cohsdt:to:cohsds}.
\end{proof}

\subsection{Proofs of results in Section~\ref{sec:desirability:sdfs}}\label{sec:proofs:desirability:sdfs}

\begin{proof}[Proof of Proposition~\ref{prop:the:smallest:cohsdfs}]
The proof is very similar to the proof of Proposition~\ref{prop:the:smallest:cohsds}, but we nevertheless include it here for the sake of completeness.
If we let, for ease of notation, \(\cohsdfs[+]\coloneqq\cset{\sot\in\finitesetsofthings}{\finsot\cap\beautifulthings\neq\emptyset}\), then it follows from~\ref{axiom:desirable:finite:sets:background} and~\ref{axiom:desirable:finite:sets:increasing} that it clearly suffices to prove that \(\cohsdfs[+]\) is a finitely coherent SDFS.

It's a matter of immediate verification that it trivially satisfies~\ref{axiom:desirable:finite:sets:consistency}, \ref{axiom:desirable:finite:sets:increasing} and~\ref{axiom:desirable:finite:sets:background}.

For~\ref{axiom:desirable:finite:sets:forbidden}, consider that for any~\(\finsot\in\finitesetsofthings\), \((\finsot\setminus\uglythings)\cap\beautifulthings=\finsot\cap(\beautifulthings\setminus\uglythings)=\finsot\cap\beautifulthings\), where the last equality follows from the fact that \(\beautifulthings\cap\uglythings=\emptyset\) [use~\ref{axiom:desirable:things:background:consistency}].

For~\ref{axiom:desirable:finite:sets:production}, first observe that if \(\beautifulthings=\emptyset\), and therefore also \(\cohsdfs[+]=\emptyset\), then the condition is satisfied vacuously.
Otherwise, consider any non-empty~\(\sofinsot\Subset\cohsdfs[+]\) and observe that there always is some~\(\selection[+]\in\finselections\) such that \(\selection[+](\finsot)\in\finsot\cap\beautifulthings\) for all~\(\finsot\in\sofinsot\).
Then \(\selection[+](\sofinsot)\subseteq\beautifulthings\), and therefore also \(\initialclosure(\selection[+](\sofinsot))=\beautifulthings\) [to see why this equality holds, observe that \(\emptyset\subseteq\selection[+](\sofinsot)\subseteq\beautifulthings\) implies that \(\initialclosure(\emptyset)\subseteq\initialclosure(\selection[+](\sofinsot))\subseteq\initialclosure(\beautifulthings)\) by~\ref{axiom:closure:increasing}, and recall that \(\beautifulthings=\initialclosure(\emptyset)\) by definition, and that then \(\initialclosure(\beautifulthings)=\initialclosure(\initialclosure(\emptyset))=\initialclosure(\emptyset)=\beautifulthings\) by \ref{axiom:closure:idempotent}].
If we now choose any~\(\thing[\selection]\in\initialclosure(\selection(\sofinsot))\) for all~\(\selection\in\selections[\sofinsot]\), then we must prove that \(\finsot[+]\in\cohsdfs[+]\) for the corresponding \(\finsot[+]\coloneqq\cset{\thing[\selection]}{\selection\in\finselections}\), or in other words, that \(\finsot[+]\cap\beautifulthings\neq\emptyset\).
But this is obvious, since both \(\thing[{\selection[+]}]\in\finsot[+]\), and \(\thing[{\selection[+]}]\in\initialclosure(\selection[+](\sofinsot))=\beautifulthings\).
\end{proof}

\begin{proof}[Proof of Proposition~\ref{prop:cohsdfs:to:cohsdt}]
The proof is very similar to the proof of Proposition~\ref{prop:cohsds:to:cohsdt}, but we nevertheless include it here for the sake of completeness.

For the proof of the first statement, consider any~\(\finsot\) in~\(\sdfsify{\sdtify{\cohsdfs}}\).
Then \(\finsot\cap\sdtify{\cohsdfs}\neq\emptyset\), or, in other words, \(\thing[o]\in\sdtify{\cohsdfs}\) for some~\(\thing[o]\) in~\(\finsot\).
But then \(\set{\thing[o]}\in\cohsdfs\), and therefore \ref{axiom:desirable:finite:sets:increasing} guarantees that also \(\finsot\in\cohsdfs\), since \(\set{\thing[o]}\subseteq\finsot\).

We now turn to the last statement.
First of all, observe that if \(\cohsdfs\) is finitely coherent, then it follows from~\ref{axiom:desirable:finite:sets:background} that \(\beautifulthings\subseteq\sdtify{\cohsdfs}\).

To prove the coherence of~\(\sdtify{\cohsdfs}\), it is [by~\ref{axiom:closure:super}] enough to show that \(\initialclosure(\sdtify{\cohsdfs})\subseteq\sdtify{\cohsdfs}\) and that \(\sdtify{\cohsdfs}\cap\uglythings=\emptyset\).

To show that \(\sdtify{\cohsdfs}\cap\uglythings=\emptyset\), assume towards contradiction that \(\uglythings\cap\sdtify{\cohsdfs}\neq\emptyset\), so there's some~\(\thing[-]\in\uglythings\cap\sdtify{\cohsdfs}\).
This implies that \(\set{\thing[-]}\in\cohsdfs\), and therefore [use~\ref{axiom:desirable:finite:sets:forbidden} with \(\set{\thing[-]}\setminus\uglythings=\emptyset\)] that \(\emptyset\in\cohsdfs\), contradicting~\ref{axiom:desirable:finite:sets:consistency}.

To prove that \(\initialclosure(\sdtify{\cohsdfs})\subseteq\sdtify{\cohsdfs}\), we may clearly assume that \(\initialclosure(\sdtify{\cohsdfs})\neq\emptyset\).
Consider, then, any~\(\thing[o]\) in~\(\initialclosure\group{\sdtify{\cohsdfs}}\).
Since we already know that \(\beautifulthings\subseteq\sdtify{\cohsdfs}\), we may assume without loss of generality that \(\thing[o]\notin\beautifulthings\).
Since \(\initialclosure\) is finitary, this implies that there's some non-empty~\(\finsot\Subset\sdtify{\cohsdfs}\) for which \(\thing[o]\in\initialclosure\group{\finsot}\) [\(\finsot\) must be non-empty because, otherwise, we'd have that \(\thing[o]\in\initialclosure(\finsot)=\initialclosure(\emptyset)=\beautifulthings\), contradicting our assumption that \(\thing[o]\notin\beautifulthings\)].
If we now let \(\sofinsot\coloneqq\cset{\set{\thing}}{\thing\in\finsot}\) then clearly \(\emptyset\neq\sofinsot\Subset\cohsdfs\).
Every element of~\(\sofinsot\) is a singleton, so \(\finselections\) consists of the single map~\(\selection[o]\colon\sofinsot\to\things\) defined by \(\selection[o](\set{\thing})\coloneqq\thing\) for all~\(\thing\in\finsot\).
But then of course \(\selection[o]\group{\sofinsot}=\cset{\selection[o]\group{\set{\thing}}}{\thing\in\finsot}=\finsot\) and therefore \(\initialclosure\group{\selection[o]\group{\sofinsot}}=\initialclosure\group{\finsot}\).
If we now consider the special choice~\(\thing[{\selection[o]}]\coloneqq\thing[o]\in\initialclosure\group{\selection[o]\group{\sofinsot}}\), then \ref{axiom:desirable:finite:sets:production} guarantees that \(\set{\thing[o]}=\cset{\thing[\selection]}{\selection\in\finselections}\in\cohsdfs\), so \(\thing[o]\in\sdtify{\cohsdfs}\).
Hence, indeed, \(\initialclosure\group{\sdtify{\cohsdfs}}\subseteq\sdtify{\cohsdfs}\).
\end{proof}

\begin{proof}[Proof of Proposition~\ref{prop:cohsdt:to:cohsdfs}]
The proof is very similar to the proof of Proposition~\ref{prop:cohsdt:to:cohsds}, but we nevertheless include it here for the sake of completeness.
For the first statement, consider the following chain of equivalences, valid for any~\(\thing\in\things\):
\begin{equation*}
\thing\in\sdtify{\sdfsify{\cohsdt}}
\ifandonlyif\set{\thing}\in\sdfsify{\cohsdt}
\ifandonlyif\set{\thing}\cap\cohsdt\neq\emptyset
\ifandonlyif\thing\in\cohsdt,
\end{equation*}
which implies that, generally speaking, indeed \(\sdtify{\sdfsify{\cohsdt}}=\cohsdt\).

Next, assume that~\(\cohsdt\) is coherent, then we must show that \ref{axiom:desirable:finite:sets:consistency}--\ref{axiom:desirable:finite:sets:production} are satisfied for~\(\sdfsify{\cohsdt}\).
We concentrate on the proofs for~\ref{axiom:desirable:finite:sets:forbidden}--\ref{axiom:desirable:finite:sets:production}, since the proofs for~\ref{axiom:desirable:finite:sets:consistency} and~\ref{axiom:desirable:finite:sets:increasing} are trivial.

\underline{\ref{axiom:desirable:finite:sets:forbidden}}.
Assume that \(\finsot\in\sdfsify{\cohsdt}\), meaning that \(\finsot\cap\cohsdt\neq\emptyset\).
Since \(\cohsdt\cap\uglythings=\emptyset\) by Equation~\eqref{eq:definition:cohsdts}, this implies that
\begin{equation*}
\emptyset
\subset\finsot\cap\cohsdt
=\finsot\cap\group[\big]{(\cohsdt\cap\uglythings)\cup(\cohsdt\setminus\uglythings)}
=\finsot\cap(\cohsdt\setminus\uglythings)
=(\finsot\setminus\uglythings)\cap\cohsdt,
\end{equation*}
so, indeed, \(\finsot\setminus\uglythings\in\sdfsify{\cohsdt}\).

\underline{\ref{axiom:desirable:finite:sets:background}}.
Since the coherence of \(\cohsdt\) implies that \(\beautifulthings\subseteq\cohsdt\), it follows trivially that, indeed, \(\set{\thing[{+}]}\in\sdfsify{\cohsdt}\) for all~\(\thing[{+}]\in\beautifulthings\).

\underline{\ref{axiom:desirable:finite:sets:production}}.
Consider any non-empty~\(\sofinsot\Subset\sdfsify{\cohsdt}\), meaning that \(\finsot\cap\cohsdt\neq\emptyset\) for all~\(\finsot\in\sofinsot\).
This implies that we can always find some~\(\selection[o]\in\finselections\) for which \(\selection[o](\finsot)\in\cohsdt\) for all~\(\finsot\in\sofinsot\), and therefore \(\selection[o](\sofinsot)\subseteq\cohsdt\), so we also find that \(\initialclosure(\selection[o](\sofinsot))\subseteq\cohsdt\).
This guarantees that always \(\thing[{\selection[o]}]\in\cohsdt\), and therefore that, indeed, \(\cset{\thing[\selection]}{\selection\in\finselections}\in\sdfsify{\cohsdt}\).

Finally, assume that the closure operator~\(\initialclosure\) is finitary and that \(\sdfsify{\cohsdt}\) is finitely coherent, then Proposition~\ref{prop:cohsdfs:to:cohsdt} guarantees that the SDT~\(\sdtify{\sdfsify{\cohsdt}}\) is coherent.
Since we proved above that \(\sdtify{\sdfsify{\cohsdt}}=\cohsdt\), this implies that \(\cohsdt\) is coherent.
\end{proof}

\begin{proof}[Proof of Proposition~\ref{prop:conjunctive:and:coherent:sdfs}]
The proof is very similar to the proof of Proposition~\ref{prop:conjunctive:and:coherent:sds}, but we nevertheless include it here for the sake of completeness.
For the `if' statement, assume that there's some coherent~\(\cohsdt\in\cohsdts\) such that \(\cohsdfs=\cohsdfs[\cohsdt]\).
Then it follows from Proposition~\ref{prop:cohsdt:to:cohsdfs} that \(\cohsdfs=\cohsdfs[\cohsdt]\) is finitely coherent.
To show that it is also conjunctive, consider any \(\finsot\in\cohsdfs\), so \(\finsot\cap\cohsdt\neq\emptyset\).
Fix any \(\thing\in\finsot\cap\cohsdt\), then on the one hand \(\thing\in\finsot\) and on the other hand \(\thing\in\cohsdt\) and therefore, indeed, \(\set{\thing}\in\cohsdfs[\cohsdt]\).

Next, we turn to the proof of the `only if' statement.
First off, consider the following chain of equivalences, valid for any finitely coherent SDFS~\(\cohsdfs\):
\begin{align*}
\text{\(\cohsdfs\) is conjunctive}
&\ifandonlyif\group{\forall\finsot\in\cohsdfs}\group{\exists\thing\in\finsot}\set{\thing}\in\cohsdfs
\ifandonlyif\group{\forall\finsot\in\cohsdfs}\group{\exists\thing\in\finsot}\thing\in\sdtify{\cohsdfs}\\
&\ifandonlyif\group{\forall\finsot\in\cohsdfs}\finsot\cap\sdtify{\cohsdfs}\neq\emptyset
\ifandonlyif\group{\forall\finsot\in\cohsdfs}\finsot\in\sdfsify{\sdtify{\cohsdfs}}
\ifandonlyif\cohsdfs\subseteq\sdfsify{\sdtify{\cohsdfs}}
\ifandonlyif\cohsdfs=\sdfsify{\sdtify{\cohsdfs}},
\end{align*}
where the last equivalence follows from Proposition~\ref{prop:cohsdfs:to:cohsdt}, which tells us that \(\sdfsify{\sdtify{\cohsdfs}}\subseteq\cohsdfs\), because \(\cohsdfs\) was assumed to be finitely coherent.
Proposition~\ref{prop:cohsdfs:to:cohsdt} furthermore guarantees that the SDT~\(\sdtify{\cohsdfs}\) is coherent.

Finally, if \(\cohsdfs=\sdfsify{\cohsdt}\) then necessarily \(\sdtify{\cohsdfs}=\sdtify{\sdfsify{\cohsdt}}=\cohsdt\), where the last equality follows from Proposition~\ref{prop:cohsdt:to:cohsdfs}.
\end{proof}

\subsection{Proofs of results in Section~\ref{sec:representation:lattices}}\label{sec:proofs:representation:lattices}

\begin{proof}[Proof of Proposition~\ref{prop:the:two:systems}]
For a start, consider any~\(\cohsdt\in\cohsdts\), and verify the equivalences in the following chain:
\begin{align*}
\cohsdt\in\event{\sosot}
\ifandonlyif\cohsdt\in\smash{\bigcap_{\sot\in\sosot}}\cohsdts[\sot]
&\ifandonlyif\group{\forall\sot\in\sosot}\sot\cap\cohsdt\neq\emptyset
\ifandonlyif\group{\exists\selection\in\selections}\group{\forall\sot\in\sosot}\selection(\sot)\in\cohsdt\notag\\
&\ifandonlyif\group{\exists\selection\in\selections}\selection(\sosot)\subseteq\cohsdt
\ifandonlyif\group{\exists\selection\in\selections}\initialclosure(\selection(\sosot))\subseteq\cohsdt\\
&\ifandonlyif\cohsdt\in\cohsdtUpset{\basicevent{\sosot}},
\end{align*}
where the penultimate equivalence follows from~\ref{axiom:closure:super}--\ref{axiom:closure:idempotent} and the assumption that \(\cohsdt\in\cohsdts\).
The proof of the last equivalence goes as follows.
The converse implication is immediate, looking at the definition of~\(\basicevent{\sosot}\) in Equation~\eqref{eq:definition:production:event}.
For the direct implication, assume that these is some~\(\selection\in\selections\) such that \(\initialclosure(\selection(\sosot))\subseteq\cohsdt\).
Since \(\cohsdt\in\cohsdts\), we see that then necessarily also \(\initialclosure(\selection(\sosot))\in\cohsdts\), and therefore, by Equation~\eqref{eq:definition:production:event}, that \(\initialclosure(\selection(\sosot))\in\basicevent{\sosot}\).
\end{proof}

\begin{proof}[Proof of Proposition~\ref{prop:filter:consistency}]
We give a proof for the first statement involving coherence of SDSes.
The proof for the second and third statement involving finite coherence of SDSes and finite coherence of SDFSes are completely analogous.

Assume, towards contradiction, that there's some~\(\sosot\subseteq\cohsds\) such that \(\event{\sosot}=\emptyset\), then Proposition~\ref{prop:the:two:systems} tells us that \(\basicevent{\sosot}=\emptyset\), so we get from Equation~\eqref{eq:definition:production:event} that \(\initialclosure\group{\selection(\sosot)}\cap\uglythings\neq\emptyset\) for all~\(\selection\in\selections\).
We can therefore select a \(\thing[\selection]\in\initialclosure\group{\selection(\sosot)}\cap\uglythings\) for all~\(\selection\in\selections\), and then it follows from~\ref{axiom:desirable:sets:production:with:background} that \(\sot\coloneqq\cset{\thing[\selection]}{\selection\in\selections}\in\cohsds\).
But since, by construction, \(\sot\subseteq\uglythings\), it follows from~\ref{axiom:desirable:sets:forbidden} that \(\emptyset=\sot\setminus\uglythings\in\cohsds\), contradicting~\ref{axiom:desirable:sets:consistency}.
\end{proof}

\begin{proof}[Proof of Proposition~\ref{prop:it:does:not:matter:which}]
We restrict ourselves to the proof of the version for finitely coherent SDSes, as the proofs of the other versions are similar, and somewhat less involved.
We only give a proof for the first statement~\ref{it:it:does:not:matter:which:inclusion}, as the second statement~\ref{it:it:does:not:matter:which:equality} is a trivial consequence of the first.
Since always \(\emptyset\Subset\cohsds\), it's clear that we may assume without loss of generality that \(\sosot[2]\neq\emptyset\).
It follows from Proposition~\ref{prop:filter:consistency} that we may also assume without loss of generality that~\(\eventwithindex{1}\neq\emptyset\).
Using Proposition~\ref{prop:the:two:systems}, we can then infer from \(\eventwithindex{1}\subseteq\eventwithindex{2}\) that
\begin{equation}\label{eq:it:does:not:matter:which}
\emptyset
\neq\cohsdtUpset{\cset{\initialclosure(\selection(\sosot[1]))}{\selection\in\selections[{\sosot[1]}]}\cap\cohsdts}
\subseteq\smash{\bigcap_{\sot\in\sosot[2]}}\cohsdts[\sot].
\end{equation}
Now, \(\selections[{\sosot[1]}]\) can be partitioned into two disjoint sets
\begin{equation*}
\indexedselections[{\sosot[1]}]{\ast}
\coloneqq\cset[\big]{\selection\in\selections[{\sosot[1]}]}{\initialclosure(\selection(\sosot[1]))\in\cohsdts}
\text{ and }
\indexedselections[{\sosot[1]}]{\ast\ast}
\coloneqq\cset[\big]{\selection\in\selections[{\sosot[1]}]}{\initialclosure(\selection(\sosot[1]))\notin\cohsdts}.
\end{equation*}
We infer from Equation~\eqref{eq:it:does:not:matter:which} that \(\indexedselections[{\sosot[1]}]{\ast}\neq\emptyset\) and that \(\initialclosure(\selection(\sosot[1]))\cap\sot\neq\emptyset\) for all~\(\sot\in\sosot[2]\) and all~\(\selection\in\indexedselections[{\sosot[1]}]{\ast}\), and it follows from the definition of~\(\indexedselections[{\sosot[1]}]{\ast\ast}\) that \(\initialclosure(\selection(\sosot[1]))\cap\uglythings\neq\emptyset\) for all~\(\selection\in\indexedselections[{\sosot[1]}]{\ast\ast}\).
If we now fix any~\(\sot\in\sosot[2]\), then this tells us that we can always choose a \(\thing[\selection]\in\initialclosure(\selection(\sosot[1]))\) such that also \(\thing[\selection]\in\sot\), for all~\(\selection\in\indexedselections[{\sosot[1]}]{\ast}\).
Similarly and at the same time, we can always choose a \(\thing[\selection]\in\initialclosure(\selection(\sosot[1]))\) such that also \(\thing[\selection]\in\uglythings\), for all~\(\selection\in\indexedselections[{\sosot[1]}]{\ast\ast}\).
Now let \(\sot'\coloneqq\cset{\thing[\selection]}{\selection\in\indexedselections[{\sosot[1]}]{\ast}}\subseteq\sot\) and \(\sot''\coloneqq\cset{\thing[\selection]}{\selection\in\indexedselections[{\sosot[1]}]{\ast\ast}}\subseteq\uglythings\), then, since we assumed that \(\sosot[1]\Subset\cohsds\), we infer from~\ref{axiom:desirable:sets:finitary:production} that \(\sot'\cup\sot''=\cset{\thing[\selection]}{\selection\in\selections[{\sosot[1]}]}\in\cohsds\), and from~\ref{axiom:desirable:sets:forbidden} that then also \(\sot'\setminus\uglythings\in\cohsds\).
But since, by construction, \(\sot'\setminus\uglythings\subseteq\sot'\subseteq\sot\), we infer from~\ref{axiom:desirable:sets:increasing} that also \(\sot\in\cohsds\).
Hence, indeed, \(\sosot[2]\Subset\cohsds\).
\end{proof}

\begin{proof}[Proof of Theorem~\ref{thm:basic:sets:constitute:a:bounded:lattice}]
We begin with a proof for the first statement.
Consider any non-empty family of~\(\sosot[i]\subseteq\setsofthings\), \(i\in I\) then
\begin{equation}\label{eq:intersection:of:possibles}
\bigcap_{i\in I}\eventwithindex{i}
=\bigcap_{i\in I}\group[\bigg]{\bigcap_{\sot[i]\in\sosot[i]}\cohsdts[{\sot[i]}]}
=\bigcap_{\sot\in\sosot}\cohsdts[\sot]
=\event{\sosot}
=\event[\bigg]{\bigcup_{i\in I}\sosot[i]}
\in\events,
\end{equation}
where we let \(\sosot\coloneqq\bigcup_{i\in I}\sosot[i]\).
That \(\bigcap_{i\in I}\eventwithindex{i}\) still belongs to~\(\events\) allows us to conclude, via a standard result in order theory \cite[Corollary~2.29]{davey2002}, that intersection is indeed the infimum in the poset~\(\structure{\events,\subseteq}\).
Moreover, taking into account the complete distributivity of unions over intersections, we also find that
\begin{equation}\label{eq:union:of:possibles}
\bigcup_{i\in I}\eventwithindex{i}
=\bigcup_{i\in I}\group[\bigg]{\bigcap_{\sot[i]\in\sosot[i]}\cohsdts[{\sot[i]}]}
=\bigcap_{\psi\in\Psi}\bigcup_{i\in I}\cohsdts[\psi(i)]
=\bigcap_{\psi\in\Psi}\cohsdts[{\sot[\psi]}]
=\event{\sosot'}\in\events,
\end{equation}
where \(\Psi\) is the set of all choice maps~\(\psi\colon I\to\bigcup_{i\in I}\sosot[i]\) with \(\psi(i)\in\sosot[i]\) for all~\(i\in I\), where we let \(\sot[\psi]\coloneqq\bigcup_{i\in I}\psi(i)\in\setsofthings\) and \(\sosot'\coloneqq\cset{\sot[\psi]}{\psi\in\Psi}\subseteq\setsofthings\), and where the penultimate equality holds because
\begin{equation*}
\bigcup_{i\in I}\cohsdts[\psi(i)]
=\bigcup_{i\in I}\cset{\cohsdt\in\cohsdts}{\cohsdt\cap\psi(i)\neq\emptyset}
=\cset{\cohsdt\in\cohsdts}{\cohsdt\cap\sot[\psi]\neq\emptyset}
=\cohsdts[{\sot[\psi]}].
\end{equation*}
That \(\bigcup_{i\in I}\eventwithindex{i}\) still belongs to~\(\events\) allows us to conclude, again via the same standard result in order theory \cite[Corollary~2.29]{davey2002}, that union is indeed the supremum in the poset~\(\structure{\events,\subseteq}\).
This structure is therefore a complete lattice (of sets), since the non-empty index set~\(I\) was arbitrary in our argumentation.
It follows at once that this complete lattice is completely distributive, because any complete lattice of sets is \cite[Theorem~10.29]{davey2002}.

Finally, we infer from Equation~\eqref{eq:definition:event} that \(\event{\set{\emptyset}}=\cohsdts[\emptyset]=\emptyset\), so \(\emptyset\in\events\) and therefore \(\emptyset\) is the bottom of this structure.
Similarly, we can infer from Equation~\eqref{eq:definition:event} that \(\event{\emptyset}=\cohsdts\), so \(\cohsdts\in\events\) and therefore \(\cohsdts\) is the top of this structure.

The proof for the second and third statements use the same ideas, but in a simpler, finitary guise, as the index set~\(I\) and the subsets~\(\sosot[i]\) [or~\(\sofinsot[i]\)] in the argumentation above are now kept finite, and only finite distributivity is required.
\end{proof}

\subsection{Proofs of results in Section~\ref{sec:filters}}\label{sec:proofs:filters}

\begin{proof}[Proof of the statement involving Equation~\eqref{eq:directed:downwards}]
We must prove that the non-empty subset~\(\filterbase\) of \(\lattice\setminus\set{\latticebottom}\) is a filter base for some proper filter if and only if it satisfies the condition~\eqref{eq:directed:downwards}.

For sufficiency, assume that \(\filterbase\) satisfies~\eqref{eq:directed:downwards}  and let \(\filter\coloneqq\posetUpset{\filterbase}\).
Then it's enough show that \(\filter\) is a proper filter.
Since any image of \(\posetUpset{\bolleke}\) satisfies~\ref{axiom:lattice:filters:increasing}, it suffices to focus on~\ref{axiom:lattice:filters:intersections}.
Consider any \(a_1,a_2\in\filters\), so there are \(b_1,b_2\in\filterbase\) such that \(b_1\leq a_1\) and \(b_2\leq a_2\).
Then on the one hand \(b_1\meet b_2\leq a_1\meet a_2\), and on the other hand it follows from the assumption that there is some~\(b\in\filterbase\) such that \(b\leq b_1\meet b_2\) and therefore \(b\leq a_1\meet a_2\), which guarantees that, indeed, \(a_1\meet a_2\in\filter\).
Moreover, since \(\latticebottom\notin\filterbase\), it follows at once that also \(\latticebottom\notin\filter\).

For necessity, assume that there is some proper filter~\(\filter\) for which \(\filter=\posetUpset{\filterbase}\).
Consider any \(b_1,b_2\in\filterbase\) then \(b_1\meet b_2\in\filter\) by~\ref{axiom:lattice:filters:intersections}, so there is some~\(b\in\filterbase\) such that \(b\leq b_1\meet b_2\).
\end{proof}

\subsection{Proofs of results in Section~\ref{sec:representation:finitary}}\label{sec:proofs:representation:finitary}

\begin{proof}[Proof of Theorem~\ref{thm:the:finitary:representation:theorem}]
For statement~\ref{it:the:finitary:representation:theorem:if:cohsds:then:filter}, assume that \(\cohsds\) is a finitely coherent SDS.
We must show that \(\finfilterise(\cohsds)\) is a proper filter.
To see that \(\finfilterise(\cohsds)\) is non-empty, observe that always \(\emptyset\Subset\cohsds\), and therefore \(\cohsdts=\event{\emptyset}\in\finfilterise(\cohsds)\).
To show that \(\finfilterise(\cohsds)\) is increasing [satisfies~\ref{axiom:lattice:filters:increasing}], consider any~\(\sosot[1],\sosot[2]\Subset\setsofthings\) such that \(\eventwithindex{1}\in\finfilterise(\cohsds)\) and \(\eventwithindex{1}\subseteq\eventwithindex{2}\).
Then Proposition~\ref{prop:it:does:not:matter:which}\ref{it:it:does:not:matter:which:equality} implies that~\(\sosot[1]\Subset\cohsds\), and Proposition~\ref{prop:it:does:not:matter:which}\ref{it:it:does:not:matter:which:inclusion} then guarantees that also~\(\sosot[2]\Subset\cohsds\), whence, indeed, also \(\eventwithindex{2}\in\finfilterise(\cohsds)\).
To show that \(\finfilterise(\cohsds)\) is closed under finite intersections [satisfies~\ref{axiom:lattice:filters:intersections}], consider arbitrary~\(\sosot[1],\sosot[2]\Subset\cohsds\).
Then we infer from Equation~\eqref{eq:intersection:of:possibles} that \(\eventwithindex{1}\cap\eventwithindex{2}=\event{{\sosot[1]\cup\sosot[2]}}\), and since \(\sosot[1]\cup\sosot[2]\Subset\cohsds\), this tells us that, indeed, \(\eventwithindex{1}\cap\eventwithindex{2}\in\finfilterise(\cohsds)\).
To show that the filter~\(\finfilterise(\cohsds)\) is proper, simply observe that it follows from Proposition~\ref{prop:filter:consistency} that \(\event{\sosot}\neq\emptyset\) for all~\(\sosot\Subset\cohsds\), which ensures that \(\emptyset\notin\finfilterise(\cohsds)\).
This tells us that, indeed, \(\finfilterise(\cohsds)\neq\finevents\) [recall that \(\emptyset\in\finevents\) by Theorem~\ref{thm:basic:sets:constitute:a:bounded:lattice}].

For statement~\ref{it:the:finitary:representation:theorem:if:filter:then:cohsds}, assume that \(\filter\) is a proper filter.
We check that the relevant conditions are satisfied for \(\cohsds\) to be a finitely coherent SDS.

\underline{\ref{axiom:desirable:sets:consistency}}.
Since \(\cohsdts[\emptyset]=\emptyset\) and \(\filter\) is proper, we find that \(\cohsdts[\emptyset]\notin\filter\) [again, recall that \(\emptyset\in\finevents\) by Theorem~\ref{thm:basic:sets:constitute:a:bounded:lattice}].
Hence, indeed, \(\emptyset\notin\findesirify(\filter)\).

\underline{\ref{axiom:desirable:sets:increasing}}.
Consider any~\(\sot[1],\sot[2]\in\setsofthings\) with~\(\sot[1]\subseteq\sot[2]\).
Assume that \(\sot[1]\in\findesirify(\filter)\), so \(\cohsdts[{\sot[1]}]\in\filter\).
That \(\sot[1]\subseteq\sot[2]\) implies, via Lemma~\ref{lem:the:cohsdtify:operator}\ref{it:the:cohsdtify:operator:increasing}, that \(\cohsdts[{\sot[1]}]\subseteq\cohsdts[{\sot[2]}]\).
This allows us to infer that also \(\cohsdts[{\sot[2]}]\in\filter\), using~\ref{axiom:lattice:filters:increasing}.
Hence, indeed, \(\sot[2]\in\findesirify(\filter)\).

\underline{\ref{axiom:desirable:sets:forbidden}}.
Consider any~\(\sot\in\setsofthings\), and assume that~\(\sot\in\findesirify(\filter)\).
Infer from Lemma~\ref{lem:the:cohsdtify:operator}\ref{it:the:cohsdtify:operator:forbidden} that \(\cohsdts[\sot]=\cohsdts[{\sot\setminus\uglythings}]\) and therefore also \(\cohsdts[\sot]\in\filter\ifandonlyif\cohsdts[{\sot\setminus\uglythings}]\in\filter\).
Hence, indeed, \(\sot\setminus\uglythings\in\findesirify(\filter)\).

\underline{\ref{axiom:desirable:sets:background}}.
We may assume without loss of generality that \(\beautifulthings\neq\emptyset\), because otherwise this requirement is trivially satisfied. Consider then any~\(\thing[{+}]\in\beautifulthings\), then we must show that \(\set{\thing[{+}]}\in\findesirify(\filter)\), or in other words that \(\cohsdts[{\set{\thing[{+}]}}]\in\filter\).
Now simply observe that
\begin{equation*}
\cohsdts[{\set{\thing[{+}]}}]
=\cset{\cohsdt\in\cohsdts}{\set{\thing[{+}]}\cap\cohsdt\neq\emptyset}
=\cohsdts
\in\filter,
\end{equation*}
where the last equality holds because the coherence of~\(\cohsdt\) implies that \(\beautifulthings\subseteq\cohsdt\), and the final statement holds because the smallest filter~\(\fineventfiltersbottom=\set{\cohsdts}\) is included in all filters.

\underline{\ref{axiom:desirable:sets:finitary:production}}.
Consider any non-empty~\(\sosot\Subset\findesirify(\filter)\) and any choice~\(\thing[\selection]\in\initialclosure(\selection(\sosot))\) for all the~\(\selection\in\selections\), then we must prove that \(\sot[o]\coloneqq\cset{\thing[\selection]}{\selection\in\selections}\in\findesirify(\filter)\), or in other words that \(\cohsdts[{\sot[o]}]\in\filter\).
It follows from the assumptions that \(\cohsdts[\sot]\in\filter\) for all~\(\sot\in\sosot\), and therefore also, by~\ref{axiom:lattice:filters:intersections} and the finiteness of~\(\sosot\), that \(\event{\sosot}=\bigcap_{\sot\in\sosot}\cohsdts[\sot]\in\filter\), so it's enough to prove that \(\event{\sosot}\subseteq\cohsdts[{\sot[o]}]\), because then also, by~\ref{axiom:lattice:filters:increasing}, \(\cohsdts[{\sot[o]}]\in\filter\), as required.
Consider, to this end, any~\(\cohsdt\in\event{\sosot}\), then it follows from Proposition~\ref{prop:the:two:systems} and Equation~\eqref{eq:definition:production:event} that there's some selection map~\(\selection[o]\in\selections\) such that \(\initialclosure(\selection[o](\sosot))\subseteq\cohsdt\).
This guarantees that \(\thing[{\selection[o]}]\in\cohsdt\), and therefore \(\cohsdt\cap\sot[o]\neq\emptyset\).
Hence, indeed, \(\cohsdt\in\cohsdts[{\sot[o]}]\).

For statement~\ref{it:the:finitary:representation:theorem:cohsds:embedding}, let \(\filter'\coloneqq\finfilterise(\cohsds)\) and let \(\cohsds'\coloneqq\findesirify(\filter')\), then we have to prove that \(\cohsds=\cohsds'\).
We start with the following chain of equivalences:
\begin{equation}\label{eq:the:finitary:representation:theorem:cohsds:embedding}
\sot\in\cohsds'
\ifandonlyif\cohsdts[\sot]\in\filter'
\ifandonlyif\cohsdts[\sot]\in\finfilterise(\cohsds)
\ifandonlyif\group{\exists\sosot\Subset\cohsds}
\event{\sosot}=\event{\set{\sot}},
\text{ for all~\(\sot\in\setsofthings\),}
\end{equation}
where the third equivalence follows from the fact that, by Equation~\eqref{eq:definition:event}, \(\event{\set{\sot}}=\cohsdts[\sot]\).
To prove that \(\cohsds\subseteq\cohsds'\), consider any~\(\sot\in\cohsds\), then \(\set{\sot}\Subset\cohsds\).
Using Equation~\eqref{eq:the:finitary:representation:theorem:cohsds:embedding} with \(\sosot\coloneqq\set{\sot}\), this implies that, indeed, \(\sot\in\cohsds'\).
For the converse inclusion, consider any~\(\sot\in\cohsds'\), so there's, by Equation~\eqref{eq:the:finitary:representation:theorem:cohsds:embedding}, some~\(\sosot\Subset\cohsds\) for which \(\event{\sosot}=\event{\set{\sot}}\).
Proposition~\ref{prop:it:does:not:matter:which}\ref{it:it:does:not:matter:which:equality} then guarantees that also \(\set{\sot}\Subset\cohsds\), or in other words, that, indeed, \(\sot\in\cohsds\).

For statement~\ref{it:the:finitary:representation:theorem:filter:embedding}, let~\(\cohsds'\coloneqq\findesirify(\filter)\) and let \(\filter'\coloneqq\finfilterise(\cohsds')\), then we have to prove that \(\filter'=\filter\), or in other words, that \(\event{\sosot}\in\filter'\ifandonlyif\event{\sosot}\in\filter\) for all~\(\sosot\Subset\setsofthings\).
Now, consider the following chain of equivalences, valid for any~\(\sosot\Subset\setsofthings\):
\begin{align*}
\event{\sosot}\in\filter'
&\ifandonlyif\event{\sosot}\in\finfilterise(\cohsds')
\ifandonlyif\group{\exists\sosot'\Subset\cohsds'}\event{\sosot}=\event{\sosot'}\\
&\ifandonlyif\group{\exists\sosot'\Subset\setsofthings}
\group[\big]{\sosot'\Subset\findesirify(\filter)\text{ and }\event{\sosot}=\event{\sosot'}}\\
&\ifandonlyif\group{\exists\sosot'\Subset\setsofthings}
\group[\big]{\event{\sosot'}\in\filter\text{ and }\event{\sosot}=\event{\sosot'}}\\
&\ifandonlyif\event{\sosot}\in\filter,
\end{align*}
where the second equivalence follows from the definition of~\(\finfilterise(\cohsds')\) and the fourth equivalence follows from Lemma~\ref{lem:characterisation:desirify:finitary}.

Statements~\ref{it:the:finitary:representation:theorem:cohsds:order:preserving} and~\ref{it:the:finitary:representation:theorem:filter:order:preserving} follow readily from the definitions of the maps~\(\finfilterise\) and~\(\findesirify\).

Before we turn to the proofs of statements~\ref{it:the:finitary:representation:theorem:filterise:bounds} and~\ref{it:the:finitary:representation:theorem:desirify:bounds}, it will be helpful to recall from the finitary counterparts of Propositions~\ref{prop:coherent:sds:constitute:a:complete:lattice} and~\ref{prop:the:smallest:cohsds} that \(\fincohsdsbottom=\cset{\sot\in\setsofthings}{\sot\cap\beautifulthings\neq\emptyset}\) and \(\fincohsdstop=\setsofthings\), and from Proposition~\ref{prop:filters:constititute:a:complete:lattice} and Theorem~\ref{thm:basic:sets:constitute:a:bounded:lattice} that \(\fineventfiltersbottom=\set{\cohsdts}\) and \(\fineventfilterstop=\finevents\).

For statement~\ref{it:the:finitary:representation:theorem:filterise:bounds}, consider any~\(\sosot\Subset\fincohsdsbottom\) and~\(\sot\in\sosot\),\footnote{The case that~\(\sosot=\emptyset\) is also covered, because \(\event{\emptyset}=\cohsdts\) as well; see the discussion after Equation~\eqref{eq:definition:event}.} then \(\sot\cap\beautifulthings\neq\emptyset\), and therefore, by Lemma~\ref{lem:the:cohsdtify:operator}\ref{it:the:cohsdtify:operator:background}, also \(\cohsdts[\sot]=\cohsdts\).
Equation~\eqref{eq:definition:event} then guarantees that \(\event{\sosot}=\cohsdts\).
But then indeed,  \(\finfilterise(\fincohsdsbottom)=\cset{\event{\sosot}}{\sosot\Subset\fincohsdsbottom}=\set{\cohsdts}=\fineventfiltersbottom\).
For the tops, we find that
\begin{equation*}
\finfilterise(\fincohsdstop)
=\finfilterise(\setsofthings)
=\cset{\event{\sosot}}{\sosot\Subset\setsofthings}
=\finevents
=\fineventfilterstop.
\end{equation*}

For statement~\ref{it:the:finitary:representation:theorem:desirify:bounds}, consider the following chain of equivalences, for any~\(\sot\in\setsofthings\):
\begin{equation*}
\sot\in\findesirify(\fineventfiltersbottom)
\ifandonlyif\sot\in\findesirify(\set{\cohsdts})
\ifandonlyif\cohsdts[\sot]=\cohsdts
\ifandonlyif\sot\cap\beautifulthings\neq\emptyset
\ifandonlyif\sot\in\fincohsdsbottom,
\end{equation*}
where the third equivalence follows from Lemma~\ref{lem:the:cohsdtify:operator}\ref{it:the:cohsdtify:operator:background}.
For the tops, we find that
\begin{equation*}
\findesirify(\fineventfilterstop)
=\findesirify(\finevents)
=\cset{\sot\in\setsofthings}{\event{\set{\sot}}\in\finevents}
=\setsofthings
=\fincohsdstop.
\end{equation*}

Finally, we turn to statement~\ref{it:the:finitary:representation:theorem:completeness}.
Consider that the proper filter~\(\filter\) is prime if and only if for all~\(\sosot[1],\sosot[2]\Subset\setsofthings\),
\begin{equation}\label{eq:the:finitary:representation:theorem:completeness:filters}
\eventwithindex{1}\cup\eventwithindex{2}\in\filter
\then\group{\eventwithindex{1}\in\filter\text{ or }\eventwithindex{2}\in\filter}.
\end{equation}
Since we've already argued in the proof of Theorem~\ref{thm:basic:sets:constitute:a:bounded:lattice} that \(\eventwithindex{1}\cup\eventwithindex{2}=\event{\sosot}\), with \(\sosot\coloneqq\cset{\sot[1]\cup\sot[2]}{\sot[1]\in\sosot[1]\text{ and }\sot[2]\in\sosot[2]}\), it follows from Proposition~\ref{prop:it:does:not:matter:which}\ref{it:it:does:not:matter:which:equality} that the statement~\eqref{eq:the:finitary:representation:theorem:completeness:filters} is equivalent to
\begin{equation}\label{eq:the:finitary:representation:theorem:completeness:cohsdss}
\cset{\sot[1]\cup\sot[2]}{\sot[1]\in\sosot[1]\text{ and }\sot[2]\in\sosot[2]}\Subset\cohsds
\then\group{\sosot[1]\Subset\cohsds\text{ or }\sosot[2]\Subset\cohsds}.
\end{equation}
Assume now that \(\filter\) is prime, and apply the condition~\eqref{eq:the:finitary:representation:theorem:completeness:cohsdss} with \(\sosot[1]\coloneqq\set{\sot[1]}\) and \(\sosot[2]\coloneqq\set{\sot[2]}\) to find the completeness condition~\eqref{eq:complete:cohsds}.
Conversely, assume that~\(\cohsds\) is complete, and consider any~\(\sosot[1],\sosot[2]\Subset\setsofthings\).
To prove that condition~\eqref{eq:the:finitary:representation:theorem:completeness:cohsdss} is satisfied, we assume that \(\cset{\sot[1]\cup\sot[2]}{\sot[1]\in\sosot[1]\text{ and }\sot[2]\in\sosot[2]}\Subset\cohsds\) and at the same time~\(\sosot[1]\not\Subset\cohsds\), and we prove that then necessarily \(\sosot[2]\Subset\cohsds\).
It follows from the assumptions that there's some~\(\sot[o]\in\sosot[1]\) such that \(\sot[o]\notin\cohsds\), while at the same time \(\cset{\sot[o]\cup\sot[2]}{\sot[2]\in\sosot[2]}\Subset\cohsds\).
But then the completeness condition~\eqref{eq:complete:cohsds} guarantees that \(\sot[2]\in\cohsds\) for all~\(\sot[2]\in\sosot[2]\), whence, indeed, \(\sosot[2]\Subset\cohsds\).
\end{proof}

\begin{lemma}\label{lem:the:cohsdtify:operator}
Consider any~\(\sot,\sot[1],\sot[2]\in\setsofthings\).
Then the following statements hold:
\begin{enumerate}[label=\upshape(\roman*),leftmargin=*]
\item\label{it:the:cohsdtify:operator:increasing} if \(\sot[1]\subseteq\sot[2]\) then \(\cohsdts[{\sot[1]}]\subseteq\cohsdts[{\sot[2]}]\);
\item\label{it:the:cohsdtify:operator:forbidden} \(\cohsdts[\sot]=\cohsdts[{\sot\setminus\uglythings}]\);
\item\label{it:the:cohsdtify:operator:is:upset} \(\cohsdtUpset{\cohsdts[\sot]}=\cohsdts[\sot]\);
\item\label{it:the:cohsdtify:operator:empty} if \(\sot\subseteq\uglythings\) then \(\cohsdts[\sot]=\emptyset\);
\item\label{it:the:cohsdtify:operator:background} \(\sot\cap\beautifulthings\neq\emptyset\) if and only if \(\cohsdts[\sot]=\cohsdts\);
\end{enumerate}
\end{lemma}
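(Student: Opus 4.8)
The plan is to read all five statements straight off the defining equation \(\cohsdts[\sot]=\cset{\cohsdt\in\cohsdts}{\sot\cap\cohsdt\neq\emptyset}\) from Equation~\eqref{eq:the:cohsdtify:operator}, using only two structural facts about coherent SDTs: that every \(\cohsdt\in\cohsdts\) is disjoint from the forbidden things, \(\cohsdt\cap\uglythings=\emptyset\) [by~\ref{axiom:desirable:things:forbidden} and Equation~\eqref{eq:definition:cohsdts}], and that every \(\cohsdt\in\cohsdts\) contains the vacuous set, \(\beautifulthings\subseteq\cohsdt\), since \(\beautifulthings=\cohsdtbottom\) is the smallest (closed, hence coherent) SDT. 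Each item is then a one-line membership argument.

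For~\ref{it:the:cohsdtify:operator:increasing} I would note that \(\sot[1]\subseteq\sot[2]\) gives \(\sot[1]\cap\cohsdt\subseteq\sot[2]\cap\cohsdt\) for every \(\cohsdt\), so non-emptiness of the former forces non-emptiness of the latter, which is exactly the claimed inclusion. Statement~\ref{it:the:cohsdtify:operator:is:upset} is the same idea one level up: the inclusion \(\cohsdts[\sot]\subseteq\cohsdtUpset{\cohsdts[\sot]}\) holds for free because any set sits inside its own up-set, and for the reverse I would take \(\cohsdt\in\cohsdts[\sot]\) with \(\cohsdt\subseteq\cohsdt'\in\cohsdts\), observe \(\sot\cap\cohsdt'\supseteq\sot\cap\cohsdt\neq\emptyset\), and conclude \(\cohsdt'\in\cohsdts[\sot]\).

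For~\ref{it:the:cohsdtify:operator:forbidden} and~\ref{it:the:cohsdtify:operator:empty} the forbidden-disjointness fact does the work: since \(\cohsdt\cap\uglythings=\emptyset\), we have \(\sot\cap\cohsdt=(\sot\setminus\uglythings)\cap\cohsdt\) for every coherent \(\cohsdt\), and comparing the two non-emptiness conditions over all \(\cohsdt\in\cohsdts\) gives~\ref{it:the:cohsdtify:operator:forbidden}; if moreover \(\sot\subseteq\uglythings\), then \(\sot\cap\cohsdt\subseteq\uglythings\cap\cohsdt=\emptyset\), so no coherent \(\cohsdt\) qualifies and \(\cohsdts[\sot]=\emptyset\), which is~\ref{it:the:cohsdtify:operator:empty}.

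Finally, for~\ref{it:the:cohsdtify:operator:background}, the forward direction uses \(\beautifulthings\subseteq\cohsdt\): if \(\sot\cap\beautifulthings\neq\emptyset\) then \(\sot\cap\cohsdt\supseteq\sot\cap\beautifulthings\neq\emptyset\) for every \(\cohsdt\in\cohsdts\), so \(\cohsdts[\sot]=\cohsdts\). The converse is the only place requiring a moment's care: to deduce \(\sot\cap\beautifulthings\neq\emptyset\) from \(\cohsdts[\sot]=\cohsdts\) I need a concrete coherent witness, and the right choice is \(\beautifulthings\) itself, which is a legitimate member of \(\cohsdts\) precisely because the standing sanitary assumption~\ref{axiom:desirable:things:background:consistency} guarantees \(\beautifulthings\cap\uglythings=\emptyset\); applying \(\cohsdts[\sot]=\cohsdts\) to this \(\beautifulthings\in\cohsdts\) then yields \(\sot\cap\beautifulthings\neq\emptyset\). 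There is otherwise no genuine obstacle here — the whole lemma is bookkeeping around the definition — and the one thing I would be careful to flag explicitly is this dependence of \(\beautifulthings\in\cohsdts\) on~\ref{axiom:desirable:things:background:consistency}.
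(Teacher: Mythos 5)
Your proposal is correct and follows essentially the same route as the paper's own proof: every item is read off the definition of \(\cohsdts[\sot]\) using only \(\cohsdt\cap\uglythings=\emptyset\) and \(\beautifulthings\subseteq\cohsdt\), with the converse of the last item witnessed by \(\beautifulthings\in\cohsdts\) exactly as in the paper. Your explicit remark that \(\beautifulthings\in\cohsdts\) rests on the standing assumption~\ref{axiom:desirable:things:background:consistency} is a small but welcome clarification that the paper leaves implicit.
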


\begin{proof}
The proofs of~\ref{it:the:cohsdtify:operator:increasing} and~\ref{it:the:cohsdtify:operator:is:upset} are trivial, looking at Equation~\eqref{eq:the:cohsdtify:operator}.

For~\ref{it:the:cohsdtify:operator:forbidden}, consider any~\(\cohsdt\in\cohsdts\), and the following sequence of equivalences:
\begin{equation*}
\cohsdt\in\cohsdts[{\sot\setminus\uglythings}]
\ifandonlyif\cohsdt\cap(\sot\setminus\uglythings)\neq\emptyset
\ifandonlyif\cohsdt\cap\sot\neq\emptyset
\ifandonlyif\cohsdt\in\cohsdts[\sot],
\end{equation*}
where the penultimate equivalence holds because \(\cohsdt\cap(\sot\setminus\uglythings)=(\cohsdt\setminus\uglythings)\cap\sot=\cohsdt\cap\sot\), since \(\cohsdt\setminus\uglythings=\cohsdt\).
This tells us that, indeed, \(\cohsdts[\sot]=\cohsdts[{\sot\setminus\uglythings}]\).

For~\ref{it:the:cohsdtify:operator:empty}, consider any~\(\cohsdt\in\cohsdts\).
Assume that \(\sot\subseteq\uglythings\), then also \(\sot\cap\cohsdt\subseteq\uglythings\cap\cohsdt=\emptyset\), so \(\cohsdt\notin\cohsdts[\sot]\).
Hence, indeed, \(\cohsdts[\sot]=\emptyset\).

For~\ref{it:the:cohsdtify:operator:background}, consider any~\(\cohsdt\in\cohsdts\).
Assume that \(\sot\cap\beautifulthings\neq\emptyset\), then also \(\sot\cap\cohsdt\neq\emptyset\) because \(\beautifulthings\subseteq\cohsdt\), so \(\cohsdt\in\cohsdts[\sot]\).
Hence, indeed, \(\cohsdts[\sot]=\cohsdts\).
Conversely, assume that \(\cohsdts[\sot]=\cohsdts\), then since \(\beautifulthings=\bigcap\cohsdts\in\cohsdts\), we find that in particular \(\beautifulthings\in\cohsdts[\sot]\), so indeed \(\beautifulthings\cap\sot\neq\emptyset\).
\end{proof}

\begin{lemma}\label{lem:characterisation:desirify:finitary}
Let \(\filter\) be any proper filter on~\(\structure{\finevents,\subseteq}\) and let~\(\sosot\Subset\setsofthings\) be any finite SDS, then \(\sosot\Subset\findesirify(\filter)\ifandonlyif\event{\sosot}\in\filter\).
\end{lemma}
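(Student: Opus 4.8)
The plan is to establish both implications directly from the two defining filter properties \ref{axiom:lattice:filters:increasing} and \ref{axiom:lattice:filters:intersections}, using only the definition \(\findesirify(\filter)=\cset{\sot\in\setsofthings}{\cohsdts[\sot]\in\filter}\) together with the identity \(\event{\sosot}=\bigcap_{\sot\in\sosot}\cohsdts[\sot]\) from Equation~\eqref{eq:definition:event}. The finiteness of \(\sosot\) is precisely what makes the argument work with filters, whose meet-closure \ref{axiom:lattice:filters:intersections} is only guaranteed for \emph{finite} meets; this is the one place where the hypothesis \(\sosot\Subset\setsofthings\) is genuinely used, so I would flag it explicitly.

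For the forward implication, I would assume \(\sosot\Subset\findesirify(\filter)\), so that \(\cohsdts[\sot]\in\filter\) for every \(\sot\in\sosot\) by the definition of \(\findesirify\). Each such \(\cohsdts[\sot]=\event{\set{\sot}}\) lies in \(\finevents\), and since \(\sosot\) is finite, \(\event{\sosot}=\bigcap_{\sot\in\sosot}\cohsdts[\sot]\) is a finite lattice meet — recall from Theorem~\ref{thm:basic:sets:constitute:a:bounded:lattice} that meet is intersection — of these finitely many members of \(\filter\); repeated application of \ref{axiom:lattice:filters:intersections} then yields \(\event{\sosot}\in\filter\). The degenerate case \(\sosot=\emptyset\) needs a separate remark: there \(\event{\emptyset}=\cohsdts\) is the top of \(\structure{\finevents,\subseteq}\), which belongs to every filter because a filter is non-empty and upward closed under \ref{axiom:lattice:filters:increasing}.

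For the converse, I would assume \(\event{\sosot}\in\filter\) and fix an arbitrary \(\sot\in\sosot\). From \(\event{\sosot}=\bigcap_{\sot'\in\sosot}\cohsdts[\sot']\subseteq\cohsdts[\sot]\), the increasing property \ref{axiom:lattice:filters:increasing} gives \(\cohsdts[\sot]\in\filter\), that is, \(\sot\in\findesirify(\filter)\). As \(\sot\) was arbitrary in the finite set \(\sosot\), this shows \(\sosot\Subset\findesirify(\filter)\). No serious obstacle is expected: the whole statement is an exercise in unwinding definitions and invoking the two filter axioms, and the only care required lies in the empty-\(\sosot\) bookkeeping and in pinpointing where finiteness is needed, so that finite rather than arbitrary meet-closure suffices.
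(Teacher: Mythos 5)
Your proof is correct and follows essentially the same route as the paper's: both directions rest on unwinding the definition of \(\findesirify\), the identity \(\event{\sosot}=\bigcap_{\sot\in\sosot}\cohsdts[\sot]\), and the filter axioms \ref{axiom:lattice:filters:increasing} and \ref{axiom:lattice:filters:intersections}, with the same parenthetical handling of \(\sosot=\emptyset\) via \(\event{\emptyset}=\cohsdts\). The paper merely compresses this into a single chain of equivalences, so your version is just a more explicit rendering of the identical argument.
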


\begin{proof}
Consider the following chain of equivalences, valid for any~\(\sosot\Subset\setsofthings\):
\begin{align*}
\sosot\Subset\findesirify(\filter)
\ifandonlyif\group{\forall\sot\in\sosot}\sot\in\findesirify(\filter)
\ifandonlyif\group{\forall\sot\in\sosot}\cohsdts[\sot]\in\filter
\ifandonlyif\event{\sosot}\in\filter,
\end{align*}
where the last equivalence follows from Equation~\eqref{eq:definition:event} and the fact that \(\filter\) satisfies~\ref{axiom:lattice:filters:intersections} and~\ref{axiom:lattice:filters:increasing} [observe, by the way, that \(\event{\emptyset}=\cohsdts\in\filter\) when \(\filter\) is a proper filter].
\end{proof}

\begin{proof}[Proof of Theorem~\ref{thm:conjunctive:representation:finite}]
\underline{\ref{it:conjunctive:representation:finite:consistency}}.
That \(\event{\sosot}=\cset{\cohsdt\in\cohsdts}{\sosot\Subset\cohsds[\cohsdt]}\) follows from Equation~\eqref{eq:connection:with:binary:model:finitary}.
For necessity, suppose that \(\cohsds\) is finitely consistent.
That means that there's some finitely coherent~\(\cohsds[o]\in\fincohsdss\) such that \(\cohsds\subseteq\cohsds[o]\).
But it then follows from Proposition~\ref{prop:filter:consistency} that \(\event{\sosot}\neq\emptyset\) for all~\(\sosot\Subset\cohsds[o]\), and therefore in particular also for all~\(\sosot\Subset\cohsds\).

For sufficiency, assume that \(\event{\sosot}\neq\emptyset\) for all~\(\sosot\Subset\cohsds\).
Then we infer from Lemma~\ref{lem:conjunctive:representation:finite}\ref{it:conjunctive:representation:finite:filterbase} that \(\filterbase[\cohsds]\coloneqq\cset{\event{\sosot}}{\sosot\Subset\cohsds}\) is a filter base.
Let's denote by \(\filter[\cohsds]\) the (smallest) proper filter that it generates; see Lemma~\ref{lem:conjunctive:representation:finite}\ref{it:conjunctive:representation:finite:filter}.
If we now let \(\cohsds'\coloneqq\findesirify(\filter[\cohsds])\), then it follows from Theorem~\ref{thm:the:finitary:representation:theorem}\ref{it:the:finitary:representation:theorem:if:filter:then:cohsds} that \(\cohsds'\) is a finitely coherent SDS: \(\cohsds'\in\fincohsdss\).
We're therefore done if we can prove that \(\cohsds\subseteq\cohsds'\).
To this end, consider any~\(\sot\in\cohsds\), then \(\set{\sot}\Subset\cohsds\) and therefore also \(\cohsdts[\sot]=\event{\set{\sot}}\in\filterbase[\cohsds]\).
But then \(\cohsdts[\sot]\in\filter[\cohsds]\), and therefore indeed also \(\sot\in\findesirify(\filter[\cohsds])=\cohsds'\).

\underline{\ref{it:conjunctive:representation:finite:closure}}.
If \(\cohsds\) is not finitely consistent then it follows from~\ref{it:conjunctive:representation:finite:consistency} that there's some~\(\sosot\Subset\cohsds\) for which \(\event{\sosot}=\cset{\cohsdt\in\cohsdts}{\sosot\Subset\cohsds[\cohsdt]}=\emptyset\) and therefore \(\bigcap_{\cohsdt\in\cohsdts\colon\sosot\Subset\sdsify{\cohsdt}}\sdsify{\cohsdt}=\bigcap_{\cohsdt\in\event{\sosot}}\sdsify{\cohsdt}=\setsofthings\), making sure that both the left-hand side and the right-hand side in~\ref{it:conjunctive:representation:finite:closure} are equal to~\(\setsofthings\).
We may therefore assume that \(\cohsds\) is finitely consistent.
If we borrow the notation from the argumentation above in the proof of~\ref{it:conjunctive:representation:finite:consistency}, and take into account Lemma~\ref{lem:conjunctive:representation:finite}\ref{it:conjunctive:representation:finite:liminf} and Equation~\eqref{eq:connection:with:binary:model:finitary}, then it's clear that we have to prove that
\[
\fincohsdsclosure(\cohsds)=\bigcup_{V\in\filter[\cohsds]}\bigcap_{\cohsdt\in V}\sdsify{\cohsdt}.
\]
If we take into account Equation~\eqref{eq:from:filter:to:cohsds:finitary}, then we see that \(\bigcup_{V\in\filter[\cohsds]}\bigcap_{\cohsdt\in V}\sdsify{\cohsdt}=\findesirify(\filter[\cohsds])=\cohsds'\), so we need to prove that \(\cohsds'=\fincohsdsclosure(\cohsds)\).
Since we already know from the proof of~\ref{it:conjunctive:representation:finite:consistency} that \(\cohsds'\) is finitely coherent and that \(\cohsds\subseteq\cohsds'\), we find that \(\fincohsdsclosure(\cohsds)\subseteq\cohsds'\), because \(\fincohsdsclosure(\cohsds)\) is the smallest finitely coherent SDS that the finitely consistent~\(\cohsds\) is included in.
To prove the converse inclusion, namely that \(\cohsds'\subseteq\fincohsdsclosure(\cohsds)\), it suffices to consider any~\(\cohsds''\in\fincohsdss\) such that \(\cohsds\subseteq\cohsds''\), and prove that then~\(\cohsds'\subseteq\cohsds''\).
Now consider any~\(E\in\filter[\cohsds]\), then by Lemma~\ref{lem:conjunctive:representation:finite}\ref{it:conjunctive:representation:finite:filter} there are~\(\sosot,\sosot'\Subset\setsofthings\) such that \(E=\event{\sosot'}\) and \(\sosot\Subset\cohsds\) and \(\event{\sosot}\subseteq\event{\sosot'}\).
But then also \(\sosot\Subset\cohsds''\), and therefore \(\sosot'\Subset\cohsds''\), by Proposition~\ref{prop:it:does:not:matter:which}.
This implies that also \(E=\event{\sosot'}\in\finfilterise(\cohsds'')\), so we can conclude that \(\filter[\cohsds]\subseteq\finfilterise(\cohsds'')\).
But then indeed also \(\cohsds'=\findesirify(\filter[\cohsds])\subseteq\findesirify(\finfilterise(\cohsds''))=\cohsds''\), where the inclusion follows from Theorem~\ref{thm:the:finitary:representation:theorem}\ref{it:the:finitary:representation:theorem:filter:order:preserving} and the last equality follows from Theorem~\ref{thm:the:finitary:representation:theorem}\ref{it:the:finitary:representation:theorem:cohsds:embedding}.

\underline{\ref{it:conjunctive:representation:finite:coherence}}.
The proof is now straightforward, given~\ref{it:conjunctive:representation:finite:closure}, since finitely coherent means finitely consistent and closed with respect to the \(\fincohsdsclosure\)-operator.
\end{proof}

\begin{lemma}\label{lem:conjunctive:representation:finite}
Consider any SDS~\(\cohsds\subseteq\setsofthings\) such that \(\event{\sosot}\neq\emptyset\) for all~\(\sosot\Subset\cohsds\), then the following statements hold:
\begin{enumerate}[label=\upshape(\roman*),leftmargin=*]
\item\label{it:conjunctive:representation:finite:filterbase} the set \(\filterbase[\cohsds]\coloneqq\cset{\event{\sosot}}{\sosot\Subset\cohsds}\) is a filter base;
\item\label{it:conjunctive:representation:finite:filter} the smallest proper filter~\(\filter[\cohsds]\) that includes~\(\filterbase[\cohsds]\) is given by
\begin{equation*}
\filter[\cohsds]
\coloneqq\fineventUpset{\filterbase[\cohsds]}
=\cset[\big]{\event{\sosot'}}{\sosot'\Subset\setsofthings\text{ and }\group{\exists\sosot\Subset\cohsds}\event{\sosot}\subseteq\event{\sosot'}};
\end{equation*}
\item\label{it:conjunctive:representation:finite:liminf} \(\bigcup_{V\in\filter[\cohsds]}\bigcap_{\cohsdt\in V}\sdsify{\cohsdt}=\bigcup_{B\in\filterbase[\cohsds]}\bigcap_{\cohsdt\in B}\sdsify{\cohsdt}=\bigcup_{\sosot\Subset\cohsds}\bigcap_{\cohsdt\in\event{\sosot}}\sdsify{\cohsdt}\).
\end{enumerate}
\end{lemma}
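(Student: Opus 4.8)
The plan is to handle the three items in sequence, relying on the filter-base theory recalled in Section~\ref{sec:filters} and on the lattice identities for events already established in the proof of Theorem~\ref{thm:basic:sets:constitute:a:bounded:lattice}.

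For~\ref{it:conjunctive:representation:finite:filterbase}, I would check that \(\filterbase[\cohsds]\) satisfies the criterion for a filter base given around Equation~\eqref{eq:directed:downwards}, namely that it is a non-empty subset of \(\finevents\setminus\set{\emptyset}\) that is directed downwards. It is non-empty because \(\emptyset\Subset\cohsds\) gives \(\event{\emptyset}=\cohsdts\in\filterbase[\cohsds]\), and the standing hypothesis \(\event{\sosot}\neq\emptyset\) for all \(\sosot\Subset\cohsds\) puts \(\filterbase[\cohsds]\) inside \(\finevents\setminus\set{\emptyset}\). For downward directedness I would take any \(\event{\sosot[1]},\event{\sosot[2]}\in\filterbase[\cohsds]\) and apply Equation~\eqref{eq:intersection:of:possibles} to obtain \(\event{\sosot[1]}\cap\event{\sosot[2]}=\event{\sosot[1]\cup\sosot[2]}\); since \(\sosot[1]\cup\sosot[2]\Subset\cohsds\), this intersection is itself a member of \(\filterbase[\cohsds]\) lying below both \(\event{\sosot[1]}\) and \(\event{\sosot[2]}\), which is exactly what is required.

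For~\ref{it:conjunctive:representation:finite:filter}, once~\ref{it:conjunctive:representation:finite:filterbase} holds the filter-base discussion in Section~\ref{sec:filters} immediately yields that \(\fineventUpset{\filterbase[\cohsds]}\) is a proper filter having \(\filterbase[\cohsds]\) as a base, and it is the smallest proper filter containing \(\filterbase[\cohsds]\) because any filter containing \(\filterbase[\cohsds]\) must, by~\ref{axiom:lattice:filters:increasing}, contain every event above a member of \(\filterbase[\cohsds]\). The explicit set description then drops out by unfolding the definition of \(\fineventUpset{\bolleke}\) in Equation~\eqref{eq:up:operators} and using that every element of \(\finevents\) has the form \(\event{\sosot'}\) for some \(\sosot'\Subset\setsofthings\).

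Item~\ref{it:conjunctive:representation:finite:liminf} is the only mildly computational part. The second equality is a mere reindexing, since \(\filterbase[\cohsds]=\cset{\event{\sosot}}{\sosot\Subset\cohsds}\) means the two unions range over the same family of sets. For the first equality, the inclusion from right to left is immediate from \(\filterbase[\cohsds]\subseteq\filter[\cohsds]\). For the reverse inclusion I would use the monotonicity fact that a larger index set produces a smaller intersection: if \(B\subseteq V\) then \(\bigcap_{\cohsdt\in V}\sdsify{\cohsdt}\subseteq\bigcap_{\cohsdt\in B}\sdsify{\cohsdt}\). Given any \(V\in\filter[\cohsds]\), part~\ref{it:conjunctive:representation:finite:filter} hands me a \(B\in\filterbase[\cohsds]\) with \(B\subseteq V\), so the term indexed by \(V\) sits inside the term indexed by \(B\); taking unions over all \(V\) then finishes it. I expect this inclusion-reversal to be the one spot requiring genuine attention, because the direction of set inclusion flips when passing from the events \(V\) to the intersections \(\bigcap_{\cohsdt\in V}\sdsify{\cohsdt}\) they index; the remaining steps are bookkeeping that the earlier lattice and filter-base results render routine.
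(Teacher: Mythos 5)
Your proposal is correct and follows essentially the same route as the paper's own proof: non-emptiness and closure under finite intersections of \(\filterbase[\cohsds]\) via Equation~\eqref{eq:intersection:of:possibles}, the filter-base machinery of Section~\ref{sec:filters} for the explicit description of \(\filter[\cohsds]\), and the inclusion-reversal \(B\subseteq V\then\bigcap_{\cohsdt\in V}\sdsify{\cohsdt}\subseteq\bigcap_{\cohsdt\in B}\sdsify{\cohsdt}\) for the limit-inferior identity. No gaps.
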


\begin{proof}
\underline{\ref{it:conjunctive:representation:finite:filterbase}}.
Any non-empty collection of non-empty sets that is closed under finite intersections is in particular a filter base; see the discussion in Section~\ref{sec:filters}, and in particular condition~\eqref{eq:directed:downwards}.
First of all, observe that the set~\(\filterbase[\cohsds]\) is non-empty because \(\emptyset\Subset\cohsds\), and contains no empty set by assumption.
To show that it's closed under finite intersections, and therefore directed downwards, consider any~\(\sosot[1],\sosot[2]\Subset\cohsds\), then we infer from Equation~\eqref{eq:intersection:of:possibles} that \(\event{\sosot[1]}\cap\event{\sosot[2]}=\event{\sosot[1]\cup\sosot[2]}\), with still \(\sosot[1]\cup\sosot[2]\Subset\cohsds\), and therefore, indeed, \(\event{\sosot[1]}\cap\event{\sosot[2]}\in\filterbase[\cohsds]\).

\underline{\ref{it:conjunctive:representation:finite:filter}}.
Trivially from the discussion of filter bases in Section~\ref{sec:filters}.

\underline{\ref{it:conjunctive:representation:finite:liminf}}.
The second equality is trivial given the definition of~\(\filterbase[\cohsds]\).
For the first inequality, it's clearly enough to prove that \(\bigcup_{V\in\filter[\cohsds]}\bigcap_{\cohsdt\in V}\sdsify{\cohsdt}\subseteq\bigcup_{B\in\filterbase[\cohsds]}\bigcap_{\cohsdt\in B}\sdsify{\cohsdt}\).
But this is immediate, since \ref{it:conjunctive:representation:finite:filter} tells us that for any~\(V\in\filter[\cohsds]\) there's some~\(B\in\filterbase[\cohsds]\) such that \(B\subseteq V\) and therefore also \(\bigcap_{\cohsdt\in V}\sdsify{\cohsdt}\subseteq\bigcap_{\cohsdt\in B}\sdsify{\cohsdt}\).
\end{proof}

\begin{proof}[Proof of Theorem~\ref{thm:prime:filter:representation:finitely:coherent}]
Consider any finitely consistent~\(\cohsds\).
For necessity, assume that \(\cohsds\) is finitely coherent, then it follows from Theorem~\ref{thm:the:finitary:representation:theorem}\ref{it:the:finitary:representation:theorem:if:cohsds:then:filter} that \(\finfilterise(\cohsds)\) is a proper filter.
The prime filter representation result in Theorem~\ref{thm:prime:filter:representation} then guarantees that
\[
\finfilterise(\cohsds)
=\bigcap\cset[\big]{\primefilter\in\fineventprimefilters}{\finfilterise(\cohsds)\subseteq\primefilter},
\]
and applying the map~\(\findesirify\) to both sides of this equality leads to
\begin{align*}
\cohsds
=\findesirify(\finfilterise(\cohsds))
&=\findesirify\group[\bigg]{\bigcap\cset[\big]{\primefilter\in\fineventprimefilters}{\finfilterise(\cohsds)\subseteq\primefilter}}\\
&=\bigcap\cset[\big]{\findesirify\group{\primefilter}}{\primefilter\in\fineventprimefilters\text{ and }\finfilterise(\cohsds)\subseteq\primefilter},
\end{align*}
where the first equality follows from Theorem~\ref{thm:the:finitary:representation:theorem}\ref{it:the:finitary:representation:theorem:cohsds:embedding} and the last one from the fact that \(\findesirify\) is an order isomorphism between complete lattices of sets, by Theorem~\ref{thm:the:finitary:representation:theorem}, and because we have seen in the finitary counterpart to Proposition~\ref{prop:coherent:sds:constitute:a:complete:lattice} and in Proposition~\ref{prop:filters:constititute:a:complete:lattice} that intersection plays the role of infimum in these complete lattices.
This then leads to
\begin{align*}
\cohsds
&=\bigcap\cset[\big]{\findesirify\group{\primefilter}}{\primefilter\in\fineventprimefilters\text{ and }\finfilterise(\cohsds)\subseteq\primefilter}\\
&=\bigcap\cset[\big]{\findesirify\group{\primefilter}}{\primefilter\in\fineventprimefilters\text{ and }\cohsds\subseteq\findesirify(\primefilter)}
=\bigcap\cset[\big]{\cohsds'\in\completefincohsdss}{\cohsds\subseteq\cohsds'},
\end{align*}
where the second equality follows from Theorem~\ref{thm:the:finitary:representation:theorem}\ref{it:the:finitary:representation:theorem:cohsds:embedding}\&\ref{it:the:finitary:representation:theorem:filter:embedding}\&\ref{it:the:finitary:representation:theorem:cohsds:order:preserving}\&\ref{it:the:finitary:representation:theorem:filter:order:preserving} and the last equality follows from the correspondence between prime filters and complete finitely coherent SDSes in Theorem~\ref{thm:the:finitary:representation:theorem}\ref{it:the:finitary:representation:theorem:completeness}.

For sufficiency, assume that \(\cohsds=\bigcap\cset[\big]{\cohsds'\in\completefincohsdss}{\cohsds\subseteq\cohsds'}\).
That \(\cohsds\) is finitely consistent implies that \(\cohsds\neq\setsofthings\), and therefore also \(\cset[\big]{\cohsds'\in\completefincohsdss}{\cohsds\subseteq\cohsds'}\neq\emptyset\).
\(\cohsds\) is therefore finitely coherent as the intersection of a non-empty set of finitely coherent SDSes.
\end{proof}

\begin{proof}[Proof of Proposition~\ref{prop:conjunctive:is:complete}]
First off, it follows from Proposition~\ref{prop:cohsdt:to:cohsds} that \(\sdsify{\cohsdt}\) is (finitely) coherent.
Now, consider any~\(\sot[1],\sot[2]\subseteq\things\) and assume that \(\sot[1]\cup\sot[2]\in\sdsify{\cohsdt}\), or equivalently, that
\begin{equation*}
\emptyset\subset(\sot[1]\cup\sot[2])\cap\cohsdt=(\sot[1]\cap\cohsdt)\cup(\sot[2]\cap\cohsdt),
\end{equation*}
which clearly implies that, indeed, \(\sot[1]\in\sdsify{\cohsdt}\) or \(\sot[2]\in\sdsify{\cohsdt}\).
\end{proof}

\subsection{Proofs of results in Section~\ref{sec:representation}}\label{sec:proofs:representation}

\begin{proof}[Proof of Theorem~\ref{thm:the:representation:theorem}]
For statement~\ref{it:the:representation:theorem:if:cohsds:then:filter}, assume that \(\cohsds\) is a coherent SDS.

We must show that \(\finfilterise(\cohsds)\) is a proper principal filter.
To see that \(\finfilterise(\cohsds)\) is non-empty, observe that always \(\emptyset\Subset\cohsds\), and therefore \(\cohsdts=\event{\emptyset}\in\finfilterise(\cohsds)\).
To show that \(\filterise(\cohsds)\) is increasing [satisfies~\ref{axiom:lattice:filters:increasing}], consider any~\(\sosot[1],\sosot[2]\subseteq\setsofthings\) such that \(\eventwithindex{1}\in\filterise(\cohsds)\) and \(\eventwithindex{1}\subseteq\eventwithindex{2}\).
Then the infinitary version of Proposition~\ref{prop:it:does:not:matter:which}\ref{it:it:does:not:matter:which:equality} implies that~\(\sosot[1]\subseteq\cohsds\), and the infinitary version of Proposition~\ref{prop:it:does:not:matter:which}\ref{it:it:does:not:matter:which:inclusion} guarantees that then~\(\sosot[2]\subseteq\cohsds\), whence, indeed, also \(\eventwithindex{2}\in\filterise(\cohsds)\).
To show that \(\filterise(\cohsds)\) is closed under arbitrary non-empty intersections [satisfies~\ref{axiom:lattice:filters:arbitrary:intersections}], consider any non-empty family of~\(\sosot[i]\subseteq\cohsds\), \(i\in I\).
Then we infer from Equation~\eqref{eq:intersection:of:possibles} that \(\bigcap_{i\in I}\eventwithindex{i}=\event{{\bigcup_{i\in I}\sosot[i]}}\), and since \(\bigcup_{i\in I}\sosot[i]\subseteq\cohsds\), this tells us that, indeed, \(\bigcap_{i\in I}\eventwithindex{i}\in\filterise(\cohsds)\).
To show that \(\filterise(\cohsds)\) is proper, simply observe that it follows from Proposition~\ref{prop:filter:consistency} that \(\event{\sosot}\) is non-empty for all~\(\sosot\subseteq\cohsds\).
This ensures that \(\emptyset\notin\filterise(\cohsds)\), so \(\filterise(\cohsds)\neq\events\) [recall that \(\emptyset\in\events\) by Theorem~\ref{thm:basic:sets:constitute:a:bounded:lattice}].
We conclude that \(\filterise(\cohsds)\) is a proper principal filter.

For statement~\ref{it:the:representation:theorem:if:filter:then:cohsds}, assume that \(\filter\) is a proper principal filter.
We check that the relevant conditions are satisfied for \(\cohsds\) to be a coherent SDS.

\underline{\ref{axiom:desirable:sets:consistency}}.
Since \(\cohsdts[\emptyset]=\emptyset\) and \(\filter\) is proper, we find that \(\cohsdts[\emptyset]\notin\filter\) [recall again that \(\emptyset\in\events\) by Theorem~\ref{thm:basic:sets:constitute:a:bounded:lattice}].
Hence, indeed, \(\emptyset\notin\desirify(\filter)\).

\underline{\ref{axiom:desirable:sets:increasing}}.
Consider any~\(\sot[1],\sot[2]\in\setsofthings\) with~\(\sot[1]\subseteq\sot[2]\).
Assume that \(\sot[1]\in\desirify(\filter)\), so \(\cohsdts[{\sot[1]}]\in\filter\).
That \(\sot[1]\subseteq\sot[2]\) implies, via Lemma~\ref{lem:the:cohsdtify:operator}\ref{it:the:cohsdtify:operator:increasing}, that \(\cohsdts[{\sot[1]}]\subseteq\cohsdts[{\sot[2]}]\).
This allows us to infer that also \(\cohsdts[{\sot[2]}]\in\filter\), using~\ref{axiom:lattice:filters:increasing}.
Hence, indeed, \(\sot[2]\in\desirify(\filter)\).

\underline{\ref{axiom:desirable:sets:forbidden}}.
Consider any~\(\sot\in\setsofthings\), and assume that~\(\sot\in\desirify(\filter)\).
Infer from Lemma~\ref{lem:the:cohsdtify:operator}\ref{it:the:cohsdtify:operator:forbidden} that \(\cohsdts[\sot]=\cohsdts[{\sot\setminus\uglythings}]\) and therefore also \(\cohsdts[\sot]\in\filter\ifandonlyif\cohsdts[{\sot\setminus\uglythings}]\in\filter\).
Hence, indeed, \(\sot\setminus\uglythings\in\desirify(\filter)\).

\underline{\ref{axiom:desirable:sets:background}}.
We may assume without loss of generality that \(\beautifulthings\neq\emptyset\), because otherwise this requirement is trivially satisfied. Consider then any~\(\thing[{+}]\in\beautifulthings\), then we must show that \(\set{\thing[{+}]}\in\desirify(\filter)\), or in other words that \(\cohsdts[{\set{\thing[{+}]}}]\in\filter\).
Now simply observe that
\begin{equation*}
\cohsdts[{\set{\thing[{+}]}}]
=\cset{\cohsdt\in\cohsdts}{\set{\thing[{+}]}\cap\cohsdt\neq\emptyset}
=\cohsdts
\in\filter,
\end{equation*}
where the second equality holds because the coherence of~\(\cohsdt\) implies that \(\beautifulthings\subseteq\cohsdt\), and the final statement holds because the smallest principal filter~\(\eventprincipalfiltersbottom=\set{\cohsdts}\) is included in all principal filters.

\underline{\ref{axiom:desirable:sets:production}}.
Consider any non-empty subset~\(\sosot\) of~\(\desirify(\filter)\) and any choice~\(\thing[\selection]\in\initialclosure(\selection(\sosot))\) for all the~\(\selection\in\selections\), then we must prove that \(\sot[o]\coloneqq\cset{\thing[\selection]}{\selection\in\selections}\in\desirify(\filter)\), or in other words that \(\cohsdts[{\sot[o]}]\in\filter\).
It follows from the assumptions that \(\cohsdts[\sot]\in\filter\) for all~\(\sot\in\sosot\), and therefore also, using~\ref{axiom:lattice:filters:arbitrary:intersections} and Equation~\eqref{eq:definition:event}, that \(\event{\sosot}=\bigcap_{\sot\in\sosot}\cohsdts[\sot]\in\filter\), so it's enough to prove that \(\event{\sosot}\subseteq\cohsdts[{\sot[o]}]\), because we'll then also find, by~\ref{axiom:lattice:filters:increasing}, that \(\cohsdts[{\sot[o]}]\in\filter\), as required.
This is what we now set out to do.
Consider any~\(\cohsdt\in\event{\sosot}\), then it follows from Proposition~\ref{prop:the:two:systems} and Equation~\eqref{eq:definition:production:event} that there's some selection map~\(\selection[o]\in\selections\) such that \(\initialclosure(\selection[o](\sosot))\subseteq\cohsdt\), which guarantees that \(\thing[{\selection[o]}]\in\cohsdt\), and therefore \(\cohsdt\cap\sot[o]\neq\emptyset\).
Hence, indeed, \(\cohsdt\in\cohsdts[{\sot[o]}]\).

For statement~\ref{it:the:representation:theorem:cohsds:embedding}, let \(\filter'\coloneqq\filterise(\cohsds)\) and let \(\cohsds'\coloneqq\desirify(\filter')\), then we have to prove that \(\cohsds=\cohsds'\).
We start with the following chain of equivalences:
\begin{equation}\label{eq:the:representation:theorem:cohsds:embedding}
\sot\in\cohsds'
\ifandonlyif\cohsdts[\sot]\in\filter'
\ifandonlyif\cohsdts[\sot]\in\filterise(\cohsds)
\ifandonlyif\group{\exists\sosot\subseteq\cohsds}
\group[\big]{\event{\sosot}=\event{\set{\sot}}}
\text{ for all~\(\sot\in\setsofthings\),}
\end{equation}
where the third equivalence follows from the fact that, by Equation~\eqref{eq:definition:event}, \(\event{\set{\sot}}=\cohsdts[\sot]\).
To prove that \(\cohsds\subseteq\cohsds'\), consider any~\(\sot\in\cohsds\), so \(\set{\sot}\subseteq\cohsds\).
Using Equation~\eqref{eq:the:representation:theorem:cohsds:embedding} with \(\sosot\coloneqq\set{\sot}\), this implies that, indeed, \(\sot\in\cohsds'\).
For the converse inclusion, consider any~\(\sot\in\cohsds'\), so there's, by Equation~\eqref{eq:the:representation:theorem:cohsds:embedding}, some~\(\sosot\subseteq\cohsds\) for which \(\event{\sosot}=\event{\set{\sot}}\).
The infinitary version of Proposition~\ref{prop:it:does:not:matter:which}\ref{it:it:does:not:matter:which:equality} then guarantees that also \(\set{\sot}\subseteq\cohsds\), or in other words, that \(\sot\in\cohsds\).

For statement~\ref{it:the:representation:theorem:filter:embedding}, let~\(\cohsds'\coloneqq\desirify(\filter)\) and let \(\filter'\coloneqq\filterise(\cohsds')\), then we have to prove that \(\filter'=\filter\), or in other words, if we consider any~\(\sosot\subseteq\setsofthings\), that \(\event{\sosot}\in\filter'\ifandonlyif\event{\sosot}\in\filter\).
Now, consider the following chain of equivalences, valid for any~\(\sosot\subseteq\setsofthings\):
\begin{align*}
\event{\sosot}\in\filter'
&\ifandonlyif\event{\sosot}\in\filterise(\cohsds')
\ifandonlyif\group{\exists\sosot'\subseteq\cohsds'}
\event{\sosot}=\event{\sosot'}\\
&\ifandonlyif\group{\exists\sosot'\subseteq\setsofthings}
\group[\big]{\sosot'\subseteq\desirify(\filter)\text{ and }\event{\sosot}=\event{\sosot'}}\\
&\ifandonlyif\group{\exists\sosot'\subseteq\setsofthings}
\group[\big]{\event{\sosot'}\in\filter\text{ and }\event{\sosot}=\event{\sosot'}}\\
&\ifandonlyif\event{\sosot}\in\filter,
\end{align*}
where the second equivalence follows from the definition of~\(\filterise(\cohsds')\), and the fourth equivalence follows from Lemma~\ref{lem:characterisation:desirify}.

Statements~\ref{it:the:representation:theorem:cohsds:order:preserving} and~\ref{it:the:representation:theorem:filter:order:preserving} follow readily from the definitions of the maps~\(\filterise\) and~\(\desirify\).

Before we turn to the proofs of statements~\ref{it:the:representation:theorem:filterise:bounds} and~\ref{it:the:representation:theorem:desirify:bounds}, it will be helpful to recall from Propositions~\ref{prop:coherent:sds:constitute:a:complete:lattice} and~\ref{prop:the:smallest:cohsds} that \(\cohsdsbottom=\cset{\sot\in\setsofthings}{\sot\cap\beautifulthings\neq\emptyset}\) and \(\cohsdstop=\setsofthings\), together with \(\eventprincipalfiltersbottom=\set{\cohsdts}\) and \(\eventprincipalfilterstop=\events\).

For statement~\ref{it:the:representation:theorem:filterise:bounds}, consider any~\(\sosot\subseteq\cohsdsbottom\) and any~\(\sot\in\sosot\),\footnote{The case that~\(\sosot=\emptyset\) is also covered, because \(\event{\emptyset}=\cohsdts\) as well; see the discussion after Equation~\eqref{eq:definition:event}.} then \(\sot\cap\beautifulthings\neq\emptyset\), and therefore, by Lemma~\ref{lem:the:cohsdtify:operator}\ref{it:the:cohsdtify:operator:background}, also \(\cohsdts[\sot]=\cohsdts\).
Equation~\eqref{eq:definition:event} then guarantees that \(\event{\sosot}=\cohsdts\).
But then indeed, \(\filterise(\cohsdsbottom)=\cset{\event{\sosot}}{\sosot\subseteq\cohsdsbottom}=\set{\cohsdts}=\eventprincipalfiltersbottom\).
For the tops, we find that
\begin{equation*}
\filterise(\cohsdstop)
=\filterise(\setsofthings)
=\cset{\event{\sosot}}{\sosot\subseteq\setsofthings}
=\events
=\eventprincipalfilterstop.
\end{equation*}

For statement~\ref{it:the:representation:theorem:desirify:bounds}, consider the following chain of equivalences, for any~\(\sot\in\setsofthings\):
\begin{equation*}
\sot\in\desirify(\eventprincipalfiltersbottom)
\ifandonlyif\sot\in\desirify(\set{\cohsdts})
\ifandonlyif\cohsdts[\sot]=\cohsdts
\ifandonlyif\sot\cap\beautifulthings\neq\emptyset
\ifandonlyif\sot\in\cohsdsbottom,
\end{equation*}
where the third equivalence follows from Lemma~\ref{lem:the:cohsdtify:operator}\ref{it:the:cohsdtify:operator:background}.
For the tops, we find that
\begin{equation*}
\desirify(\eventprincipalfilterstop)
=\desirify(\events)
=\cset{\sot\in\setsofthings}{\event{\set{\sot}}\in\events}
=\setsofthings
=\cohsdstop.
\qedhere
\end{equation*}
\end{proof}

\begin{lemma}\label{lem:characterisation:desirify}
Let \(\filter\) be any proper principal filter on~\(\structure{\events,\subseteq}\) and let~\(\emptyset\neq\sosot\subseteq\setsofthings\) be any non-empty SDS, then \(\sosot\subseteq\desirify(\filter)\ifandonlyif\event{\sosot}\in\filter\).
\end{lemma}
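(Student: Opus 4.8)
The plan is to mirror, almost verbatim, the argument used for the finitary companion result Lemma~\ref{lem:characterisation:desirify:finitary}, but to replace the appeal to the finite-meet closure~\ref{axiom:lattice:filters:intersections} by the arbitrary-meet closure~\ref{axiom:lattice:filters:arbitrary:intersections} that \emph{principal} filters enjoy on the complete lattice~\(\structure{\events,\subseteq}\). First I would unfold the definition of~\(\desirify\): for any non-empty~\(\sosot\subseteq\setsofthings\),
\[
\sosot\subseteq\desirify(\filter)
\ifandonlyif\group{\forall\sot\in\sosot}\sot\in\desirify(\filter)
\ifandonlyif\group{\forall\sot\in\sosot}\cohsdts[\sot]\in\filter.
\]
The bridge between this pointwise condition and the single membership~\(\event{\sosot}\in\filter\) is Equation~\eqref{eq:definition:event}, which records that \(\event{\sosot}=\bigcap_{\sot\in\sosot}\cohsdts[\sot]\), together with the observation that each \(\cohsdts[\sot]=\event{\set{\sot}}\) is an element of~\(\events\), so that all the objects involved genuinely live in the lattice we are filtering on.

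For the direction from left to right, I would assume \(\cohsdts[\sot]\in\filter\) for all~\(\sot\in\sosot\). Since \(\filter\) is a principal filter and \(\sosot\neq\emptyset\), the non-empty family \(\cset{\cohsdts[\sot]}{\sot\in\sosot}\subseteq\filter\) has its infimum in~\(\filter\) by~\ref{axiom:lattice:filters:arbitrary:intersections}; and because Theorem~\ref{thm:basic:sets:constitute:a:bounded:lattice} tells us that meet in~\(\structure{\events,\subseteq}\) is set intersection, that infimum is exactly \(\bigcap_{\sot\in\sosot}\cohsdts[\sot]=\event{\sosot}\). Hence \(\event{\sosot}\in\filter\). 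For the converse, I would assume \(\event{\sosot}\in\filter\) and fix any~\(\sot\in\sosot\); then \(\event{\sosot}=\bigcap_{\sot'\in\sosot}\cohsdts[{\sot'}]\subseteq\cohsdts[\sot]\), so the increasing property~\ref{axiom:lattice:filters:increasing} yields \(\cohsdts[\sot]\in\filter\), i.e.\ \(\sot\in\desirify(\filter)\), whence \(\sosot\subseteq\desirify(\filter)\).

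There is essentially no hard part here; the whole content is in invoking the right closure axiom. The only point that genuinely needs care—and the reason the statement is phrased for \emph{principal} filters and for \emph{non-empty}~\(\sosot\)—is the left-to-right step: an ordinary (merely finitely-meet-closed) proper filter would not suffice when \(\sosot\) is infinite, and the arbitrary-meet axiom~\ref{axiom:lattice:filters:arbitrary:intersections} is stated only for non-empty subsets, which is exactly the hypothesis we are granted. This is precisely the structural difference between this infinitary lemma and its finitary sibling, where finite~\(\sosot\) lets one get by with the finite-intersection axiom~\ref{axiom:lattice:filters:intersections} alone.
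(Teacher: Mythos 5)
Your proof is correct and follows essentially the same route as the paper's: unfold the definition of \(\desirify\), then combine Equation~\eqref{eq:definition:event} with the increasing property~\ref{axiom:lattice:filters:increasing} for one direction and the arbitrary-meet closure~\ref{axiom:lattice:filters:arbitrary:intersections} of principal filters for the other. Your added remarks on why principality and non-emptiness are exactly what the infinitary intersection requires simply make explicit what the paper leaves implicit.
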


\begin{proof}
Consider the following chain of equivalences:
\begin{align*}
\sosot\subseteq\desirify(\filter)
\ifandonlyif\group{\forall\sot\in\sosot}\sot\in\desirify(\filter)
\ifandonlyif\group{\forall\sot\in\sosot}\cohsdts[\sot]\in\filter
\ifandonlyif\event{\sosot}\in\filter,
\end{align*}
where the last equivalence follows from Equation~\eqref{eq:definition:event} and the fact that \(\filter\) satisfies~\ref{axiom:lattice:filters:increasing} and~\ref{axiom:lattice:filters:arbitrary:intersections} [observe, by the way, that \(\event{\emptyset}=\cohsdts\in\filter\) when \(\filter\) is a proper principal filter].
\end{proof}

\begin{proof}[Proof of Theorem~\ref{thm:conjunctive:representation}]
\underline{\ref{it:conjunctive:representation:consistency}}.
The equality between the two sets follows from Equation~\eqref{eq:connection:with:binary:model}.

For necessity, suppose that \(\cohsds\) is consistent.
That means that there's some coherent~\(\cohsds[o]\in\cohsdss\) such that \(\cohsds\subseteq\cohsds[o]\).
But it then follows from Proposition~\ref{prop:filter:consistency} that \(\event{\sosot}\neq\emptyset\) for all~\(\sosot\subseteq\cohsds[o]\), and therefore in particular also for~\(\sosot=\cohsds\).

For sufficiency, assume that \(\event{\cohsds}\neq\emptyset\).
Denote by \(\filter[\cohsds]\coloneqq\eventupset{\event{\cohsds}}\) the (smallest) proper principal filter that it generates.
If we now let \(\cohsds'\coloneqq\desirify(\filter[\cohsds])\), then it follows from Theorem~\ref{thm:the:representation:theorem}\ref{it:the:representation:theorem:if:filter:then:cohsds} that \(\cohsds'\) is a coherent SDS: \(\cohsds'\in\cohsdss\).
We're therefore done if we can prove that \(\cohsds\subseteq\cohsds'\).
To this end, consider any~\(\sot\in\cohsds\), then \(\set{\sot}\subseteq\cohsds\) and therefore also \(\event{\cohsds}\subseteq\event{\set{\sot}}\).
But then \(\cohsdts[\sot]=\event{\set{\sot}}\in\filter[\cohsds]\), and therefore indeed also \(\sot\in\desirify(\filter[\cohsds])=\cohsds'\).

\underline{\ref{it:conjunctive:representation:closure}}.
If \(\cohsds\) is not consistent then it follows from~\ref{it:conjunctive:representation:consistency} that \(\event{\cohsds}=\emptyset\), and therefore also from Equation~\eqref{eq:connection:with:binary:model} that \(\bigcap_{\cohsdt\in\cohsdts\colon\cohsds\subseteq\sdsify{\cohsdt}}\sdsify{\cohsdt}=\bigcap_{\cohsdt\in\event{\cohsds}}\sdsify{\cohsdt}=\setsofthings\), making sure that both the left-hand side and the right-hand side in~\ref{it:conjunctive:representation:closure} are equal to~\(\setsofthings\).

We may therefore assume that \(\cohsds\) is consistent, and use the argumentation and notations above in the proof of~\ref{it:conjunctive:representation:consistency}, starting with the proper principal filter \(\filter[\cohsds]\coloneqq\eventupset{\event{\cohsds}}\)  and the corresponding coherent SDS~\(\cohsds'\coloneqq\desirify(\filter[\cohsds])\) for which we know that \(\cohsds\subseteq\cohsds'\), and, by Theorem~\ref{thm:the:representation:theorem}\ref{it:the:representation:theorem:filter:embedding}, that \(\filterise(\cohsds')=\filter[\cohsds]\).
Applying Equation~\eqref{eq:from:cohsds:to:filter} for the coherent~\(\cohsds'\) leads to
\[
\event{\cohsds'}
=\bigcap\filterise(\cohsds')
=\bigcap\filter[\cohsds]
=\event{\cohsds},
\]
so Equation~\eqref{eq:from:filter:to:cohsds} tells us that \(\cohsds'=\bigcap_{\cohsdt\in\cohsdts\colon\cohsds\subseteq\sdsify{\cohsdt}}\sdsify{\cohsdt}\).
We therefore need to prove that \(\cohsdsclosure(\cohsds)=\cohsds'\).
But since we already know that \(\cohsds\subseteq\cohsds'\), it's enough to consider any~\(\cohsds''\in\cohsdss\) such that \(\cohsds\subseteq\cohsds''\), and to prove that then~\(\cohsds'\subseteq\cohsds''\).
Now \(\cohsds\subseteq\cohsds''\) implies that \(\event{\cohsds''}\subseteq\event{\cohsds}\) and therefore that \(\filter[\cohsds]=\eventupset{\event{\cohsds}}\subseteq\eventupset{\event{\cohsds''}}=\filterise(\cohsds'')\), where the last equality follows from applying Equation~\eqref{eq:from:cohsds:to:filter} for the coherent~\(\cohsds''\).
But then indeed also, by Theorem~\ref{thm:the:representation:theorem}\ref{it:the:representation:theorem:filter:order:preserving}\&\ref{it:the:representation:theorem:cohsds:embedding}, \(\cohsds'=\desirify(\filter[\cohsds])\subseteq\desirify(\filterise(\cohsds''))=\cohsds''\).

\underline{\ref{it:conjunctive:representation:coherence}}.
The proof is now straightforward, given~\ref{it:conjunctive:representation:closure}, since coherent means consistent and closed with respect to the \(\cohsdsclosure\)-operator.
\end{proof}

\subsection{Proofs of results in Section~\ref{sec:finitary:models}}\label{sec:proofs:finitary:models}

\begin{proof}[Proof of Proposition~\ref{prop:finite:coherence:of:finitary:part}]
It's enough to consider any finitely coherent~\(\cohsds\in\fincohsdss\), and to check that \(\fintypart{\cohsds}\) satisfies the relevant axioms~\ref{axiom:desirable:sets:consistency}--\ref{axiom:desirable:sets:background} and~\ref{axiom:desirable:sets:finitary:production}.

\underline{\ref{axiom:desirable:sets:consistency}}.
It follows from the finite coherence of~\(\cohsds\) that \(\emptyset\notin\cohsds\) [use~\ref{axiom:desirable:sets:consistency}], and therefore that \(\emptyset\notin\cohsds\cap\finitesetsofthings\), whence also \(\emptyset\notin\sotUpset{\cohsds\cap\finitesetsofthings}=\fintypart{\cohsds}\).

\underline{\ref{axiom:desirable:sets:increasing}}.
Consider that
\begin{equation*}
\sotUpset{\fintypart{\cohsds}}
=\sotUpset[\big]{\sotUpset{\cohsds\cap\finitesetsofthings}}
=\sotUpset{\cohsds\cap\finitesetsofthings}
=\fintypart{\cohsds},
\end{equation*}
where the second equality follows because \(\sotUpset{\bolleke}\) is a closure operator [see the comments in the beginning of Section~\ref{sec:representation:lattices}].

\underline{\ref{axiom:desirable:sets:forbidden}}.
Consider any~\(\sot\in\fintypart{\cohsds}\), then there's some~\(\finsot\in\cohsds\cap\finitesetsofthings\) such that \(\finsot\Subset\sot\).
But then also \(\finsot\setminus\uglythings\subseteq\sot\setminus\uglythings\), and since \(\finsot\setminus\uglythings\in\cohsds\cap\finitesetsofthings\) by the finite coherence of~\(\cohsds\) [use~\ref{axiom:desirable:sets:forbidden}], we find that, indeed, \(\sot\setminus\uglythings\in\sotUpset{\cohsds\cap\finitesetsofthings}=\fintypart{\cohsds}\).

\underline{\ref{axiom:desirable:sets:background}}.
Consider any~\(\thing[+]\in\beautifulthings\).
Since \(\set{\thing[+]}\in\cohsds\) by the finite coherence of~\(\cohsds\) [use~\ref{axiom:desirable:sets:background}], and therefore also \(\set{\thing[+]}\in\cohsds\cap\finitesetsofthings\), we trivially find that \(\set{\thing[+]}\in\sotUpset{\cohsds\cap\finitesetsofthings}=\fintypart{\cohsds}\).

\underline{\ref{axiom:desirable:sets:finitary:production}}.
Fix any non-empty~\(\sosot\Subset\fintypart{\cohsds}\), and for all corresponding selections~\(\selection\in\selections\), some~\(\thing[\selection]\in\initialclosure(\selection(\sosot))\).
Then we must prove that \(\cset{\thing[\selection]}{\selection\in\selections[\sosot]}\in\fintypart{\cohsds}\).

For any~\(\sot\in\sosot\), there's some~\(\finsot\in\cohsds\) such that \(\finsot\Subset\sot\).
Let, with obvious notations, \(\finite{\sosot}\coloneqq\cset{\finsot}{\sot\in\sosot}\).
Then \(\finite{\sosot}\Subset\cohsds\cap\finitesetsofthings\), so \(\finite{\sosot}\) is a finite set of finite subsets of~\(\things\), which in turn implies that the set of selections~\(\selections[{\finite{\sosot}}]\) is finite as well.
For any~\(\finite{\selection}\in\selections[\finite{\sosot}]\), we consider the corresponding selection \(\selection_{\finite{\selection}}\in\selections\), defined by \(\selection_{\finite{\selection}}(\sot)\coloneqq\finite{\selection}(\finsot)\in\finsot\Subset\sot\), for all \(\sot\in\sosot\), again with obvious notations, implying that \(\selection_{\finite{\selection}}(\sosot)=\finite{\selection}(\finite{\sosot})\).
If we now let \(\thing[\finite{\selection}]\coloneqq\thing[{\selection_{\finite{\selection}}}]\), then it follows from the argumentation above that \(\thing[\finite{\selection}]\in\initialclosure(\finite{\selection}(\finite{\sosot}))\), simply because by assumption \(\thing[\selection_{\finite{\selection}}]\in\initialclosure(\selection_{\finite{\selection}}(\sosot))\).
Clearly,
\begin{equation}\label{eq:finite:coherence:of:finitary:part}
\cset{\thing[\finite{\selection}]}{\finite{\selection}\in\selections[{\finite{\sosot}}]}
=\cset{\thing[\selection_{\finite{\selection}}]}{\finite{\selection}\in\selections[{\finite{\sosot}}]}
\subseteq\cset{\thing[\selection]}{\selection\in\selections[\sosot]}.
\end{equation}
We infer from the finite coherence of~\(\cohsds\) that \(\cset{\thing[\finite{\selection}]}{\finite{\selection}\in\selections[{\finite{\sosot}}]}\in\cohsds\cap\finitesetsofthings\) [use~\ref{axiom:desirable:sets:finitary:production} and the finiteness of~\(\sofinsot\) and~\(\selections[{\finite{\sosot}}]\)].
But then Equation~\eqref{eq:finite:coherence:of:finitary:part} tells us that, indeed, \(\cset{\thing[\selection]}{\selection\in\selections[\sosot]}\in\fintypart{\cohsds}\).
\end{proof}

\begin{proof}[Proof of Proposition~\ref{prop:conjunctive:is:finitary}]
It follows from Proposition~\ref{prop:cohsdt:to:cohsds} that \(\sdsify{\cohsdt}\) is (finitely) coherent.
It then follows from~\ref{axiom:desirable:sets:increasing} that \(\fintypart{\sdsify{\cohsdt}}\subseteq\sdsify{\cohsdt}\).
For the converse inclusion, consider any~\(\sot\in\sdsify{\cohsdt}\), then \(\sot\cap\cohsdt\neq\emptyset\).
Consider, therefore, any~\(\thing\in\sot\cap\cohsdt\), then \(\set{t}\subseteq\sot\) and \(\set{t}\in\sdsify{\cohsdt}\cap\finitesetsofthings\), so \(\sot\in\fintypart{\sdsify{\cohsdt}}\), and therefore, indeed, \(\sdsify{\cohsdt}\subseteq\fintypart{\sdsify{\cohsdt}}\).
\end{proof}

\begin{proof}[Proof of Proposition~\ref{prop:complete:finitary:is:conjunctive}]
We give the proof for the second statement involving finite coherence.
The proof for the first statement involving coherence is similar, but slightly simpler.
That \(\fintypart{\sdsify{\cohsdt}}=\sdtify{\cohsds}\) follows from Proposition~\ref{prop:conjunctive:is:finitary}.
So, assume that the closure operator~\(\initialclosure\) is finitary, and consider any~\(\cohsds\in\completefincohsdss\).
It suffices to prove that \(\cohsds\cap\finitesetsofthings=\sdsify{\cohsdt}\cap\finitesetsofthings\) for some coherent \(\cohsdt\in\cohsdts\).

Let \(\cohsdt\coloneqq\sdtify{\cohsds}\), then \(\cohsdt\) is coherent and \(\sdsify{\cohsdt}\subseteq\cohsds\), by Proposition~\ref{prop:cohsds:to:cohsdt}, so it only remains to prove that \(\cohsds\cap\finitesetsofthings\subseteq\sdsify{\cohsdt}\cap\finitesetsofthings\).
So, consider any~\(\sot\in\cohsds\cap\finitesetsofthings\), then there are two possibilities.
If \(\sot\) is a singleton~\(\set{\thing}\), then necessarily \(\thing\in\sdtify{\cohsds}=\cohsdt\), and therefore \(\sot=\set{\thing}\in\sdsify{\cohsdt}\).
Otherwise, it follows from the completeness of~\(\cohsds\) that there's some strict subset~\(\sot'\) of~\(\sot\) such that \(\sot'\in\cohsds\), and therefore also \(\sot'\in\cohsds\cap\finitesetsofthings\).
Now replace \(\sot\) with \(\sot'\) in the argumentation above, and keep the recursion going until we do reach a singleton~\(\set{\thing}\) for which \(\thing\in\sdtify{\cohsds}=\cohsdt\), or equivalently \(\set{\thing}\in\cohsds\), which must happen after a finite number of steps, because \(\sot\) is finite.
But then necessarily also \(\thing\in\sot\), so \(\sot\cap\cohsdt\neq\emptyset\), and therefore indeed \(\sot\in\sdsify{\cohsdt}\cap\finitesetsofthings\).
\end{proof}

\begin{proof}[Proof of Theorem~\ref{thm:conjunctive:representation:finitary}]
The proof of sufficiency is fairly straightforward.
Assume that \(\cohsds=\bigcap\cset{\sdsify{\cohsdt}}{\cohsdt\in\cohsdts\text{ and }\cohsds\subseteq\sdsify{\cohsdt}}\).
Recall that it follows from Proposition~\ref{prop:cohsdt:to:cohsds} that any element of the set~\(\cset{\sdsify{\cohsdt}}{\cohsdt\in\cohsdts\text{ and }\cohsds\subseteq\sdsify{\cohsdt}}\neq\emptyset\) is finitely coherent, and therefore also strictly included in~\(\setsofthings\).
Since the assumed finite consistency of \(\cohsds\) implies that \(\cohsds\neq\setsofthings\), this guarantees that \(\cset{\sdsify{\cohsdt}}{\cohsdt\in\cohsdts\text{ and }\cohsds\subseteq\sdsify{\cohsdt}}\neq\emptyset\).
Hence, \(\cohsds\) is finitely coherent, as an intersection of a non-empty collection of finitely coherent SDSes.

We now turn to the proof of necessity, so assume that the SDS~\(\cohsds\) is finitary and finitely coherent.
Our proof will rely on the Prime Filter Representation Theorem for finitely coherent SDSes [Theorem~\ref{thm:prime:filter:representation:finitely:coherent}], which tells us that \(\cohsds=\bigcap\cset{\cohsds'\in\completefincohsdss}{\cohsds\subseteq\cohsds'}\).
Consider any~\(\cohsds'\in\completefincohsdss\) such that \(\cohsds\subseteq\cohsds'\) [it follows from the assumptions that these is always at least one such~\(\cohsds'\)].
Then also \(\finpart{\cohsds}\subseteq\finpart{\cohsds'}\), and therefore
\[
\cohsds
=\fintypart{\cohsds}
=\sotUpset{\finpart{\cohsds}}
\subseteq\sotUpset{\finpart{\cohsds'}}
=\fintypart{\cohsds'}
=\sdsify{\sdtify{\cohsds'}}
\subseteq\cohsds',
\]
where the first equality follows from the assumed finitary character of~\(\cohsds\), the last equality follows from the finitary character of the finitely coherent conjunctive~\(\sdsify{\sdtify{\cohsds'}}\) [see Proposition~\ref{prop:complete:finitary:is:conjunctive}], and the last inclusion follows from Proposition~\ref{prop:cohsds:to:cohsdt}.
We've thus proved that
\begin{equation*}
\cohsds\subseteq\cohsds'
\then
\cohsds\subseteq\sdsify{\sdtify{\cohsds'}}\subseteq\cohsds',
\text{ for all~\(\cohsds'\in\completefincohsdss\)}.
\end{equation*}
But, since \(\sdtify{\cohsds'}\in\cohsdts\) for all~\(\cohsds'\in\completefincohsdss\), by Proposition~\ref{prop:cohsds:to:cohsdt}, this implies that
\begin{equation*}
\cset{\sdsify{\sdtify{\cohsds'}}}{\cohsds'\in\completefincohsdss\text{ and }\cohsds\subseteq\cohsds'}
\subseteq\cset{\sdsify{\cohsdt}}{\cohsdt\in\cohsdts\text{ and }\cohsds\subseteq\sdsify{\cohsdt}},
\end{equation*}
and therefore
\begin{align*}
\cohsds
\subseteq\bigcap\cset{\sdsify{\cohsdt}}{\cohsdt\in\cohsdts\text{ and }\cohsds\subseteq\sdsify{\cohsdt}}
&\subseteq\bigcap\cset{\sdsify{\sdtify{\cohsds'}}}{\cohsds'\in\completefincohsdss\text{ and }\cohsds\subseteq\cohsds'}\\
&\subseteq\bigcap\cset{\cohsds'\in\completefincohsdss}{\cohsds\subseteq\cohsds'}
=\cohsds,
\end{align*}
where the third inclusion follows again from Proposition~\ref{prop:cohsds:to:cohsdt}, and the equality follows from Theorem~\ref{thm:prime:filter:representation:finitely:coherent}.
\end{proof}

\begin{proof}[Proof of Corollary~\ref{cor:finitary:and:coherent}]
For~\ref{it:finitary:and:coherent:finitely:coherent}, recall that all the conjunctive models have the form~\(\sdsify{\cohsdt}\) for \(\cohsdt\in\cohsdts\) by Proposition~\ref{prop:conjunctive:and:coherent:sds}, and are therefore coherent by Proposition~\ref{prop:cohsdt:to:cohsds}.
So is, therefore, any intersection of them.
Now recall Theorem~\ref{thm:conjunctive:representation:finitary}.

For~\ref{it:finitary:and:coherent:finitary:is:coherent}, assume that \(\cohsds\) is (finitely) coherent, then Proposition~\ref{prop:finite:coherence:of:finitary:part} guarantees that \(\fintypart{\cohsds}\) is finitely coherent.
But since \(\fintypart{\cohsds}\) is finitary, we infer from the first statement that its finite coherence implies its coherence.
\end{proof}

\subsection{Proofs of results in Section~\ref{sec:representation:finitary:finite}}\label{sec:proofs:representation:finitary:finite}

\begin{proof}[Proof of Theorem~\ref{thm:the:finitary:representation:theorem:finite}]
The proof is very similar to the proof of Theorem~\ref{thm:the:finitary:representation:theorem}, but we nevertheless include it here for the sake of completeness.

For statement~\ref{it:the:finitary:representation:theorem:finite:if:cohsds:then:filter}, assume that \(\cohsdfs\) is a finitely coherent SDFS.
We must show that \(\finfinfilterise(\cohsdfs)\) is a proper filter.
To see that \(\finfinfilterise(\cohsdfs)\) is non-empty, observe that always \(\emptyset\Subset\cohsdfs\), and therefore \(\cohsdts=\event{\emptyset}\in\finfinfilterise(\cohsdfs)\).
To show that \(\finfinfilterise(\cohsdfs)\) is increasing [satisfies~\ref{axiom:lattice:filters:increasing}], consider any~\(\sofinsot[1],\sofinsot[2]\Subset\finitesetsofthings\) such that \(\fineventwithindex{1}\in\finfinfilterise(\cohsdfs)\) and \(\fineventwithindex{1}\subseteq\fineventwithindex{2}\).
Then Proposition~\ref{prop:it:does:not:matter:which}\ref{it:it:does:not:matter:which:equality:finite} implies that~\(\sofinsot[1]\Subset\cohsdfs\), and Proposition~\ref{prop:it:does:not:matter:which}\ref{it:it:does:not:matter:which:inclusion:finite} then guarantees that also~\(\sofinsot[2]\Subset\cohsdfs\), whence, indeed, also \(\fineventwithindex{2}\in\finfinfilterise(\cohsdfs)\).
To show that \(\finfinfilterise(\cohsds)\) is closed under finite intersections [satisfies~\ref{axiom:lattice:filters:intersections}], consider arbitrary~\(\sofinsot[1],\sofinsot[2]\Subset\cohsdfs\).
Then we infer from Equation~\eqref{eq:intersection:of:possibles} that \(\fineventwithindex{1}\cap\fineventwithindex{2}=\event{{\sofinsot[1]\cup\sofinsot[2]}}\), and since \(\sofinsot[1]\cup\sofinsot[2]\Subset\cohsdfs\), this tells us that, indeed, \(\fineventwithindex{1}\cap\fineventwithindex{2}\in\finfinfilterise(\cohsdfs)\).
To show that the filter~\(\finfinfilterise(\cohsdfs)\) is proper, simply observe that it follows from Proposition~\ref{prop:filter:consistency} that \(\event{\sofinsot}\neq\emptyset\) for all~\(\sofinsot\Subset\cohsdfs\).
This ensures that \(\emptyset\notin\finfinfilterise(\cohsdfs)\), so \(\finfilterise(\cohsdfs)\neq\finfinevents\) [recall that \(\emptyset\in\finfinevents\) by Theorem~\ref{thm:basic:sets:constitute:a:bounded:lattice}].
We conclude that \(\finfilterise(\cohsdfs)\) is a proper filter.

For statement~\ref{it:the:finitary:representation:theorem:finite:if:filter:then:cohsds}, assume that \(\filter\) is a proper filter.
We check that the relevant conditions are satisfied for \(\cohsdfs\) to be a finitely coherent SDFS.

\underline{\ref{axiom:desirable:finite:sets:consistency}}.
Since \(\cohsdts[\emptyset]=\emptyset\) and \(\filter\) is proper, we find that \(\cohsdts[\emptyset]\notin\filter\) [recall again that \(\emptyset\in\finfinevents\) by Theorem~\ref{thm:basic:sets:constitute:a:bounded:lattice}].
Hence, indeed, \(\emptyset\notin\finfindesirify(\filter)\).

\underline{\ref{axiom:desirable:finite:sets:increasing}}.
Consider any~\(\finsot[1],\finsot[2]\in\finitesetsofthings\) with~\(\finsot[1]\subseteq\finsot[2]\).
Assume that \(\finsot[1]\in\finfindesirify(\filter)\), so \(\cohsdts[{\finsot[1]}]\in\filter\).
That \(\finsot[1]\subseteq\finsot[2]\) implies, via Lemma~\ref{lem:the:cohsdtify:operator}\ref{it:the:cohsdtify:operator:increasing}, that \(\cohsdts[{\finsot[1]}]\subseteq\cohsdts[{\finsot[2]}]\).
This allows us to infer that also \(\cohsdts[{\finsot[2]}]\in\filter\), using~\ref{axiom:lattice:filters:increasing}.
Hence, indeed, \(\finsot[2]\in\finfindesirify(\filter)\).

\underline{\ref{axiom:desirable:finite:sets:forbidden}}.
Consider any~\(\finsot\in\finitesetsofthings\), and assume that~\(\finsot\in\finfindesirify(\filter)\).
Infer from Lemma~\ref{lem:the:cohsdtify:operator}\ref{it:the:cohsdtify:operator:forbidden} that \(\cohsdts[\finsot]=\cohsdts[{\finsot\setminus\uglythings}]\) and therefore also \(\cohsdts[\finsot]\in\filter\ifandonlyif\cohsdts[{\finsot\setminus\uglythings}]\in\filter\).
Hence, indeed, \(\finsot\setminus\uglythings\in\finfindesirify(\filter)\).

\underline{\ref{axiom:desirable:finite:sets:background}}.
We may assume without loss of generality that \(\beautifulthings\neq\emptyset\), because otherwise this requirement is trivially satisfied.
Consider then any~\(\thing[{+}]\in\beautifulthings\), then we must show that \(\set{\thing[{+}]}\in\finfindesirify(\filter)\), or in other words that \(\cohsdts[{\set{\thing[{+}]}}]\in\filter\).
Now simply observe that
\begin{equation*}
\cohsdts[{\set{\thing[{+}]}}]
=\cset{\cohsdt\in\cohsdts}{\set{\thing[{+}]}\cap\cohsdt\neq\emptyset}
=\cohsdts
\in\filter,
\end{equation*}
where the second equality holds because the coherence of~\(\cohsdt\) implies that \(\beautifulthings\subseteq\cohsdt\), and the final statement holds because the smallest filter~\(\finfineventfiltersbottom=\set{\cohsdts}\) is included in all filters.

\underline{\ref{axiom:desirable:finite:sets:production}}.
Consider any non-empty~\(\sofinsot\Subset\finfindesirify(\filter)\) and any choice~\(\thing[\selection]\in\initialclosure(\selection(\sofinsot))\) for all the~\(\selection\in\finselections\), then we must prove that \(\finsot[o]\coloneqq\cset{\thing[\selection]}{\selection\in\finselections}\in\finfindesirify(\filter)\), or in other words that \(\cohsdts[{\finsot[o]}]\in\filter\).
It follows from the assumptions that \(\cohsdts[\finsot]\in\filter\) for all~\(\finsot\in\sofinsot\), and therefore also, by~\ref{axiom:lattice:filters:intersections} and the finiteness of~\(\sofinsot\), that \(\event{\sofinsot}=\bigcap_{\finsot\in\sofinsot}\cohsdts[\finsot]\in\filter\), so it's enough to prove that \(\event{\sofinsot}\subseteq\cohsdts[{\finsot[o]}]\), because then also, by~\ref{axiom:lattice:filters:increasing}, \(\cohsdts[{\finsot[o]}]\in\filter\), as required.
Consider, to this end, any~\(\cohsdt\in\event{\sofinsot}\), then it follows from Proposition~\ref{prop:the:two:systems} and Equation~\eqref{eq:definition:production:event} that there's some selection map~\(\selection[o]\in\finselections\) such that \(\initialclosure(\selection[o](\sofinsot))\subseteq\cohsdt\).
This guarantees that \(\thing[{\selection[o]}]\in\cohsdt\), and therefore \(\cohsdt\cap\finsot[o]\neq\emptyset\).
Hence, indeed, \(\cohsdt\in\cohsdts[{\finsot[o]}]\).

For statement~\ref{it:the:finitary:representation:theorem:finite:cohsds:embedding}, let \(\filter'\coloneqq\finfinfilterise(\cohsdfs)\) and let \(\cohsdfs'\coloneqq\finfindesirify(\filter')\), then we have to prove that \(\cohsdfs=\cohsdfs'\).
We start with the following chain of equivalences:
\begin{equation}\label{eq:the:finitary:representation:theorem:finite:cohsds:embedding}
\finsot\in\cohsdfs'
\ifandonlyif\cohsdts[\finsot]\in\filter'
\ifandonlyif\cohsdts[\finsot]\in\finfinfilterise(\cohsdfs)
\ifandonlyif\group{\exists\sofinsot\Subset\cohsdfs}
\event{\sofinsot}=\event{\set{\finsot}},
\text{ for all~\(\finsot\in\finitesetsofthings\),}
\end{equation}
where the third equivalence follows from the fact that, by Equation~\eqref{eq:definition:event}, \(\event{\set{\finsot}}=\cohsdts[\finsot]\).
To prove that \(\cohsdfs\subseteq\cohsdfs'\), consider any~\(\finsot\in\cohsdfs\), then \(\set{\finsot}\Subset\cohsdfs\).
Using Equation~\eqref{eq:the:finitary:representation:theorem:finite:cohsds:embedding} with \(\sofinsot\coloneqq\set{\finsot}\), this implies that, indeed, \(\finsot\in\cohsdfs'\).
For the converse inclusion, consider any~\(\finsot\in\cohsdfs'\), so there's, by Equation~\eqref{eq:the:finitary:representation:theorem:finite:cohsds:embedding}, some~\(\sofinsot\Subset\cohsdfs\) for which \(\event{\sofinsot}=\event{\set{\finsot}}\).
Proposition~\ref{prop:it:does:not:matter:which}\ref{it:it:does:not:matter:which:equality:finite} then guarantees that also \(\set{\finsot}\Subset\cohsdfs\), or in other words, that, indeed, \(\finsot\in\cohsdfs\).

For statement~\ref{it:the:finitary:representation:theorem:finite:filter:embedding}, let~\(\cohsdfs'\coloneqq\finfindesirify(\filter)\) and let \(\filter'\coloneqq\finfinfilterise(\cohsdfs')\), then we have to prove that \(\filter'=\filter\), or in other words, that \(\event{\sofinsot}\in\filter'\ifandonlyif\event{\sofinsot}\in\filter\) for all~\(\sofinsot\Subset\finitesetsofthings\).
Now, consider the following chain of equivalences, valid for any~\(\sofinsot\Subset\finitesetsofthings\):
\begin{align*}
\event{\sofinsot}\in\filter'
&\ifandonlyif\event{\sofinsot}\in\finfinfilterise(\cohsdfs')
\ifandonlyif\group{\exists\sofinsot'\Subset\cohsdfs'}\event{\sofinsot}=\event{\sofinsot'}\\
&\ifandonlyif\group{\exists\sofinsot'\Subset\finitesetsofthings}
\group[\big]{\sofinsot'\Subset\finfindesirify(\filter)\text{ and }\event{\sofinsot}=\event{\sofinsot'}}\\
&\ifandonlyif\group{\exists\sofinsot'\Subset\finitesetsofthings}
\group[\big]{\event{\sofinsot'}\in\filter\text{ and }\event{\sofinsot}=\event{\sofinsot'}}\\
&\ifandonlyif\event{\sofinsot}\in\filter,
\end{align*}
where the second equivalence follows from the definition of~\(\finfilterise(\cohsds')\) and the fourth equivalence follows from Lemma~\ref{lem:characterisation:desirify:finitary}.

Statements~\ref{it:the:finitary:representation:theorem:finite:cohsds:order:preserving} and~\ref{it:the:finitary:representation:theorem:finite:filter:order:preserving} follow readily from the definitions of the maps~\(\finfinfilterise\) and~\(\finfindesirify\).

Before we turn to the proofs of statements~\ref{it:the:finitary:representation:theorem:finite:filterise:bounds} and~\ref{it:the:finitary:representation:theorem:finite:desirify:bounds}, it will be helpful to recall from  Propositions~\ref{prop:coherent:sdfs:constitute:a:complete:lattice} and~\ref{prop:the:smallest:cohsdfs} that \(\cohsdfsbottom=\cset{\finsot\in\finitesetsofthings}{\finsot\cap\beautifulthings\neq\emptyset}\) and \(\cohsdfstop=\finitesetsofthings\), and from Proposition~\ref{prop:filters:constititute:a:complete:lattice} and Theorem~\ref{thm:basic:sets:constitute:a:bounded:lattice} that \(\finfineventfiltersbottom=\set{\cohsdts}\) and \(\finfineventfilterstop=\finfinevents\).

For statement~\ref{it:the:finitary:representation:theorem:finite:filterise:bounds}, consider any~\(\sofinsot\Subset\cohsdfsbottom\) and~\(\finsot\in\sofinsot\),\footnote{The case that~\(\sofinsot=\emptyset\) is also covered, because \(\event{\emptyset}=\cohsdts\) as well; see the discussion after Equation~\eqref{eq:definition:event}.} then \(\finsot\cap\beautifulthings\neq\emptyset\), and therefore, by Lemma~\ref{lem:the:cohsdtify:operator}\ref{it:the:cohsdtify:operator:background}, also \(\cohsdts[\finsot]=\cohsdts\).
Equation~\eqref{eq:definition:event} then guarantees that \(\event{\sofinsot}=\cohsdts\).
But then indeed,  \(\finfinfilterise(\cohsdfsbottom)=\cset{\event{\sofinsot}}{\sofinsot\Subset\cohsdfsbottom}=\set{\cohsdts}=\finfineventfiltersbottom\).
For the tops, we find that
\begin{equation*}
\finfinfilterise(\cohsdfstop)
=\finfinfilterise(\finitesetsofthings)
=\cset{\event{\sofinsot}}{\sofinsot\Subset\finitesetsofthings}
=\finfinevents
=\finfineventfilterstop.
\end{equation*}

For statement~\ref{it:the:finitary:representation:theorem:finite:desirify:bounds}, consider the following chain of equivalences, for any~\(\finsot\in\finitesetsofthings\):
\begin{equation*}
\finsot\in\finfindesirify(\finfineventfiltersbottom)
\ifandonlyif\finsot\in\finfindesirify(\set{\cohsdts})
\ifandonlyif\cohsdts[\finsot]=\cohsdts
\ifandonlyif\finsot\cap\beautifulthings\neq\emptyset
\ifandonlyif\finsot\in\cohsdfsbottom,
\end{equation*}
where the third equivalence follows from Lemma~\ref{lem:the:cohsdtify:operator}\ref{it:the:cohsdtify:operator:background}.
For the tops, we find that
\begin{equation*}
\finfindesirify(\finfineventfilterstop)
=\finfindesirify(\finfinevents)
=\cset{\finsot\in\finitesetsofthings}{\event{\set{\finsot}}\in\finfinevents}
=\finitesetsofthings
=\cohsdfstop.
\end{equation*}

Finally, we turn to statement~\ref{it:the:finitary:representation:theorem:finite:completeness}.
Consider that the proper filter~\(\filter\) is prime if and only if for all~\(\sofinsot[1],\sofinsot[2]\Subset\finitesetsofthings\),
\begin{equation}\label{eq:the:finitary:representation:theorem:finite:completeness:filters}
\fineventwithindex{1}\cup\fineventwithindex{2}\in\filter
\then\group{\fineventwithindex{1}\in\filter\text{ or }\fineventwithindex{2}\in\filter}.
\end{equation}
Since we've already argued in the proof of Theorem~\ref{thm:basic:sets:constitute:a:bounded:lattice} that \(\fineventwithindex{1}\cup\fineventwithindex{2}=\event{\sofinsot}\), with \(\sofinsot\coloneqq\cset{\finsot[1]\cup\finsot[2]}{\finsot[1]\in\sofinsot[1]\text{ and }\finsot[2]\in\sofinsot[2]}\), it follows from Proposition~\ref{prop:it:does:not:matter:which}\ref{it:it:does:not:matter:which:equality:finite} that the statement~\eqref{eq:the:finitary:representation:theorem:finite:completeness:filters} is equivalent to
\begin{equation}\label{eq:the:finitary:representation:theorem:finite:completeness:cohsdss}
\cset{\finsot[1]\cup\finsot[2]}{\finsot[1]\in\sofinsot[1]\text{ and }\finsot[2]\in\sofinsot[2]}\Subset\cohsdfs
\then\group{\sofinsot[1]\Subset\cohsdfs\text{ or }\sofinsot[2]\Subset\cohsdfs}.
\end{equation}
Assume now that \(\filter\) is prime, and apply the condition~\eqref{eq:the:finitary:representation:theorem:finite:completeness:cohsdss} with \(\sofinsot[1]\coloneqq\set{\finsot[1]}\) and \(\sofinsot[2]\coloneqq\set{\finsot[2]}\) to find the completeness condition~\eqref{eq:complete:cohsdfs}.
Conversely, assume that~\(\cohsdfs\) is complete, and consider any~\(\sofinsot[1],\sofinsot[2]\Subset\finitesetsofthings\).
To prove that condition~\eqref{eq:the:finitary:representation:theorem:finite:completeness:cohsdss} is satisfied, we assume that \(\cset{\finsot[1]\cup\finsot[2]}{\finsot[1]\in\sofinsot[1]\text{ and }\finsot[2]\in\sofinsot[2]}\Subset\cohsdfs\) and at the same time~\(\sofinsot[1]\not\Subset\cohsdfs\), and we prove that then necessarily \(\sofinsot[2]\Subset\cohsdfs\).
It follows from the assumptions that there's some~\(\finsot[o]\in\sofinsot[1]\) such that \(\finsot[o]\notin\cohsdfs\), while at the same time \(\cset{\finsot[o]\cup\finsot[2]}{\finsot[2]\in\sofinsot[2]}\Subset\cohsdfs\).
But then the completeness condition~\eqref{eq:complete:cohsds} guarantees that \(\finsot[2]\in\cohsdfs\) for all~\(\finsot[2]\in\sofinsot[2]\), whence, indeed, \(\sofinsot[2]\Subset\cohsdfs\).
\end{proof}

\begin{lemma}\label{lem:characterisation:finfindesirify}
Let \(\filter\) be any proper filter on~\(\structure{\finfinevents,\subseteq}\) and let~\(\sofinsot\Subset\finitesetsofthings\) be any finite SDFS, then \(\sofinsot\Subset\finfindesirify(\filter)\ifandonlyif\event{\sofinsot}\in\filter\).
\end{lemma}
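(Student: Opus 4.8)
The plan is to establish the biconditional through a short chain of equivalences that transcribes, almost verbatim, the proof of Lemma~\ref{lem:characterisation:desirify:finitary}; the only differences are cosmetic, in that we now work with finite sets of things $\finsot\in\finitesetsofthings$ in place of arbitrary sets $\sot\in\setsofthings$, and with the bounded distributive lattice $\structure{\finfinevents,\subseteq}$ in place of $\structure{\finevents,\subseteq}$. First I would unfold the left-hand side: since $\sofinsot\Subset\finitesetsofthings$ is finite by hypothesis, the statement $\sofinsot\Subset\finfindesirify(\filter)$ is equivalent to $\group{\forall\finsot\in\sofinsot}\finsot\in\finfindesirify(\filter)$, and then, by the defining equation of the map $\finfindesirify$, to $\group{\forall\finsot\in\sofinsot}\cohsdts[\finsot]\in\filter$.

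The substance of the argument is the final equivalence, namely that $\group{\forall\finsot\in\sofinsot}\cohsdts[\finsot]\in\filter$ holds if and only if $\event{\sofinsot}\in\filter$, where by Equation~\eqref{eq:definition:event} we have $\event{\sofinsot}=\bigcap_{\finsot\in\sofinsot}\cohsdts[\finsot]$. For the direct implication I would invoke the finiteness of $\sofinsot$ together with closure of the filter under finite meets \ref{axiom:lattice:filters:intersections}: a finite intersection of members of $\filter$ is again a member of $\filter$, so $\event{\sofinsot}\in\filter$. For the converse, since $\event{\sofinsot}\subseteq\cohsdts[\finsot]$ for each $\finsot\in\sofinsot$, the increasingness property \ref{axiom:lattice:filters:increasing} of $\filter$ immediately yields $\cohsdts[\finsot]\in\filter$ for every such $\finsot$.

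I would flag the single degenerate case that merits an explicit remark but presents no genuine difficulty: when $\sofinsot=\emptyset$, the empty intersection gives $\event{\emptyset}=\cohsdts$, the top of the lattice, which belongs to every proper filter, while the quantified condition $\group{\forall\finsot\in\sofinsot}\cohsdts[\finsot]\in\filter$ holds vacuously, so the equivalence survives this case. There is no real obstacle here; the only point worth noticing is that the finiteness of $\sofinsot$ is exactly what makes the finite-meet axiom \ref{axiom:lattice:filters:intersections} sufficient, with no recourse to arbitrary intersections needed. This is precisely why the lemma is the $\finfinevents$-analogue of Lemma~\ref{lem:characterisation:desirify:finitary} rather than of the principal-filter statement in Lemma~\ref{lem:characterisation:desirify}.
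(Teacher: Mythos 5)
Your proof is correct and follows essentially the same route as the paper's: the same chain of equivalences unfolding \(\sofinsot\Subset\finfindesirify(\filter)\) via the definition of \(\finfindesirify\), then closing the final step with \ref{axiom:lattice:filters:intersections} and \ref{axiom:lattice:filters:increasing}, including the same remark about the empty case \(\event{\emptyset}=\cohsdts\). No issues.
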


\begin{proof}
The proof is very similar to the proof of Lemma~\ref{lem:characterisation:desirify:finitary}, but we nevertheless include it here for the sake of completeness.

Consider the following chain of equivalences, valid for any~\(\sofinsot\Subset\finitesetsofthings\):
\begin{align*}
\sofinsot\Subset\finfindesirify(\filter)
\ifandonlyif\group{\forall\finsot\in\sofinsot}\finsot\in\finfindesirify(\filter)
\ifandonlyif\group{\forall\finsot\in\sofinsot}\cohsdts[\finsot]\in\filter
\ifandonlyif\event{\sofinsot}\in\filter,
\end{align*}
where the last equivalence follows from Equation~\eqref{eq:definition:event} and the fact that \(\filter\) satisfies~\ref{axiom:lattice:filters:intersections} and~\ref{axiom:lattice:filters:increasing} [observe, by the way, that \(\event{\emptyset}=\cohsdts\in\filter\) when \(\filter\) is a proper filter].
\end{proof}

\begin{proof}[Proof of Theorem~\ref{thm:conjunctive:representation:finite:finite}]
The proof is very similar to the proof of Theorem~\ref{thm:conjunctive:representation:finite}, but we nevertheless include it here for the sake of completeness.

\underline{\ref{it:conjunctive:representation:finite:finite:consistency}}.
That \(\event{\sofinsot}=\cset{\cohsdt\in\cohsdts}{\sofinsot\Subset\cohsdfs[\cohsdt]}\) follows from Equation~\eqref{eq:connection:with:binary:model:finitary:finite}.
For necessity, suppose that \(\cohsdfs\) is finitely consistent.
That means that there's some finitely coherent~\(\cohsdfs[o]\in\cohsdfss\) such that \(\cohsdfs\subseteq\cohsdfs[o]\).
But it then follows from Proposition~\ref{prop:filter:consistency} that \(\event{\sofinsot}\neq\emptyset\) for all~\(\sofinsot\Subset\cohsdfs[o]\), and therefore in particular also for all~\(\sofinsot\Subset\cohsdfs\).

For sufficiency, assume that \(\event{\sofinsot}\neq\emptyset\) for all~\(\sofinsot\Subset\cohsdfs\).
Then we infer from Lemma~\ref{lem:conjunctive:representation:finite:finite}\ref{it:conjunctive:representation:finite:finite:filterbase} that \(\filterbase[\cohsdfs]\coloneqq\cset{\event{\sofinsot}}{\sofinsot\Subset\cohsdfs}\) is a filter base.
Let's denote by \(\filter[\cohsdfs]\) the (smallest) proper filter that it generates; see Lemma~\ref{lem:conjunctive:representation:finite:finite}\ref{it:conjunctive:representation:finite:finite:filter}.
If we now let \(\cohsdfs'\coloneqq\finfindesirify(\filter[\cohsdfs])\), then it follows from Theorem~\ref{thm:the:finitary:representation:theorem:finite}\ref{it:the:finitary:representation:theorem:finite:if:filter:then:cohsds} that \(\cohsdfs'\) is a finitely coherent SDFS: \(\cohsdfs'\in\cohsdfss\).
We're therefore done if we can prove that \(\cohsdfs\subseteq\cohsdfs'\).
To this end, consider any~\(\finsot\in\cohsdfs\), then \(\set{\finsot}\Subset\cohsdfs\) and therefore also \(\cohsdts[\finsot]=\event{\set{\finsot}}\in\filterbase[\cohsdfs]\).
But then \(\cohsdts[\finsot]\in\filter[\cohsdfs]\), and therefore indeed also \(\finsot\in\finfindesirify(\filter[\cohsdfs])=\cohsdfs'\).

\underline{\ref{it:conjunctive:representation:finite:finite:closure}}.
If \(\cohsdfs\) is not finitely consistent then it follows from~\ref{it:conjunctive:representation:finite:finite:consistency} that there's some~\(\sofinsot\Subset\cohsdfs\) for which \(\event{\sofinsot}=\cset{\cohsdt\in\cohsdts}{\sofinsot\Subset\cohsdfs[\cohsdt]}=\emptyset\) and therefore \(\bigcap_{\cohsdt\in\cohsdts\colon\sofinsot\Subset\sdfsify{\cohsdt}}\sdfsify{\cohsdt}=\bigcap_{\cohsdt\in\event{\sofinsot}}\sdfsify{\cohsdt}=\finitesetsofthings\), making sure that both the left-hand side and the right-hand side in~\ref{it:conjunctive:representation:finite:finite:closure} are equal to~\(\finitesetsofthings\).
We may therefore assume that \(\cohsdfs\) is finitely consistent.
If we borrow the notation from the argumentation above in the proof of~\ref{it:conjunctive:representation:finite:finite:consistency}, and take into account Lemma~\ref{lem:conjunctive:representation:finite:finite}\ref{it:conjunctive:representation:finite:finite:liminf} and Equation~\eqref{eq:connection:with:binary:model:finitary:finite}, then it's clear that we have to prove that \[
\cohsdfsclosure(\cohsdfs)
=\bigcup_{V\in\filter[\cohsdfs]}\bigcap_{\cohsdt\in V}\sdfsify{\cohsdt}.
\]
If we take into account Equation~\eqref{eq:from:filter:to:cohsdfs}, then we see that \(\bigcup_{V\in\filter[\cohsdfs]}\bigcap_{\cohsdt\in V}\sdfsify{\cohsdt}=\finfindesirify(\filter[\cohsdfs])=\cohsdfs'\), so we need to prove that \(\cohsdfs'=\cohsdfsclosure(\cohsdfs)\).
Since we already know from the proof of~\ref{it:conjunctive:representation:finite:finite:consistency} that \(\cohsdfs'\) is finitely coherent and that \(\cohsdfs\subseteq\cohsdfs'\), we find that \(\cohsdfsclosure(\cohsdfs)\subseteq\cohsdfs'\), because \(\cohsdfsclosure(\cohsdfs)\) is the smallest finitely coherent SDFS that the finitely consistent~\(\cohsdfs\) is included in.
To prove the converse inclusion, namely that \(\cohsdfs'\subseteq\cohsdfsclosure(\cohsdfs)\), it suffices to consider any~\(\cohsdfs''\in\cohsdfss\) such that \(\cohsdfs\subseteq\cohsdfs''\), and prove that then~\(\cohsdfs'\subseteq\cohsdfs''\).
Now consider any~\(E\in\filter[\cohsdfs]\), then by Lemma~\ref{lem:conjunctive:representation:finite:finite}\ref{it:conjunctive:representation:finite:finite:filter} there are~\(\sofinsot,\sofinsot'\Subset\finitesetsofthings\) such that \(E=\event{\sofinsot'}\) and \(\sofinsot\Subset\cohsdfs\) and \(\event{\sofinsot}\subseteq\event{\sofinsot'}\).
But then also \(\sofinsot\Subset\cohsdfs''\), and therefore \(\sofinsot'\Subset\cohsdfs''\), by Proposition~\ref{prop:it:does:not:matter:which}.
This implies that also \(E=\event{\sofinsot'}\in\finfinfilterise(\cohsdfs'')\), so we can conclude that \(\filter[\cohsdfs]\subseteq\finfinfilterise(\cohsdfs'')\).
But then indeed also \(\cohsdfs'=\finfindesirify(\filter[\cohsdfs])\subseteq\finfindesirify(\finfinfilterise(\cohsdfs''))=\cohsdfs''\), where the inclusion follows from Theorem~\ref{thm:the:finitary:representation:theorem:finite}\ref{it:the:finitary:representation:theorem:finite:filter:order:preserving} and the last equality follows from Theorem~\ref{thm:the:finitary:representation:theorem:finite}\ref{it:the:finitary:representation:theorem:finite:cohsds:embedding}.

\underline{\ref{it:conjunctive:representation:finite:finite:coherence}}.
The proof is now straightforward, given~\ref{it:conjunctive:representation:finite:finite:closure}, since finitely coherent means finitely consistent and closed with respect to the \(\cohsdfsclosure\)-operator.
\end{proof}

\begin{lemma}\label{lem:conjunctive:representation:finite:finite}
Consider any SDFS~\(\cohsdfs\subseteq\finitesetsofthings\) such that \(\event{\sofinsot}\neq\emptyset\) for all~\(\sofinsot\Subset\cohsdfs\), then the following statements hold:
\begin{enumerate}[label=\upshape(\roman*),leftmargin=*]
\item\label{it:conjunctive:representation:finite:finite:filterbase} the set \(\filterbase[\cohsdfs]\coloneqq\cset{\event{\sofinsot}}{\sofinsot\Subset\cohsdfs}\) is a filter base;
\item\label{it:conjunctive:representation:finite:finite:filter} the smallest proper filter~\(\filter[\cohsdfs]\) that includes~\(\filterbase[\cohsdfs]\) is given by
\begin{equation*}
\filter[\cohsdfs]
\coloneqq\finfineventUpset{\filterbase[\cohsdfs]}
=\cset[\big]{\event{\sofinsot[o]}}{\sofinsot[o]\Subset\finitesetsofthings\text{ and }\group{\exists\sofinsot\Subset\cohsdfs}\event{\sofinsot}\subseteq\event{\sofinsot[o]}};
\end{equation*}
\item\label{it:conjunctive:representation:finite:finite:liminf} \(\bigcup_{V\in\filter[\cohsdfs]}\bigcap_{\cohsdt\in V}\sdfsify{\cohsdt}=\bigcup_{B\in\filterbase[\cohsdfs]}\bigcap_{\cohsdt\in B}\sdfsify{\cohsdt}=\bigcup_{\sofinsot\Subset\cohsdfs}\bigcap_{\cohsdt\in\event{\sofinsot}}\sdfsify{\cohsdt}\).
\end{enumerate}
\end{lemma}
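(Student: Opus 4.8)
The plan is to mirror, essentially verbatim, the proof of the SDS analogue Lemma~\ref{lem:conjunctive:representation:finite}, since the three claims here are the finite-set counterparts of the ones established there, with $\cohsds$ replaced by $\cohsdfs$, the finite SDSes $\sosot$ replaced by the finite SDFSes $\sofinsot$, and the conjunctive models $\sdsify{\cohsdt}$ replaced by $\sdfsify{\cohsdt}$. The only structural change is that we now work over the bounded distributive lattice $\structure{\finfinevents,\subseteq}$ rather than $\structure{\finevents,\subseteq}$, so the first thing I would do is confirm that the two ingredients the SDS proof relies on---Equation~\eqref{eq:intersection:of:possibles} and condition~\eqref{eq:directed:downwards}---still apply in this finite-of-finite setting. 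They do: the former is established in the proof of Theorem~\ref{thm:basic:sets:constitute:a:bounded:lattice} for finite families of finite sets of things, and the latter is a purely lattice-theoretic characterisation of filter bases from Section~\ref{sec:filters} that is insensitive to the nature of the underlying lattice.

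For part~\ref{it:conjunctive:representation:finite:finite:filterbase}, I would invoke the characterisation via condition~\eqref{eq:directed:downwards}: a non-empty collection of non-empty lattice elements that is directed downwards---in particular, closed under finite meets---is a filter base for some proper filter. The collection $\filterbase[\cohsdfs]$ is non-empty because $\emptyset\Subset\cohsdfs$ always holds, so $\event{\emptyset}=\cohsdts\in\filterbase[\cohsdfs]$, and it contains no empty set precisely by the standing hypothesis that $\event{\sofinsot}\neq\emptyset$ for all $\sofinsot\Subset\cohsdfs$. Closure under finite intersections then follows from Equation~\eqref{eq:intersection:of:possibles}: for any $\sofinsot[1],\sofinsot[2]\Subset\cohsdfs$ we have $\event{\sofinsot[1]}\cap\event{\sofinsot[2]}=\event{\sofinsot[1]\cup\sofinsot[2]}$, with still $\sofinsot[1]\cup\sofinsot[2]\Subset\cohsdfs$, so this intersection again lies in $\filterbase[\cohsdfs]$.

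Part~\ref{it:conjunctive:representation:finite:finite:filter} is then immediate from the general description in Section~\ref{sec:filters} of the smallest proper filter generated by a filter base, namely the upset $\finfineventUpset{\filterbase[\cohsdfs]}$ of that base; unfolding the definition of the upset yields exactly the displayed right-hand side. For part~\ref{it:conjunctive:representation:finite:finite:liminf}, the second equality is just the definition of $\filterbase[\cohsdfs]$, while for the first equality one inclusion is trivial because $\filterbase[\cohsdfs]\subseteq\filter[\cohsdfs]$, and the reverse inclusion reduces, using part~\ref{it:conjunctive:representation:finite:finite:filter}, to the observation that every $V\in\filter[\cohsdfs]$ contains some $B\in\filterbase[\cohsdfs]$, whence $\bigcap_{\cohsdt\in V}\sdfsify{\cohsdt}\subseteq\bigcap_{\cohsdt\in B}\sdfsify{\cohsdt}$. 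Since none of these steps introduces a genuinely new idea, I do not expect any real obstacle; the only point that warrants a moment's care is verifying that Equation~\eqref{eq:intersection:of:possibles} specialises correctly to the lattice $\finfinevents$, which it does because the argument establishing it is unaffected by restricting to finite sets of finite sets of things.
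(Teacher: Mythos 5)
Your proposal is correct and follows essentially the same route as the paper's own proof, which likewise transcribes the argument of Lemma~\ref{lem:conjunctive:representation:finite} to the SDFS setting: part~\ref{it:conjunctive:representation:finite:finite:filterbase} via non-emptiness, the standing hypothesis, and closure under finite intersections from Equation~\eqref{eq:intersection:of:possibles}; part~\ref{it:conjunctive:representation:finite:finite:filter} directly from the filter-base discussion in Section~\ref{sec:filters}; and part~\ref{it:conjunctive:representation:finite:finite:liminf} from the observation that every element of the filter contains an element of the base. No gaps.
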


\begin{proof}
The proof is very similar to the proof of Lemma~\ref{lem:conjunctive:representation:finite}, but we nevertheless include it here for the sake of completeness.

\underline{\ref{it:conjunctive:representation:finite:finite:filterbase}}.
Any non-empty collection of non-empty sets that is closed under finite intersections is in particular a filter base; see the discussion in Section~\ref{sec:filters}, and in particular condition~\eqref{eq:directed:downwards}.
First of all, observe that the set~\(\filterbase[\cohsdfs]\) is non-empty because \(\emptyset\Subset\cohsdfs\), and that it contains no empty set by assumption.
To show that it's closed under finite intersections, and therefore directed downwards, consider any~\(\sofinsot[1],\sofinsot[2]\Subset\cohsdfs\), then we infer from Equation~\eqref{eq:intersection:of:possibles} that \(\event{\sofinsot[1]}\cap\event{\sofinsot[2]}=\event{\sofinsot[1]\cup\sofinsot[2]}\), with still \(\sofinsot[1]\cup\sofinsot[2]\Subset\cohsdfs\), and therefore, indeed, \(\event{\sofinsot[1]}\cap\event{\sofinsot[2]}\in\filterbase[\cohsdfs]\).

\underline{\ref{it:conjunctive:representation:finite:finite:filter}}.
Trivially from the discussion of filter bases in Section~\ref{sec:filters}.

\underline{\ref{it:conjunctive:representation:finite:finite:liminf}}.
The second equality is trivial given the definition of~\(\filterbase[\cohsdfs]\).
For the first inequality, it's clearly enough to prove that \(\bigcup_{V\in\filter[\cohsdfs]}\bigcap_{\cohsdt\in V}\sdfsify{\cohsdt}\subseteq\bigcup_{B\in\filterbase[\cohsdfs]}\bigcap_{\cohsdt\in B}\sdfsify{\cohsdt}\).
But this is immediate, since \ref{it:conjunctive:representation:finite:finite:filter} tells us that for any~\(V\in\filter[\cohsdfs]\) there's some~\(B\in\filterbase[\cohsdfs]\) such that \(B\subseteq V\) and therefore also \(\bigcap_{\cohsdt\in V}\sdfsify{\cohsdt}\subseteq\bigcap_{\cohsdt\in B}\sdfsify{\cohsdt}\).
\end{proof}

\begin{proof}[Proof of Theorem~\ref{thm:prime:filter:representation:finitely:coherent:finite}]
The proof is very similar to the proof of Theorem~\ref{thm:prime:filter:representation:finitely:coherent}, but we nevertheless include it here for the sake of completeness.

Consider any finitely consistent SFDS~\(\cohsdfs\).
For necessity, assume that \(\cohsdfs\) is finitely coherent, then it follows from Theorem~\ref{thm:the:finitary:representation:theorem:finite}\ref{it:the:finitary:representation:theorem:finite:if:cohsds:then:filter} that \(\finfinfilterise(\cohsdfs)\) is a proper filter.
The prime filter representation result in Theorem~\ref{thm:prime:filter:representation} then guarantees that
\[
\finfilterise(\cohsdfs)
=\bigcap\cset[\big]{\primefilter\in\finfineventprimefilters}{\finfinfilterise(\cohsdfs)\subseteq\primefilter},
\]
and applying the map~\(\finfindesirify\) to both sides of this equality leads to
\begin{align*}
\cohsdfs
=\finfindesirify(\finfinfilterise(\cohsdfs))
&=\finfindesirify\group[\bigg]{\bigcap\cset[\big]{\primefilter\in\finfineventprimefilters}{\finfinfilterise(\cohsdfs)\subseteq\primefilter}}\\
&=\bigcap\cset[\big]{\finfindesirify\group{\primefilter}}{\primefilter\in\finfineventprimefilters\text{ and }\finfinfilterise(\cohsdfs)\subseteq\primefilter},
\end{align*}
where the first equality follows from Theorem~\ref{thm:the:finitary:representation:theorem:finite}\ref{it:the:finitary:representation:theorem:finite:cohsds:embedding} and the last one from the fact that \(\finfindesirify\) is an order isomorphism between complete lattices of sets, by Theorem~\ref{thm:the:finitary:representation:theorem:finite}, and because we have seen in Propositions~\ref{prop:coherent:sdfs:constitute:a:complete:lattice} and~\ref{prop:filters:constititute:a:complete:lattice} that intersection plays the role of infimum in these complete lattices.
This then leads to
\begin{align*}
\cohsdfs
&=\bigcap\cset[\big]{\finfindesirify\group{\primefilter}}{\primefilter\in\finfineventprimefilters\text{ and }\finfinfilterise(\cohsdfs)\subseteq\primefilter}\\
&=\bigcap\cset[\big]{\finfindesirify\group{\primefilter}}{\primefilter\in\finfineventprimefilters\text{ and }\cohsdfs\subseteq\finfindesirify(\primefilter)}
=\bigcap\cset[\big]{\cohsdfs'\in\completecohsdfss}{\cohsdfs\subseteq\cohsdfs'},
\end{align*}
where the second equality follows from Theorem~\ref{thm:the:finitary:representation:theorem:finite}\ref{it:the:finitary:representation:theorem:finite:cohsds:embedding}\&\ref{it:the:finitary:representation:theorem:finite:filter:embedding}\&\ref{it:the:finitary:representation:theorem:finite:cohsds:order:preserving}\&\ref{it:the:finitary:representation:theorem:finite:filter:order:preserving} and the last equality follows from the correspondence between prime filters and complete finitely coherent SDSes in Theorem~\ref{thm:the:finitary:representation:theorem:finite}\ref{it:the:finitary:representation:theorem:finite:completeness}.

For sufficiency, assume that \(\cohsdfs=\bigcap\cset[\big]{\cohsdfs'\in\completecohsdfss}{\cohsdfs\subseteq\cohsdfs'}\).
That \(\cohsdfs\) is finitely consistent implies that \(\cohsdfs\neq\finitesetsofthings\), and therefore also \(\cset[\big]{\cohsdfs'\in\completecohsdfss}{\cohsdfs\subseteq\cohsdfs'}\neq\emptyset\).
\(\cohsdfs\) is therefore finitely coherent as the intersection of a non-empty set of finitely coherent SDFSes.
\end{proof}

\begin{proof}[Proof of Proposition~\ref{prop:complete:and:conjunctive:finite}]
The proof of statement~\ref{it:complete:and:conjunctive:finite:sdt:to:sdfs} is very similar to the proof of Theorem~\ref{prop:conjunctive:is:complete}, but we nevertheless include it here for the sake of completeness.
First off, it follows from Proposition~\ref{prop:cohsdt:to:cohsdfs} that \(\sdfsify{\cohsdt}\) is finitely coherent.
Now, consider any~\(\finsot[1],\finsot[2]\in\finitesetsofthings\) and assume that \(\finsot[1]\cup\finsot[2]\in\sdfsify{\cohsdt}\), or equivalently, that
\begin{equation*}
\emptyset\subset(\finsot[1]\cup\finsot[2])\cap\cohsdt=(\finsot[1]\cap\cohsdt)\cup(\finsot[2]\cap\cohsdt),
\end{equation*}
which clearly implies that, indeed, \(\finsot[1]\in\sdfsify{\cohsdt}\) or \(\finsot[2]\in\sdfsify{\cohsdt}\).

The proof of statement~\ref{it:complete:and:conjunctive:finite:sdfs:is:conjunctive} is, in its turn, very similar to the proof of Proposition~\ref{prop:complete:finitary:is:conjunctive}, but marginally simpler.
We also include it here for the sake of completeness.
Observe that \(\sdfsify{\sdtify{\cohsdfs}}\subseteq\cohsdfs\) by Proposition~\ref{prop:cohsdfs:to:cohsdt}, so it remains to prove that \(\cohsdfs\subseteq\sdfsify{\sdtify{\cohsdfs}}\).
So, consider any~\(\finsot\in\cohsdfs\), then there are two possibilities.
If \(\finsot\) is a singleton~\(\set{\thing}\), then necessarily \(\thing\in\sdtify{\cohsdfs}\), and therefore \(\finsot=\set{\thing}\in\sdfsify{\sdtify{\cohsdfs}}\).
Otherwise, it follows from the completeness of~\(\cohsdfs\) that there's some strict subset~\(\finsot'\) of~\(\finsot\) such that \(\finsot'\in\cohsdfs\), and therefore also \(\finsot'\in\cohsdfs\).
Now replace \(\finsot\) with \(\finsot'\) in the argumentation above, and keep the recursion going until we do reach a singleton~\(\set{\thing}\) for which \(\thing\in\sdtify{\cohsdfs}\), or equivalently \(\set{\thing}\in\cohsdfs\), which must happen after a finite number of steps, because \(\finsot\) is finite.
But then necessarily also \(\thing\in\finsot\), so \(\finsot\cap\sdtify{\cohsdfs}\neq\emptyset\), and therefore indeed \(\finsot\in\sdfsify{\sdtify{\cohsdfs}}\).

For the final statements, assume that \(\initialclosure\) is finitary, and consider any finitely coherent SDFS~\(\cohsdfs\).
If~\(\cohsdfs\) is complete, then \ref{it:complete:and:conjunctive:finite:sdfs:is:conjunctive} guarantees that it is also conjunctive.
Conversely, if \(\cohsdfs\) is conjunctive, then Proposition~\ref{prop:cohsdfs:to:cohsdt} guarantees that \(\cohsdt[\cohsdfs]\) is a coherent SDT, and Proposition~\ref{prop:conjunctive:and:coherent:sdfs} that \(\cohsdfs=\cohsdfs[{\cohsdt[\cohsdfs]}]\), and then~\ref{it:complete:and:conjunctive:finite:sdt:to:sdfs} tells us that \(\cohsdfs\) is indeed complete.
The rest of the proof is now immediate.
\end{proof}

\begin{proof}[Proof of Theorem~\ref{thm:finitely:coherent:isomorphism}]
For the proof of statement~\ref{it:finitely:coherent:isomorphism:sdfs:to:sds}, we check that \(\sotUpset\cohsdfs\) satisfies the finite coherence conditions~\ref{axiom:desirable:sets:consistency}--\ref{axiom:desirable:sets:finitary:production}.

\underline{\ref{axiom:desirable:sets:consistency}}.
Assume towards contradiction that \(\emptyset\in\sotUpset\cohsdfs\), then there must be some~\(\finsot\in\cohsdfs\) such that \(\finsot\Subset\emptyset\), and therefore \(\finsot=\emptyset\), contradicting the assumption that \(\cohsdfs\) satisfies~\ref{axiom:desirable:finite:sets:consistency}.

\underline{\ref{axiom:desirable:sets:increasing}}.
Consider any~\(\sot[1]\in\sotUpset\cohsdfs\) and~\(\sot[2]\supseteq\sot[1]\).
Then there is some~\(\finsot[1]\in\cohsdfs\) such that \(\finsot[1]\Subset\sot[1]\), and therefore also \(\finsot[1]\Subset\sot[2]\).
This tells us that, indeed, \(\sot[2]\in\sotUpset\cohsdfs\).

\underline{\ref{axiom:desirable:sets:forbidden}}.
Consider any~\(\sot\in\sotUpset\cohsdfs\).
Then there is some~\(\finsot\in\cohsdfs\) such that \(\finsot\Subset\sot\), and therefore also \(\finsot\setminus\uglythings\Subset\sot\setminus\uglythings\).
Since it follows from~\ref{axiom:desirable:finite:sets:forbidden} that \(\finsot\setminus\uglythings\in\cohsdfs\), this tells us that, indeed, \(\sot\setminus\uglythings\in\sotUpset\cohsdfs\).

\underline{\ref{axiom:desirable:sets:background}}.
Consider any~\(\thing[+]\in\beautifulthings\), then by assumption \(\set{\thing[+]}\in\cohsdfs\), and therefore indeed also \(\set{\thing[+]}\in\sotUpset\cohsdfs\).

\underline{\ref{axiom:desirable:sets:finitary:production}}.
Fix any non-empty~\(\sosot\Subset\sotUpset\cohsdfs\), and for all corresponding selections~\(\selection\in\selections\), some~\(\thing[\selection]\in\initialclosure(\selection(\sosot))\).
Then we must prove that \(\cset{\thing[\selection]}{\selection\in\selections[\sosot]}\in\sotUpset\cohsdfs\).

For any~\(\sot\in\sosot\), there's some~\(\finsot\in\cohsdfs\) such that \(\finsot\Subset\sot\).
Let, with obvious notations, \(\finite{\sosot}\coloneqq\cset{\finsot}{\sot\in\sosot}\).
Then \(\finite{\sosot}\Subset\cohsdfs\), so \(\finite{\sosot}\) is a finite set of finite subsets of~\(\things\), which in turn implies that the set of selections~\(\selections[{\finite{\sosot}}]\) is finite as well.
For any~\(\finite{\selection}\in\selections[\finite{\sosot}]\), we consider the corresponding selection \(\selection_{\finite{\selection}}\in\selections\), defined by \(\selection_{\finite{\selection}}(\sot)\coloneqq\finite{\selection}(\finsot)\in\finsot\Subset\sot\), for all \(\sot\in\sosot\), again with obvious notations, implying that \(\selection_{\finite{\selection}}(\sosot)=\finite{\selection}(\finite{\sosot})\).
If we now let \(\thing[\finite{\selection}]\coloneqq\thing[{\selection_{\finite{\selection}}}]\), then it follows from the argumentation above that \(\thing[\finite{\selection}]\in\initialclosure(\finite{\selection}(\finite{\sosot}))\), simply because by assumption \(\thing[\selection_{\finite{\selection}}]\in\initialclosure(\selection_{\finite{\selection}}(\sosot))\).
Clearly,
\begin{equation}\label{eq:finite:coherence:of:upset}
\cset{\thing[\finite{\selection}]}{\finite{\selection}\in\selections[{\finite{\sosot}}]}
=\cset{\thing[\selection_{\finite{\selection}}]}{\finite{\selection}\in\selections[{\finite{\sosot}}]}
\subseteq\cset{\thing[\selection]}{\selection\in\selections[\sosot]}.
\end{equation}
We infer from the finite coherence of~\(\cohsdfs\) that \(\cset{\thing[\finite{\selection}]}{\finite{\selection}\in\selections[{\finite{\sosot}}]}\in\cohsdfs\) [use~\ref{axiom:desirable:finite:sets:production} and the finiteness of~\(\sofinsot\) and~\(\selections[{\finite{\sosot}}]\)].
But then Equation~\eqref{eq:finite:coherence:of:upset} tells us that, indeed, \(\cset{\thing[\selection]}{\selection\in\selections[\sosot]}\in\sotUpset{\cohsdfs}\).

On to the proof of statement~\ref{it:finitely:coherent:isomorphism:sds:to:sdfs}.
It is trivially verified that \(\finpart{\cohsds}\) satisfies the conditions~\ref{axiom:desirable:finite:sets:consistency}--\ref{axiom:desirable:finite:sets:background}, so we concentrate on~\ref{axiom:desirable:finite:sets:production}.
Consider any non-empty~\(\sofinsot\Subset\finpart{\cohsds}\), and for all~\(\selection\in\finselections\), any choice~\(\thing[\selection]\in\initialclosure(\selection(\sofinsot))\).
Since clearly also \(\sofinsot\Subset\cohsds\), we infer from~\ref{axiom:desirable:sets:finitary:production} [or from~~\ref{axiom:desirable:sets:production}] that \(\cset{\thing[\selection]}{\selection\in\finselections}\in\cohsds\).
Since \(\finselections\) is a subset of the set of all maps from the finite set \(\sofinsot\) to the finite set \(\bigcup\sofinsot\), it follows that also \(\cset{\thing[\selection]}{\selection\in\finselections}\in\finitesetsofthings\).
We conclude that, indeed, \(\cset{\thing[\selection]}{\selection\in\finselections}\in\finpart{\cohsds}\).

For the proof of statement~\ref{it:finitely:coherent:isomorphism:up:after:fin}, observe that this is essentially Proposition~\ref{prop:finitary:equal:to:finitary:part}.

And finally, for the proof of statement~\ref{it:finitely:coherent:isomorphism:fin:after:up}, observe that, trivially, \(\cohsdfs\subseteq\finpart{\sotUpset\cohsdfs}\).
Conversely, consider any~\(\finsot\in\finpart{\sotUpset\cohsdfs}\), then \(\finsot\in\finitesetsofthings\) and \(\finsot\in\sotupset\cohsdfs\), so there is some~\(\finsot'\in\cohsdfs\) such that \(\finsot'\Subset\finsot\).
But then \ref{axiom:desirable:finite:sets:increasing} guarantees that also \(\finsot\in\cohsdfs\).
\end{proof}

\subsection{Proofs of results in Section~\ref{sec:propositional:logic}}\label{sec:proofs:propositional:logic}

\begin{proof}[Proof of Proposition~\ref{prop:propositional:basic}]
First, fix any~\(\cohsdt\in\cohsdts\) such that \(\cohsdfs\subseteq\sdfsify{\cohsdt}\).
Consider any~\(\finsot\in\cohsdfs\), then clearly also \(\finsot\cap\cohsdt\neq\emptyset\).
Consider any~\(\thing[\finsot]\in\finsot\cap\cohsdt\) [which is always possible], then \(\thing[\finsot]\entails\bigvee\finsot\), and therefore also \(\bigvee\finsot\in\cohsdt\), because \(\cohsdt\) is deductively closed.
This tells us that \(\cohsdt(\cohsdfs)\subseteq\cohsdt\).
It therefore suffices to prove~\ref{it:propositional:basic:is:coherent} in order to also prove~\ref{it:propositional:basic:is:smallest}.
And \ref{it:propositional:basic:is:smallest} then readily leads to~\ref{it:propositional:basic:representation}, taking into account Corollary~\ref{cor:conjunctive:representation:finitely:coherent:finite}.

So, let's concentrate on the proof of~\ref{it:propositional:basic:is:coherent}.
First, we prove that \(\cohsdt(\cohsdfs)\) is deductively closed.
So consider any~\(\finsot\in\cohsdfs\) and any~\(\thing\in\things\) such that \(\bigvee\finsot\entails\thing\).
Then we have to prove that also \(\thing\in\cohsdt(\cohsdfs)\).
To this end, let \(\finsot'\coloneqq\finsot\cup\set{\thing}\) then still \(\finsot'\in\cohsdfs\) by coherence [\ref{axiom:desirable:finite:sets:increasing}], and therefore also \(\bigvee\finsot'\in\cohsdt(\cohsdfs)\).
But we infer from~\(\bigvee\finsot\entails\thing\) that \(\bigvee\finsot'=\thing\), and therefore indeed \(\thing\in\cohsdt(\cohsdfs)\).

It now only remains to prove that \(\cohsdt(\cohsdfs)\) is consistent.
Assume towards contradiction that it isn't, so \(\cohsdt(\cohsdfs)\cap\uglythings\neq\emptyset\).
Consider any~\(\thing[-]\in\cohsdt(\cohsdfs)\cap\uglythings\), so there's some~\(\finsot\in\finitesetsofthings\) for which \(\thing[-]=\bigvee\finsot\).
This can only happen if all wffs in~\(\finsot\) are contradictions, so \(\finsot\subseteq\uglythings\).
But then~\ref{axiom:desirable:sets:forbidden} implies that~\(\emptyset\in\cohsdfs\), contradicting~\ref{axiom:desirable:finite:sets:consistency}.
\end{proof}

\section{Notation}\label{sec:symbols}
In this appendix, we provide a list of the most commonly used and most important notation, and where it is defined or first introduced.

\begin{center}
\begin{longtable}{lp{0.6\linewidth}l}
\textbf{notation}&\textbf{meaning}&\textbf{introduced where}\\
\(\things\) & set of all things & Section~\ref{sec:desirable:things}\\
\(\powerset\group{\things}\) & set of all sets of things & Section~\ref{sec:desirable:things}\\
SDT & abbreviation for `\SDT' & Section~\ref{sec:desirable:things}\\
\(\sot\), \(\sot[i]\) & generic sets of things & Section~\ref{sec:desirable:things}\\
\(\closure\) & closure operator & Section~\ref{sec:desirable:things}\\
\(\initialclosure\) & closure operator associated with \(\things\) & Section~\ref{sec:desirable:things}\\
\(\uglythings\) & set of all forbidden things & Section~\ref{sec:desirable:things}\\
\(\beautifulthings\) & the smallest closed SDT & Section~\ref{sec:desirable:things}\\
\(\cohsdt\), \(\cohsdt[i]\) & generic coherent SDTs & Section~\ref{sec:desirable:things}\\
\(\toppedcohsdts\) & set of all closed SDTs & Section~\ref{sec:desirable:things}\\
\(\cohsdts\) & set of all coherent SDTs & Equation~\eqref{eq:definition:cohsdts}\\
\(\cohsdttop\) & top of the complete lattice \(\structure{\toppedcohsdts,\subseteq}\) & Section~\ref{sec:desirable:things}\\
\(\cohsdtbottom\) & bottom of the complete lattice \(\structure{\toppedcohsdts,\subseteq}\) & Section~\ref{sec:desirable:things}\\
SDS & abbreviation for `\SDS' & Section~\ref{sec:desirability:sds}\\
\(\sosot\), \(\sosot[i]\) & generic sets of sets of things & Section~\ref{sec:desirability:sds}\\
\(\selections\) & set of all selection maps on \(\sosot\) & Section~\ref{sec:desirability:sds}\\
\(\selection\), \(\selection[o]\) & generic selection maps & Section~\ref{sec:desirability:sds}\\
\(\cohsds\), \(\cohsds[i]\) & generic coherent SDSes & Section~\ref{sec:desirability:sds}\\
\(\cohsdss\) & set of all coherent SDSes & Section~\ref{sec:desirability:sds}\\
\(\toppedcohsdss\) & the complete lattice \(\toppedcohsdss\coloneqq\cohsdss\cup\set{\powerset\group{\things}}\) & Section~\ref{sec:desirability:sds}\\
\(\cohsdsclosure\) & closure operator associated with \(\toppedcohsdss\) & Section~\ref{sec:desirability:sds}\\
\(\cohsdstop\) & top of the complete lattice \(\structure{\toppedcohsdss,\subseteq}\) & Section~\ref{sec:desirability:sds}\\
\(\cohsdsbottom\) & bottom of the complete lattice \(\structure{\toppedcohsdss,\subseteq}\) & Section~\ref{sec:desirability:sds}\\
\(\fincohsdss\) & set of all finitely coherent SDSes & Section~\ref{sec:desirability:sds:finitary}\\
\(\toppedfincohsdss\) & the complete lattice \(\toppedfincohsdss\coloneqq\fincohsdss\cup\set{\powerset\group{\things}}\) & Section~\ref{sec:desirability:sds:finitary}\\
\(\fincohsdsclosure\) & closure operator associated with \(\toppedfincohsdss\) & Section~\ref{sec:desirability:sds:finitary}\\
\(\sdtify{\sosot}\) & SDT associated with the SDS \(\sosot\) & Equation~\eqref{eq:sos:to:sot:and:sot:to:sos}\\
\(\sdsify{\sot}\) & SDS associated with the SDT \(\sot\) & Equation~\eqref{eq:sos:to:sot:and:sot:to:sos}\\
\(\sdtify{\bolleke}\) & map turning an SDS into an SDT & Section~\ref{sec:desirability:sds:finitary}\\
\(\sdsify{\bolleke}\) & map turning an SDT into an SDS & Section~\ref{sec:desirability:sds:finitary}\\
\(\sdsify{\sdtify{\cohsds}}\) & conjunctive part of the coherent SDS \(\cohsds\) & Section~\ref{sec:desirability:sds:finitary}\\
\(\finitesetsofthings\) & set of all finite sets of things & Section~\ref{sec:desirability:sdfs}\\
SDFS & abbreviation for `\SDFS' & Section~\ref{sec:desirability:sdfs}\\
\(\cohsdfss\) & set of all finitely coherent SDFSes & Section~\ref{sec:desirability:sdfs}\\
\(\toppedcohsdfss\) & the complete lattice \(\toppedcohsdfss\coloneqq\cohsdfss\cup\set{\finitesetsofthings}\) & Section~\ref{sec:desirability:sdfs}\\
\(\cohsdfsclosure\) & closure operator associated with \(\toppedcohsdfss\) & Section~\ref{sec:desirability:sdfs}\\
\(\cohsdfstop\) & top of the complete lattice~\(\structure{\toppedcohsdfss,\subseteq}\) & Section~\ref{sec:desirability:sdfs}\\
\(\cohsdfsbottom\) & bottom of the complete lattice~\(\structure{\toppedcohsdfss,\subseteq}\) & Section~\ref{sec:desirability:sdfs}\\
\(\sdfsify{\bolleke}\) & map turning an SDT into an SDFS & Equation~\eqref{eq:sofinsot:to:finsot:and:finsot:to:sofinsot}\\
\(\sdfsify{\sdtify{\cohsdfs}}\) & conjunctive part of the coherent SDFS \(\cohsdfs\) & Section~\ref{sec:desirability:sdfs}\\
\(\filter\) & filter & Section~\ref{sec:towards:representation}\\
\(\idealcohsdt\) & set of things that are actually desirable & Section~\ref{sec:towards:representation}\\
\(\cohsdts[\sot]\) & event corresponding to `the set of things~\(\sot\) is desirable' & Equation~\eqref{eq:the:cohsdtify:operator}\\
\(\event{\sosot}\) & event corresponding to `the set of sets of things~\(\sosot\) is desirable' & Equation~\eqref{eq:definition:event}\\
\(\posetupset{a}\) & set of elements dominating \(a\) in the poset \(\lattice\) & Equation~\eqref{eq:up:operators}\\
\(\posetUpset{A}\) & set of elements dominating some \(a\in A\) in the poset \(\lattice\) & Equation~\eqref{eq:up:operators}\\
\(\basicevent{\sosot}\) & event based on the set of sets of things \(\sosot\) & Equation~\eqref{eq:definition:production:event}\\
\(\events\) & the set of events representing sets of sets of things & Section~\ref{sec:representation:lattices}\\
\(\finevents\) & the set of events representing finite sets of sets of things & Section~\ref{sec:representation:lattices}\\
\(\finfinevents\) & the set of events representing finite sets of finite sets of things & Section~\ref{sec:representation:lattices}\\
\(\structure{\lattice,\leq}\) & generic bounded lattice & Section~\ref{sec:filters}\\
\(\meet\) & meet of a generic bounded lattice & Section~\ref{sec:filters}\\
\(\join\) & join of a generic bounded lattice & Section~\ref{sec:filters}\\
\(\latticetop\) & top of a generic bounded lattice & Section~\ref{sec:filters}\\
\(\latticebottom\) & bottom of a generic bounded lattice & Section~\ref{sec:filters}\\
\(\filters\group{\lattice}\) & set of all filters on~\(\structure{\lattice,\leq}\) & Section~\ref{sec:filters}\\
\(\properfilters\group{\lattice}\) & set of all proper filters on~\(\structure{\lattice,\leq}\) & Section~\ref{sec:filters}\\
\(\filterbase\) & filter base & Section~\ref{sec:filters}\\
\(\filterclosure\) & closure operator associated with~\(\properfilters\group{\lattice}\) & Section~\ref{sec:filters}\\
\(\primefilters\group{\lattice}\) & set of all prime filters on~\(\structure{\lattice,\leq}\) & Section~\ref{sec:filters}\\
\(\fincohsdstop\) & top of the complete lattice \(\structure{\toppedfincohsdss,\subseteq}\) & Section~\ref{sec:representation:finitary}\\
\(\fincohsdsbottom\) & bottom of the complete lattice \(\structure{\toppedfincohsdss,\subseteq}\) & Section~\ref{sec:representation:finitary}\\
\(\finfilterise\) & map turning a finitely coherent SDS into a proper filter on~\(\structure{\finevents,\subseteq}\) & Section~\ref{sec:representation:finitary}\\
\(\findesirify\) & map turning a proper filter on~\(\structure{\finevents,\subseteq}\) into a finitely coherent SDS & Section~\ref{sec:representation:finitary}\\
\(\completesdss\) & set of all complete and coherent SDSes & Section~\ref{sec:representation:finitary}\\
\(\completefincohsdss\) & set of all complete and finitely coherent SDSes & Section~\ref{sec:representation:finitary}\\
\(\eventprincipalfilters\) & set of all principal filters on~\(\structure{\events,\subseteq}\) & Section~\ref{sec:representation}\\
\(\propereventprincipalfilters\) & set of all proper principal filters on~\(\structure{\events,\subseteq}\) & Section~\ref{sec:representation}\\
\(\filterise\) & map turning a coherent SDS into a proper principal filter on~\(\structure{\events,\subseteq}\) & Section~\ref{sec:representation}\\
\(\desirify\) & map turning a proper principal filter on~\(\structure{\events,\subseteq}\) into a coherent SDS & Section~\ref{sec:representation}\\
\(\finpart{\sosot}\) & finite part of the SDS~\(\sosot\) & Section~\ref{sec:finitary:models}\\
\(\fintypart{\sosot}\) & finitary part of the SDS~\(\sosot\) & Section~\ref{sec:finitary:models}\\
\(\finfinfilterise\) & map turning a finitely coherent SDFS into a proper filter on~\(\structure{\finfinevents,\subseteq}\) & Section~\ref{sec:representation:finitary:finite}\\
\(\finfindesirify\) &  map turning a proper filter on~\(\structure{\finfinevents,\subseteq}\) into a finitely coherent SDFS & Section~\ref{sec:representation:finitary:finite}\\
\(\lindenbaum\) & Lindenbaum algebra & Section~\ref{sec:propositional:logic}\\
\(\gbls\) & set of all gambles & Section~\ref{sec:choice:functions}\\
\(\posgbls\) & set of all gambles \(h\gblgt0\) & Section~\ref{sec:choice:functions}\\
\(\neggbls\) & set of all gambles \(h\gbllt0\) & Section~\ref{sec:choice:functions}\\
\(\nonposopts\) & set of all gambles \(h\gbllt0\) or \(h=0\) & Section~\ref{sec:choice:functions}\\
\(E_p\) & expectation operator associated with the probability mass function~\(p\) & Section~\ref{sec:choice:functions}\\
\(\simplex\) & set of all probability mass functions on~\(\states\) & Section~\ref{sec:choice:functions}\\
\(\solp\) & credal set & Section~\ref{sec:choice:functions}
\end{longtable}
\end{center}

\end{document}